\documentclass[11pt, reqno]{amsart}

\usepackage[margin=2.71cm]{geometry}
\usepackage{lipsum}
\usepackage[T1]{fontenc}
\usepackage[T2A]{fontenc}
\usepackage[cp1251]{inputenc}
\usepackage[english]{babel}

\usepackage{longtable}
\usepackage{xcolor}
\bibliographystyle{alpha}
\usepackage{xfrac}
\usepackage{amsfonts,amssymb,mathrsfs,amscd,graphics,tikz,graphicx, verbatim}
\usepackage{tikz-cd}
\usepackage[all]{xy}
\usetikzlibrary{shapes.geometric}

\usepackage{hyperref}
\hypersetup{
    colorlinks=false,
    citecolor=black,
    filecolor=black,
    linkcolor=blue,
    urlcolor=black,
    unicode=true,
    linkbordercolor={1 0 0}
}

\numberwithin{equation}{section}
\theoremstyle{plain}
\newtheorem{theorem}{Theorem}[section]
\newtheorem{lemma}{Lemma}[section]
\newtheorem{propos}{Proposition}[section]

\newtheorem{cor}{Corollary}[section]
\newtheorem{conj}{Conjecture}
\theoremstyle{definition}
\newtheorem{definition}{Definition}
\newtheorem{remark}{Remark}


\newcommand{\ad}{\operatorname{ad}}
\newcommand{\alg}{\operatorname{alg}}
\newcommand{\Aut}{\operatorname{Aut}}
\newcommand{\aut}{\operatorname{aut}}

\newcommand{\band}{\operatorname{band}}
\newcommand{\bandU}{\operatorname{\underline{band}}}
\newcommand{\Band}{\operatorname{Band}}
\newcommand{\bandAut}{\operatorname{bandAut}}
\newcommand{\BandU}{\underline{\operatorname{Band}}}
\newcommand{\Br}{\operatorname{Br}}

\newcommand{\Cart}{\operatorname{Cart}}
\newcommand{\cart}{\operatorname{cart}}
\newcommand{\CC}{\mathbb C}
\newcommand{\Char}{\operatorname{char}}
\newcommand{\coker}{\operatorname{Coker}}

\newcommand{\Div}{\operatorname{Div}}

\newcommand{\End}{\operatorname{End}}

\newcommand{\et}{\operatorname{\acute{e}t}}

\newcommand{\F}{{\mathbb{F}}}
\newcommand{\fppf}{\operatorname{fppf}}
\newcommand{\fpqc}{\operatorname{fpqc}}
\newcommand{\Frob}{\operatorname{Frob}}

\newcommand{\Gal}{\operatorname{Gal}}
\newcommand{\Ger}{\operatorname{Ger}}
\newcommand{\GL}{\operatorname{GL}}
\newcommand{\Gm}{\mathbb{G}_m}

\newcommand{\preBand}{\operatorname{preBand}}

\newcommand{\zz}{{\mathbb Z}}
\newcommand{\qq}{{\mathbb Q}}
\newcommand{\R}{\operatorname{R}}

\newcommand{\HOM}{\operatorname{HOM}}

\newcommand{\id}{\operatorname{id}}

\newcommand{\infl}{\operatorname{inf}}
\newcommand{\im}{\operatorname{Im}}
\newcommand{\Isex}{\operatorname{Isex}}
\newcommand{\Isoc}{\operatorname{Isoc}}
\newcommand{\Isom}{\operatorname{Isom}}

\newcommand{\Mot}{\operatorname{Mot}}

\newcommand{\lcm}{\operatorname{lcm}}
\newcommand{\LOCLIB}{\operatorname{LOCLIB}}
\newcommand{\LocLib}{\operatorname{LocLib}}

\newcommand{\OO}{\operatorname{\mathcal{O}}}
\newcommand{\Vect}{\operatorname{Vect}}

\newcommand{\num}{\operatorname{num}}

\newcommand{\Ob}{\operatorname{Ob}}
\newcommand{\ord}{\operatorname{ord}}

\newcommand{\RGamma}{\operatorname{R\Gamma}}
\newcommand{\REP}{\operatorname{REP}}
\newcommand{\Rep}{\operatorname{Rep}}
\newcommand{\res}{\operatorname{res}}

\newcommand{\sep}{\operatorname{sep}}

\newcommand{\ShGroups}{\operatorname{ShGroups}}
\newcommand{\ShGroupsU}{\underline{\operatorname{ShGroups}}}

\newcommand{\sm}{\operatorname{sm}}

\newcommand{\Stab}{\operatorname{Stab}}

\newcommand \Int{\operatorname {Int}}
\newcommand{\TORS}{\operatorname{TORS}}
\newcommand{\Tors}{\operatorname{Tors}}

\newcommand{\Hom}{\operatorname{Hom}}
\newcommand{\HomI}{\operatorname{\underline{Hom}}}
\newcommand{\Hex}{\operatorname{Hex}}

\newcommand{\Kt}{\operatorname{Kt}}
\newcommand{\Res}{\operatorname{Res}}
\newcommand{\fl}{\operatorname{fppf}}

\newcommand{\Loc}{\operatorname{Loc}}

\newcommand{\Ab}{\operatorname{Ab}}
\newcommand{\op}{\operatorname{op}}
\newcommand{\Rlim}{\operatorname{R}\lim}
\newcommand{\loc}{\operatorname{loc}}
\newcommand {\Spec} {\operatorname{Spec}}
\newcommand{\cycl}{\operatorname{cycl}}
\newcommand{\QQ}{\mathbb Q}

\newcommand{\RR}{\mathbb R}

\newcommand\norm[1]{\left\lVert#1\right\rVert}
\newcommand\restr[2]{{
  \left.\kern-\nulldelimiterspace 
  #1 
  \vphantom{\big|} 
  \right|_{#2} 
  }}
\newcommand*\MapsTo{\xrightarrow{{\smash{\ensuremath{\sim}}}}}

\begin{document}
\title{Representations of the Kottwitz gerbes}
\author{Sergei Iakovenko}

\maketitle
\begin{abstract} Let $F$ be a local or global field and let $G$ be a linear algebraic group over $F$. We study Tannakian categories of representations of the Kottwitz gerbes $\Rep(\Kt_{F})$ and the functor $G\mapsto B(F,G)$ defined by Kottwitz in \cite{Kot}. In particular, we show that if $F$ is a function field of a curve over $\mathbb{F}_q$, then $\Rep(\Kt_F)$ is equivalent to the category of Drinfeld isoshtukas. In the case of number fields, we establish the existence of various fiber functors on $\Rep(\Kt_{\mathbb{Q}})$ and its subcategories and show that Scholze's conjecture \cite[Conjecture 9.5]{Sch} follows from the full Tate conjecture over finite fields \cite{Tate1994}.
\end{abstract}

\phantomsection
\hypertarget{MyToc}{} 
\setcounter{tocdepth}{1}
\tableofcontents

\section*{Introduction}

 \subsection{Context}
 Let $F$ be a local or global field and let $G$ be a linear algebraic group over $F$. In \cite{Kot}, Kottwitz introduced a functor $B(F,G)$ from linear algebraic groups over $F$ to pointed sets. We geometrize a purely cohomological construction of Kottwitz and study categories $\Rep(\Kt_{F})$ that are Tannakian categories over $F$ attached to Kottwitz gerbes $\Kt_{F}$, which are the main ingredients of the construction of $B(F,G)$.

In order to sketch the definition of $\Kt_{F}$ and $B(F,G)$, we first recall Tate-Nakayama isomorphisms for local and global fields. Let $K/F$ be a finite Galois extension. If $F$ is a local field, we set $X_{K/F} = \mathbb{Z}$ and $\mathbb{D}_{K/F} = \Gm$.

If $F$ is a global field, then a $\Gal(K/F)$-module $X_{K/F}$ is defined as the kernel of the following homomorphism of $\Gal(K/F)$-modules
\begin{equation*}
\bigoplus_{v\in V_{K/F}} \zz\cdot v \xrightarrow{\deg} \zz, \quad \sum_{v} n_v v \mapsto \sum_{v} n_v
\end{equation*}
where $V_{K/F}$ is the set of all places of $K$ over $F$. Then we define $\mathbb{D}_{K/F}$ as a pro-torus over $F$, whose group of characters is $X_{K/F}$.

There are canonical classes $\alpha_{K/F} \in H^2(\Gal(K/F), \mathbb{D}_{K/F}(K))$, such that
the cup product
\begin{equation}
\label{GlTD}
H^0(\Gal(K/F), X_{K/F}) \xrightarrow{\cup\, \alpha_{K/F}} H^2(\Gal(K/F), \Gm(K)) = \Br(K/F)
\end{equation}
is an isomorphism of Tate cohomology groups. Note that the existence of $\alpha_{K/F}$ in the global cases was shown by Tate in \cite{Tate}, where he also established Tate-Nakayama isomorphisms for cohomology of algebraic tori over global fields.

Kottwitz used the canonical classes $\alpha_{K/F}$, or rather the extensions
\begin{equation}
1 \to \mathbb{D}_{K/F}(K) \to \Kt_{K/F} \to \Gal(K/F) \to 1
\end{equation}
corresponding to these classes, to construct the functor $G \mapsto B(F, G)$. More precisely, for any finite Galois extension $K/F$, Kottwitz \cite[Section 2]{Kot} defined the set of equivalence classes of algebraic $1$-cocycles of $\Kt_{K/F}$ with values in $G(K)$ that is denoted $H^1_{\alg}(\Kt_{K/F},G(K))$. If $L\supset K\supset F$ is a tower of finite Galois extensions, then there is a naturally defined map of pointed sets $H^1_{\alg}(\Kt_{K/F},G(K))\to H^1_{\alg}(\Kt_{L/F},G(L))$ arising from a compatibility between the canonical classes $\alpha_{L/F}$ and $\alpha_{K/F}$. Subsequently, in \cite[Section 10]{Kot} the pointed set $B(F,G)$ is defined as follows
\begin{equation*}
B(F,G) = \varinjlim_{K} H^1_{\alg}(\Kt_{K/F}, G(K)),
\end{equation*}
where the colimit is taken over the set of finite Galois extensions $K/F$ contained in a fixed separable closure $\bar{F}$ of $F$. Moreover, the compatibility mentioned above gives rise to the canonical class $\alpha_F$ given by the limit of $\alpha_{K/F}$ (see Section \ref{DefnOfKtG}).

Remarkably, Kottwitz' construction of $B(F,G)$ works uniformly over local and global fields of any characteristic and has other surprising properties. For instance, $B(F,G)$  behaves nicely under localization, meaning that if $F$ is a global field, there are naturally defined maps $B(F,G)\to B(F_v,G)$, where $F_v$ is the completion of $F$ with respect to a chosen valuation. In particular, one gets such a localization in the case where $F$ is $\mathbb{Q}$ and $v$ is archimedean, that is $F_v = \mathbb{R}$. In \cite[Section 5]{Far} Fargues introduced a twistor projective line $\widetilde{\mathbb{P}}^{1}_\mathbb{R}$ and proved that $B(\mathbb{R}, G) = H^1_{\et}(\widetilde{\mathbb{P}}^{1}_\mathbb{R}, G)$. The relation between $B(\mathbb{R}, G)$ and Shimura data was investigated by Jaburi in his Master's thesis \cite{Jab}.

On the other hand, the local $B(F_v,G)$'s for non-archimedean places have long been known for their relation to the theory of isocrystals, which are objects of semi-linear algebra playing an important role in studying $p$-divisible groups, special fibers of Shimura varieties, and the geometry of the Fargues-Fontaine curve.

The category of isocrystals is defined as follows. For simplicity, let $F=\mathbb{Q}_p$ with the residue field $\mathbb{F}_p$, also denote by $\mathbb{F}$ the algebraic closure of $\mathbb{F}_p$. Let $\breve{F}$ denote the field of fractions of the ring of Witt vectors of $\mathbb{F}$. Note that $\breve{F}$ is the completion of the maximal unramified extension of $\mathbb{Q}_p$. Additionally, $\breve{F}$ is endowed with the action of the topological generator of the Galois group of $\mathbb{F}$ over $\mathbb{F}_p$, which we denote by $\sigma$.

An $F$-isocrystal over $\mathbb{F}$ is a finite-dimensional vector space $V$ over $\breve{F}$ with a $\sigma$-linear isomorphism $\alpha: V \MapsTo V$. $F$-isocrystals over an algebraically closed field were classified by Dieudonn\'{e} and Manin (see \cite{Dieu} and \cite{Manin}).

 In \cite{Kot86} and \cite{Kot97}, Kottwitz introduced $F$-isocrystals with $G$-structure  while studying $\sigma$-conjugacy classes of linear algebraic groups over non-archimedean local fields. In particular, he proved the following:
\begin{theorem}[Kottwitz \cite{Kot97}]
\label{Kot97Th}
Let $F$ be a non-archimedean local field of characteristic $0$. Then $B(F,G)$ is isomorphic to the set of $\otimes$-isomorphism classes of exact $\otimes$-functors from the category of representations of $G$ to the category of $F$-isocrystals.
In particular, $B(F, \GL_n)$ is identified with the set of isomorphism classes of $F$-isocrystals of dimension $n$.
\end{theorem}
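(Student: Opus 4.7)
My plan is to establish Theorem \ref{Kot97Th} through Tannakian duality for gerbes, in three stages.

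First, I would pass to the limit. The directed system $\{\Kt_{K/F}\}_K$ assembles into a pro-gerbe $\Kt_F$ banded by a pro-torus $\mathbb{D}_F$ obtained as a suitable limit of the finite-level bands $\mathbb{D}_{K/F} = \Gm$, and the compatibility of the canonical classes $\alpha_{K/F}$ already used in the construction of $B(F,G)$ yields a canonical class $\alpha_F \in H^2$ for $\Kt_F$. I would then verify that $B(F,G) = \varinjlim_K H^1_{\alg}(\Kt_{K/F}, G(K)) = H^1_{\alg}(\Kt_F, G)$, and invoke the Tannakian principle for gerbes: an algebraic $1$-cocycle on $\Kt_F$ with values in $G$ is the same datum as a $G$-torsor over $\Kt_F$, which in turn is equivalent (functorially in $V \in \Rep(G)$) to an exact $\otimes$-functor $\Rep(G) \to \Rep(\Kt_F)$.

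Second, I would identify $\Rep(\Kt_F)$ with the category of $F$-isocrystals over $\mathbb{F}$. By Dieudonn\'{e}--Manin, the category of $F$-isocrystals is a semisimple Tannakian category over $F$ with simple objects indexed by rational slopes, so its Tannakian gerbe is banded by a pro-torus whose character group is $\mathbb{Q}$. I would first match this band with $\mathbb{D}_F$ via their character groups --- the transition maps in the limit defining $\mathbb{D}_F$ are arranged so that the limit character group is precisely $\mathbb{Q}$ --- and then identify the $H^2$-class of the isocrystal gerbe with $\alpha_F$. Both classes are pinned down by their images in $\Br(F) \cong \mathbb{Q}/\mathbb{Z}$ under the local invariant map: pairing the isocrystal gerbe class with a rational slope $\lambda$ returns $\lambda \bmod \mathbb{Z}$, which is exactly the characterization of $\alpha_F$ via Tate--Nakayama.

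The special case $G = \GL_n$ then falls out immediately: since $\Rep(\GL_n)$ is $\otimes$-generated by the standard representation, an exact $\otimes$-functor into $F$-isocrystals is determined up to $\otimes$-isomorphism by the image of that single representation, an $n$-dimensional $F$-isocrystal.

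\textbf{Main obstacle.} I expect the hard work to lie entirely in the comparison of $H^2$-classes in the second stage: producing an explicit $2$-cocycle representing the isocrystal gerbe --- for instance from a Frobenius lift on $\breve{F}$ and a choice of uniformizer --- and tracing it through the local reciprocity map to match the canonical class $\alpha_F$. The preliminary identification of the bands is itself a delicate pro-finite computation, requiring the correct transition maps (multiplication-by-$[L:K]$ type) at each level of the tower, without which the subsequent $H^2$-comparison does not even make sense.
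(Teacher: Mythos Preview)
Your overall two-stage strategy matches what the paper does: the first stage is exactly Proposition~\ref{RepsGStruct} (Section~\ref{GStructure}), and the second stage is Theorem~\ref{TheoremNonArch} (Section~\ref{NonArchDescr}); combining them yields Theorem~\ref{Kot97Th}. Your Stage~1 sketch is essentially the paper's argument, though the paper organizes it via the gerbes $\HOM_\nu(\Kt_F,\mathcal{G})$ stratified by morphisms of bands $\nu$, reduces to algebraic quotients $\Kt_{K,S}$, and then uses smoothness of the centralizer $C_\nu$ together with the comparison isomorphism (Theorem~\ref{ComparisonNonAb}) to identify both sides with $H^1(F,J_\nu)$.

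Where you genuinely diverge is Stage~2. You propose the abstract route: identify the band of the isocrystal category as the pro-torus with character group $\mathbb{Q}$, then match $H^2$-classes by pairing with rational characters via local Tate duality (this is precisely Proposition~\ref{TannakaClassificationLocal} in the paper, which it develops later for the global comparison). The paper instead deliberately takes a concrete route, advertised in the introduction as ``a very explicit fashion in contrast to a somewhat more abstract comparison that existed in the literature'': it first reduces to unramified extensions $K/F$ via inflation of canonical classes, then builds the functor $\Rep(\Kt_{K/F})\to\Isoc(\bar{\mathbb{F}}_p)$ by hand from a Frobenius lift, computes $d^K(\sigma_K)^n \equiv \pi_F \pmod{N_{K/F}(K^\times)}$, and proves full faithfulness by an explicit Dieudonn\'{e}--Manin-style decomposition (Proposition~\ref{Decomposition}) using a Vandermonde argument over residue fields. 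Your approach is cleaner and closer to Kottwitz's original in \cite{Kot86}; the paper's buys an explicit description of the functor and of simple objects at each finite level, and avoids needing the semisimplicity of $\Isoc(\bar{\mathbb{F}}_p)$ as an input to the band computation. Both are valid; the ``main obstacle'' you anticipate (matching the $2$-cocycle to the canonical class via local reciprocity) is exactly what the paper's explicit uniformizer computation in Section~\ref{NonArchDescr} carries out at the level of group extensions rather than cohomology classes.
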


Note that in \cite{Kot}, Kottwitz does not provide a similar interpretation of $B(F,G)$ for global fields even in the case $G= \GL_n$. Using the foundational work of Giraud on non-abelian cohomology \cite{Giraud} and the paper of Breen \cite{Breen}, both $\alpha_{K/F}$ and the extensions $\Kt_{K/F}$ can be interpreted in terms of gerbes. Recall that gerbes can be viewed as stacks that are locally equivalent to stacks of torsors for some sheaf of groups.

 Scholze suggested to apply general Tannakian duality to the Kottwitz gerbes $\Kt_F$. The duality was introduced by Saavedra Rivano \cite{Saa} and completed by Deligne \cite{Del}, it produces Tannakian categories $\Rep(\Kt_F)$  that play the key role in this work.

The significance of $\Rep(\Kt_{\mathbb{Q}})$ was emphasized in the ICM talk of Scholze (see Section 9 in \cite{Sch}), where he noticed the analogy between $\Rep(\Kt_{\mathbb{Q}})$ and the category of isoshtukas over curves over finite fields. The latter was introduced by Drinfeld in his work on the Langlands correspondence in the function field case (see \cite{Dr}). Scholze also stated the following conjecture:
\begin{conj}[Scholze \cite{Sch}]
\label{Scholze}
There is a Weil cohomology theory $H^i_{\Kt_{\mathbb{Q}}}(X)$
for varieties $X$ over
$\mathbb{F}$ taking values in $\Rep(\Kt_{\mathbb{Q}})$. Under the functor $\Rep(\Kt_{\mathbb{Q}}) \to\Rep(\Kt_{\mathbb{Q}_p})$, it maps to crystalline
cohomology, and under the functor $\Rep(\Kt_{\mathbb{Q}}) \to \Rep(\Kt_{\mathbb{Q}_l})$
for $l\neq p$, it maps to \'{e}tale
cohomology. Under the restriction $\Rep(\Kt_{\mathbb{Q}})\to \Rep(\Kt_{\mathbb{R}})$,
it gives a Weil cohomology theory $H^i_{\Kt_{\mathbb{R}}}(X)$
with values in $\Kt_{\mathbb{R}}$.
\end{conj}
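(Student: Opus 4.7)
The plan is to assume the full Tate conjecture over finite fields and deduce Scholze's conjecture from the structure theory of motives over $\mathbb{F}$. Concretely, I would realize the category of numerical motives $\mathcal{M}_{\num}(\mathbb{F})_{\mathbb{Q}}$ as a full Tannakian subcategory of $\Rep(\Kt_{\mathbb{Q}})$ and then define $H^i_{\Kt_{\mathbb{Q}}}(X)$ as the motive $h^i(X)$ viewed through this embedding. The comparison with $\ell$-adic, crystalline, and real realizations then reduces to checking that the classical realizations factor through the localization functors $\Rep(\Kt_{\mathbb{Q}}) \to \Rep(\Kt_{\mathbb{Q}_v})$ constructed earlier in the paper.

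First, I would recall that by Jannsen's theorem $\mathcal{M}_{\num}(\mathbb{F})_{\mathbb{Q}}$ is semisimple abelian, and that the full Tate conjecture identifies numerical equivalence with $\ell$-adic and crystalline homological equivalence and yields semisimplicity of Frobenius. After modifying the commutativity constraint by Deligne's sign rule coming from the weight grading, this becomes a semisimple Tannakian category over $\mathbb{Q}$, corresponding by Tannakian duality to a gerbe $\mathcal{M}$ over $\mathbb{Q}$ whose band is the pro-torus $P$ of Weil $q$-numbers supplied by Honda-Tate.

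Next I would match $\mathcal{M}$ with a quotient gerbe of $\Kt_{\mathbb{Q}}$. The surjection of pro-tori $\mathbb{D}_{\overline{\mathbb{Q}}/\mathbb{Q}} \twoheadrightarrow P$ dual to the inclusion of character groups is immediate. To upgrade this to a morphism of gerbes, it suffices to show that the class of $\mathcal{M}$ in $H^2(\mathbb{Q}, P)$ is the pushforward of $\alpha_{\mathbb{Q}}$. By Tate-Poitou-Nakayama duality for $P$ this reduces to a local comparison at each place of $\mathbb{Q}$: at $p$ one uses the Dieudonn\'e-Manin and Honda-Tate classifications together with Theorem \ref{Kot97Th} to match the motivic class with $\alpha_{\mathbb{Q}_p}$; at primes $\ell \neq p$ the local motivic gerbe is neutral, matching the vanishing of $\alpha_{\mathbb{Q}_\ell}$ restricted to $P$; at the archimedean place one matches the polarization datum on motives with the nontrivial local class $\alpha_{\mathbb{R}}$.

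With $\mathcal{M}$ identified as a quotient of $\Kt_{\mathbb{Q}}$, the functor $X \mapsto h^*(X) \in \Rep(\mathcal{M}) \subset \Rep(\Kt_{\mathbb{Q}})$ is the desired Weil cohomology. Compatibility with $\ell$-adic cohomology for $\ell \neq p$ amounts to the fact that the classical $\ell$-adic realization of motives coincides with the composition of the embedding, the localization $\Rep(\Kt_{\mathbb{Q}}) \to \Rep(\Kt_{\mathbb{Q}_\ell})$, and the fiber functor constructed earlier in the paper; similarly, crystalline cohomology is recovered via the isocrystal interpretation of Theorem \ref{Kot97Th}. The restriction to $\Rep(\Kt_{\mathbb{R}})$ then automatically produces the promised Weil cohomology over $\mathbb{R}$. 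The main obstacle will be step three, the global identification of gerbe classes: while the local matching at $p$ and at $\ell \neq p$ rests on well-established theory, the archimedean comparison is delicate and is precisely where one needs the full strength of the Tate conjecture rather than the weaker semisimplicity of Frobenius or the equality of homological and numerical equivalence alone.
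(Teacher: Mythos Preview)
Your proposal is correct and follows essentially the same route as the paper: under the full Tate conjecture, use the Langlands--Rapoport--Milne description of $\Mot(\mathbb{F})$ as a Tannakian category banded by the Weil-number protorus $P$, build the surjection $\mathbb{D}_{\mathbb{Q}}\twoheadrightarrow P$ dual to the inclusion of character modules, and check that the pushforward of the Kottwitz class $\alpha_{\mathbb{Q}}$ equals the motivic class by a local--global argument, after which the embedding $\Mot(\mathbb{F})\hookrightarrow\Rep(\Kt_{\mathbb{Q}})$ and the realization compatibilities fall out.

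One point deserves recalibration. You locate the main obstacle at the archimedean place and say this is ``precisely where one needs the full strength of the Tate conjecture.'' In the paper's argument the archimedean comparison is in fact the easiest local step: it is just the weight map $\pi\mapsto -\mathrm{wt}(\pi)$, which lands in $\Br(\mathbb{R})=\tfrac{1}{2}\mathbb{Z}/\mathbb{Z}$. The full Tate conjecture is not consumed by any single local comparison; it is needed upstream, to know that $\Mot(\mathbb{F})_{\num}$ is Tannakian with band exactly $P$ and that the endomorphism algebras of simple motives have the predicted local invariants (this is the Langlands--Rapoport--Milne input). Once that description is in hand, all three local matchings (at $p$, at $\ell\neq p$, and at $\infty$) are direct computations with the explicit map $W\hookrightarrow X^\ast(\mathbb{D}_{\mathbb{Q}})$. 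Also, the phrase ``Tate--Poitou--Nakayama duality for $P$'' should be replaced by the specific injectivity $H^2(\mathbb{Q},P)\hookrightarrow H^2(\mathbb{A},P)$ for the protorus $P$, which is not a formal duality statement but a result of Milne requiring the vanishing of $\varprojlim^1 H^1(\mathbb{Q},P^K)$.
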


\subsection{Summary of contents and main results}
This work is organized as follows. In the first sections, we recall main definitions related to gerbes and present several approaches to gerbes and their categories of representations. We also prove a general result on gerbes with a smooth band that extends several well-known comparison isomorphisms into the setting of non-abelian $H^2$, namely we show that $H^2_{\sm}(F, G) = H^2_{\fl}(F, G)$, where $G$ is a smooth algebraic group over $F$ (see Theorem \ref{ComparisonNonAb}.)

In Section \ref{GStructure}, we study categories of representations of Kottwitz gerbes with $G$-structure, namely the categories of $\otimes$-functors $\Rep(G) \to \Rep(\Kt_F)$. It is shown in Proposition \ref{RepsGStruct} that the set of $\otimes$-isomorphism classes of such functors is identified with $B(F,G)$.

In Section \ref{NonArchDescr}, we describe $\Rep(\Kt_F)$ in the non-archimedean local cases, re-establishing the equivalence between $\Rep(\Kt_F)$ and the category of isocrystals in a very explicit fashion in contrast to a somewhat more abstract comparison that existed in the literature (see \cite{Kot86}). Namely, using the explicit definition of the category of representations of $\Kt_{F}$ given in Section \ref{ExplDefRep} and main results of local class field theory, we prove the following:
\begin{theorem}[Theorem \ref{TheoremNonArch}]
 Let $F$ be a non-archimedean local field. Then the category of representations of the Kottwitz gerbe $\Rep(\Kt_F)$ is equivalent to the category of pairs $(V,\rho)$, where $V$ is a finite-dimensional vector spaces over $\breve{F}$, and $\rho:V\MapsTo V$ is a $\sigma$-linear automorphism.
\end{theorem}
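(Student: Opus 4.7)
The plan is to unravel the explicit description of $\Rep(\Kt_F)$ provided in Section \ref{ExplDefRep} and match it, via local class field theory, with the category of $\sigma$-linear pairs. By construction an object of $\Rep(\Kt_F)$ is a finite-dimensional $\bar{F}$-vector space $W$ endowed with a semilinear action of $\Gal(\bar{F}/F)$ twisted by a $2$-cocycle representing the canonical class $\alpha_F \in H^2(\Gal(\bar{F}/F), \bar{F}^\times) = \Br(F)$, with morphisms the $\Gal(\bar{F}/F)$-equivariant $\bar{F}$-linear maps. The strategy is to rewrite this data using a cocycle representative supported on the unramified part.

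The first key input is the local class field theory statement that the inflation map
\begin{equation*}
H^2(\Gal(F^{\mathrm{un}}/F), (F^{\mathrm{un}})^\times) \to H^2(\Gal(\bar{F}/F), \bar{F}^\times) = \Br(F)
\end{equation*}
is an isomorphism, so $\alpha_F$ admits a representative that is trivial on the inertia subgroup. Plugging this representative into the twisted action makes the inertia act purely $\bar{F}$-linearly, and ordinary Galois descent along $\bar{F}/F^{\mathrm{un}}$ produces a finite-dimensional $F^{\mathrm{un}}$-vector space $V_0$ together with a twisted semilinear action of the quotient $\Gal(F^{\mathrm{un}}/F) = \hat{\mathbb{Z}}$, which is topologically generated by Frobenius $\sigma$. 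Since $\hat{\mathbb{Z}}$ is procyclic, the entire twisted action is recorded by a single $\sigma$-linear automorphism $\rho_0 \colon V_0 \to V_0$, once a compatible trivializing cochain for the cocycle has been picked at each finite unramified level. Extending scalars to the completion gives $V := V_0 \otimes_{F^{\mathrm{un}}} \breve{F}$ and $\rho := \rho_0 \otimes \sigma$, producing the desired pair $(V, \rho)$.

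Conversely, a pair $(V, \rho)$ descends along the faithfully flat inclusion $F^{\mathrm{un}} \hookrightarrow \breve{F}$ to a finite-dimensional $F^{\mathrm{un}}$-model equipped with a compatible $\sigma$-action, which is then inflated back to a twisted semilinear representation of $\Gal(\bar{F}/F)$ on $V_0 \otimes_{F^{\mathrm{un}}} \bar{F}$. The main obstacle, and the step I expect to require the most care, is the passage in the middle: checking that a colimit of Galois-descent data over finite unramified extensions $F_n/F$ is captured faithfully by a single $\sigma$-linear pair on $\breve{F}$ with \emph{no} residual normalization constraint on iterates of $\rho$ (which at each finite level would force $\rho^n$ to act by a specified uniformizer). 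This constraint dissolves precisely because $\Kt_F$ is defined as the limit of the gerbes $\Kt_{K/F}$ over all finite Galois $K/F$: enlarging the unramified extension allows any prescribed scalar to be absorbed into a change of trivializing cochain. Once this object-level matching is established, the fact that both assignments are mutually inverse, exact, $F$-linear, and compatible with tensor products and internal Hom follows formally, yielding the required equivalence of Tannakian categories.
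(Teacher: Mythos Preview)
Your overall strategy --- reduce to the unramified tower via local class field theory, then match with isocrystals --- is the same as the paper's. However, two steps are not adequately justified.

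First, your inverse construction contains a genuine error. You write that a pair $(V,\rho)$ over $\breve{F}$ ``descends along the faithfully flat inclusion $F^{\mathrm{un}} \hookrightarrow \breve{F}$.'' This is not a valid descent argument: faithfully flat descent of modules requires descent data (an isomorphism over $\breve{F}\otimes_{F^{\mathrm{un}}}\breve{F}$ satisfying the cocycle condition), and a single $\sigma$-linear automorphism $\rho$ does not supply such data. The assertion that every isocrystal over $\breve{F}$ admits a model over some finite unramified extension $F_n$ is essentially the Dieudonn\'e--Manin classification, which the paper invokes explicitly. Without it, essential surjectivity of your functor is unproven, and your ``dissolving constraint'' remark is circular: you need to know the slopes are rational and that $(V,\rho)$ can be realised at a finite level before the constraint can be absorbed.

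Second, you assert that full faithfulness and mutual inverseness ``follow formally.'' They do not. At each finite unramified level $K=F_n$, the functor $(V,\Phi)\mapsto(V\otimes_K\breve{F},\Phi\otimes\sigma)$ enlarges the coefficient field, and one must check that no new morphisms appear after base change. The paper does this in Section~\ref{FunctorToIsoc} by decomposing each object into simple pieces (Proposition~\ref{Decomposition}) and then comparing the endomorphism algebras on both sides as central division algebras over $F$, identifying each with the same cyclic algebra of degree $r=n/(m,n)$. Your sketch supplies no alternative argument for this comparison.
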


The case of archimedean local fields is treated in Section \ref{SecArchCases}, where we prove the following:
\begin{theorem}[Theorem \ref{KtRealRep}]
The category of representations of the real Kottwitz gerbe $\Rep(\Kt_\RR)$ is equivalent to the category of finite dimensional $\zz$-graded complex vector spaces with semi-linear automorphism $\alpha$, such that $\alpha^2$ acts on $V^m$ by $(-1)^m$.
\end{theorem}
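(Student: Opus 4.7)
My plan is to reduce to the finite-level gerbe $\Kt_{\mathbb{C}/\mathbb{R}}$, describe this extension very explicitly, and then translate its defining relations through the explicit description of $\Rep(\Kt_F)$ from Section \ref{ExplDefRep}. Since $\mathbb{C}/\mathbb{R}$ is the only nontrivial finite Galois extension of $\mathbb{R}$, we have $\Kt_\RR = \Kt_{\mathbb{C}/\mathbb{R}}$. The canonical class $\alpha_{\mathbb{C}/\mathbb{R}}$ generates $\Br(\mathbb{C}/\mathbb{R}) \simeq \zz/2$ and is represented by the $2$-cocycle whose only nontrivial value is $-1 \in \mathbb{R}^\times \setminus N_{\mathbb{C}/\mathbb{R}}(\mathbb{C}^\times)$. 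Writing $\sigma$ for complex conjugation and $\tilde\sigma$ for a chosen lift to $\Kt_{\mathbb{C}/\mathbb{R}}$, this yields the presentation
\begin{equation*}
\tilde\sigma\, z\, \tilde\sigma^{-1} = \bar z \quad (z\in\mathbb{C}^\times), \qquad \tilde\sigma^{2} = -1.
\end{equation*}

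Next, I would pull representations through the explicit description of Section \ref{ExplDefRep}. A representation of $\Kt_\RR$ becomes a finite-dimensional $\mathbb{C}$-vector space $V$ equipped with an algebraic action of $\mathbb{C}^\times = \mathbb{D}_{\mathbb{C}/\mathbb{R}}(\mathbb{C})$ and a compatible action of $\tilde\sigma$. The algebraic action of $\mathbb{C}^\times$ decomposes $V = \bigoplus_{m\in\zz} V^m$ into weight spaces on which $z$ acts as $z^m$, while $\tilde\sigma$, projecting to complex conjugation in the Galois group, acts on $V$ as a $\mathbb{C}$-antilinear map $\alpha$. Applying the commutation relation $\tilde\sigma z = \bar z\,\tilde\sigma$ to a weight vector $v \in V^m$ and using semi-linearity shows that $\alpha(v)$ again lies in $V^m$, so $\alpha$ preserves the grading. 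Applying $\tilde\sigma^{2}=-1$ to $v \in V^m$ gives $\alpha^{2}(v) = (-1)\cdot v = (-1)^{m}v$, since $-1 \in \mathbb{C}^\times \subset \Kt_\RR$ acts on $V^m$ as $(-1)^m$. This recovers precisely the data of the theorem.

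Conversely, any $\zz$-graded $\mathbb{C}$-vector space $V = \bigoplus V^m$ with a grading-preserving semi-linear automorphism $\alpha$ satisfying $\alpha^{2}|_{V^m} = (-1)^m$ defines a representation by letting $z \in \mathbb{C}^\times$ act as $z^m$ on $V^m$ and $\tilde\sigma$ act as $\alpha$; the gerbe relations are then automatic by running the previous computation in reverse. Morphisms match up in the evident way, yielding a fully faithful and essentially surjective functor. The main subtlety, rather than a genuine obstacle, is the careful bookkeeping needed to distinguish the two roles of $\mathbb{C}$ (as scalar field of $V$ and as $\mathbb{D}_{\mathbb{C}/\mathbb{R}}(\mathbb{C}) \subset \Kt_{\mathbb{C}/\mathbb{R}}$) and to see that the semi-linearity of $\alpha$ couples to the grading exactly as dictated by the commutation relation.
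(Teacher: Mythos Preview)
Your proposal is correct and follows essentially the same approach as the paper: both arguments identify $\Kt_\RR$ with the explicit extension determined by $\tilde\sigma^2=-1$, extract the $\zz$-grading from the algebraic $\mathbb{C}^\times$-action, read off the semi-linear automorphism $\alpha$ from the lift of complex conjugation, and deduce $\alpha^2|_{V^m}=(-1)^m$ from $-1\in\mathbb{C}^\times$ acting by the character $(-1)^m$. Your write-up is in fact slightly more thorough than the paper's, since you also verify that $\alpha$ preserves the grading via the commutation relation $\tilde\sigma z=\bar z\,\tilde\sigma$, a point the paper leaves implicit.
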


In Section \ref{FibreFunctorsExist} we get the case of global fields off the ground by showing that $\Rep(\Kt_F)$ has a fibre functor in any extension $\widetilde{F}/F$ of cohomological dimension $\leq1$. In particular, we obtain the following result.
 \begin{theorem}[Theorem \ref{FibreFunctors}]
 \begin{itemize}
\item $\Rep(\Kt_{\mathbb{Q}})$ has a fiber functor in finite-dimensional vector spaces over $\mathbb{Q}^{\cycl}$, where $\mathbb{Q}^{\cycl}$ is the extension of $\mathbb{Q}$ obtained by adjoining all roots of unity.
 \item If $F$ is the function field of an algebraic curve over $\mathbb{F}_q$, then $\Rep(\Kt_F)$ has a fiber functor in finite-dimensional vector spaces over $\breve{F} = F\otimes_{\mathbb{F}_q}\mathbb{F}$.
 \end{itemize}
 \end{theorem}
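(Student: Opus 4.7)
The theorem is a direct specialization of the general criterion stated just above it, namely that $\Rep(\Kt_F)$ admits a fiber functor in $\Vect_{\widetilde F}$ whenever $\widetilde F/F$ has cohomological dimension at most one. So my plan is to produce both $\mathbb{Q}^{\cycl}/\mathbb{Q}$ and $\breve F/F$ as extensions with $\operatorname{cd}\leq 1$, and invoke this general criterion; the underlying mechanism is the Tannakian dictionary, under which a fiber functor over $\widetilde F$ corresponds to a trivialization of the base-changed gerbe $\Kt_F\times_F \widetilde F$, existence of which in turn amounts to the vanishing of a class in $H^2(\widetilde F,\mathbb D_F)$.

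The function-field case is essentially immediate. The field $\breve F = F\otimes_{\F_q}\F$ is the function field of the base-changed curve $C_{\F}$ over the algebraically closed field $\F$, hence has transcendence degree one over $\F$. By Tsen's theorem it is a $C_1$-field, and Serre's criterion then gives $\operatorname{cd}(\breve F)\leq 1$. Applying the general result produces the required fiber functor over $\breve F$.

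For the number-field case I would verify $\operatorname{cd}(\mathbb{Q}^{\cycl})\leq 1$ via Serre's criterion, which reduces the statement to showing $\Br(L) = 0$ for every finite extension $L$ of $\mathbb{Q}^{\cycl}$. Any such $L$ is algebraic over $\mathbb{Q}$, so a Brauer class $\alpha\in \Br(L)$ is represented by a finite-dimensional central simple $L$-algebra whose (finitely many) structure constants lie in some number field $L_0\subset L$. Thus $\alpha$ descends to $\alpha_0\in \Br(L_0)$. By Albert-Brauer-Hasse-Noether combined with the local fact that Brauer classes of non-archimedean local fields are split by unramified (hence cyclotomic) extensions, and that $\Br(\mathbb{R})$ is split by $\mathbb{C} = \mathbb{R}(i)$, the class $\alpha_0$ dies in $\Br(L_0(\mu_m))$ for some $m\geq 1$; since $L \supset \mathbb{Q}^{\cycl} \supset \mathbb{Q}(\mu_m)$, this forces $\alpha = \alpha_0|_L = 0$.

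The main obstacle does not lie in these two verifications but in the general criterion on which they rest: one must promote the classical vanishing $H^2(\widetilde F,T)=0$ for an individual torus $T$ over a field of cohomological dimension at most one to the pro-torus $\mathbb D_F = \varprojlim_K \mathbb D_{K/F}$ banding $\Kt_F$. This entails controlling the $\varprojlim^{1}$ of the intermediate groups $H^1(\widetilde F,\mathbb D_{K/F})$ along the tower of finite Galois extensions, typically by Mittag-Leffler arguments (finiteness or surjectivity of transition maps in the inverse system). All of that work is packaged into the general theorem invoked at the outset, after which the two specific fiber functors above follow in a line.
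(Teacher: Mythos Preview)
Your reduction is exactly what the paper does: Theorem \ref{FibreFunctors} is the specialization of the general criterion proved in Section \ref{FibreFunctorsExist} (a fiber functor exists over any $\widetilde F/F$ of cohomological dimension $\leq 1$) to the two fields in question. For $\breve F$ the paper likewise invokes Tsen; for $\mathbb{Q}^{\cycl}$ the paper simply quotes Serre's criterion (a totally imaginary number field with infinite local degree at every finite place has $\operatorname{cd}\leq 1$), which is quicker than your direct Brauer-group computation but amounts to the same thing.

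One correction to your closing sketch of what the general criterion actually costs: the paper does \emph{not} proceed by establishing Mittag--Leffler for the system $H^1(\widetilde F,\mathbb D_{K/F})$. It instead passes through the exact sequence $0\to\Gm\to\mathbb T_{K/F}\to\mathbb D_{K/F}\to 0$ of tori, shows (using Hilbert 90 and $\operatorname{cd}(\widetilde F)\leq 1$) that $H^i(\widetilde F,\varprojlim\Gm)=0$ for $i=2,3$, and thereby identifies $H^2(\widetilde F,\mathbb D_F)$ with $H^2(\widetilde F,\mathbb T_F)$. The local factors $\mathbb T^v_{K/F}$ are Weil restrictions $\Res_{K^w/F}\Gm$, so Shapiro's lemma plus Hilbert 90 gives $H^1(\widetilde F,\mathbb T^v_{K/F})=0$ outright, and $H^2=0$ by the cohomological-dimension hypothesis. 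Thus the $\varprojlim^1$ obstruction you flag is dispatched by term-by-term vanishing rather than by surjectivity or finiteness of transition maps; your sketch would not obviously go through as stated, since $H^1(\widetilde F,\mathbb D_{K/F})$ need not be finite and the transition maps are not visibly surjective.
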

\noindent Roughly speaking, the first statement says that $\Rep(\Kt_{\mathbb{Q}})$ can be described as the category of representations of the protorus $\mathbb{D}_{\mathbb{Q}}$ over $\mathbb{Q}^{\cycl}$ with extra data corresponding to descent from $\mathbb{Q}^{\cycl}$ to $\mathbb{Q}$. Unfortunately, this descent data cannot be made sufficiently explicit at the moment, due to its highly abstract nature.

The second part of Theorem \ref{FibreFunctors} leads us to the work of Drinfeld on shtukas and Langlands correspondence for curves over a finite field. Namely, in \cite{Dr} Drinfeld introduced the category of isoshtukas that is defined as the category of pairs $(V,\varphi)$, where $V$ is a finite-dimensional vector space over $F\otimes_{\mathbb{F}_q}\mathbb{F}$, and $\varphi$ is a $\Frob$-linear automorphism of $V$, where $\Frob$ is a toplogical generator of the Galois group of $\mathbb{F}$ over $\mathbb{F}_q$.

In Section \ref{LocalGlobalPrinSec}, we obtain one of the main technical ingredients needed for the comparison of $\Rep(\Kt_{F})$ and the category of isoshtukas. Namely, in Theorem \ref{LocalGlobalTheorem} we establish a local-global principle for Tannakian categories over $F$ banded by $\mathbb{D}_F$, where $\mathbb{D}_F$ is the pro-torus with the module of characters $X_F = \varinjlim X_{K/F}$.

The classification of isoshtukas obtained by Drinfeld combined with local Tate's duality and the local-global principle  give us the following result:
\begin{theorem}[Theorem \ref{FunctionFields}]
Let $F$ be a function field of a smooth algebraic curve over a finite field. Then the category of representations of the Kottwitz gerbe $\Rep(\Kt_F)$ is equivalent to the category of Drinfeld isoshutkas.
\end{theorem}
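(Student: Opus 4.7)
My strategy is to realize both $\Rep(\Kt_F)$ and the category $\mathcal{I}(F)$ of Drinfeld isoshtukas as Tannakian categories over $F$ banded by the pro-torus $\mathbb{D}_F$, then apply the local-global principle of Theorem \ref{LocalGlobalTheorem} to reduce the comparison to a place-by-place check. First I would show that $\mathcal{I}(F)$ is a Tannakian category over $F$ with band $\mathbb{D}_F$: the forgetful functor $(V,\varphi)\mapsto V$ provides a fiber functor into finite-dimensional $\breve{F}$-vector spaces, while Drinfeld's classification yields a slope decomposition of every isoshtuka into isotypic components, whose automorphism groups naturally assemble into $\mathbb{D}_F$. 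On the $\Rep(\Kt_F)$ side, Theorem \ref{FibreFunctors} already provides a fiber functor in $\breve{F}$-modules, and the band is $\mathbb{D}_F$ by construction of the Kottwitz gerbe.

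The next step is to compare the two gerbes locally at each place $v$ of $F$. Theorem \ref{TheoremNonArch} identifies $\Rep(\Kt_{F_v})$ with the category of $F_v$-isocrystals. On the isoshtuka side, base change along $F\otimes_{\mathbb{F}_q}\mathbb{F}\to \breve{F}_v$, using that $F_v$ has equal characteristic and $\breve{F}_v$ is the completion of its maximal unramified extension, again yields the category of $F_v$-isocrystals. Thus the two local gerbes are abstractly equivalent as Tannakian categories; the remaining point is that they represent the same class in $H^2_{\sm}(F_v,\mathbb{D}_{F_v})$, which follows from local Tate--Nakayama duality together with the explicit computation of local Brauer invariants on both sides, both being pinned down by the local canonical class $\alpha_{F_v}$.

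Finally, I would invoke Theorem \ref{LocalGlobalTheorem}: once the localizations agree as gerbes with matching canonical classes at every place, the two global gerbes over $F$ banded by $\mathbb{D}_F$ coincide, and Tannakian duality translates this into the desired $\otimes$-equivalence $\Rep(\Kt_F)\simeq \mathcal{I}(F)$. The main obstacle, as I see it, is not any individual local comparison but the verification that the local identifications really descend from a single global equivalence of bands; concretely, one must check that the classes of $\Kt_F$ and of the isoshtuka gerbe differ by a global coboundary rather than merely by compatible local ones. This is precisely the content assigned to Theorem \ref{LocalGlobalTheorem}, which controls the kernel of the localization map on $H^2(F,\mathbb{D}_F)$, and the matching of global canonical classes is forced by the compatibility of the Tate--Nakayama pairing with localization.
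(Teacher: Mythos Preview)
Your proposal is correct and follows essentially the same strategy as the paper: identify the band of the isoshtuka category with $\mathbb{D}_F$ via Drinfeld's classification, invoke the local-global principle (Theorem \ref{LocalGlobalTheorem}) to reduce to a place-by-place comparison, and settle the local comparison via Tate duality and the explicit Brauer invariants coming from isocrystals. One point of precision worth tightening: the injectivity of Theorem \ref{LocalGlobalTheorem} concerns $H^2(F,\mathbb{D}_F)\hookrightarrow \varprojlim H^2(\mathbb{A},\mathbb{D}_{K,S})$, so the local comparison must be carried out in $H^2(F_v,\mathbb{D}_F\times_F F_v)$ rather than in $H^2(F_v,\mathbb{D}_{F_v})$; the paper makes this passage explicit by rewriting the local classes, via Proposition \ref{TannakaClassificationLocal}, as homomorphisms $X^\ast(\mathbb{D}_F)^{\Gamma_v}\to\Br(F_v)$ and then matching the formula for the Kottwitz localization map $\mu_w^\prime$ against Drinfeld's local slope formula (Theorem \ref{phiSpacesLocal}).
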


As a consequence, we obtain the following description of $B(F,G)$ for global function fields:
\begin{theorem}[Theorem
\ref{BFG}]
\label{BFGIntro}
Let $F$ be a global function field and let $G$ be a linear algebraic group over $F$, then $B(F,G)$ is isomorphic to the set of isomorphism classes of isoshtukas with $G$-structure.
\end{theorem}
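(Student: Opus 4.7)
The plan is to deduce this as an essentially formal consequence of the two preceding results. By Proposition \ref{RepsGStruct}, the pointed set $B(F,G)$ is in bijection with the set of $\otimes$-isomorphism classes of exact $\otimes$-functors $\Rep(G) \to \Rep(\Kt_F)$. By Theorem \ref{FunctionFields}, there is an $F$-linear equivalence of Tannakian categories between $\Rep(\Kt_F)$ and the category of Drinfeld isoshtukas over $F$. Composing an arbitrary exact $\otimes$-functor $\Rep(G) \to \Rep(\Kt_F)$ with this equivalence yields an exact $\otimes$-functor from $\Rep(G)$ to the category of isoshtukas, and this composition is a bijection on $\otimes$-isomorphism classes.

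To finish, I would, in direct analogy with Kottwitz's definition of isocrystals with $G$-structure used in Theorem \ref{Kot97Th}, define an \emph{isoshtuka with $G$-structure} to be an exact $F$-linear $\otimes$-functor from $\Rep(G)$ to the category of Drinfeld isoshtukas. With this definition in place, chaining the two bijections above produces the desired identification of $B(F,G)$ with the set of isomorphism classes of isoshtukas with $G$-structure.

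The only delicate point in this plan is ensuring that the equivalence of Theorem \ref{FunctionFields} is genuinely an equivalence of Tannakian (and not just plain $F$-linear) categories, so that it indeed induces a bijection on $\otimes$-isomorphism classes of $\otimes$-functors out of $\Rep(G)$. This compatibility should however be part of how Theorem \ref{FunctionFields} is proved: both categories are neutralized over $\breve{F}$ via the fiber functor produced in Theorem \ref{FibreFunctors}, and the equivalence is obtained by identifying the corresponding affine gerbes (equivalently, their bands and twisting classes) using the local-global principle of Theorem \ref{LocalGlobalTheorem} together with Drinfeld's classification of isoshtukas. Once this is granted, no further work is needed beyond the definitional step above.
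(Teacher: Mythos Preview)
Your proposal is correct and matches the paper's own argument essentially verbatim: the paper defines an isoshtuka with $G$-structure as an exact tensor functor $\Rep(G)\to\{\text{isoshtukas}\}$, invokes the equivalence of Theorem~\ref{FunctionFields} with $\Rep(\Kt_F)$, and then applies Proposition~\ref{RepsGStruct}. Your remark that the equivalence of Theorem~\ref{FunctionFields} must be one of Tannakian categories is well taken and is indeed how that theorem is established in the paper.
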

Note that $G$-isoshtukas for connected reductive groups $G$ were recently studied by Hamacher and Kim \cite{HamKim} from a somewhat different perspective.  Namely, they defined isoshtukas as $G$-torsors $V$ over $F\otimes_{\mathbb{F}_q}\mathbb{F}$ together with an isomorphism $\Frob^\ast V \simeq V$ and classified them very explicitly \cite[Theorem 1.3]{HamKim}. Since $G$ is connected reductive, then $H^1(F\otimes_{\mathbb{F}_q}\mathbb{F}, G) = 0$ by Steinberg theorem (see \cite{St}). Consequently, any two fiber functors on $\Rep(G)$ in $\Vect_{F\otimes_{\mathbb{F}_q}\mathbb{F}}$ are isomorphic and the set of isomorphism classes in $\Hom^\otimes(\Rep(G),\Rep(\Kt_F))$ can be computed after fixing fibre functors. A standard Tannakian argument (see \cite[Chapter IX]{DOR}) shows that the set of isomorphism classes of $G$-isoshtukas of Hamacher and Kim is given by $B(F,G)$.

As it is often the case for seemingly analogous results for function fields and number fields, the study of the Kottwitz gerbes and their representations over number fields is admittedly more complicated.  One can study $\Rep(\Kt_{\mathbb{Q}})$ via its subcategories given by the categories of representations of algebraic quotients of the Kottwitz gerbe $\Kt_{\mathbb{Q}}$. Recall that a gerbe is called algebraic if it is locally equivalent to a stack of torsors for an algebraic group.

In Section \ref{MoreonFiber}, we prove the following refinement of the results on the existence of fibre functors.
Let $\Kt_\mathbb{Q}^\prime$ be an algebraic quotient $\Kt_{K,S}$ of $\Kt_{\mathbb{Q}}$, where $K/\mathbb{Q}$ is a finite Galois extension and $S$ is a finite set of places of $K$ satisfying Kaletha-Kottwitz-Tate conditions (see Section \ref{TateKalethaKottwitzCond}).
\begin{theorem}[Theorem \ref{FineFiberFunctors}]
For any algebraic quotient $\Kt_{\mathbb{Q}}^\prime$ of $\Kt_{\mathbb{Q}}$ as above, there exists infinitely many rational primes $q$, such that $\Rep(\Kt_{\mathbb{Q}}^\prime)$ has a fiber functor in finite-dimensional vector spaces over the $q$th cyclotomic field $\mathbb{Q}(\zeta_q)$.
\end{theorem}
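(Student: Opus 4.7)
The plan is to translate the existence of a fiber functor into the vanishing of a $2$-cohomology class and then locate the desired primes by a combination of class field theory and Dirichlet's theorem on primes in arithmetic progressions. By the Tannakian formalism of Saavedra and Deligne, $\Rep(\Kt_{\mathbb{Q}}^\prime)$ admits a fiber functor with values in $\Vect_{\mathbb{Q}(\zeta_q)}$ if and only if the algebraic gerbe $\Kt_{K,S}$ becomes neutral after base change along $\mathbb{Q}\hookrightarrow\mathbb{Q}(\zeta_q)$. Since $\Kt_{K,S}$ is banded by an algebraic torus $T$ (a finite quotient of the pro-torus $\mathbb{D}_\mathbb{Q}$ determined by the pair $(K,S)$), neutrality is equivalent to the vanishing of the restricted class $[\Kt_{K,S}]|_{\mathbb{Q}(\zeta_q)}\in H^2(\mathbb{Q}(\zeta_q), T)$.

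Next, I would invoke the local-global principle of Theorem \ref{LocalGlobalTheorem}, specialized from the pro-torus $\mathbb{D}_\mathbb{Q}$ to the finite-level quotient $T$, to reduce the vanishing of $[\Kt_{K,S}]|_{\mathbb{Q}(\zeta_q)}$ to local vanishing conditions at the places in $S$. The Kaletha-Kottwitz-Tate conditions on $S$ ensure that the localization $[\Kt_{K,S}]_v$ is already trivial for $v\notin S$, so only finitely many constraints remain. Each remaining local class $[\Kt_{K,S}]_v\in H^2(\mathbb{Q}_v, T)$ has finite order $n_v$ that can be read off from the character lattice of $T$ and the canonical local class $\alpha_{K/\mathbb{Q},v}$ via local Tate-Nakayama duality.

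For every non-archimedean $v\in S$ with residue characteristic $p_v$, and for any prime $q\neq p_v$, the completion $\mathbb{Q}(\zeta_q)_w$ at a place $w\mid v$ is the unramified extension of $\mathbb{Q}_v$ whose degree equals the multiplicative order of $p_v$ in $(\mathbb{Z}/q\mathbb{Z})^\times$, and Tate-Nakayama then forces $[\Kt_{K,S}]_v$ to die in $\mathbb{Q}(\zeta_q)_w$ as soon as $n_v$ divides this order. Letting $M$ be the least common multiple of the $n_v$ as $v$ ranges over the finite places of $S$ and imposing $q\equiv 1\pmod{M}$ with $q$ distinct from the residue characteristics of $S$, the multiplicative orders align correctly at every finite $v\in S$, and Dirichlet's theorem supplies infinitely many such primes $q$. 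The archimedean contribution is $2$-torsion, killed by the fact that $\mathbb{Q}(\zeta_q)$ is totally complex for every $q\geq 3$.

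The main obstacle I anticipate is the second step: explicitly extracting the orders $n_v$ from the Kaletha-Kottwitz-Tate description of $T$ and $\alpha_{K/\mathbb{Q},v}$, and verifying that the local-global principle of Theorem \ref{LocalGlobalTheorem} genuinely descends to the finite-level algebraic quotient $\Kt_{K,S}$ (as opposed to its original pro-algebraic formulation). Once these cohomological points are settled, the Dirichlet density argument is routine and yields the claimed infinitude of cyclotomic primes.
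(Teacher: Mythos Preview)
Your overall architecture is right, but the two steps you flag as ``obstacles'' are not merely technical; each hides a genuine gap.

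\textbf{The local-global step.} Theorem \ref{LocalGlobalTheorem} is a statement about the \emph{limit} $H^2(\mathbb{Q},\mathbb{D}_{\mathbb{Q}})$, and its proof rests on the vanishing of $\varprojlim B_{K,S}$ (Lemma \ref{MittagLeff}). At a fixed finite level the localization map $H^2(L,\mathbb{D}_{K,S})\to H^2(\mathbb{A}_L,\mathbb{D}_{K,S})$ has a kernel of order $\gcd_{\dot v\in\dot S}[K^{\dot v}L:L]$, which is typically nonzero. So you cannot simply ``specialize'' the local-global principle to $T=\mathbb{D}_{K,S}$. The paper handles this by first enlarging $(K,S)$ to a pair $(E,S')$ chosen so that anything landing in the kernel of $\loc_L$ for $\mathbb{D}_{E,S'}$ already dies after projection back to $\mathbb{D}_{K,S}$; this is exactly where the pro-level injectivity is used. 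Without such an enlargement your reduction to purely local conditions is not valid.

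\textbf{The Dirichlet step.} The claim that $q\equiv 1\pmod M$ forces $n_v\mid\ord_q(p_v)$ is false. The congruence $q\equiv 1\pmod M$ only says that $(\mathbb{Z}/q\mathbb{Z})^\times$ has order divisible by $M$; it says nothing about where the specific element $p_v$ sits inside that group. For instance, $p_v$ could be an $M$-th power residue mod $q$, in which case its order divides $(q-1)/M$ and can be coprime to $M$. Controlling $\ord_q(p_v)$ for \emph{all} $p_v$ simultaneously is a nontrivial problem of Artin-primitive-root type. The paper solves it by working in the Kummer tower $\mathbb{Q}(\zeta_{r^2},\sqrt[r]{p_1},\ldots,\sqrt[r]{p_s})$, invoking a uniform bound on Kummer degrees (Theorem \ref{PerSTheorem}) to guarantee that a suitable diagonal element lies in the Galois group, and then applying Chebotarev to place $\Frob_q$ in its conjugacy class. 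Dirichlet alone is not enough; you need Chebotarev in this larger extension together with the Kummer-degree bound to absorb the unavoidable constant $c$.
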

\noindent
The main advantage here is that we are reduced to finite cyclic Galois extensions that are ramified at a unique finite prime, namely at $q$. This makes the descent data on $\Rep(\Kt_{\mathbb{Q}}^\prime)$ more concrete and easier to handle, as compared to the situation of Theorem \ref{FibreFunctors}. However, it raises several tricky questions about explicit computation of cohomology of certain non-split tori over $\mathbb{Q}(\zeta_q)$.

Finally, in Section \ref{ScholzeTate}, we show the following:
\begin{theorem}[Corollary \ref{TateScholze}] Scholze's Conjecture \ref{Scholze} follows from the Tate's conjecture over finite fields.
\end{theorem}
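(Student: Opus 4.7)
The strategy is to construct the required Weil cohomology theory out of the theory of motives. Assuming the Tate conjecture over finite fields (together with the weight sign conjecture, which is of a mild nature once Tate is in hand), the category $\mathcal{M}(\mathbb{F})$ of numerical motives over $\mathbb{F}$ becomes a semisimple $\mathbb{Q}$-linear Tannakian category by work of Jannsen and Milne, and Honda--Tate theory classifies its simple objects in terms of Weil numbers. By Tannakian duality we obtain a motivic gerbe $\mathfrak{M}$ over $\mathbb{Q}$ whose category of representations recovers $\mathcal{M}(\mathbb{F})$, and the universal Weil cohomology functor $X \mapsto h(X)$ on smooth projective varieties over $\mathbb{F}$ takes values in $\Rep(\mathfrak{M})$. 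Semisimplicity, combined with the commutativity of endomorphism algebras of simple motives up to isogeny, ensures that the band of $\mathfrak{M}$ is a pro-torus, paralleling the band $\mathbb{D}_{\mathbb{Q}}$ of $\Kt_{\mathbb{Q}}$.

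The core step is to exhibit a morphism of gerbes $\Kt_{\mathbb{Q}} \to \mathfrak{M}$ over $\mathbb{Q}$, or equivalently a $\otimes$-functor $\Rep(\mathfrak{M}) \to \Rep(\Kt_{\mathbb{Q}})$. One first matches bands on the level of character modules: the Weil-number band of $\mathfrak{M}$ appears as a natural quotient of $\mathbb{D}_{\mathbb{Q}}$, which can be verified directly from the explicit description of $X_{\mathbb{Q}} = \varinjlim X_{K/\mathbb{Q}}$ in terms of places. One then matches classes in $H^2$ locally at every place of $\mathbb{Q}$: at finite $v \neq p$ via local Tate duality and the unramifiedness of $\ell$-adic cohomology of smooth proper varieties at good primes, at $p$ by invoking Theorem \ref{TheoremNonArch} together with the Dieudonn\'e--Manin classification, and at the archimedean place by Theorem \ref{KtRealRep}. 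Combined with the local-global principle established in Section \ref{LocalGlobalPrinSec}, these local identifications pin down the morphism of gerbes uniquely and produce the Weil cohomology $H^i_{\Kt_{\mathbb{Q}}}(X)$ valued in $\Rep(\Kt_{\mathbb{Q}})$.

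The compatibilities of Scholze's conjecture under the restriction maps to local Kottwitz gerbes are then formal consequences of the construction together with the explicit equivalences already established in this paper. The restriction $\Rep(\Kt_{\mathbb{Q}}) \to \Rep(\Kt_{\mathbb{Q}_\ell})$ recovers $\ell$-adic \'etale cohomology via the standard $\ell$-adic realization on motives; the restriction $\Rep(\Kt_{\mathbb{Q}}) \to \Rep(\Kt_{\mathbb{Q}_p})$, combined with the identification of $\Rep(\Kt_{\mathbb{Q}_p})$ with $F$-isocrystals from Theorem \ref{TheoremNonArch}, recovers crystalline cohomology; and the restriction $\Rep(\Kt_{\mathbb{Q}}) \to \Rep(\Kt_{\mathbb{R}})$ together with Theorem \ref{KtRealRep} yields the archimedean piece. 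The main obstacle will be the global identification of the cohomology class of $\mathfrak{M}$ with Kottwitz's canonical class $\alpha_{\mathbb{Q}}$: while the underlying bands compare cleanly on characters, the Kottwitz class is defined arithmetically via Tate--Nakayama whereas the motivic class is defined geometrically via Frobenius eigenvalues, and bridging the two rigorously amounts to reinterpreting Langlands--Rapoport and Milne's analysis of the pseudomotivic groupoid within the Kottwitz-gerbe framework developed in the earlier sections.
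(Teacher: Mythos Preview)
Your approach is essentially the one the paper takes: realize the band of $\Mot(\mathbb{F})$ (the Weil-number protorus $P$) as a quotient of $\mathbb{D}_{\mathbb{Q}}$ via an explicit map on characters, push forward the Kottwitz class along this map, and identify the result with the Langlands--Rapoport--Milne class of $\Mot(\mathbb{F})$ by checking locally at every place. One correction: the local-global principle you invoke from Section~\ref{LocalGlobalPrinSec} is stated for $\mathbb{D}_F$, whereas here you need injectivity of $H^2(\mathbb{Q},P)\to H^2(\mathbb{A},P)$ for the \emph{Weil-number} protorus $P$; this is a separate (and easier) statement, established by Milne and recorded in the paper as Proposition~\ref{MilneCohom}. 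The two injectivity results are proved by similar methods but for different tori, so the citation should point there.
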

\noindent The key ingredient in proving this result is the explicit description of the category of numerical motives $\Mot(\mathbb{F})_{\num}$ obtained by Langlands and Rapoport in \cite{LR} under the assumption of the Tate conjecture (see Conjecture \ref{TateConjecture}), that was refined in subsequent works of Milne \cite{Milne1994} and \cite{MilneGerbes}.

\subsection*{Acknowledgements}
I am deeply grateful to my advisor Peter Scholze for introducing me to this problem, sharing his intuition on Kottwitz gerbes, and for numerous discussions that helped me to learn and understand a lot. I would also like to thank all the members of the Arithmetic Geometry group of the University of Bonn, with whom I had a chance to interact during my Ph.D. In particular, I express my gratitude to Alex Ivanov for his support and several useful conversations and suggestions. Likewise, I wish to thank Mitya Kubrak for various discussions and remarks concerning my results. In addition, I am very thankful to Eugen Hellmann and Eva Viehmann for the opportunities to present my work at seminars at the University of M\"{u}nster and the Technical University of Munich, respectively. Of course, I could not imagine my years in Bonn without Jo\~{a}o Louren\c{c}o, Mafalda Santos, Maxim Smirnov, and Dimitrije Cicmilovi\'{c}. Special thanks go to Tasho Kaletha for his constant support. Lastly, I wish to thank the University of Bonn and the Max Planck Institute for Mathematics for providing great platforms for my Ph.D. studies.

\section{Gerbes: from abstract definitions to group extensions}
\label{GeneralitiesOnGerbes}
A gerbe $\mathcal{E}$ over a site $S$ is defined as a stack fibred in groupoids that satisfies the following extra conditions:
\begin{itemize}
\item There exists a cover $U \in \Ob(S)$ such that the fibre groupoid $\mathcal{E}_U$ is not empty.
\item For all $U\in\Ob(S)$, any two objects  $x, y \in \mathcal{E}_U$ are locally isomorphic, i.e. there exists a cover $U^\prime \xrightarrow{p} U$ such that $p^\ast x \simeq p^\ast y$.
\end{itemize}

In this work we consider fppf, fpqc and smooth sites over a field $F$. Gerbes form a $2$-subcategory of the $2$-category of stacks.

We refer the reader to \cite{Giraud} for generalities on non-abelian cohomology and gerbes.

\subsection{Local description of gerbes}
\label{LocDescrG}
We recall several results on local description of gerbes (see Section 4 in \cite{Breen}). Firstly, we sketch the main steps in describing a gerbe via local data consisting of a concrete bitorsor satisfying several compatibility conditions, we proceed further by describing $1$- and $2$-morphisms of gerbes via local data.

Let $\mathcal{E}$ be a gerbe over $S$. Choosing a cartesian section $x$ of $\mathcal{E}$ over some $U\in\Ob(S)$, we obtain an equivalence
\begin{equation}
\label{EquivAbove}
\Phi : \mathcal{E}\times U \to \TORS(U, G)
\end{equation}
that sends an object $y$ of the fiber groupoid $\mathcal{E}_V$ over $V\xrightarrow{p}U$ to the torsor $\Isom(p^\ast x, y)$. In this situation, we say that $G=\Aut(x)$ is the band of $\mathcal{E}$, or that $\mathcal{E}$ is banded by $G$. When $G$ is a smooth algebraic group scheme, $\mathcal{E}$ is called a smooth algebraic gerbe.

Throughout the rest of the section, we assume that all gerbes are banded by smooth bands, since it is always the case in our applications.

After pulling back the equivalence (\ref{EquivAbove}) to $U\times U$, we get an equivalence of stacks over $U^2$
\begin{equation}
\label{DescentGerbe}
\varphi: p_1^\ast \Phi \circ (p_2^\ast\Phi)^{-1} : \TORS(U^2, p_1^\ast G) \to \TORS(U^2, p_2^\ast G),
\end{equation}
which restricts to the identity on the diagonal.

Using the definition of $\Phi$ and the fact that the category of equivalences between $\TORS(U\times U, p_1^\ast G)$ and $\TORS(U\times U, p_2^\ast G)$ is equivalent to the category of $(p_2^\ast G, p_1^\ast G)$-bitorsors (see Chap.~IV, Prop.\,5.2.5 in \cite{Giraud}), we may pass to the   $(p_2^\ast G, p_1^\ast G)$-bitorsor
\begin{equation}
\label{BiTors}
E = \Isom(p_1^\ast x, p_2^\ast x)
\end{equation}
over $U^2$, whose set of sections over $V\xrightarrow{(f,g)} U^2$ is given by the set $\Isom(f^\ast x, g^\ast x)$ of arrows in the groupoid $\mathcal{E}_V$, moreover, $E$ restricts to $G = \Aut(x)$ on the diagonal $\Delta:U \to U^2$.

Note that the assumption that $G$ is smooth implies that $E$ is representable by a smooth scheme over $U^2$. Additionally, since $E$ is a gerbe, the projection map $E \to U^2$ is an epimorphism. Combining the latter with the monomorphism $\Delta^\ast(E)\to E$ we obtain the following sequence of morphisms
\begin{equation}
\label{GroupoidsSeq}
G =\Aut(x) \to E = \Isom(p_1^\ast(x),p_2^\ast(x))\to U^2.
\end{equation}

In order to define the descent data on $\TORS(U,G)$, the equivalence (\ref{DescentGerbe}) and therefore the bitorsor $E$ must satisfy several compatibility conditions after pulling back to $U^3$ and $U^4$; namely, the condition over $U^3$ translates into an isomorphism of $(p^\ast_1 G, p^\ast_3 G)$-bitorsors
\begin{equation}
\label{CocyclePsi}
\psi: p^\ast_{12} E \wedge p^\ast_{23} E \MapsTo p^\ast_{13}E
\end{equation}
given by associating an element of $\Isom(p^\ast_3(x), p^\ast_1(x))$ to the composition of $f \in \Isom(p^\ast_2 x, p^\ast_1 x)$ and $g \in \Isom(p^\ast_3 x, p^\ast_2 x)$. The condition over $U^4$ boils down to the equality of the two possible induced maps
\begin{equation}
\label{Associativity}
p^\ast_{12} E \wedge p^\ast_{23} \wedge p^\ast_{34} E \to p^\ast_{14}E.
\end{equation}

\subsection{Local description of morphisms between gerbes}
\label{LocDescrMms}
Given a gerbe $\mathcal{E}^\prime$ with a section $y\in \mathcal{E}^\prime_U$ such that $\Aut(y) = H$, where $H$ is a smooth algebraic group over $U$, we let $\varphi^\prime$ and  $\psi^\prime$ denote the equivalence analogous to (\ref{DescentGerbe}) and the cocycle isomorphism (\ref{CocyclePsi}) attached to $\mathcal{E}^\prime$ and $y$, respectively. One describes a morphism $\mathcal{E} \to \mathcal{E}^\prime$ locally as follows. Note that giving a morphism $f: \mathcal{E} \to \mathcal{E}^\prime$ is equivalent to giving a morphism $f_U : \TORS(U, G) \to \TORS(U, H)$ and an isomorphism
\begin{equation}
\label{MorphDiagr}
\alpha: p^\ast_2 f_U \circ \varphi \MapsTo \varphi^\prime \circ p^\ast_1 f_U
\end{equation}
that is compatible with the cocycle isomorphisms $\psi$ and $\psi^\prime$ over $U^3$.

Let $E^\prime= \Isom(p^\ast_1(y), p^\ast_2(y))$ be the $(p^\ast_1 H,p^\ast_2 H)$-bitorsor attached to $\mathcal{E}^\prime$ and a trivializing object $y\in\mathcal{E}^\prime_U$. Assuming for simplicity that $f_U(x) \simeq y$ in $\mathcal{E}^\prime_U$, we obtain a morphism of bitorsors $\alpha\circ p_2^\ast f_U: E \to E^\prime$ over $U^2$ compatible with the cocycle isomorphisms and whose restriction to the diagonal defines a morphism $\nu: G\to H$ of smooth algebraic groups  over $U$.

\subsection{Specializing to the case of Galois covers}
\label{PassingToGalois}
Let $U=\Spec(K)$, where $K/F$ is a Galois extension. In this case, we have the isomorphism $\Spec(K)\times_{\Spec(F)}\Spec(K) = \Gal(K/F)\times \Spec(K)$ dual to the isomorphism $K\otimes_{F} K \to \prod_{\Gal(K/F)} K$ given by sending $a\otimes b$ to $(a\sigma(b))_{\sigma\in\Gal{K/F}}$. Using this isomorphism and taking $U$-points of schemes in (\ref{GroupoidsSeq}), we obtain the short exact sequence of groups
\begin{equation}
\label{ExSeqOfPoints}
1 \to G(K) \to E(K) \to \Gal(K/F) \to 1.
\end{equation}
For brevity, extensions of the form (\ref{ExSeqOfPoints}) together with a sheaf of groups $G$ over $\Spec(U)$ are called $K/F$-Galois gerbes.

The group structure on $E(K)$ is given by the isomorphism $\psi$ in (\ref{CocyclePsi}). More explicitly, we identify $U^3$ with $U\times\Gal(K/F)\times\Gal(K/F)$, so that $p^\ast_1(x) = x$, $p^\ast_2(x) = \sigma^\ast x$, and $p^\ast_3(x) = (\sigma\tau)^\ast x$, where $\sigma, \tau\in \Gal(K/F)^2$. We can rewrite the isomorphism $\psi$ as the family of isomorphisms
\begin{equation}
\psi_{\sigma,\tau}: \sigma^\ast\Isom(\tau^\ast x, x)\wedge \Isom(\sigma^\ast x,x) \to \Isom((\sigma\tau)^\ast x,x)
\end{equation}
given by assigning to a pair of sections $(v,u) \in \Isom(\sigma^\ast x,x)(U)\times \Isom(\tau^\ast x,x)(U)$ their composition $u\sigma^\ast(v) \in \Isom((\sigma\tau)^\ast(x),x)(U)$. The collection of isomorphisms $(\psi_{\sigma,\tau})_{(\sigma,\tau)\in \Gal(K/F)}$  defines the multiplication on $E(K)$, its associativity is the consequence of the equality of two induced maps in (\ref{Associativity}). We also see that $G(K)$ is identified with the fiber of $E(K)\to \Gal(K/F)$ over the identity.

Similarly, under the assumptions of Section \ref{LocDescrMms}, a morphism of gerbes $f: \mathcal{E} \to \mathcal{E}^\prime$ induces a homomorphism of the corresponding $K/F$-Galois gerbes
\begin{equation}
\begin{tikzcd}
1 \arrow{r} & G(K) \arrow{r} \arrow{d}{\widetilde{\nu}} & E(K) \arrow{r}\arrow{d}{\widetilde{f}} & \Gal(K/F) \arrow{r} \arrow[equal]{d}& 1\\
1 \arrow{r} & H(K) \arrow{r}          & E^\prime(K) \arrow{r}   & \Gal(K/F) \arrow{r}& 1
\end{tikzcd}
\end{equation}
where $\widetilde{f}$ and $\widetilde{\nu}$ denote the restrictions to $U$-points of morphisms $\alpha\circ p_2^\ast f_U$ and $\nu$, respectively. We will see further that in some cases $f$ can be recovered up to isomorphism from such data.

\section{Bands}
In this section we recollect briefly the main definitions and constructions leading to the definition of a banded gerbe.
\subsection{Stack of bands}
Let $S$ be a site. We let $\ShGroups(S)$ denote the cloven category of sheaves of groups over $S$ and let $\ShGroupsU(S)$ denote the category of cartesian sections of $\ShGroups(S)$. For any two sheaves of groups $X,Y$ we define the sheaf of external morphisms
\begin{equation}
\Hex(X,Y) = \Hom_{\ShGroups}(X,Y)/\Int(Y)
\end{equation}
and the sheaf of external isomorphisms $\Isex(X,Y)$ as the image of $\Isom(X,Y)$ in $\Hex(X,Y)$.

Firstly, we define a prestack of bands over $S$. For any $U\in \Ob(S)$ we define $\preBand(S)_U$ as follows
\begin{equation}
\Ob(\preBand(S)_U) = \Ob(\ShGroups(S)_U)
\end{equation}
and for any two objects in $\preBand(S)_U$ we set
\begin{equation}
\Hom_{\preBand}(X,Y) = H^0(S_{/ U},\Hex(X,Y))
\end{equation}
moreover, the invertible morphisms $\Isom_{\preBand}(X,Y)$ are given by the sections of $\Isex(X,Y)$ over $S_{/U}$. Since $\Hex(X,Y)$ commutes with restriction morphisms $V\to U$ we have a cloven category $\preBand(S)$, and a functor between cloven categories
\begin{equation}
\label{PreBand}
\ShGroups(S) \to \preBand(S).
\end{equation}

The latter category is a prestack that admits a natural functor
\begin{equation}
\label{PreBandBand}
\preBand(S) \to \Band(S)
\end{equation}
where $\Band(S)$ is obtained as an associated stack and is called the stack of bands over $S$. Moreover, we define a functor
\begin{equation}
\band(S): \ShGroups(S) \to \Band(S)
\end{equation}
by composing (\ref{PreBand}) and (\ref{PreBandBand}).

A cartesian section of $\Band(S)$ is called a band over $S$. The category of bands over $S$ is defined by taking cartesian sections of $\Band(S)$ and is denoted by $\BandU(S)$. By $\bandU(S)$ we denote the functor induced by $\band(S)$ on the categories of cartesian sections
\begin{equation}
\bandU(S): \ShGroupsU(S) \to \BandU(S).
\end{equation}

\subsection{Band acting on a stack} Let $\mathcal{E}$ be a stack over $S$, recall that $\mathcal{E}^{\cart}$ denotes the stack in groupoids associated to $\mathcal{E}$. There is a cartesian functor
\begin{equation}
\aut(E): \mathcal{E}^{\cart} \to \ShGroups(S)
\end{equation}
defined by taking sheaves of automorphisms of $x\in \mathcal{E}_U$, where $U\in \Ob(S)$. Therefore, there is a natural morphism of stacks
\begin{equation}
\bandAut: \mathcal{E}^{\cart} \to \Band(S)
\end{equation}
given by the composition of $\aut(S)$ and $\band(S)$. Note that the stack $\Band(S)$ is defined in such a way that  $\bandAut(S)$ sends any two isomorphisms $m,n: x\rightrightarrows y$ between elements of $\mathcal{E}_U$ into the same morphism.

Furthermore, any morphism of stacks induces a morphism of bands by passing to the associated stacks.

Let $L$ be a band over $S$. An action of $L$ on $\mathcal{E}$ is a morphism of morphisms of stacks $a: L\circ f\to \bandAut(S)(\mathcal{E})$ in the following diagram
\[
\begin{tikzcd}
\mathcal{E}^{\cart} \arrow{dr}[below]{f} \arrow{rr}{\bandAut(S)}{}
& & \Band(S)  \\
& S  \arrow{ur}[below]{L}
\end{tikzcd}
\]

In other words, $a$ is a family of morphisms
\begin{equation}
\label{LocalBand}
a(U): L(U)\to \band(\Aut_U(x)),\quad U\in\Ob(S), x\in \Ob(\mathcal{E}_U).
\end{equation}
The triple $(L,a,\mathcal{E})$ is called a stack with an action of a band.

Given two stacks with actions of bands $(L, a, \mathcal{E})$ and $(M, b, \mathcal{G})$, there is an obvious notion of compatibility for a morphism of stacks $m:\mathcal{E}\to \mathcal{H}$ and a morphism of bands $u: L\to M$.

\subsection{Band of a gerbe}
Let $\mathcal{E}$ be an $S$-gerbe, and let $(L,a)$ be a band acting on $\mathcal{E}$. The following conditions are equivalent:
\begin{itemize}
\item $a$ is an isomorphism.
\item for any band $L^\prime$ over $S$ and any action $b^\prime$ of $L^\prime$ on $\mathcal{E}$, there exists a unique morphism $u: L^\prime \to L$ making $b^\prime$ an  action induced via $u$ and $a$. That is $b^\prime = a\circ (u\cdot f)$.
\end{itemize}
Moreover, there exists a band $(L,a)$ satisfying the conditions above. For the proof see  IV.Proposi\-ti\-on 2.2.1 in \cite{Giraud}.

A morphism of gerbes $f: \mathcal{E} \to \mathcal{H}$ is banded by a morphism of bands $\nu: L \to M$ if the following diagram commutes
\[
\begin{tikzcd}
\band(\restr{\Aut(x)}{U}) \arrow{r}{f}  & \band(\restr{\Aut(f(x))}{U})  \\
L(U) \arrow{r}{\nu} \arrow{u} & M(U) \arrow{u}
\end{tikzcd}
\]
where the vertical arrows are the isomorphisms from (\ref{LocalBand}). Furthermore, any morphism of gerbes is banded by a unique morphism of bands.

\begin{definition}
\label{GiraudH2}Let $L$ be a band over $S$ and let $\mathcal{E}$ and $\mathcal{H}$ be gerbes banded by $L$. A morphism $f:\mathcal{E}\to\mathcal{H}$ is called an $L$-morphism or $L$-equivalence, if $f$ is banded by $\id_L$.
 The second non-abelian cohomology set of Giraud $H^2(S, L)$ is defined as the set of $L$-equivalence classes of $L$-gerbes (see IV.3.1 in \cite{Giraud}).  If $G$ is a sheaf of groups over $S$, we set $H^2(S, G) = H^2(S,\band(G))$.
\end{definition}
Note that the substack of abelian bands is equivalent to the stack of abelian groups over $S$ (see Proposition IV.1.2.3 in \cite{Giraud}). Moreover, if $L$ is an abelian band, $H^2(S, L)$ is endowed with a group structure and coincides with the usual derived functor definition of $H^2$ (see Theorem IV.3.4.2 in \cite{Giraud})

\section{Gerbes with a smooth band and a comparison isomorphism for $H^2$}
In this section, we establish a comparison isomorphism for non-abelian $H^2$ that would be applied in Section \ref{GStructure}. Namely, we prove the following
\begin{theorem}
\label{ComparisonNonAb}
Let $F$ be a field, and let $G$ be a smooth group scheme over $F$. Then the following pointed sets classifying $G$-gerbes over the corresponding sites are isomorphic
\begin{equation}
H^2(F_{\sm}, G) = H^2(F_{\fl}, G) = H^2(F_{\fpqc}, G).
\end{equation}
\end{theorem}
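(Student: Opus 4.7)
The plan is to define natural pullback maps $H^2(F_{\sm}, G) \to H^2(F_{\fl}, G) \to H^2(F_{\fpqc}, G)$ induced by the inclusions of sites (every smooth cover is fppf, and every fppf cover is fpqc) and show that each is bijective. This is the non-abelian analogue of Grothendieck's classical comparison theorem for cohomology of smooth group schemes (SGA 3), and the strategy is to bootstrap from the analogous identity for $H^1$, namely $H^1_{\sm}(F, G) = H^1_{\fl}(F, G) = H^1_{\fpqc}(F, G)$ for smooth $G$, by exploiting the local description of gerbes via bitorsors recalled in Section \ref{LocDescrG}.

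For injectivity, suppose two gerbes on the smaller site pull back to $G$-equivalent gerbes on the larger site. Choosing a local $G$-equivalence on an fpqc cover and comparing its behaviour on triple overlaps, one sees that the obstruction to upgrading it to a global $G$-equivalence is classified by an $H^1(-, G)$-valued invariant on a suitable cover, together with an automorphism sheaf pinned down by the band. Since the $H^1$ of smooth $G$ does not depend on the choice among the three sites, the obstruction already vanishes on the smooth site, and the desired $G$-equivalence can be built there.

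For surjectivity, given a gerbe $\mathcal{E}$ on $F_{\fpqc}$ banded by $G$, I would restrict it along the inclusion $F_{\sm} \hookrightarrow F_{\fpqc}$. Since smooth covers are in particular fpqc, the restriction is automatically a stack, and one checks that its band is still $G$. The point is to verify the gerbe axioms on the smooth site. Local connectedness of the fibers follows from $H^1_{\sm} = H^1_{\fpqc}$ applied to the $\Isom$-sheaves between two local sections. Local non-emptiness is the main step: using the local presentation of $\mathcal{E}$ by an fpqc cover $U \to \Spec F$, a section $x$, and a smooth bitorsor $E = \Isom(p_1^\ast x, p_2^\ast x)$ on $U \times_F U$ (cf.~(\ref{BiTors})), one exploits that any fpqc $G$-torsor for smooth $G$ is representable by a smooth scheme, hence is trivialized over a smooth cover. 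A \v{C}ech-theoretic descent argument then produces a smooth cover $V \to \Spec F$ with $\mathcal{E}(V) \neq \emptyset$, together with a smooth bitorsor on $V \times_F V$ with the required cocycle data, thus exhibiting a smooth gerbe in the same $G$-equivalence class as $\mathcal{E}$.

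The main obstacle is the refinement step for surjectivity: fpqc (and even fppf) covers of $\Spec F$ cannot in general be refined by smooth covers, the standard obstruction being purely inseparable extensions in positive characteristic. Smoothness of $G$ is essential precisely here, since the obstructions to descending the trivialising datum to a smooth cover live in $G$-valued cohomology, which is insensitive to inseparable phenomena on the base. Once the smooth gerbe is produced, checking that its pullback is $G$-equivalent to $\mathcal{E}$ is routine, and combining this with injectivity concludes the proof.
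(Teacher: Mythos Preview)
Your proposal identifies the right overall shape and correctly isolates the main obstacle (purely inseparable covers in positive characteristic), but the crucial surjectivity step is not actually carried out. You write that ``a \v{C}ech-theoretic descent argument then produces a smooth cover $V \to \Spec F$ with $\mathcal{E}(V) \neq \emptyset$'', yet this is precisely the content of the theorem and no mechanism is supplied. Knowing that the bitorsor $E$ on $U\times_F U$ is represented by a smooth scheme, or that it trivializes over a smooth cover of $U^2$, does not by itself yield a smooth cover of $\Spec F$ carrying a section of $\mathcal{E}$: the cover $U$ itself may be a purely inseparable extension of $F$, and refining the cocycle data over $U^2$ does nothing to refine $U$. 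The injectivity sketch is likewise imprecise (the obstruction to upgrading an fpqc $G$-equivalence to a smooth one is not literally an $H^1$ class in $G$, though the idea can be salvaged), but the real gap is surjectivity.

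The paper's argument is substantially different and supplies the missing idea. It first reduces, via Saavedra Rivano's comparison $H^2(F_{\fpqc},G)\simeq H^2(F_{\fl},G)$ and the pseudo-torsor structure under $H^2(F_{\fl},C(G))$, to the case where $\mathcal{E}$ is neutralized by a \emph{finite} extension $f:\Spec K\to\Spec F$, which one may take to be purely inseparable. The key construction is then the adjunction map
\[
\theta:\mathcal{E}\longrightarrow f_\ast f^\ast\mathcal{E}\simeq \TORS(F_{\fl},\Res_{K/F}(G_K)),
\]
where the right-hand equivalence uses $R^1 f_\ast(G_K)=\{\ast\}$ for finite $f$ and smooth $G$ (Proposition~\ref{VanishingR1} and Proposition~\ref{ShapiroLExt}). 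One then checks, after base change to $K$, that $\theta$ is representable by the smooth $F$-scheme $\Res_{K/F}(G_K)/G$. Pulling back the canonical section of the neutral target along $\theta$ therefore exhibits a smooth $F$-scheme mapping to $\mathcal{E}$, and smooth schemes have \'etale-local sections. This Weil-restriction trick is the substantive geometric input, and your proposal contains no substitute for it.
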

\noindent The proof of this theorem will be given in Section \ref{ComparisonProof} after we develop main technical tools in Sections \ref{InverseDirect}--\ref{ShapiroSubs}.

\subsection{Inverse and direct images of stacks}
\label{InverseDirect}
Let us first recall the definitions of the direct and inverse image $2$-functors defined for stacks.

Let $S$ and $S^\prime$ be two sites.  Let $f^{-1}: S \to S^\prime$ be an adjoint functor to a morphism of sites
\begin{equation}
f: S^\prime \to S.
\end{equation}
Note that we are primarily interested in the case where $S=\Spec(F)_{\fpqc}$ and $S^\prime=\Spec(K)_{\fpqc}$, and $K/F$ is a finite field  extension.
\begin{definition}
For any stack $\mathcal{E}^\prime$ over $S^\prime$ its direct image $f_\ast(\mathcal{E}^\prime)$ is given by the stack $\mathcal{E}^\prime\times_{S^\prime} S$ obtained from $\mathcal{E}^\prime$ via base change along $f^{-1}$. Namely, the fibre of $f_\ast(\mathcal{E}^\prime)$ over $U \in \Ob(S)$ is given by $\mathcal{E}^\prime(f^{-1}(U))$.
\end{definition}

\begin{definition}
A pair $(\mathcal{E}^\prime, \varphi)$ where $\mathcal{E}^\prime$ is a stack over $S^\prime$ and $\varphi: \mathcal{E} \to f_{\ast}(\mathcal{E}^\prime)$ a morphism of stacks over $S$ is called an inverse image of $\mathcal{E}$ if for any stack $\mathcal{G}^\prime$ over $S^\prime$ the functor
\begin{equation}
\label{AdjunctionStacks}
\Cart_{S^\prime}(\mathcal{E}^\prime, \mathcal{G}^\prime) \to \Cart_S (\mathcal{E}, f_\ast(\mathcal{G}^\prime))
\end{equation}
is an equivalence of categories.
\end{definition}

In particular, the sites $S$ and $S^\prime$ can be considered as stacks in groupoids over themselves, in this case the couple $(S^\prime, \id_{S})$ is an inverse image of $S$.

Given a stack $\mathcal{E}$ over $S$ and a morphism of sites $S^\prime \to S$ one can construct a pair $(f^\ast(\mathcal{E}), \varphi)$ passing to the underlying fibred category and then stackifying the inverse image of $\mathcal{E}$ as a fibred category, note that $\varphi: \mathcal{E}\to f_\ast f^\ast \mathcal{E}$ is a natural functor defined by $2$-adjunction of direct image and inverse image functors of underlying fibred categories and the stackifickation.

\subsection{Inverse and direct images of gerbes}
If $\mathcal{E}$ is a gerbe over $S$ then its inverse image $f^\ast \mathcal{E}$ is a gerbe over $S^\prime$ (see III. Corollaire 2.1.5.6 in \cite{Giraud}), thus gerbes are well-behaved under $f^\ast$, however, given a gerbe $\mathcal{E}^\prime$ over $S^\prime$, its direct image $f_\ast(\mathcal{E}^\prime)$ is not a gerbe in general.

 From now on, we assume that $\mathcal{E}$ is a gerbe over $S$ banded by a sheaf of groups $G$, such that its inverse image $(\mathcal{E}^\prime,\varphi:\mathcal{E}\to f_\ast \mathcal{E}^\prime))$ along $f:S^\prime \to S$, has a section, that is there exists a cartesian functor $S^\prime\to \mathcal{E}^\prime$. Thus $\mathcal{E}^\prime$ is equivalent to the neutral gerbe $\TORS(S^\prime, G^\prime)$, where $G^\prime = f^\ast(G)$ is the inverse image of the sheaf of groups associated to $G$.

We would like to deduce a sufficient criterion for $f_\ast(\TORS(S^\prime, G^\prime))$ to be a gerbe. In order to achieve this, we apply theory developed by Giraud in  \cite[V.3]{Giraud}. For a sheaf of groups $G^\prime$ over $S^\prime$ he defines a sheaf $R^1 f_\ast(G^\prime)$ on $S$ that is associated to a presheaf
\begin{equation}
V\in \Ob(S) \rightsquigarrow H^1(f^{-1}(V), G^\prime).
\end{equation}

There is a canonical isomorphism of sheaves between $R^1f_\ast(G^\prime)$ and the sheaf of maximal subgerbes of $f_\ast(E^\prime)$ that is denoted by $\Ger(\mathcal{E}^\prime)$ (see \cite[V. Lemme 3.1.5]{Giraud}). This implies that $f_\ast \mathcal{E}^\prime$ admits a fully faithful functor $\mathcal{H}\to f_\ast(\mathcal{E}^\prime)$ where $\mathcal{H}$ is a gerbe over $S$, if and only if $R^1f_\ast(G^\prime)$ has a section.

Moreover, there is an exact sequence of pointed sets (see  \cite[V. Prop. 3.1.3]{Giraud})
\begin{equation}
\label{ExSeqNonab}
\begin{tikzcd}
0 \arrow{r}& H^1(S, f_\ast(G^\prime)) \arrow{r} &H^1(S^\prime, G^\prime) \arrow{r}{\gamma} & H^0(S, R^1f_\ast(G^\prime))
\end{tikzcd}
\end{equation}
where the first arrow is induced by an equivalence of stacks
\begin{equation}
\label{STors}
\TORS(S^\prime, G^\prime)^{S} \simeq \TORS(S, f_\ast(G^\prime))
\end{equation}
where $\TORS(S^\prime, G^\prime)^{S}$ denotes a full subcategory of $\TORS(S^\prime, G^\prime)$ whose objects are $G^\prime$-torsors $P$ over $S^\prime$, such that there exists a refinement $R$ in $S$ satisfying the following condition: for any $Y\in R$, the restriction of $P$ to $f^{-1}(Y)$ is trivial (see \cite[V. Prop. 3.1.1]{Giraud}.)

Clearly, the vanishing of $R^1f_\ast(G^\prime)$ implies several particularly nice properties that can be summarized as follows:
\begin{propos}
\label{ShapiroLExt} If $R^1f_\ast(G^\prime) =\{\ast\}$, then the following is satisfied:
\begin{itemize}
\item $\TORS(S^\prime, G^\prime) \simeq \TORS(S, f_\ast(G^\prime))$.
\item $f_\ast(\TORS(S^\prime, G^\prime)) \simeq \TORS(S, f_\ast(G^\prime))$.
\end{itemize}
\end{propos}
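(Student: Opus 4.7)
The plan is to derive both bullets from the exact sequence (\ref{ExSeqNonab}) together with the equivalence (\ref{STors}), by observing that the hypothesis $R^1 f_\ast(G^\prime) = \{\ast\}$ forces the subcategory $\TORS(S^\prime, G^\prime)^S$ to exhaust all of $\TORS(S^\prime, G^\prime)$, both globally and fibrewise over $S$.

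For the first bullet, I would take an arbitrary $G^\prime$-torsor $P$ on $S^\prime$ and chase its class in (\ref{ExSeqNonab}). Its image under $\gamma$ lies in $H^0(S, R^1 f_\ast(G^\prime)) = \{\ast\}$, so $\gamma([P])$ is trivial. Unwinding the definition of $R^1 f_\ast(G^\prime)$ as the sheaf associated to the presheaf $V \mapsto H^1(f^{-1}(V), G^\prime)$, triviality of $\gamma([P])$ is equivalent to the existence of a covering $\{V_i \to S\}$ in $S$ such that $P|_{f^{-1}(V_i)}$ is trivial for every $i$. In other words, $P$ satisfies the local-triviality condition defining $\TORS(S^\prime, G^\prime)^S$. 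Hence the inclusion $\TORS(S^\prime, G^\prime)^S \hookrightarrow \TORS(S^\prime, G^\prime)$ is an equivalence, and composing with (\ref{STors}) yields the first equivalence $\TORS(S^\prime, G^\prime) \simeq \TORS(S, f_\ast(G^\prime))$.

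For the second bullet I would run the same argument fibrewise over every $U \in \Ob(S)$. The localized functor $f_{/U} \colon S^\prime_{/f^{-1}(U)} \to S_{/U}$ is again a morphism of sites whose associated derived sheaf $R^1 (f_{/U})_\ast$ is the restriction of $R^1 f_\ast(G^\prime)$ to $S_{/U}$, and thus still reduces to a point. The first bullet applied to $f_{/U}$ identifies the fibre of $\TORS(S^\prime, G^\prime)^S$ over $U$ with the full category $\TORS(f^{-1}(U), G^\prime) = f_\ast(\TORS(S^\prime, G^\prime))(U)$. Hence $\TORS(S^\prime, G^\prime)^S$ and $f_\ast(\TORS(S^\prime, G^\prime))$ agree as stacks over $S$, and combining again with (\ref{STors}) gives $f_\ast(\TORS(S^\prime, G^\prime)) \simeq \TORS(S, f_\ast(G^\prime))$.

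The argument is essentially formal once the preparatory material in Section \ref{InverseDirect} is in place, so I do not expect a real obstacle. The only mildly delicate point is the compatibility of $R^1 f_\ast$ with localization along $U \in \Ob(S)$ used in the second bullet; this is however immediate from its definition via sheafification of the presheaf of first cohomologies, since sheafification commutes with restriction to slice sites.
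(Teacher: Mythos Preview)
Your argument for the first bullet is essentially the same as the paper's: both show that the vanishing of $H^0(S,R^1f_\ast(G^\prime))$ forces every $G^\prime$-torsor to lie in $\TORS(S^\prime,G^\prime)^S$, and then invoke (\ref{STors}).

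For the second bullet your route differs from the paper's. You localize over each $U\in\Ob(S)$, use that $R^1(f_{/U})_\ast$ is the restriction of $R^1f_\ast(G^\prime)$, and rerun the first bullet fibrewise to conclude that the inclusion $\TORS(S^\prime,G^\prime)^S\hookrightarrow f_\ast\TORS(S^\prime,G^\prime)$ is an equivalence of stacks. The paper instead argues structurally: it checks directly that $f_\ast\TORS(S^\prime,G^\prime)$ is a gerbe (local non-emptiness is automatic, and the vanishing of $R^1f_\ast(G^\prime)$ gives that any two local sections are $S$-locally isomorphic), and then invokes Giraud's result \cite[V.~Prop.~3.1.6]{Giraud} that $\TORS(S,f_\ast(G^\prime))$ is the unique maximal subgerbe of $f_\ast\TORS(S^\prime,G^\prime)$. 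Your approach is more hands-on and avoids the appeal to the maximal-subgerbe statement; the paper's is shorter once one is willing to quote Giraud. Both are correct, and the compatibility of $R^1f_\ast$ with restriction to slice sites that you flag is indeed immediate from the sheafification description.
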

\begin{proof} Let $P$ be a $G^\prime$-torsor. Then $f_\ast(P)$ is an $f_\ast(G^\prime)$-torsor, if and only if the class of $P$ in $H^1(S^\prime, G^\prime)$ maps to the marked point in $H^0(S, R^1f_\ast(G^\prime))$ (see \cite[V. 3.1.3.1]{Giraud}). Therefore, if $R^1f_\ast(G^\prime) =\{\ast\}$, the natural functor $\TORS(S^\prime, G^\prime)^S\to  \TORS(S^\prime, G^\prime)$ is an equivalence. Then the first claim follows by combining this with  (\ref{STors}.)

In order to prove the second claim, we notice that $f_\ast(\TORS(S^\prime, G^\prime))$ is a gerbe, since it is locally non-empty by definition and it follows from the vanishing of $R^1f_\ast(G^\prime)$ that any local section of $f_\ast(\TORS(S^\prime, G^\prime))$ is $S$-locally isomorphic to the trivial $f_\ast(G^\prime)$-torsor. The rest follows from the fact that $\TORS(S, f_\ast(G^\prime))$ is a unique maximal subgerbe of $f_\ast(\TORS(S^\prime, G^\prime))$ (see \cite[V. Prop. 3.1.6]{Giraud}).
\end{proof}

\subsection{Vanishing of $R^1f_\ast(G^\prime)$ for smooth $G^\prime$ and finite $f$} Although we are going to apply Proposition \ref{ShapiroLExt} in a very particular case, it can be useful to have a more general statement.
\begin{propos}
\label{VanishingR1}
 Let $f: S^\prime \to S$ be a morphism of $\fppf$-sites of schemes induced by a finite morphism of the corresponding schemes and let $G^\prime$ be a sheaf of smooth groups over $S^\prime$. Then $R^1f_\ast(G^\prime) = \{\ast\}$.
\end{propos}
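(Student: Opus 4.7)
The plan is to verify the pointwise triviality criterion for the fppf sheafification: for every $V \in \Ob(S)$ and every $G'$-torsor $P$ over $f^{-1}(V)$, I will produce an fppf cover $V' \to V$ such that $P$ becomes trivial after pullback to $f^{-1}(V')$. Since $R^1 f_\ast(G')$ is the sheafification of the presheaf $V \mapsto H^1_{\fppf}(f^{-1}(V), G')$, this will imply $R^1 f_\ast(G') = \{\ast\}$.

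The construction of $V'$ is via Weil restriction. Working locally, I reduce to the case $V = \Spec(A)$ affine and $f^{-1}(V) = \Spec(A')$ with $A'$ finite (and, after an fppf refinement, finite locally free) over $A$. Smoothness of $G'$ makes $P$ smooth over $\Spec(A')$, and after further localization on $V$ if needed, $P$ may be assumed quasi-projective over $\Spec(A')$. Under these hypotheses, the Weil restriction $V' := \Res_{f^{-1}(V)/V}(P)$ is representable by a $V$-scheme, and smoothness is preserved by Weil restriction along finite locally free morphisms (Bosch--L\"utkebohmert--Raynaud, \emph{N\'eron Models}, Ch.~7.6), so $V' \to V$ is smooth.

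Surjectivity of $V' \to V$ can be verified fibrewise. For any geometric point $\bar v \to V$ with algebraically closed residue field $\bar\kappa$, the fibre $V' \times_V \bar v$ parametrizes $P$-sections over $\bar\kappa \otimes_A A'$, a finite Artinian $\bar\kappa$-algebra whose residue fields are all $\bar\kappa$ itself. The torsor $P$ is smooth over $f^{-1}(V)$, so it has $\bar\kappa$-points at each closed point of $\Spec(\bar\kappa \otimes_A A')$, and these lift through the nilpotent thickenings by formal smoothness; hence $V'(\bar\kappa) \neq \emptyset$ and $V' \to V$ is an fppf cover.

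The trivialization is then immediate from the universal property of Weil restriction: the bijection $\Hom_V(V', V') = \Hom_{f^{-1}(V)}(V' \times_V f^{-1}(V), P)$ applied to $\id_{V'}$ yields a canonical $f^{-1}(V)$-morphism $\sigma \colon f^{-1}(V') \to P$, which is precisely a section of the pullback torsor $P \times_{f^{-1}(V)} f^{-1}(V') \to f^{-1}(V')$. I expect the main obstacle to be the reduction steps ensuring representability, smoothness, and surjectivity of the Weil restriction, rather than the core argument, which is a one-line application of the universal property; all of these reductions are automatic in the intended application of Proposition \ref{ShapiroLExt} where $V = \Spec(F)$ and $f^{-1}(V) = \Spec(K)$ for a finite extension of fields $K/F$.
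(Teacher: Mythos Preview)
Your approach via Weil restriction is genuinely different from the paper's. The paper argues pointwise: it invokes the conservative family of points of the fppf topos given by fppf-local schemes (Gabber--Kelly), and checks that at each such point $\overline{S}$ the stalk $R^1 f_\ast(G')_{\overline{S}} \cong H^1_{\fppf}(\overline{S'},\overline{G'}) \cong H^1_{\et}(\overline{S'},\overline{G'})$ vanishes, because $\overline{S'}$ is a finite product of strictly henselian local schemes and smooth torsors over such are trivial. Your approach instead produces an explicit trivializing cover $V' = \Res_{f^{-1}(V)/V}(P)$, which is more constructive and avoids the machinery of points.

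There is, however, a gap in your reduction step. You assert that $A'$ may be taken finite locally free over $A$ ``after an fppf refinement'', but this is false for general finite morphisms: flatness is stable under base change, so if $f$ is finite but not flat (for instance a nontrivial closed immersion such as $\Spec k \hookrightarrow \Spec k[t]$), no fppf cover of $V$ will make the pullback of $f$ flat. Without local freeness, neither the representability of $\Res_{f^{-1}(V)/V}(P)$ nor its compatibility with base change (which you use in the fibrewise surjectivity check) is available from the references you cite. Consequently your argument, as written, proves the proposition only under the additional hypothesis that $f$ is finite locally free. As you correctly note, this hypothesis is automatic in the intended application to finite field extensions $K/F$, so nothing is lost downstream; but the proposition as stated covers arbitrary finite morphisms, and for that the paper's stalk computation is needed, since a finite algebra over a henselian local ring decomposes as a product of henselian local rings regardless of flatness.
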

\begin{proof}
We remark that the proof follows the lines of \cite[Exp. VIII, par. 5]{SGA4t2} where the derived direct images of abelian sheaves in \'{e}tale topology are treated.

We claim that $R^1 f_\ast(G^\prime)=\{\ast\}$ as an $\fppf$-sheaf. Recall that the $\fl$-site over a scheme has enough points (see \cite[\href{https://stacks.math.columbia.edu/tag/06VW}{Tag 06VW}]{stacks-project}), that is there exists a conservative family of points $\{p_i\}_{i\in I}$
such that a morphism of sheaves is an isomorphism whenever it holds fiberwise for every $p_i$. In our case the family of $S_{\fl}$-local $S$-schemes defines such a conservative family of fibre functors (see Theorem 2.3 in \cite{GabberKelly}). Since $\fl$ topology is finer than \'{e}tale topology we see that any $S_{\fl}$-local scheme $\Spec(R)\to S$ is in particular strictly henselian; moreover, all its residue fields are algebraically closed.

Thus we are reduced to the computation of fibres of $R^1 f_\ast(G^\prime)$ at points given by $\fl$-local schemes $\overline{S}$. We let $\overline{S^\prime}$ denote the fibre product $S^\prime \times_{S} \overline{S}$ and  let $\overline{G^\prime}$ denote the pullback of $G^\prime$ to $\overline{S^\prime}$. Then we have an isomorphism of sheaves of pointed sets
\begin{equation}
R^1 f_\ast(G^\prime)_{\overline{S}} \simeq H^1_{\fl}(\overline{S^\prime}, \overline{G^\prime})\simeq
H^1_{\et}(\overline{S^\prime}, \overline{G^\prime})
\end{equation}
where the second isomorphism follows from the smoothness of $\overline{G^\prime}$ over $\overline{S^\prime}$. Applying the finiteness of $S^\prime\to S$ we see that $\overline{S^\prime}\to \overline{S}$ is finite, thus $\overline{S^\prime}$ is a finite product of strictly henselian schemes, and over such schemes any torsor under a smooth group is trivial. Therefore
\begin{equation}
R^1 f_\ast(G^\prime)_{\overline{S}} = \{\ast\} \text{ for any } \overline{S} \ \fl\text{-local},
\end{equation}
which proves the claim.
\end{proof}

\subsection{The case of finite field extensions}
\label{ShapiroSubs}
Let $K/F$ be a finite extension of fields and let $G^\prime$ be a smooth algebraic group over $K$. Setting $S^\prime = \Spec(K)_{\fppf}$ and $S = \Spec(F)_{\fppf}$, we see that the assumptions of Proposition \ref{VanishingR1} are satisfied and moreover $f_\ast(G^\prime)$ is representable by a smooth algebraic group $\Res_{K/F}(G^\prime)$ over $F$ (see \cite[A.~Prop.~5.2]{CGP}.) Therefore, we have an equivalence
\begin{equation}
f_\ast(\TORS(K_{\fl}, G^\prime))\to \TORS(F_{\fl}, \Res_{K/F}(G^\prime))
\end{equation}
and a natural functor
\begin{equation}
\label{WeilRestrGerbe}
\theta:  \mathcal{E}\to \TORS(F_{\fl}, \Res_{K/F}(G^\prime))
\end{equation}
 between $F_{\fl}$-gerbes that is given by sending an object $P\in \Ob(E_U)$ to $f_\ast f^\ast P$ which is an object of $\TORS(F_{\fl}, \Res_{K/F}(G^\prime))_U$.

Note that Propositions \ref{ShapiroLExt} and \ref{VanishingR1} combined with Grothendieck comparison isomorphism \cite[Th. 11.7]{GR1} give us an alternative proof of Shapiro-Oesterl\'{e} lemma (see  \cite[IV. 2.3]{Oes}):
\begin{cor} Let $K/F$ be a finite field extension and let $G^\prime$ be a smooth group scheme over $K$. Then the natural functor $\TORS(F_{\fppf}, \Res_{K/F}(G^\prime))\to \TORS(K_{\fppf}, G^\prime)$ defined via \cite[V. Cor. 3.1.2]{Giraud} is an equivalence of gerbes that induces an isomorphism
\begin{equation}
H^1(K_{\et}, G^\prime) \simeq H^1(F_{\et}, \Res_{K/F}(G^\prime))
\end{equation}
of pointed sets, which reduces to the usual isomorphism of abelian cohomology, if $G^\prime$ is abelian.
\end{cor}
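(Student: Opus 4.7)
The plan is to deduce the corollary as a direct combination of Propositions~\ref{ShapiroLExt} and \ref{VanishingR1} together with representability of Weil restriction and Grothendieck's comparison theorem. Set $S=\Spec(F)_{\fppf}$ and $S'=\Spec(K)_{\fppf}$, and let $f:S'\to S$ be the morphism of sites induced by the finite morphism $\Spec(K)\to\Spec(F)$. Since $G'$ is a smooth group scheme over $K$, Proposition~\ref{VanishingR1} applies and gives
\begin{equation*}
R^1 f_\ast(G') = \{\ast\}
\end{equation*}
as a sheaf of pointed sets on $S$. By \cite[A.~Prop.~5.2]{CGP}, the direct image $f_\ast(G')$ is representable by the smooth algebraic group $\Res_{K/F}(G')$ over $F$, so the hypotheses of Proposition~\ref{ShapiroLExt} are met.

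Next, I would invoke Proposition~\ref{ShapiroLExt} to obtain the equivalence of stacks
\begin{equation*}
\TORS(K_{\fppf}, G') \simeq \TORS(F_{\fppf}, \Res_{K/F}(G'))
\end{equation*}
(and the analogous equivalence for $f_\ast \TORS(K_{\fppf}, G')$). The only thing to check here is that the equivalence produced abstractly by Proposition~\ref{ShapiroLExt}, via the functor \cite[V.\,Cor.~3.1.2]{Giraud}, agrees with the natural functor $\theta$ of (\ref{WeilRestrGerbe}); this is immediate by tracing through the construction, since both functors send a $G'$-torsor $P$ over $\Spec(K)$ to $f_\ast P$, viewed as a $\Res_{K/F}(G')$-torsor over $\Spec(F)$. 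Passing to $\fppf$-equivalence classes of objects yields the isomorphism of pointed sets
\begin{equation*}
H^1(K_{\fppf}, G') \simeq H^1(F_{\fppf}, \Res_{K/F}(G')).
\end{equation*}

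Finally, to upgrade this to the étale statement, I would apply Grothendieck's comparison theorem \cite[Th.~11.7]{GR1}, which asserts that for a smooth algebraic group $H$ over a base $X$ the canonical map $H^1_{\et}(X,H)\to H^1_{\fppf}(X,H)$ is a bijection. Applied to $G'/\!\Spec(K)$ and to $\Res_{K/F}(G')/\!\Spec(F)$ (which is smooth by loc.\,cit.), this identifies both sides of the previous display with their étale counterparts and produces the claimed isomorphism
\begin{equation*}
H^1(K_{\et}, G') \simeq H^1(F_{\et}, \Res_{K/F}(G')).
\end{equation*}
In the abelian case, the isomorphism is automatically compatible with the group structures coming from the usual derived-functor definition, since both are induced by the same Shapiro-type adjunction at the level of underlying torsors. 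I do not expect any substantial obstacle: the main content has been absorbed into Propositions~\ref{ShapiroLExt} and \ref{VanishingR1}, and the only bookkeeping is to verify that the constructed equivalence is the same as $\theta$ and is natural in $G'$.
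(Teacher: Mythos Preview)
Your proposal is correct and follows essentially the same approach as the paper: combine Proposition~\ref{VanishingR1} (vanishing of $R^1f_\ast(G')$) with Proposition~\ref{ShapiroLExt} to obtain the $\fppf$ equivalence, and then apply Grothendieck's comparison isomorphism \cite[Th.~11.7]{GR1} to pass to \'etale cohomology. The paper states this deduction in one sentence; your write-up simply fills in the details.
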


\subsection{Comparison isomorphism}
\label{ComparisonProof}
Any gerbe over $F_{\fpqc}$ banded by an algebraic group $G$ can be neutralized by a finite extension $K/F$, here we assume for simplicity that $L$ is a band that is globally representable by a sheaf of groups.
It is a consequence of the comparison isomorphism for $H^2(F_{\fpqc}, G)$ and $H^2(F_{\fl}, G)$ proved by Saavedra Rivano \cite[III. Cor. 3.1.6]{Saa} and the fact that $H^2(F_{\fl}, G)$ is a pseudotorsor under $H^2(F_{\fl}, C(G))$ (see \cite[IV. Th. 3.3.3]{Giraud},) where $C(G)$ is the centre of $G$, and the latter is the usual $\fl$-cohomology of $C(G)$.

We claim that any algebraic gerbe over a field with a smooth algebraic band can be neutralized by a finite Galois extension of $F$.

Note that the discussion above reduces this statement to the case $\Char(F)>0$. Moreover, we may assume that $K/F$ is purely inseparable.

Let $\mathcal{E}$ be a gerbe banded by $G$, and let $f: S^\prime=K_{\fl} \to S= F_{\fl}$ be the corresponding morphism of sites. Assume that $\mathcal{E}^\prime=f^\ast(\mathcal{E})$ is a neutral gerbe $\TORS(S^\prime, f^\ast(G))$.
Consider the fibre product
\[
\begin{tikzcd}
\mathcal{E}\times_{\mathcal{E}_\ast}  S \arrow{r} \arrow{d} & S \arrow{d} \\
\mathcal{E}  \arrow{r}{\theta}                                  & \mathcal{E}_\ast = \TORS(S, \Res_{K/F}(G^\prime))
\end{tikzcd}
\]

We note that the vertical arrow provides us with a morphism $\mathcal{E}\times_{\mathcal{E}_\ast} F \to \mathcal{E}$ and we claim that $\mathcal{E}\times_{\mathcal{E}_\ast} F$ is representable by a smooth scheme over $F$. To show this, we observe that the bottom arrow $\theta$ is representable by a smooth scheme.

Indeed, one can check this after pulling back both gerbes to $K$, and thus reducing to the case of a morphism between neutral gerbes
\begin{equation}
 \theta_K : \TORS(S^\prime, G^\prime) \to \TORS(S^\prime,\Res_{K/F}(G^\prime)).
\end{equation}

By definition of representability, we need to check that for any point
\begin{equation*}
u: U \to \TORS(S^\prime,\Res_{K/F}(G^\prime)),
\end{equation*}
where $U$ is a scheme over $\Spec(K)$, the fibre product
\begin{equation}
\label{fibresgerbes}
\begin{tikzcd}
\mathcal{E}\times_{\mathcal{E}_\ast} U \arrow{r} \arrow{d} & U\arrow{d}{u} \\
\TORS(S^\prime, G^\prime) \arrow{r}{\theta_K}                                  & \TORS(S^\prime, f^\ast\Res_{K/F}(G^\prime))
\end{tikzcd}
\end{equation}
is representable by a smooth scheme.

The fibre is given by an $f^\ast\Res_{K/F}(G^\prime)$-torsor $Q$ over $U$ that lies in the image of $\theta_K$. Since $f^\ast\Res_{K/F}(G^\prime)$ is a smooth group scheme, $Q$ becomes trivial over an \'{e}tale cover  of $K$, therefore, we may assume that $Q$ is a trivial torsor $f^\ast\Res_{K/F}(G^\prime)$. We can describe the functor $\theta_K$ using the following remark.
\begin{remark}
If $u: A\to B$ is a monomorphism of groups, then there is a natural functor $\TORS(S, A) \to \TORS(S,B)$ given by
$P\mapsto  {}^{u}P = P\wedge^{A}B$. There is a natural morphism from $P$ to ${}^{u}P$ that can be composed with with the quotient by $A$, i.e. we have a morphism
\begin{equation}
P \to {}^{u}P \to {}^{u}P/A
\end{equation}
which gives us a section $q: e \to {}^{u}P/A$ since the morphism above factors through $P/A$ (locally isomorphic to $e$, the final object of the site we are working on). Moreover, $\Tors(S, A)$ is equivalent to the category of pairs $(Q,q)$ where $Q$ is a $B$-torsor, and $q \in \Hom(e,Q/A)$.
\end{remark}

Note that $\theta_K$ is defined by adjunction
\begin{equation}
f^\ast P \mapsto f^\ast f_\ast f^\ast P
\end{equation}
where $P$ is an object of a fibre of $\mathcal{E}$, and $f^\ast P$ is the associated $G^\prime$-torsor. On the other hand, we also have a natural monomorphism of sheaves of sets
\begin{equation}
f^\ast P \to f^\ast f_\ast f^\ast P
\end{equation}
that is compatible with the monomorphism of sheaves of groups $G^\prime \to f^\ast \Res_{K/F}(G^\prime)$. In this case we have an isomorphism of $f^\ast \Res_{K/F}(G^\prime)$-torsors (see \cite{Giraud} Prop. III.1.4.6(iii))
\begin{equation}
f^\ast f_\ast f^\ast P \simeq f^\ast P \wedge^{G} f^\ast \Res_{K/F}(G^\prime),
\end{equation}
 which allows us to apply the remark above. It follows that the fibre product (\ref{fibresgerbes}) is given by the quotient of a smooth group scheme $f^\ast\Res_{K/F}(G^\prime)$ by $G^\prime$, and therefore, is representable by a smooth scheme.

We obtain a cartesian functor from the $\fl$-site over the smooth scheme $\Res_{K/F}(G^\prime)/G^\prime$ to the gerbe $\mathcal{E}$, furthermore, $\Res_{K/F}(G^\prime)/G^\prime$ has sections \'{e}tale-locally. This gives us an \'{e}tale section of $\mathcal{E}$.

The previous argument shows that any $G$-gerbe $\mathcal{E}$ over $F_{\fl}$ trivializes over an \'{e}tale extension $K/F$. Taking the inverse image of $\mathcal{E}$ along $F\to K$ we get the category of $G\times_F K$-torsors over $F_{\fl}$, which is equivalent to the category of $G\times_F K$-torsors over $F_{\et}$. Therefore, any two local sections of $\mathcal{E}$ are \'{e}tale-locally isomorphic.

\begin{remark}
A remark of Conrad in \cite[App. B.3]{Con} says that any gerbe $\mathcal{E}$ banded by an abelian $\fl$ $S$-group scheme $G$ is actually an Artin stack, thus it would imply that $\mathcal{E}$ can be neutralized by an \'{e}tale extension by noticing that $H^2_{\fl}(S, G)$ is a pseudo-torsor under $H^2_{\fl}(S, C(G))$.
\end{remark}

\section{Definition of Kottwitz gerbes}
\label{DefnOfKtG}
\subsection{Local fields} Let $F$ be a local field and let $K/F$ be a finite Galois extension with the Galois group $\Gamma_{K/F}$. Then there is an isomorphism between Tate cohomology groups
\begin{equation}
H^0(\Gamma_{K/F}, \zz) \MapsTo H^2(\Gamma_{K/F}, K^\times)
\end{equation}
given by the cup product with the canonical class of local class field theory
\begin{equation*}
\alpha_{K/F} \in H^2(\Gamma_{K/F},\Hom(\zz,K^\times)).
\end{equation*}Using the fact that the latter cohomology group also classifies the extensions of $\Gamma_{K/F}$ by $K^\times$, we define $\Kt_{K/F}$ as an extension
\begin{equation}
1\to K^\times \to \Kt_{K/F} \to \Gamma_{K/F} \to 1
\end{equation}
corresponding to the class $\alpha_{K/F}$. Hilbert's Theorem 90 ensures that $\Kt_{K/F}$ is essentially well-defined, namely, it is defined up to the conjugation by an element of $K^\times$.

In order to define $\Kt_F$, Kottwitz considered the limit of $\Kt_{K/F}$ over finite Galois extensions $K/F$. More accurately, consider $F\subset K\subset L$ such that $L$ is finite Galois over $F$, then the canonical class $\alpha_{K/F}$ can be considered as an element of $H^2(\Gamma_{L/F}, L^\times)$ via inflation, to which one associates a gerbe $\Kt_{K/F}^{\infl}$ (see Section 2.10 in \cite{Kot}). Moreover, there is a relation
\begin{equation}
\infl(\alpha_{K/F}) = [L:K]\alpha_{L/F},
\end{equation}
which implies the existence of a homomorphism $\Kt_{L/F} \to \Kt_{K/F}^{\infl}$ that is given by the map $p_{L/K}: x\mapsto x^{[L:K]}$ when restricted to $L^\times$.

Therefore, $\Kt_{F}$ corresponds to the element $\alpha_{F}\in\varprojlim_{K/F\text{ fin. Galois}} H^2(\Gamma_{K/F}, K^\times)$ defined by the limit of canonical classes $\alpha_{K/F}$.

Applying  Hilbert's Theorem 90, we see that inflation $H^2(\Gamma_{K/F}, K^\times)\to H^2(F_{\et},\Gm)$ is injective, this way $\alpha_{F}$ defines a unique class in $\varprojlim H^2(F,\Gm)$, where the transition maps on cohomology are induced by the maps $p_{L/K}$. Finally, applying Hilbert's Theorem 90 we obtain that $\alpha_F$ can be considered as an element of $H^2(F_{\fpqc}, \widetilde{\Gm})$, where $\widetilde{\Gm}$ is the pro-torus over $F$ whose module of characters is given by $\mathbb{Q}$.

Consequently, the construction of Kottwitz defines an equivalence class of gerbes over $F_{\fpqc}$ banded by $\widetilde{\Gm}$.

\begin{remark} Note that $H^1(F_{\fpqc}, \widetilde{\Gm})\neq0$, despite the fact that $H^1(F_{\fpqc}, \Gm)=0$, namely this cohomology group is given by $\varprojlim^1 F^\times$ with the transition maps given by raising to $n$-th powers for any natural $n$. In general, such transition maps are not surjective and the corresponding inverse system fails Mittag-Leffler condition.
\end{remark}

\subsection{Global fields} Let $F$ be a global field and let $K/F$ be a finite Galois extension. Here we define the global Kottwitz classes and explain that the construction of Kottwitz gives us a well-defined class in $H^2(F_{\fpqc},\mathbb{D}_F)$, that is an equivalence class of gerbes banded by a pro-torus $\mathbb{D}_{F}$ introduced below (see Section \ref{DefOfDF}). Although the situation is quite similar to the local case, there are several differences, for instance, one is forced to prove several vanishing theorems for the first cohomology of various (pro-)tori, that are in general no longer split over $F$.

\label{TateKalethaKottwitzCond}
Let us first recall Tate's construction from \cite{Tate}. Let $K/F$ be a finite Galois extension. Let $S \subset V_F$ be a finite subset of the set of places of $F$, we define $S_K$ as the set of places in $K$ lying over $S$,
and $\dot{S}_K \subset S_K$ denotes a set of lifts for the places in $S$ (that is, over each $v \in S$ there is a
unique $w \in  \dot{S}_K)$. We assume that the pair $(S, \dot{S}_K)$ satisfies the following list of conditions
\begin{itemize}
\item[(T1)] $S$ contains all archimedean places and all places that ramify in $K$.
\item[(T2)] Every ideal class of $K$ has an ideal with support in $S_K$.
\item[(K1)] For every $w\in V_K$  there exists $w^\prime \in S_K$ such that $\Stab(w,\Gamma_{K/F}) = \Stab(w^\prime,\Gamma_{K/F})$.
\item[(K2)] For every $\sigma \in \Gamma_{K/F}$ there exists $\dot{v} \in \dot{S}_K$ such that $\sigma \dot{v} = \sigma{v}$.
\end{itemize}

The conditions (T1) and (T2) were introduced by Tate in \cite{Tate} and are sufficient for the construction of Tate-Nakayama classes, the conditions (K1) and (K2) appeared in a paper by Kaletha (see Conditions 3.3.1 in \cite{Kal}) and are sufficient for proving that $H^1(\Gamma, \mathbb{D}_{K,S}(\bar{F})) = 0$ and an injectivity statement, consequences of which we will recall in Lemma \ref{InjKaletha}.
\subsection{Tate's construction of the canonical classes}
\label{SemiLocal}
Here, $K/F$ is a finite Galois extension of global fields. Let (A) be the following exact sequence of $\Gamma_{K/F}$-modules
\begin{equation}
\tag{A}
0\to \OO_{K,S}^\times \xrightarrow{a^\prime} \mathbb{A}^\times_{K,S} \xrightarrow{a} C_{K,S} \to 1,
\end{equation}
where
\begin{itemize}
\item $\OO_{K,S}^\times \subset K^\times$ is the group of $S$-units.
\item $\mathbb{A}_{K,S}^\times\subset \mathbb{A}_K^\times$ is the group of $S$-id\`{e}les of $K$.
\item $C_{K,S}$ is the group of $S$-id\`{e}le classes of $K$.
\end{itemize}

The second short exact sequence considered by Tate is
\begin{equation}
\tag{B}
0 \to \zz[S_K]_0 \xrightarrow{b^\prime} \zz[S_K] \xrightarrow{b} \zz\to 0,
\end{equation}
where
\begin{itemize}
\item $\zz$ is a trivial $\Gamma_{K/F}$-module.
\item $\zz[S_K]$ is the free abelian group on the finite set of places $S_K$, the $\Gamma_{K/F}$-action being induced by the natural $\Gamma_{K/F}$-action on $V_K$.
\item $b: \sum_{v\in S_K} n_v v \mapsto \sum_{v\in S_K} n_v$.
\item $\zz[S_K]_0$ is the kernel of $b$, and $b^\prime$ the canonical inclusion.
\end{itemize}

Tate also considers the group $\Hom(B,A)$ consisting of all triples
\begin{equation*}
(f_3,f_2,f_1) \in \Hom(\zz[S_K]_0, \OO_{K,S}^\times)\times\Hom(\zz[S_K], \mathbb{A}^\times_{K,S})\times \Hom(\zz, C_{K,S})
\end{equation*}
such that
\begin{displaymath}
\xymatrix{
  0 \ar[r]^{} & \zz[S_K]_0 \ar[d]_{f_3} \ar[r]^{b^\prime} & \zz[S_K] \ar[d]_{f_2} \ar[r]^{b} & \zz \ar[d]_{f_1} \ar[r]^{} & 0  \\
  0 \ar[r]^{} & \OO_{K,S}^\times \ar[r]^{a^\prime} & \mathbb{A}^\times_{K,S} \ar[r]^{a} & C_{K,S} \ar[r]^{} & 0   }
\end{displaymath}
commutes. In order to prove the existence of cohomology classes $\alpha_i^r(K,S)$  for $i=1,2,3$ that fit into the following commutative diagram
\begin{displaymath}
\xymatrix{
  \ldots \ar[r]^{} &H^r(\Gamma_{K/F},\zz[S_K]_0) \ar[d]_{\alpha^r_3} \ar[r]^{b^\prime} & H^r(\Gamma_{K/F},\zz[S_K]) \ar[d]_{\alpha^r_2} \ar[r]^{b} & H^r(\Gamma_{K/F},\zz) \ar[d]_{\alpha^r_1}  \ar[r]^{} & \ldots \\
  \ldots \ar[r]^{} &H^{r+2}(\Gamma_{K/F}, \OO_{K,S}^\times) \ar[r]^{a^\prime} & H^{r+2}(\Gamma_{K/F},\mathbb{A}_{K,S}^\times) \ar[r]^{a} & H^{r+2}(\Gamma_{K/F},C_{K,S}) \ar[r]^{} & \ldots  }
\end{displaymath}
where all the vertical arrows are isomorphisms, one must construct a single canonical class in $H^2(\Gamma_{K/F},\Hom(B,A))$, satisfying several compatibility conditions under maps $u_i$ sending a class $\theta\in H^2(\Gamma_{K/F},\Hom(A,B))$ to its $i$th component for $i=1,2,3$.

Moreover, everything boils down to finding $\alpha^r_1$ and $\alpha^r_2$, since we have the following short exact sequence
\begin{equation}
\label{DefTNSeq}
0\to \Hom(B,A) \xrightarrow{(u_1, u_2)} \Hom(\zz,C_{K,S}) \times \Hom(\zz[S_K],\mathbb{A}^\times_{K,S})\xrightarrow{(b,1)-(1,a)} \Hom(\zz[S_K], C_{K,S}) \to 0
\end{equation}
and corresponding long exact sequence
\begin{equation}
\label{TateSES}
0 \to H^2(\Hom(B,A)) \xrightarrow{(u_1, u_2)}  H^2(C_{K,S}) \times H^2(\Hom(\zz[S_K],\mathbb{A}^\times_{K,S})) \xrightarrow{} H^2(\Hom(\zz[S_K], C_{K,S}))
\end{equation}
Note that we used the fact that $H^1(\Hom(\zz[S_K], C_{K,S}))$ vanishes, which follows from Shapiro's lemma and global class field theory. Moreover this establishes the uniqueness of canonical class in $H^2(\Gamma_{K/F},\Hom(B,A))$. Tate defines this canonical in several steps.

Firstly, $\alpha_1(K,S) \in H^2(\Gamma_{K/F}, \Hom(\zz,C_{K,S})) \simeq H^2(\Gamma_{K/F},C_{K,S})$ is given by the canonical class of global class field theory.

The class $\alpha_2=\alpha_2(K,S)$ is given as follows. Note that there is a local description of the group $H^2(\Gamma_{K/F}, \Hom(\zz[S_K],\mathbb{A}^\times_{K,S}))$, namely, there is an isomorphism
\begin{equation}
\label{IsomHJK}
j_P\circ \res : H^2(\Gamma_{K/F}, \Hom(\zz[S_K],\mathbb{A}^\times_{K,S})) \simeq \bigoplus_{w\in \dot{S}_K} H^2(\Gamma_{K/K^w}, \mathbb{A}^\times_{K,S}).
\end{equation}
Thus, $\alpha_2$ can be defined by its local components. More precisely, Tate takes $\alpha_2(w)$ to be the image of the local fundamental class $\alpha(K_w/F_u) \in H^2(\Gamma_{K/K^w}, K_w^\times)$ under the map $i_w: K_w^\times \hookrightarrow \mathbb{A}^\times_{K,S}$. The pair $(\alpha_1,\alpha_2)$ maps to $0$ in $H^2(\Hom(\zz[S_K],C_{K,S}))$. Consequently, $(\alpha_1,\alpha_2)$ lies in the image of inclusion in (\ref{DefTNSeq}) and its projection to $H^r(\Gamma_{K/F}, \Hom(\zz[S_K]_0, \OO_{K,S}))$ defines the canonical class $\alpha_3^r = \alpha_3^r(K,S)$.

\begin{definition}
\label{KottTorus}
The canonical class $\alpha_{K,S} = \alpha_3^2(K,S) \in H^2(\Gamma_{K/F}, \mathbb{D}_{K,S}(K))$ is called the Kottwitz class corresponding to the triple $(F,K,S)$ where $F$, $K$, and $S$ are as above. If $S=V_K$ we denote the corresponding class by $\alpha_{K/F}$.
\end{definition}
\subsection{Global pro-torus $\mathbb{D}_F$}
\label{DefOfDF}
Let $L\supset K \supset F$ be a tower of finite Galois extensions.
We define a map of $\Gamma_{L/F}$-modules $p_{L/K}:\mathbb{Z}[V_K]_0\to \mathbb{Z}[V_L]_0$ by restricting the map
\begin{equation}
\zz[V_K] \to \zz[V_L]: v \in V_K \mapsto \sum_{w|v} [L_w:K_v] w,
\end{equation}
to the kernels of the corresponding degree maps (see Section \ref{SemiLocal}.) Analogously, one defines $\Gamma_{L/F}$-homomorphisms $p_{L/K,S^\prime,S}:\mathbb{Z}[S_K]_0\to\mathbb{Z}[S^\prime_L]_0$, where $S\subseteq S^\prime$ are sets of places in $F$ (see notation in Section \ref{TateKalethaKottwitzCond}.)

Using $p_{L/K}$ as the transition maps, we define $X_F=\varinjlim_{K}\mathbb{Z}[V_K]_0$, where the colimit is being taken over the directed set of finite Galois extensions $K/F$ contained in a fixed separable closure $F^{\sep}$ of $F$. Then the pro-torus over $F$ having $X_F$ as the module of characters is denoted by $\mathbb{D}_F$.

\subsection{Vanishing theorems}
The following lemma is a consequence of Lemmata 3.1.9 and 3.1.10 in \cite{Kal}.
\begin{lemma}
\label{InjKaletha}
Assume (T1-2) and (K1-2), then we have the following vanishing results:
\begin{itemize}
\item The inflation map
\begin{equation}
H^i(\Gamma_{K/F}, \Hom(\zz[S_K]_0,\OO_{K,S}^\times))\to H^i(\Gamma, \Hom(\zz[S_K]_0, \bar{F}^\times)) = H^i(F, \mathbb{D}_{K,S})
\end{equation}
is injective for $i=1, 2$.
\item Moreover, $H^1(F,\mathbb{D}_{K,S}) = 0$ and $H^2(F_{\fpqc},\mathbb{D}_F) = \varprojlim_{K,S} H^2(F,\mathbb{D}_{K,S})$.
\end{itemize}
\end{lemma}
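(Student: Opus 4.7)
The plan follows the approach of Kaletha \cite[Lemmas 3.1.9, 3.1.10]{Kal}, adapted to the notation of Section \ref{SemiLocal}. The first step is to factor the inflation map as
\[
H^i(\Gamma_{K/F}, \Hom(\zz[S_K]_0, \OO_{K,S}^\times)) \xrightarrow{\alpha_i} H^i(\Gamma_{K/F}, \Hom(\zz[S_K]_0, K^\times)) \xrightarrow{\beta_i} H^i(\Gamma_F, \Hom(\zz[S_K]_0, \bar F^\times)),
\]
with $\alpha_i$ induced by $\OO_{K,S}^\times \hookrightarrow K^\times$ and $\beta_i$ the Hochschild--Serre inflation for $\Gamma_K \subset \Gamma_F$ (valid since $\Gamma_K$ acts trivially on $\zz[S_K]_0$ and $(\bar F^\times)^{\Gamma_K} = K^\times$). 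Injectivity of $\beta_i$ is the easy half: $\Hom(\zz[S_K]_0, \bar F^\times)$ is a direct sum of copies of $\bar F^\times$ as a $\Gamma_K$-module, so Hilbert 90 gives $H^1(\Gamma_K, -) = 0$, and the inflation--restriction sequence then yields injectivity of $\beta_i$ for both $i=1$ (automatic) and $i=2$.

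For the injectivity of $\alpha_i$, I would invoke condition (T2), which asserts triviality of the $S$-class group of $K$ and hence gives the short exact sequence of $\Gamma_{K/F}$-modules
\[
0 \to \OO_{K,S}^\times \to K^\times \xrightarrow{\operatorname{div}} \bigoplus_{v \notin S_K} \zz \to 0.
\]
Since $\zz[S_K]_0$ is $\zz$-free, $\Hom(\zz[S_K]_0, -)$ is exact, and the associated long exact sequence expresses the kernel of $\alpha_i$ as a quotient of $H^{i-1}(\Gamma_{K/F}, \Hom(\zz[S_K]_0, \bigoplus_{v \notin S_K} \zz))$. I would analyse this cohomology by combining the resolution $0 \to \zz[S_K]_0 \to \zz[S_K] \to \zz \to 0$ with Shapiro's lemma, which decomposes the global cohomology into a sum of Tate cohomology groups of decomposition subgroups at places outside $S$; here conditions (K1) and (K2) intervene decisively, providing exactly the orbit and stabiliser compatibility between $S_K$ and $V_K \setminus S_K$ needed to make the connecting maps into $H^i(\Hom(\zz[S_K]_0, \OO_{K,S}^\times))$ vanish, thereby establishing the claimed injectivity.

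A parallel Shapiro/Tate computation under (K1)-(K2) yields $H^1(\Gamma_{K/F}, \Hom(\zz[S_K]_0, \OO_{K,S}^\times)) = 0$, which combined with the just-established injectivity of inflation gives $H^1(F, \mathbb{D}_{K,S}) = 0$. For the final identity, since $\mathbb{D}_F = \varprojlim_{K,S} \mathbb{D}_{K,S}$ with smooth affine finite-level quotients, the Milne-type inverse-limit short exact sequence
\[
0 \to \varprojlim\nolimits^{1} H^1(F, \mathbb{D}_{K,S}) \to H^2(F_{\fpqc}, \mathbb{D}_F) \to \varprojlim_{K,S} H^2(F, \mathbb{D}_{K,S}) \to 0
\]
holds, and the vanishing of $H^1(F, \mathbb{D}_{K,S})$ kills the $\varprojlim\nolimits^{1}$-term. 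The main obstacle in this strategy is precisely the Shapiro-and-Tate analysis required to control the groups $H^{i-1}(\Gamma_{K/F}, \Hom(\zz[S_K]_0, \bigoplus_{v \notin S_K} \zz))$ and $H^1(\Gamma_{K/F}, \Hom(\zz[S_K]_0, \OO_{K,S}^\times))$ under (K1) and (K2): this is where Kaletha's original proof is most intricate, as one must simultaneously track orbit compatibility between $S_K$ and its complement $V_K \setminus S_K$ and then reassemble the semi-local contributions into genuinely global vanishings.
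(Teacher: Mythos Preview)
Your approach is exactly the one the paper adopts: the paper's proof is a citation to Kaletha's Lemmas 3.1.9--3.1.10 together with the remark that the $\varprojlim$ identity follows from the Grothendieck spectral sequence (\ref{GrSpSequence}). The factorisation through $\alpha_i$ and $\beta_i$, the Hilbert~90 argument for $\beta_i$, the (T2)-induced short exact sequence for $\alpha_i$, and the $\varprojlim^1$ argument at the end are all as in the source.

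There is, however, a logical slip in your deduction of $H^1(F,\mathbb{D}_{K,S})=0$. Knowing that $H^1(\Gamma_{K/F},\Hom(\zz[S_K]_0,\OO_{K,S}^\times))=0$ and that the inflation map out of it is \emph{injective} tells you nothing about the target; injectivity out of the zero group is vacuous. What you actually need is that $\beta_1$ is an \emph{isomorphism}---and your own Hilbert~90 argument already gives this, since $H^1(\Gamma_K,\Hom(\zz[S_K]_0,\bar F^\times))=0$ makes the inflation--restriction sequence collapse---so that $H^1(F,\mathbb{D}_{K,S})\cong H^1(\Gamma_{K/F},\Hom(\zz[S_K]_0,K^\times))$. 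You must then show \emph{this} group vanishes, not the $\OO_{K,S}^\times$-version. Equivalently, beyond the vanishing you state you also need $\alpha_1$ to be surjective, which amounts to controlling $H^1(\Gamma_{K/F},\Hom(\zz[S_K]_0,\bigoplus_{v\notin S_K}\zz))$. This is part of the same Shapiro/decomposition-group analysis you already invoke under (K1)--(K2), so the repair is local to that paragraph, but the sentence as written is a non sequitur.
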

In order to prove the latter statement, one uses Grothendieck spectral sequence for the composition of derived limit and derived functor of global sections (see more in Section \ref{CohomologicalGeneralities}).

\subsection{Kottwitz gerbes on finite levels}
 \label{KTClass}
 Let us consider the canonical class $\alpha_{K/F}$. It belongs to $H^2(\Gamma_{K/F}, \mathbb{D}_{K/F}(K))$, where $\mathbb{D}_{K/F}$ is the pro-torus over $F$ with characters $\zz[V_K]_0$, note that $\mathbb{D}_{K/F}$ is split over $K$. The Kottwitz gerbe for $K/F$ is defined as an extension
\begin{equation}
1\to \mathbb{D}_{K/F}(K) \to \Kt_{K/F} \to \Gamma_{K/F} \to 1
\end{equation}
corresponding to the class of $\alpha_{K/F}$. It follows from Lemma \ref{InjKaletha} that the extension $\Kt_{K/F}$ is defined up to conjugation by an element of $\mathbb{D}_{K/F}(K)$.

\subsection{An $F_{\fpqc}$-gerbe attached to the Kottwitz class} Using the equality of cohomology classes  $p_{L/K,S^\prime,S}^\ast(\alpha_{L,S^\prime}) = \alpha_{K,S}^{\infl}\in H^2(\Gal(L/F),\mathbb{D}_{K,S}(L))$ proved by Kottwitz \cite[Lemma 8.3]{Kot} and
passing to the limit in Lemma \ref{InjKaletha}, we obtain an injective map
\begin{equation}
\varprojlim H^2(\Gamma_{K/F}, \Hom(\zz[S_K]_0,\OO_{K,S}^\times)) \to  H^2(F, \mathbb{D}_F),
\end{equation}
which shows that the Kottwitz class defined as a projective limit of the Tate-Nakayama classes $\alpha_{K,S}$ gives rise to a well-defined class in $H^2(F_{\fpqc},\mathbb{D}_F)$, that can be represented by an $\fpqc$-gerbe which we denote by $\Kt_F$. Note that the gerbe $\Kt_F$ is well-defined only up to a non-unique isomorphism, since $H^1(F_{\fpqc}, \mathbb{D}_F)\neq 0$. However, equivalent gerbes in the class of $\Kt_F$ give rise to equivalent Tannakian categories $\Rep(\Kt_F)$.

\section{The category of representations of the Kottwitz gerbes}
\subsection{Abstract definition}
\label{AbstrReps}
Let $S=\Spec(F)$, where $F$ is a field, and let $\LOCLIB(S)$ denote the stack of locally free sheaves of finite rank over $S_{\fpqc}$, i.e. $\LOCLIB(S)_U =\LocLib_R$ for $R = \Gamma(U, \mathcal{O}_U)$, if $U$ is affine.
If $\mathcal{E}$ is an affine gerbe over $S_{\fpqc}$, we define $\REP(\mathcal{E})$ as a stack of cartesian functors
\begin{equation}
\mathcal{E} \to \LOCLIB(S),
\end{equation}
we define $\Rep_S(\mathcal{E})$ or simply $\Rep(\mathcal{E})$ as a groupoid $\REP(\mathcal{E})_S$.

If $\mathcal{E}$ is a neutral gerbe $\TORS(S, G)$ where $G$ is an affine group, we have an equivalence of categories
\begin{equation}
\Rep(\mathcal{E}) \simeq \Rep(G)
\end{equation}
which is given by sending every cartesian functor $\Phi: \mathcal{E}_U \to \LOCLIB(S)_U$ to $\Phi(G_d)$, where $G_d$ is a trivial $G$-torsor. Since $\Aut(G_d)\simeq G$, we see that $G$ acts on the locally free sheaf of finite rank $\Phi(G_d)$ by functoriality of $\Phi$.

\subsection{Representations of a bitorsor attached to a gerbe}
\label{RepsBitor}
Here we recall some results of Deligne contained in \cite{Del}.
As described in Section \ref{LocDescrG}, any affine gerbe $\mathcal{E}$ over $S_{\fpqc}$ with a cartesian section $x\in \mathcal{E}_U$, where $U$ is an affine scheme over $S$, gives rise to a bitorsor $(s,t):E=\Isom(p_1^\ast(x),p_2^\ast(x)) \to U^2$ endowed with a cocycle isomorphism $\psi$ (see (\ref{CocyclePsi})), such that $\psi$ satisfies the coherence condition over $U^4$. Moreover, the pullback of $E$ along the diagonal $\Delta: U \to U^2$ is given by the sheaf of groups $G= \Aut_U(x)$. In terminology of \cite{Breen} and \cite{Ulbrich}, the triple $(G, E, \psi)$ is called a bitorsor cocycle.

Let $V$ be a locally free sheaf of finite rank over $U$. We define the bitorsor $\mathcal{G}_V$ as a scheme over $U^2$ representing the functor that sends any affine scheme $(f_1,f_2): T\to U^2$ to the set $\Isom_{T}(f_1^\ast V, f_2^\ast V)$. Since $V$ is a sheaf over $U$, its natural descent data defines a cocycle isomorphism $\psi_V$ of the form (\ref{CocyclePsi}) satisfying the coherence condition over $U^4$. Note that $\Delta^\ast\mathcal{G}_V = \GL_V$.

\begin{definition}A representation of $(G, E,\psi)$ is a pair $(V,\rho)$, where $V$ is a locally free sheaf of finite rank on $U$ and $\rho$ is a $U^2$-morphism of bitorsors
\begin{equation}
\label{RhoMM}
\rho:E \to \mathcal{G}_V
\end{equation}
such that the restriction of $\rho$ to the diagonal induces a morphism of group schemes $\tilde{\rho}: G \to \GL(V)$ over $U$ and $\rho$ is compatible with the cocycle isomorphisms $\psi$ and $\psi_V$ over $U^3$ and corresponding coherence conditions over $U^4$. A morphism between representations $(V,\rho)$ and $(V^\prime,\rho^\prime)$ is a morphism of sheaves $f: V \to V^\prime$ such that for any affine $U^\prime\xrightarrow{q} U$ and any section $g\in E(U^\prime)$ the following diagram
\begin{equation}
\begin{tikzcd}
(p_1 q)^\ast V \arrow{r}{\rho(g)} \arrow{d}[swap]{(p_1 q)^\ast f} & (p_2 q)^\ast V \arrow{d}{(p_2 q)^\ast f}\\
(p_1 q)^\ast V^\prime \arrow{r}{\rho^\prime(g)} & (p_2 q)^\ast V^\prime
\end{tikzcd}
\end{equation}
is commutative.
\end{definition}

We let $\Rep(U:E)$ denote the category of representations of a bitorsor cocycle $(G, E, \psi)$. Its tensor structure is given by $(V,\rho)\otimes (V^\prime,\rho^\prime) = (V\otimes V^\prime, \rho\otimes\rho^\prime)$, where $\rho\otimes\rho^\prime$ denotes the morphism $E \to \mathcal{G}_{V\otimes V^\prime}$ induced by $\rho$ and $\rho^\prime$. The internal hom-functor $\HomI((V,\rho),(V^\prime,\rho^\prime))$ is given by the sheaf $\Hom(V,V^\prime)$ with an action of $E$ defined for any $U^\prime\xrightarrow{q} U$ and any $g\in E(U^\prime)$  by sending $(p_1 q)^\ast f\in (p_1 q)^\ast \Hom(V,V^\prime)$ to $\rho^\prime(g)\circ (p_1 q)^\ast f\circ \rho^\prime(g)^{-1}\in  (p_2 q)^\ast \Hom(V,V^\prime)$.   The tensor structure and internal hom-functor that we described turn $\Rep(U:E)$ into a Tannakian category.
The following result gives a more explicit description of the tannakian category $\Rep(\mathcal{E})$.
\begin{propos}[Deligne, \cite{Del}]
Let $\mathcal{E}$ be an affine gerbe over $\Spec(F)_{\fpqc}$, where  $F$ is field. Let $E$ be the bitorsor attached to $\mathcal{E}$ and its local section over an affine scheme $U$. Then the category $\Rep(U:E)$ defined above is equivalent to $\Rep(\mathcal{E})$.
\end{propos}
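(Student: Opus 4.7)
The plan is to construct quasi-inverse tensor functors between $\Rep(\mathcal{E})$ and $\Rep(U:E)$ and verify they are tensor equivalences. In one direction, the restriction functor $R: \Rep(\mathcal{E}) \to \Rep(U:E)$ sends a cartesian functor $\Phi: \mathcal{E} \to \LOCLIB(S)$ to $V := \Phi_U(x)$ together with the morphism $\rho$ defined as follows: for any affine $q: U^\prime \to U^2$ and section $g \in E(U^\prime) = \Isom(p_1^\ast x, p_2^\ast x)(U^\prime)$, the cartesian structure and functoriality of $\Phi$ yield an isomorphism $\Phi(g):(p_1 q)^\ast V \MapsTo (p_2 q)^\ast V$. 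Assembling these for varying $q$ and $g$ produces the morphism of bitorsors $\rho: E \to \mathcal{G}_V$; the compatibility of $\Phi$ with composition of morphisms in $\mathcal{E}$ translates directly into compatibility of $\rho$ with the cocycle $\psi$ over $U^3$, and the coherence over $U^4$ is automatic from associativity in $\mathcal{E}$. Naturality of $\Phi$ on arrows gives that $R$ is functorial.

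In the other direction, one constructs a functor $T: \Rep(U:E) \to \Rep(\mathcal{E})$ via descent. Given $(V, \rho)$ and an object $y \in \mathcal{E}_T$ for an affine test $T \to S$, local triviality of the gerbe furnishes an fpqc cover $T^\prime \to T$ together with a map $T^\prime \to U$ and an isomorphism $y|_{T^\prime} \simeq x|_{T^\prime}$. One sets the pullback of $T(y)$ to $T^\prime$ to be the pullback of $V$ along $T^\prime \to U$. On $T^\prime \times_T T^\prime$ any two such trivializations differ by a section of $E$, and $\rho$ supplies the corresponding gluing isomorphism between the two pullbacks of $V$. The compatibility of $\rho$ with $\psi$ over $U^3$ is exactly the cocycle condition on this gluing datum over $T^\prime \times_T T^\prime \times_T T^\prime$, so effective fpqc descent for locally free sheaves of finite rank produces a unique (up to canonical isomorphism) object $T(y) \in \LOCLIB(T)$. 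Arrows $y \to y^\prime$ in $\mathcal{E}_T$ reduce locally, after further refinement, to sections of $E$, and $\rho$ assigns them to the required morphisms of the descended sheaves; cocycle compatibility again ensures these assignments glue.

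To conclude one checks that $R$ and $T$ are mutually quasi-inverse tensor equivalences. The composition $R \circ T$ evaluated at $(V,\rho)$ returns $(V,\rho)$ on the nose because the descent presentation attached to the tautological section $x$ is the trivial one, so restriction recovers the original bitorsor morphism. For $T \circ R$ applied to $\Phi$, one observes that both $\Phi$ and $T(R(\Phi))$ are cartesian functors out of $\mathcal{E}$ that coincide on $x \in \mathcal{E}_U$ together with its automorphism and isomorphism data; uniqueness in descent (combined with $\mathcal{E}$ being a gerbe, so every object is locally isomorphic to a pullback of $x$) yields a canonical isomorphism $\Phi \simeq T(R(\Phi))$. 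The tensor structures match on both sides since the tensor product on $\Rep(U:E)$ was defined fibrewise in a way that corresponds exactly to the pointwise tensor product of cartesian functors into $\LOCLIB(S)$, and the same applies to the internal Hom.

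The main obstacle lies in the construction of $T$, specifically in showing that the descent datum provided by $\rho$ is coherent enough to define a \emph{functor} on all of $\mathcal{E}$, not merely a rule on local sections; this requires carefully verifying that two different choices of local trivialization of an object $y$ lead, via the coherence relation on $\psi$ over $U^4$ transported through $\rho$, to the same descended sheaf up to a canonical isomorphism that is compatible with further refinements. Once this coherence is established, the tensor-compatibility and the fact that the equivalence respects morphisms follow from unravelling the definitions.
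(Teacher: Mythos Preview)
The paper does not supply its own proof of this proposition; it is stated with attribution to Deligne \cite{Del} and used as a black box. Your outline is the standard argument and matches how Deligne organizes the equivalence in \cite[\S3]{Del}: one restricts a cartesian functor $\Phi$ to the chosen section $x\in\mathcal{E}_U$ to extract $(V,\rho)$, and in the other direction one uses that the bitorsor cocycle $(G,E,\psi)$ encodes precisely an fpqc descent datum for the neutral gerbe $\TORS(U,G)$, so that a morphism $\rho:E\to\mathcal{G}_V$ compatible with $\psi$ and $\psi_V$ is exactly a descent datum on the locally free sheaf $V$ relative to $U\to\Spec(F)$ together with its $\GL_V$-equivariance, which by effective fpqc descent for $\LOCLIB$ produces the cartesian functor on $\mathcal{E}$.

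Your identification of the main obstacle is accurate: the content lies in checking that the assignment $y\mapsto T(y)$ is independent of the chosen local trivialization and functorial in $y$, and this is precisely where the coherence condition on $\psi$ over $U^4$ (transported through $\rho$) is consumed. One small point worth tightening: when you say $R\circ T$ returns $(V,\rho)$ ``on the nose'', strictly speaking you recover it up to the canonical isomorphism coming from the unit of the descent adjunction, not literally; this is harmless but should be phrased as a natural isomorphism rather than an equality. Otherwise the sketch is correct.
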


Applying this proposition to the case when $\mathcal{E}$ is a smooth algebraic gerbe having a section over a finite Galois extension $K/F$ (see Theorem \ref{ComparisonNonAb}), we get a morphism between the corresponding extensions of $\Gamma_{K/F}$ by the $K$-points of sheaves that locally represent the respective bands (see Section \ref{PassingToGalois}). Namely, a representation $(V,\rho)$, where $V$ is a finite-dimensional vector space over $K$ and $\rho$ is a morphism $E\to\mathcal{G}_V$ as in (\ref{RhoMM}) gives rise to the following diagram
\begin{equation}
\label{diagramFunctor}
\begin{tikzcd}
1 \arrow{r} & G(K) \arrow{r} \arrow{d}{\widetilde{\nu}} & E(K) \arrow{r}\arrow{d}{\rho(K)} & \Gamma_{K/F} \arrow{r} \arrow[equal]{d}& 1\\
1 \arrow{r} & \GL_V(K) \arrow{r}          & \mathcal{G}_V(K) \arrow{r}   & \Gamma_{K/F} \arrow{r}& 1
\end{tikzcd}
\end{equation}
where $\rho(K)$ is a homomorphism and $\widetilde{\nu}$ is induced by an algebraic morphism $\nu: G \to \GL_V$ over $K$.

\subsection{Explicit definition}
\label{ExplDefRep}
Let $K/F$ be a finite Galois extension.  We define the category of representations of a finite level Kottwitz gerbe $\Kt_{K/F}$ (see Section \ref{KTClass}) as the category of pairs $(V,\rho)$, where $V$ is a finite-dimensional vector space over $K$ and $\rho$ is a homomorphism $\Kt_{K/F} \to \mathcal{G}_V(K)=\GL_V(K)\rtimes\Gamma_{K/F}$, such that
\begin{itemize}
\item $\rho$ induces the identity on $\Gamma_{K/F}$.
\item the restriction of $\rho$ to the kernels is induced by an algebraic morphism of $K$-group schemes $\nu: \mathbb{D}_{K/F} \to \GL_V$.
\end{itemize}
A morphism between two representations $(V,\rho)$ and $(V^\prime,\rho^\prime)$ is defined as a morphism of $K$-vector spaces $V\to V^\prime$ such that for any $g\in \Kt_{K/F}$ the following diagram
\begin{equation}
\begin{tikzcd}
p_1^\ast V \arrow{r}{\rho(g)} \arrow{d}[swap]{p_1^\ast f} & p_2^\ast V \arrow{d}{p_2^\ast f}\\
p_1^\ast V^\prime \arrow{r}{\rho^\prime(g)} & p_2^\ast V^\prime
\end{tikzcd}
\end{equation}
is commutative. We denote this category $\Rep_{\Gamma}(\Kt_{K/F})$ in order to distinguish it from the category $\Rep(\Kt_{K/F})$ defined in Section \ref{AbstrReps}. Endowing $\Rep_{\Gamma}(\Kt_{K/F})$ with the tensor structure and internal hom-functor defined as in Section \ref{RepsBitor}, we see that it is a Tannakian category over $F$ neutralized by a finite Galois extension $K/F$.

Remark that if $F$ is a global field, then we could analogously define $\Rep_{\Gamma}(\Kt_{K,S})$ for any finite set of places $S$ satisfying the conditions of Section \ref{TateKalethaKottwitzCond}. It follows from the definitions of $\alpha_{K,S}$ and $\alpha_{K/F}$ that $\Rep_{\Gamma}(\Kt_{K/F}) \simeq \varinjlim_{S} \Rep_{\Gamma}(\Kt_{K,S})$.

Let $F\subset K \subset L$, where $L$ and $K$ are finite Galois extensions. Following Kottwitz one can also define $L/F$-Galois gerbes $\Kt_{K/F}^{\infl}$ that correspond to inflated canonical classes $\infl(\alpha_{K/F}) \in H^2(\Gal(L/F), \mathbb{D}_{K/F}(L))$. It follows by descent that $\Rep_{\Gamma}(\Kt_{K/F})\simeq \Rep_{\Gamma}(\Kt_{K/F}^{\infl})$. It is shown in Section 8 in \cite{Kot} that there exists a homomorphism $\Kt_{L/F}\to \Kt_{K/F}^{\infl}$ that gives rise to the functor $\Rep_{\Gamma}(\Kt_{K/F}^{\infl})\to \Rep_{\Gamma}(\Kt_{L/F})$. Consequently, we obtain a functor
\begin{equation}
\label{RepsTransition}
\Rep_{\Gamma}(\Kt_{K/F}) \to \Rep_{\Gamma}(\Kt_{L/F}).
\end{equation}
It follows formally that (\ref{RepsTransition}) is fully faithful, hence we define
\begin{equation}
\Rep_{\Gamma}(\Kt_F) = \varinjlim_{K/F\text{ fin. Galois}} \Rep_{\Gamma}(\Kt_{K/F}).
\end{equation}
\begin{propos}
\label{RepsEqui}There is an equivalence of Tannakian categories
$\Rep_{\Gamma}(\Kt_F)\approx\Rep(\Kt_F)$.
\end{propos}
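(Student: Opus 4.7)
The plan is to first upgrade the finite-level identification and then pass to the limit. To begin with, I would show that the abstract category $\Rep(\Kt_F)$ itself admits a presentation as a colimit $\varinjlim_K \Rep(\Kt_{K/F})$, where the colimit runs over finite Galois $K/F$ and $\Rep(\Kt_{K/F})$ is understood in the abstract sense of Section \ref{AbstrReps}. This is because $\Kt_F$ is banded by the pro-torus $\mathbb{D}_F=\varprojlim \mathbb{D}_{K/F}$, any cartesian functor $\Kt_F\to\LOCLIB(S)$ takes values in finite-rank sheaves, and the induced action of $\mathbb{D}_F$ on such a sheaf factors through some finite-level quotient $\mathbb{D}_{K/F}$; hence the functor itself factors through the algebraic quotient gerbe $\Kt_{K/F}$.

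Next, at each finite level I invoke Deligne's description recalled in Section \ref{RepsBitor}: choosing a local section $x\in\Kt_{K/F}(\Spec K)$ (which exists since the gerbe is neutralized by $K/F$), we obtain an equivalence $\Rep(\Kt_{K/F})\simeq \Rep(\Spec K : E_{K/F})$, where $E_{K/F}=\Isom(p_1^\ast x,p_2^\ast x)$ is the associated bitorsor over $U^2=\Spec(K\otimes_F K)$. The third and most substantive step is to identify $\Rep(\Spec K:E_{K/F})$ with the explicit category $\Rep_{\Gamma}(\Kt_{K/F})$ from Section \ref{ExplDefRep}. Here I use the Galois decomposition $K\otimes_F K\simeq \prod_{\sigma\in\Gamma_{K/F}} K$ from Section \ref{PassingToGalois}: the bitorsor $E_{K/F}$ decomposes as a disjoint union over $\Gamma_{K/F}$ of $\mathbb{D}_{K/F}$-bitorsors over $\Spec K$, whose $K$-points recover the extension $\Kt_{K/F}$ of Section \ref{KTClass}, and $\mathcal{G}_V$ decomposes analogously into $\GL_V$-bitorsors. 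Given a bitorsor morphism $\rho:E_{K/F}\to\mathcal{G}_V$, its value on $K$-points yields a group homomorphism $\Kt_{K/F}\to \GL_V(K)\rtimes\Gamma_{K/F}$ satisfying the two conditions of Section \ref{ExplDefRep}: it induces the identity on $\Gamma_{K/F}$ (since $\rho$ is a morphism over $U^2$), and its restriction to the band $\mathbb{D}_{K/F}(K)\to\GL_V(K)$ is the $K$-points of the algebraic morphism $\widetilde{\nu}:\mathbb{D}_{K/F}\to\GL_V$ obtained by restricting $\rho$ to the diagonal $\Delta:U\to U^2$. Conversely, given data $(V,\rho)$ as in Section \ref{ExplDefRep}, the morphism on $K$-points together with the algebraicity of $\widetilde{\nu}$ rigidifies a unique morphism of schemes on each component of $E_{K/F}$: once one $K$-point of a bitorsor component is specified, the full bitorsor map is determined by the $\mathbb{D}_{K/F}$-equivariance and the algebraic morphism $\widetilde{\nu}$. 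The cocycle compatibility (\ref{CocyclePsi}) on $\rho$ reduces, on $K$-points, to the assertion that $\rho(K)$ be a group homomorphism, while the $U^4$-coherence (\ref{Associativity}) is automatic from associativity in $\Kt_{K/F}$.

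Finally, I verify that the equivalences $\Rep(\Kt_{K/F})\simeq \Rep_{\Gamma}(\Kt_{K/F})$ are compatible with the transition functors induced by the homomorphisms $\Kt_{L/F}\to \Kt_{K/F}^{\infl}$ of Section \ref{ExplDefRep}, so that passing to $\varinjlim_K$ on both sides gives the asserted equivalence of Tannakian categories. Compatibility of tensor product, internal Hom, unit, and duals follows because both sides define these operations via the same formulas on the underlying $K$-vector spaces (see Section \ref{RepsBitor}). The main obstacle will be the bookkeeping in Step 3: carefully checking that specifying a bitorsor morphism over $U^2$ amounts, in the Galois-neutralized case, to specifying a homomorphism of abstract extensions plus the algebraicity of the restriction to the band. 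All other checks are essentially formal unwindings of Deligne's theorem.
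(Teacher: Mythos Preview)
Your proposal is correct and follows essentially the same route as the paper: reduce to finite levels via the factorization of any finite-rank representation through an algebraic quotient of $\mathbb{D}_F$, invoke Deligne's bitorsor description, pass to $K$-points using the Galois decomposition of $\Spec(K\otimes_F K)$, and check compatibility with the transition functors before taking the colimit.

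The only noteworthy difference is how the finite-level equivalence is established. You construct the inverse functor by hand, arguing that a homomorphism of extensions together with the algebraicity of $\widetilde{\nu}$ rigidifies a unique bitorsor morphism component by component. The paper instead observes that the functor $\theta^{K,S}:\Rep(\Kt_{K,S})\to\Rep_{\Gamma}(\Kt_{K,S})$ induces the identity on bands (checked on $L$-points for all finite Galois $L\supseteq K$) and then appeals to the general Tannakian/Giraud fact that a morphism of gerbes banded by the identity is automatically an equivalence. Your argument is more explicit and self-contained; the paper's is shorter but presupposes that piece of non-abelian cohomology. Both are valid, and the ``bookkeeping in Step 3'' you flag as the main obstacle is exactly what the paper's soft band-argument circumvents. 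One small point: you index by $K/F$ whereas the paper indexes by pairs $(K,S)$; this is harmless since $\Rep_{\Gamma}(\Kt_{K/F})\simeq\varinjlim_S\Rep_{\Gamma}(\Kt_{K,S})$ and any finite-rank representation already factors through some algebraic $\mathbb{D}_{K,S}$.
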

\begin{proof} Note that any representation of $\Kt_F$ in the stack of locally free sheaves of finite rank factors through a representation of an algebraic gerbe $\Kt_{K,S}$ for sufficiently large $K$ and $S$ satisfying the conditions of Section \ref{TateKalethaKottwitzCond}. Therefore, we can rewrite $\Rep(\Kt_F)$ as $\varinjlim_{K,S}\Rep(\Kt_{K,S})$.

Note that the class of $\Kt_{K,S}$ lies in $H^2(\Gal(K/F),\mathbb{D}_{K,S})$ and therefore it is neutralized by the finite Galois extension $K/F$.
As explained in Section \ref{RepsBitor}, we have a functor $\theta^{K,S}: \Rep(\Kt_{K,S}) \to \Rep_{\Gamma}(\Kt_{K,S})$ that sends a representation $(V,\rho)$ of the bitorsor cocycle attached to $\Kt_{K,S}$ to the pair $(V,\rho(K))$ (see diagram (\ref{diagramFunctor}). We claim that $\theta^{K,S}$ is an equivalence of Tannakian categories. Indeed, $\theta^{K,S}$ induces a morphism between corresponding bands $\theta^{K,S}_\ast: \mathbb{D}_{K,S} \to \mathbb{D}_{K,S}$ such that, for any finite Galois extension $L/F$ containing $K$, the homomorphism $\theta^{K,S}_\ast(L): \mathbb{D}_{K,S}(L) \to \mathbb{D}_{K,S}(L)$ is the identity homomorphism of groups of points. Passing to the limit of $\theta^{K,S}_\ast$ and noticing that the transition functors defining $\Rep(\Kt_{F})$ and $\Rep_{\Gamma}(\Kt_F)$ are compatible, we obtain the desired equivalence.
\end{proof}
\subsection{Algebraic $1$-cocycles and morphisms of gerbes}
\label{KtBGandMorphisms}
Let $K$ be a finite Galois extension of $F$, let $G$ be a linear algebraic group over $F$, and let $\mathbb{D}_{K/F}$ be the band of the gerbe $\Kt_{K/F}$. In Section 2.3 of \cite{Kot}, Kottwitz defined the set $Z^1(\Kt_{K/F}, G(K))$ as a set of pairs $(x,\nu)$, where $x$ is an abstract $1$-cocycle of $\Kt_{K/F}$ with values in $G(K)$, i.e. $x$ is a map $w\mapsto x_w$ satisfying
\begin{equation}
\label{cocyclecond}
x_{w_1 w_2} = x_{w_1} w_1(x_{w_2}),
\end{equation}
where $\Kt_{K/F}$ acts on $G(K)$ via projection to $\Gamma_{K/F}$, and $\nu: \mathbb{D}_{K/F}(K) \to G(K)$ is induced by an algebraic morphism $\mathbb{D}_{K/F}\to G$ over $K$ such that $x_d = \nu(d)$ for any $d\in \mathbb{D}_{K/F}$.

Note that $\Gamma_{K/F}$ acts on the set of homomorphisms $\mathbb{D}_{K/F}(K)\to G(K)$ via the formula $\sigma(\nu)(d) = \sigma(\nu(\sigma^{-1}(d)))$. By $\Int(x)$ for $x\in G(K)$ we denote the inner automorphism defined by $g\mapsto x g x^{-1}$.

The cocycle condition (\ref{cocyclecond}) then implies that $\Int(x_w) \circ \sigma(\nu) = \nu$ whenever $w$ maps to $\sigma$.

There is an obvious action of $G(K)$ on the set of algebraic $1$-cocycles: $g\in G(K)$ transforms $(x,\nu)$ into $(w\mapsto g x_w w(g)^{-1}, \Int(g) \circ \nu)$. Then a pointed set $H^1_{\alg}(\Kt_{K/F}, G(K))$ is defined as the quotient of $Z^1(\Kt_{K/F},G(K))$ by the action of $G(K)$.

 Let $\mathcal{G}$ be a neutral gerbe $\TORS(S, G)$. The category of representations of $\mathcal{G}$ is equivalent to $\Rep_F(G)$. Moreover, for any finite Galois extension $K/F$ the gerbe $\mathcal{G}$ can be represented by a $K/F$-Galois gerbe given by the split extension
\begin{equation}
\xymatrix{
  1 \ar[r]^{} & G(K) \ar[r]^{} & \mathcal{G}_{K/F} \ar[r]^{} & \Gamma_{K/F} \ar[r]^{} & 1   }
\end{equation}
that is $\mathcal{G}_{K/F} \simeq G(K) \rtimes \Gamma_{K/F}$ with the natural action of $\Gamma_{K/F}$ on the set of $K$-points of $G$.

Recall the product formula for $G(K) \rtimes \Gamma_{K/F}$
\begin{equation}
(g, \sigma) \bullet (h, \tau) = (g \sigma(h), \sigma \tau),
\end{equation}
which also gives the formula for the action of $G(K)$ by conjugation, namely
\begin{equation}
\label{Conjugation}
\Int(g,1) \circ (h, \tau) = (g h \tau(g)^{-1}, \tau).
\end{equation}

We deduce from (\ref{cocyclecond}) and the definition of $\Kt_{K/F}$ that an algebraic $1$-cocycle $(x,\nu)$ defines a homomorphism of extensions
\begin{equation}
\label{HomsKt}
\begin{tikzcd}
1\ar{r} & \mathbb{D}_{K/F}(K) \ar{r} \ar{d}{\nu} & \Kt_{K/F} \ar{d}{x} \ar{r} & \Gamma_{K/F} \ar{r} \ar[equal]{d} & 1\\
1\ar{r} & G(K) \ar{r} & \mathcal{G}_{K/F} \ar{r} & \Gamma_{K/F} \ar{r} & 1
\end{tikzcd}
\end{equation}
that agrees with the situation described in Section \ref{PassingToGalois}.

\section{Representations of the Kottwitz gerbes with $G$-structure}
\label{GStructure}
Here we compare the Kottwitz set $B(F,G)$ and the set of isomorphism classes of tensor functors $\Rep(G) \to \Rep(\Kt_F)$. Namely, we prove the following
\begin{propos}
\label{RepsGStruct}
For any linear algebraic group $G$ over $F$, the set of isomorphism classes of tensor functors $\Rep(G)\to\Rep(\Kt_F)$ can be identified with the pointed set $B(F,G)$ defined by Kottwitz.
\end{propos}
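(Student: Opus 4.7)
The plan is to apply Tannakian duality for affine gerbes in order to translate tensor functors into morphisms of gerbes, and then combine this with the description of $B(F,G)$ via algebraic $1$-cocycles recalled in Section \ref{KtBGandMorphisms}. Let $BG = \TORS(F_{\fpqc}, G)$ be the neutral $G$-gerbe; by Section \ref{AbstrReps} there is an equivalence $\Rep(BG) \simeq \Rep(G)$. A morphism of gerbes $\Kt_F \to BG$ induces a tensor functor $\Rep(G) \simeq \Rep(BG) \to \Rep(\Kt_F)$ by pullback, and the bitorsor description of Section \ref{RepsBitor} furnishes the inverse: a tensor functor restricts, on a trivialising extension $K/F$, to a diagram of the shape (\ref{diagramFunctor}), which is exactly the data of a morphism of the associated $K/F$-Galois gerbes. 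In this way I would establish a bijection between the set of isomorphism classes of tensor functors $\Rep(G)\to\Rep(\Kt_F)$ and the set of $2$-isomorphism classes of morphisms of gerbes $\Kt_F \to BG$.

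Next I would reduce to finite Galois levels. Using Proposition \ref{RepsEqui} and the fact that any tensor functor out of $\Rep(G)$ is determined by its value on a faithful representation of $G$, whose image lies in some $\Rep_{\Gamma}(\Kt_{K/F})$ for sufficiently large $K$, the second set can be rewritten as the colimit $\varinjlim_{K} \Hom(\Kt_{K/F}, \mathcal{G}_{K/F})/\!\sim_2$, where $\mathcal{G}_{K/F} = G(K) \rtimes \Gamma_{K/F}$ is the split $K/F$-Galois gerbe representing $BG$ at level $K$ and $\sim_2$ denotes the equivalence relation of $2$-isomorphism. At each finite level, diagram (\ref{HomsKt}) from Section \ref{KtBGandMorphisms} identifies the set of algebraic $1$-cocycles $Z^1_{\alg}(\Kt_{K/F}, G(K))$ with the set of morphisms of $K/F$-Galois gerbes $\Kt_{K/F} \to \mathcal{G}_{K/F}$; the product formula (\ref{Conjugation}) then translates the action of $G(K)$ on cocycles into conjugation by elements of the kernel of $\mathcal{G}_{K/F} \to \Gamma_{K/F}$, which is precisely the relation $\sim_2$ between gerbe homomorphisms. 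This yields $H^1_{\alg}(\Kt_{K/F}, G(K)) = \Hom(\Kt_{K/F}, \mathcal{G}_{K/F})/\!\sim_2$.

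Passing to the colimit and using the compatibility of the transition maps $\Kt_{L/F} \to \Kt_{K/F}^{\infl}$ with the colimit defining $B(F,G)$, the previous identifications assemble into
\begin{equation*}
B(F,G) = \varinjlim_{K} H^1_{\alg}(\Kt_{K/F}, G(K)) \;\longleftrightarrow\; \{\text{tensor functors } \Rep(G) \to \Rep(\Kt_F)\}/\!\simeq,
\end{equation*}
where the distinguished point on the left corresponds to the trivial tensor functor sending every representation of $G$ to the underlying vector space with trivial $\Kt_F$-action. The main technical obstacle I expect is verifying that every homomorphism of $K/F$-Galois gerbes $\Kt_{K/F} \to \mathcal{G}_{K/F}$ restricts on the kernels to an \emph{algebraic} (and not merely abstract) homomorphism $\mathbb{D}_{K/F} \to G$ of group schemes over $K$. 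This algebraicity is hard-wired into the definition of $Z^1_{\alg}$ but must be extracted from the underlying Tannakian data; here I would rely on smoothness of $G$ together with the comparison isomorphism of Theorem \ref{ComparisonNonAb} to ensure that the relevant gerbe morphisms are representable in the required sense, so that the bitorsor map $\rho$ of (\ref{RhoMM}) indeed arises from a morphism of $K$-group schemes upon restriction to the diagonal.
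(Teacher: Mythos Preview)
Your overall strategy---Tannakian duality converts tensor functors into gerbe morphisms $\Kt_F \to BG$, then compare with algebraic $1$-cocycles at finite Galois levels---is the same as the paper's. The gap lies in the passage from $2$-isomorphism classes of \emph{$\fpqc$} gerbe morphisms to $G(K)$-conjugacy classes of $K/F$-Galois gerbe homomorphisms. You assert this identification directly, but two issues remain unaddressed: (i) two $\fpqc$ morphisms might become $2$-isomorphic only after enlarging the cover beyond $K$, so the equivalence relation $\sim_2$ need not be just conjugation by $G(K)$; (ii) conversely, it is not automatic that every algebraic $1$-cocycle lifts to a genuine morphism of $\fpqc$ gerbes (you flag this, but your proposed fix via Theorem~\ref{ComparisonNonAb} applied to $G$ does not resolve it).

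The paper closes this gap by inserting one structural step you omit: the stratification
\[
\Hom(\Kt_F,\mathcal{G}) = \bigsqcup_{\nu}\Hom_\nu(\Kt_F,\mathcal{G})
\]
by the induced morphism of bands $\nu:\mathbb{D}_F\to G$. For each fixed $\nu$ (which factors through some algebraic quotient $\mathbb{D}_{K,S}$), Giraud's theory makes $\HOM_\nu(\Kt_{K,S},\mathcal{G})$ itself a gerbe, banded by the centralizer $C_\nu$. The crucial point is that $C_\nu$ is smooth (centralizer of a torus in a smooth group), so Theorem~\ref{ComparisonNonAb} applies to \emph{this} gerbe, not to $BG$. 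This forces $\HOM_\nu$ to be an \'etale gerbe, whence both $\Hom_\nu(\Kt_{K,S},\mathcal{G})/\!\sim$ and Kottwitz' $B_\nu(F,G)$ are identified with the same Galois cohomology set $H^1(F_{\sm},J_\nu)$ for a form $J_\nu$ of $C_\nu$ (Proposition~\ref{GerbesGaloisGerbes}). Without isolating $C_\nu$, you have no handle on why \'etale (equivalently, Galois) data suffice to detect $\fpqc$ $2$-isomorphism, and your invocation of Theorem~\ref{ComparisonNonAb} targets the wrong band.
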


We use Tannakian duality to express everything in terms of gerbes. Namely, let $\mathcal{G}$ be a neutral gerbe attached to $G$, then the category $\Hom^{\otimes}_F(\Rep(G), \Rep(\Kt_F))$ is equivalent to the category $\Hom_{F_{\fpqc}}(\Kt_F, \mathcal{G})$ consisting of cartesian functors between the corresponding gerbes over $F_{\fpqc}$. The heart of the comparison lies in the analysis of the gerbes $\HOM_{\nu}(\Kt_{F},\mathcal{G})$ introduced in the following section.

\subsection{Definition and main properties of the gerbe $\HOM_{\nu}(\mathcal{E}, \mathcal{H})$}
Here we work in a more abstract setting. Let $\mathcal{E}$ and $\mathcal{H}$ be algebraic gerbes over $S$ having $L$ and $M$ as their bands respectively. Here $S$ is the site of affine schemes over $F$, where $F$ is field, endowed with the $\fl$ topology.

Let $\nu: L \to M$ be a morphism of bands. The category $\Hom_\nu(\mathcal{E},\mathcal{H})$ is defined as a full subcategory of $\Hom(\mathcal{E},\mathcal{H})$ whose objects are $\nu$-morphisms of corresponding gerbes. It is also useful to define a fibred category $\HOM_\nu(\mathcal{E},\mathcal{H})$ whose objects over $U$ are the morphisms of gerbes
\begin{equation}
\restr{\mathcal{E}}{U} \to \restr{\mathcal{H}}{U}
\end{equation}
that are banded by $\restr{\nu}{U}$.

Let us recall the main properties of $\HOM_\nu(\mathcal{E},\mathcal{H})$:

\begin{itemize}
\label{HOMnuProps}
\item  $\HOM_\nu(\mathcal{E},\mathcal{H})$ is a gerbe over $S$.
\item  The band of $\HOM_\nu(\mathcal{E},\mathcal{H})$ is identified with $C_\nu$, the centralizer of $\nu$.
\item  If $L=M$ and $u = \id_L$, then $\HOM_\nu(\mathcal{E},\mathcal{H})$ is banded by the centre of $L$.
\item  There exists a morphism $\mathcal{E} \to \mathcal{H}$ banded by $\nu$  if and only if $\HOM_\nu(\mathcal{E},\mathcal{H})$ is a neutral gerbe.
\end{itemize}

Note that $\Hom_{\nu}(\mathcal{E},\mathcal{H})$ is given by the groupoid $\HOM_{\nu}(\mathcal{E},\mathcal{H})_{F}$.

For more details see IV.2 in \cite{Giraud}.

\subsection{Basic example} Consider the gerbe $\HOM_{1}(\Kt_F, \mathcal{G})$ that is fibered in groupoids of the morphisms between $\Kt_F\times U$ and $\mathcal{G}\times U$ banded by the trivial morphism of bands over $U$, in this situation we have an equivalence of gerbes
\begin{equation}
\label{BasicExEq}
\HOM_{1}(\Kt_F, \mathcal{G}) \simeq \mathcal{G},
\end{equation}
and the latter is equivalent to the stack $\TORS(S,G)$. Passing to the fibre over $F$, we obtain the groupoid $\Tors(S,G)$ of $G$-torsors over $S$. Noticing that under the equivalence (\ref{BasicExEq}) $2$-morphisms between $\Kt_F$ and $\mathcal{G}$ banded by the trivial morphism correspond to morphisms between $G$-torsors, we obtain an isomorphism of pointed sets
\begin{equation}
\Hom_1(\Kt_F,\mathcal{G})/_{\sim} \simeq H^1(F, G).
\end{equation}
\subsection{Reduction steps}
Recall that every morphism of gerbes is banded by a unique morphism of bands, and that every morphism of bands is $\fpqc$-locally representable by a morphism of sheaves, thus we can rewrite $\Hom_{F_{\fpqc}}(\Kt_F, \mathcal{G})$ as a disjoint union of $\Hom_\nu(\Kt_F,\mathcal{G})$ over the set of representatives of the morphisms of bands over $F^{\alg}$
\begin{equation}
\label{Stratification}
\Hom(\Kt_F,\mathcal{G}) =\bigsqcup_{\nu:\ \mathbb{D}_F \to G \text{ over } F^{\alg}}  \Hom_\nu(\Kt_F,\mathcal{G})
\end{equation}

However, any $\nu : \mathbb{D}_{F} \to G$ factors through an algebraic quotient of the protorus $\mathbb{D}_{F}$, that is through some $\mathbb{D}_{K,S}$ where $K=K(\nu)$ is finite Galois, and $S=S(\nu)$ is a finite set of places that can be taken sufficiently large (so that $H^1(F, \mathbb{D}_{K,S}) = 0$). Using ``surjective functoriality'' of morphisms of bands (see \cite{Giraud} Proposition IV.2.3.18), we obtain $\Hom_\nu(\Kt_F,\mathcal{G}) \simeq \Hom_\nu(\Kt_{K,S},\mathcal{G})$. Thus, we reduced the computation of $\Hom(\Kt_F,\mathcal{G})$ to the question about morphisms between algebraic gerbes $\Kt_{K,S}$ and $\mathcal{G}$, although we note that we are still working over $F_{\fpqc}$.

Consider the gerbe $\mathcal{E} = \HOM_{\nu}(\Kt_{K,S},\mathcal{G})$. As mentioned in Section  \ref{HOMnuProps}, this gerbe is banded by the centralizer of $\nu$ in $G$, which we denote by $C_{\nu}$. It is representable by an algebraic group scheme over a finite extension of $F$, because $\nu: \mathbb{D}_{K,S}\to G$ is a morphism between algebraic groups over $F^{\alg}$. Since $\Kt_{K,S}$ is banded by the torus $\mathbb{D}_{K,S}$ and $G$ is a smooth group, we see that $C_{\nu}$ is smooth (see Corollaire 2.4 Exp.\,XI in \cite{SGA3}). Thus we are in the situation of the comparison isomorphism of Proposition \ref{ComparisonNonAb}, which shows that $\mathcal{E}$ is a gerbe over $F_{\sm}$. Using this we see that there exists a finite separable extension of $F$ over which $\mathcal{E}$ has a section and any two \'{e}tale-local sections are \'{e}tale-locally isomorphic.

Let $f: \Kt_{K,S} \to \mathcal{G}$ be a $\nu$-morphism, then as described in Section \ref{PassingToGalois}, for a sufficiently large finite Galois extension $L/F$, we obtain an algebraic $1$-cocycle in $Z^1_{\nu}(\Kt_{K,S}(L), \mathcal{G}(L))$.
\begin{propos}
\label{GerbesGaloisGerbes}
The map from $\Hom_\nu(\Kt_{K,S}, \mathcal{G})$ to $\varinjlim_{L/F} Z^1_{\nu}(\Kt_{K,S}(L),\mathcal{G}(L))$ described above induces a bijection on the corresponding pointed sets of isomorphism classes. Moreover, any element in $\varinjlim_{L/F} Z^1_{\nu}(\Kt_{K,S}(L),\mathcal{G}(L))$ defines a $\nu$-morphism of gerbes $\Kt_{K,S} \to \mathcal{G}$.
\end{propos}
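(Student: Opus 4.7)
The plan is to reduce the statement to the local-to-Galois-gerbe dictionary developed in Section \ref{PassingToGalois}, using the comparison isomorphism Theorem \ref{ComparisonNonAb} to ensure everything can be trivialized over a finite Galois extension of $F$. Recall that the gerbe $\mathcal{E} = \HOM_\nu(\Kt_{K,S},\mathcal{G})$ is smooth algebraic, banded by the smooth centralizer $C_\nu$; hence by the comparison isomorphism it is an $\fl$-gerbe that is neutralized by some finite separable extension of $F$, and any two local sections are \'{e}tale-locally isomorphic. Up to enlarging $L$ we may assume that both $\Kt_{K,S}$ and $\mathcal{G}$ have sections over $L$, that $\nu$ is defined over $L$, and that $\mathcal{E}(L)$ is non-empty whenever it is globally non-empty.

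First I would construct the map. Fix a $\nu$-morphism $f:\Kt_{K,S}\to\mathcal{G}$ and a sufficiently large finite Galois extension $L/F$. Passing to the associated $L/F$-Galois gerbes as in Section \ref{PassingToGalois}, $f$ induces a commutative diagram of the shape (\ref{diagramFunctor}), i.e.\ a homomorphism of extensions of $\Gamma_{L/F}$ restricting to $\nu$ on the kernels. Since $\mathcal{G}=\TORS(F,G)$ is neutral, the bottom extension splits as $G(L)\rtimes\Gamma_{L/F}$, so such a homomorphism is exactly an algebraic $1$-cocycle in $Z^1_\nu(\Kt_{K,S}(L),G(L))$ as defined in Section \ref{KtBGandMorphisms}. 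This defines the map $\Hom_\nu(\Kt_{K,S},\mathcal{G})\to\varinjlim_L Z^1_\nu$.

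Next I would analyse the equivalence relations on both sides. A $2$-morphism between two $\nu$-morphisms $f,f'$ of gerbes corresponds, after trivialization, to an automorphism of the chosen trivializing object of $\mathcal{G}$, i.e.\ to an element $g\in G(L)$; the induced change on the associated homomorphism of Galois gerbes is conjugation by $(g,1)$ in $G(L)\rtimes\Gamma_{L/F}$. By the formula (\ref{Conjugation}), this is precisely the map $x_w\mapsto g\,x_w\,w(g)^{-1}$, which is exactly the equivalence relation defining $H^1_\alg(\Kt_{K,S},G(L))$. Thus the map descends to a well-defined map on isomorphism classes, and it is injective on classes at finite level; injectivity in the colimit follows because two $\nu$-morphisms that become isomorphic after extension of scalars are already $\fl$-locally (hence \'{e}tale-locally) isomorphic by the smoothness of $C_\nu$ and hence already isomorphic over some finite level.

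Finally I would establish surjectivity, which also proves the \emph{moreover} part. Given $(x,\nu)\in Z^1_\nu(\Kt_{K,S}(L),G(L))$, the diagram (\ref{HomsKt}) exhibits $x$ as a homomorphism of $L/F$-Galois gerbes extending $\nu$. By the local description of morphisms of gerbes in Section \ref{LocDescrMms}, a morphism of the corresponding bitorsors compatible with the cocycle isomorphism $\psi$ can be read off directly from this homomorphism: one defines the bitorsor map $\alpha\circ p_2^\ast f_U$ on the set-theoretic points by $x$, and its restriction to $\Gal(L/F)$-invariants descends to the desired morphism of gerbes over $F$, with $\nu$ as its band. The coherence condition over $U^4$ is automatic from the associativity of the multiplication in $\Kt_{K,S}(L)$ and $G(L)\rtimes\Gamma_{L/F}$. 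Taking the colimit over $L$ finishes the proof.

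The main obstacle I expect is the careful bookkeeping in the final surjectivity step: one must check that the morphism of Galois gerbes genuinely descends to a morphism of stacks over $F_{\fpqc}$ and not merely to a compatible system on the Galois covers. This is exactly what Theorem \ref{ComparisonNonAb} together with the smoothness of $C_\nu$ buys us, since it guarantees that $\HOM_\nu(\Kt_{K,S},\mathcal{G})$ is \'{e}tale-locally trivial, so sections of $\Hom_\nu$ can be constructed from \'{e}tale descent data, and the cocycle $(x,\nu)$ provides precisely such data after passing to the split Galois gerbe presentation of $\mathcal{G}$.
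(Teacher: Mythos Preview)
Your approach is broadly correct but differs from the paper's proof in a noteworthy way. The paper argues abstractly: once a single $\nu$-morphism $f$ exists, $\Aut(f)$ is an $F$-form $J_\nu$ of $C_\nu$, the groupoid $\Hom_\nu(\Kt_{K,S},\mathcal{G})$ is identified with $J_\nu$-torsors, and hence its set of isomorphism classes is $H^1(F_{\sm},J_\nu)$; on the other side, Kottwitz's own description identifies $B_\nu(F,G)$ with $H^1(\Gal(F^{\sep}/F),J_\nu)$, and the two $H^1$'s agree since $J_\nu$ is smooth. For the ``moreover'' part (simultaneous non-emptiness), the paper does \emph{not} reconstruct the morphism of gerbes by hand but instead invokes Tannakian duality: an algebraic cocycle gives a tensor functor $\Rep_\Gamma(\mathcal{G}(L))\to\Rep_\Gamma(\Kt_{K,S}(L))$, Proposition~\ref{RepsEqui} identifies these with $\Rep(\mathcal{G})$ and $\Rep(\Kt_{K,S})$, and duality produces the desired morphism of gerbes. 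This sidesteps all the bitorsor bookkeeping.

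Your route via the bitorsor dictionary of Sections~\ref{LocDescrMms}--\ref{PassingToGalois} is more explicit and avoids Tannakian duality, which is a genuine simplification in spirit. However, your surjectivity paragraph contains a misleading phrase: the sentence about defining the bitorsor map ``on the set-theoretic points by $x$'' and then taking ``its restriction to $\Gal(L/F)$-invariants'' to descend is not how the reconstruction works. There is no passage to invariants. The point is rather that over each component $\{\sigma\}\times\Spec(L)\subset\Spec(L)^2$, the source bitorsor is a (trivial) $\mathbb{D}_{K,S}$-torsor and the target a (trivial) $G$-torsor; a $\nu$-equivariant torsor map is then determined by the algebraic morphism $\nu$ together with the element $x_\sigma\in G(L)$, and the cocycle condition on $x$ is exactly compatibility with the $\psi$'s over $U^3$. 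By the equivalence stated in Section~\ref{LocDescrMms}, this bitorsor morphism \emph{is} the local datum of a morphism of gerbes over $F$; no further descent step is needed. If you rewrite that paragraph accordingly, your argument goes through. The paper's $H^1(F,J_\nu)$ formulation has the advantage of making the torsor structure on both sides transparent and of linking directly to Kottwitz's inner-form picture, while your approach has the virtue of being self-contained within the bitorsor formalism.
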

\begin{proof}
As follows from Section \ref{HOMnuProps}, the sheaf $J_\nu = \Aut(f)$ over $F_{\sm}$ is an $F$-form of $C_{\nu}$ and the groupoid $\Hom_\nu(\Kt_{K,S}, \mathcal{G})$ is then identified with the groupoid of $J_\nu$-torsors over $F_{\sm}$. Therefore, the pointed set of isomorphism classes of $\nu$-morphisms between the corresponding gerbes is given by the pointed set $H^1(F_{\sm},J_\nu)$.

On the other hand, the image of $f$ in $\varinjlim_{L/F} Z^1_{\nu}(\Kt_{K,S}(L),\mathcal{G}(L))$ and the compatibility of descent data on $C_v$ defined by $f$ and its image in the Kottwtiz set, allow us to identify $B_\nu(F,G)$ with the pointed set $H^1(\Gal(F^{\sep}/F), J_{\nu})$ (see Section 3.5 in \cite{Kot97}). Since $J_\nu$ is smooth, the latter is isomorphic to $H^1(F_{\sm},J_\nu)$.

In order to show that both sets of isomorphism classes of objects in $\Hom_\nu(\Kt_{K,S},\mathcal{G})$ and $B_\nu(F,G)$ are simultaneously non-empty, we use Tannakian duality. Namely, it follows from Section \ref{KtBGandMorphisms} that any element in $Z^1_\nu(\Kt_{K,S}(L), \mathcal{G}(L))$ induces a functor between the categories of representations of $L/F$-Galois gerbes $\mathcal{G}(L)$ and $\Kt_{K,S}(L)$ defined in Section \ref{ExplDefRep}. By Proposition \ref{RepsEqui}, the latter categories are equivalent to $\Rep(\mathcal{G})$ and $\Rep(\Kt_{K,S})$, respectively. Finally, we apply Tannakian duality to obtain a functor $\Kt_{K,S} \to \mathcal{G}$.
\end{proof}

 Consequently, $B_\nu(F,G)$ is identified with $\Hom_\nu(\Kt_{K,S},\mathcal{G})/_{\sim}$.   Combining this fact with the decomposition (\ref{Stratification}), we see that $\Hom(\Kt_{K,S},\mathcal{G})/_{\sim}$ is identified with $B(F, G)$, which finishes the proof of Proposition \ref{RepsGStruct}.

\section{Representations over non-archimedean local fields}
\label{NonArchDescr}
 We write $\Rep(\Kt_F)$ to denote the Tannakian category defined in Section \ref{ExplDefRep}.  We let $\breve{F}$ denote  the maximal unramified extension of $F$, if $F$ is of characteristic $0$, and $\breve{F} = F\otimes_{\mathbb{F}_q} \overline{\mathbb{F}}_q$, if $\Char(F)$ is a local field of characteristic $p>0$ with the residue field $\mathbb{F}_q$. Note that in both cases $\Gal(\breve{F}/F)$ is a pro-cyclic group generated by the Frobenius automorphism $\Frob_{\bar{\mathbb{F}}_q}$. In this section we prove the following theorem.

 \begin{theorem}
 \label{TheoremNonArch}
 Let $F$ be a non-archimedean local field. Then the category of representations of the Kottwitz gerbe $\Rep(\Kt_F)$ is equivalent to the category of pairs $(V,\rho)$, where $V$ is a finite-dimensional vector space over $\breve{F}$, and $\rho:V\MapsTo V$ is a $\Frob_{\bar{\mathbb{F}}_q}$-linear automorphism.
 \end{theorem}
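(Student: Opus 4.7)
The plan is to compute $\Rep(\Kt_F)\simeq\Rep_{\Gamma}(\Kt_F)=\varinjlim_K\Rep_{\Gamma}(\Kt_{K/F})$ (Proposition \ref{RepsEqui}) level-by-level, using local class field theory to give an explicit presentation of each finite-level Kottwitz gerbe. Because $\Br(F)=\Br(\breve{F}/F)$ for a non-archimedean local $F$, the relevant $H^2$-classes are already captured by unramified extensions, and one may arrange the colimit to run over the cofinal subsystem of unramified $K_n/F$ of degree $n$. For such $K_n$ the canonical class $\alpha_{K_n/F}$ is the fundamental class of local class field theory, and $\Kt_{K_n/F}$ is canonically identified (up to conjugation by $K_n^\times$) with the relative Weil group
\begin{equation*}
W_{K_n/F}=\langle K_n^\times,\,\phi_n\mid \phi_n^n=\pi_F,\ \phi_n x\phi_n^{-1}=\sigma(x)\rangle,
\end{equation*}
where $\sigma$ is a Frobenius lift in $\Gal(K_n/F)$ and $\pi_F$ is a uniformizer of $F$.

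Unpacking the explicit definition of Section \ref{ExplDefRep}, a representation of $\Kt_{K_n/F}$ is then a triple $(V,\nu,g)$: a finite-dimensional $K_n$-vector space $V$, an algebraic morphism $\nu:\Gm\to\GL_V$ giving the weight decomposition $V=\bigoplus_{i\in\mathbb{Z}} V_i$, and an element $g\in\GL_V(K_n)$ --- the $\GL_V(K_n)$-component of $\rho(\phi_n)=(g,\sigma)$ --- which commutes with $\nu$ and satisfies the twisted-norm identity
\begin{equation*}
g\cdot\sigma(g)\cdot\sigma^2(g)\cdots\sigma^{n-1}(g)=\nu(\pi_F).
\end{equation*}
From this data I would construct an isocrystal $(W,\Phi)$ with $W$ a $\breve{F}$-vector space of dimension $\dim_{K_n}V$ and $\Phi$ a $\sigma$-semilinear automorphism. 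Concretely, choosing a $K_n$-basis of $V$ to identify $V$ with $K_n^{\dim V}$ and $W$ with $\breve{F}^{\dim V}$, one sets $\Phi(w)=g\cdot\sigma(w)$ where $\sigma$ acts on coordinates; a direct computation using the twisted-norm identity yields $\Phi^n=\nu(\pi_F)$ on each $V_i\otimes\breve{F}$, making it isoclinic of slope $i/n$ --- matching the rational weight $i/n\in\mathbb{Q}=\varinjlim X_{K_n/F}$ of Section \ref{DefOfDF}. These functors $F_n$ are compatible with the transitions $\Rep_{\Gamma}(\Kt_{K_n/F})\to\Rep_{\Gamma}(\Kt_{K_m/F})$ for $n\mid m$ because rational weights are preserved under scaling by $m/n$, and assemble in the colimit to a tensor functor $F:\Rep_{\Gamma}(\Kt_F)\to\{\text{isocrystals over }\breve{F}\}$.

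Finally I would show $F$ is an equivalence of Tannakian categories. Essential surjectivity follows from the Dieudonn\'e--Manin classification: every isoclinic isocrystal of slope $s/r$ (in lowest terms) lies in the image of $F_r$, realized by taking $V$ an appropriate $K_r$-vector space with $\nu(x)=x^s\cdot I$ and $g$ a block-cyclic matrix satisfying $g^r=\pi_F^s\cdot I$; a general isocrystal decomposes into isoclinic pieces by Dieudonn\'e--Manin and is hit after passing to a sufficiently large common level. Fully faithfulness reduces, after base change to $\breve{F}$, to standard Galois descent for $\breve{F}/F$ applied to the $(V,\nu,g)$-data. The main technical obstacle I expect is establishing the basis-independence (up to isomorphism) of the construction of $F_n$ --- since the naive formula $\Phi(v\otimes a)=g(v)\otimes\sigma(a)$ on $V\otimes_{K_n}\breve{F}$ is not well-defined (it would force $\sigma|_{K_n}=\id$), one must build $\Phi$ by working through the semidirect-product structure of $\GL_V(K_n)\rtimes\Gal(K_n/F)$ combined with Galois descent --- together with matching the twisted-norm condition $N_\sigma(g)=\nu(\pi_F)$ precisely with Dieudonn\'e--Manin's dimensional constraint $r\mid\dim_{K_n}V_i$ for each slope $i/n=s/r$.
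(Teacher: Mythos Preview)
Your overall strategy matches the paper's: reduce to unramified extensions, identify $\Kt_{K_n/F}$ with the relative Weil group, build the functor to isocrystals via $\Phi=\rho(\phi_n)$, and use Dieudonn\'e--Manin for essential surjectivity. Two points, however, deserve correction.

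First, your reduction step is not quite right as stated. The unramified extensions are \emph{not} cofinal in the directed set of all finite Galois $K/F$ (a totally ramified $L/F$ is contained in no $K_n$), so you cannot simply restrict the colimit to that subsystem. The paper's argument (Section \ref{reductionunram}) is different: given any finite Galois $L/F$ of degree $n$, it passes to the compositum $K_n\cdot L$ and uses that $\infl(\alpha_{L/F})=\infl(\alpha_{K_n/F})$ in $H^2(\Gamma_{K_n L/F},(K_n L)^\times)$, so that in the colimit every object coming from $\Rep_{\Gamma}(\Kt_{L/F})$ already lies in the image of some unramified level. This is what actually justifies working with unramified $K_n$ alone.

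Second, the ``technical obstacle'' you flag is a non-issue once you treat $\Phi=\rho(\phi_n)$ as a $\sigma_{K_n}$-semilinear automorphism of $V$ rather than as a pair $(g,\sigma)$ with $g$ linear. Then $(V,\Phi)\mapsto(V\otimes_{K_n}\breve{F},\,\Phi\otimes\sigma)$ is well-defined on the nose: for $\lambda\in K_n$ one has $\Phi(\lambda v)\otimes\sigma(a)=\sigma_{K_n}(\lambda)\Phi(v)\otimes\sigma(a)=\Phi(v)\otimes\sigma(\lambda a)$. No basis choice or descent gymnastics is needed here; this is exactly how the paper defines the functor.

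Finally, your sketch of full faithfulness via ``Galois descent'' differs from the paper's. The paper instead decomposes each cyclic module $V_{m,n}$ (generated by $v$ with $\Phi^n v=\pi_F^m v$) explicitly into $d=\gcd(m,n)$ simple pieces (Proposition \ref{Decomposition}), and then compares the endomorphism division algebras on both sides. Your descent idea can be made to work --- the key point being that on each weight-$m$ piece $(\Phi\otimes\sigma)^n=\pi_F^m\cdot(1\otimes\sigma^n)$, so a morphism of isocrystals automatically commutes with $1\otimes\sigma^n$ and hence descends to $K_n$ --- but you should spell out this step rather than invoke ``standard Galois descent'', since $\breve{F}/K_n$ is infinite and the descent is mediated through the isocrystal structure itself.
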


\subsection{Reduction to the unramified case}
\label{reductionunram}
Let $K$ and $L$ be two finite extensions of $F$ such that $[K:F] =  [L:F] = n$, $L$ is Galois and $K$ is unramified. Then we have two functors
\begin{displaymath}
\xymatrix{
  \Rep(\Kt^{\infl}_{K/F})  \ar[r]^-{\Theta_{K,K.L}}
                & \Rep(\Kt_{K.L/F})   \\
                & \Rep(\Kt^{\infl}_{L/F})   \ar[u]^{\Theta_{L,K.L}}          },
\end{displaymath}
and it follows from local class field theory (cf. \cite{Neu1}, Theorem (5.2)) that the cohomology groups $H^2(\Gamma_{L/F}, L^\times)$ and $H^2(\Gamma_{K/F},K^\times)$ coincide as subgroups of the group $H^2(\Gamma_{K.L/F}, (K.L)^\times)$. More accurately, the inclusions
\begin{equation}
H^2(\Gamma_{L/F}, L^\times), H^2(\Gamma_{K/F}, K^\times) \hookrightarrow H^2(\Gamma_{K.L/F}, (K.L)^\times))
\end{equation}
are defined via inflations of the extensions classified by these cohomology groups. Moreover, inflated extensions corresponding to the fundamental classes of $K$ and $L$ are equivalent.
 It follows that in the limit  $\Rep(\Kt_{F})$ every representation $\rho: \Kt_{L/F} \to \mathcal{G}_V$ becomes equivalent to a representation of the Kottwitz gerbe of an unramified extension.

Thus it remains to describe $\Rep(\Kt_{K/F})$ where $K$ is an unramified extension.

\subsection{Unramified case}
Let $K/F$ be a finite unramified extension of degree $n$. Consider a representation of the Kottwitz gerbe of $K/F$, i.e. $\rho: \Kt_{K/F} \to \mathcal{G}_V$, where $V$ is a finite dimensional vector space over $K$, and $\mathcal{G}_V$ is a $K/F$-Galois gerbe defined in Section \ref{ExplDefRep}.

The Galois group of $K$ is naturally isomorphic to the Galois group of corresponding residue field extension, which is cyclic and generated by the Frobenius element $\sigma_K$. Consider any set-theoretic section $d^K(\sigma_K)\in \Kt_{K/F}$, after composing  $\rho$ and the projection $\mathcal{G}_V\to \Gal(K/F)$, we see that $\Phi:=\rho(d^K(\sigma_K))$ defines a $\sigma_K$-linear automorphism of $V$.

Note that the following diagram commutes
\begin{displaymath}
\xymatrix{
  1  \ar[r]^{} & \Gm(K) \ar[d]_{\nu} \ar[r]^{i} & \Kt_{K/F} \ar[d]_{\rho} \ar[r]^{p} & \langle\sigma_{K/F}\rangle \ar@{=}[d]_{} \ar[r]^{} & 1 \\
  1 \ar[r]^{} & \GL(V) \ar[r]^{i^\prime} & \mathcal{G}_V \ar[r]^{p^\prime} & \langle\sigma_{K/F}\rangle \ar[r]^{} & 1  \rlap{\ ,} }
\end{displaymath}
and the morphism $\nu$ gives us a slope decomposition $V = \bigoplus V^m$, where $V^m$ corresponds to the subspace on which $\Gm(K)$ acts through the character $\lambda \mapsto \lambda^m$.

Since $\sigma_K^n = \id$, we see that
\begin{equation}
\Phi^n(v) = \rho(x)(v),
\end{equation}
where $x= d_K^K(\sigma_K)^n\in K^\times$.

It follows from the proposition below that $\restr{\Phi^n(v)}{V^m} = \pi_F^m$.

\begin{propos}
Let $K/F$ be a finite unramified extension of local fields of degree $n$, and $\sigma_{K}\in \Gal(K/F)$ be a Frobenius automorphism. Then, for any set-theoretic section $d^K: \Gal(K/F)\to \Kt_{K/F}$, the element $x = d^K(\sigma_K)^n$ lies in $F^\times$ and its image in $F^\times/N_{K/F}(K^\times)$ can be identified with $\pi_F$, a uniformizer of $F$.
\end{propos}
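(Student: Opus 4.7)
The proof proposal rests on the classical computation of $H^2$ for cyclic groups together with the explicit description of the local fundamental class. Since $K/F$ is unramified of degree $n$, the Galois group $\Gamma_{K/F} = \langle\sigma_K\rangle$ is cyclic, and for any cyclic group of order $n$ one has the Tate--Nakayama style identification
\begin{equation*}
H^2(\Gamma_{K/F}, K^\times) \xrightarrow{\sim} \widehat{H}^0(\Gamma_{K/F}, K^\times) = F^\times/N_{K/F}(K^\times)
\end{equation*}
which on the level of extensions sends the class of $1 \to K^\times \to E \to \Gamma_{K/F} \to 1$ to $\tilde{\sigma}_K^n$, where $\tilde{\sigma}_K \in E$ is any set-theoretic lift of $\sigma_K$. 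Applied to $E = \Kt_{K/F}$ with $\tilde{\sigma}_K = d^K(\sigma_K)$, this is exactly the element $x$ under consideration.

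First I would verify that $x \in F^\times$. By definition of the extension $\Kt_{K/F}$, conjugation by $d^K(\sigma_K)$ acts on $K^\times \subset \Kt_{K/F}$ as $\sigma_K$. Since $x = d^K(\sigma_K)^n$ is central in the cyclic subgroup generated by $d^K(\sigma_K)$ and lies in $K^\times$, we get $\sigma_K(x) = d^K(\sigma_K)\, x\, d^K(\sigma_K)^{-1} = x$, so $x \in (K^\times)^{\Gamma_{K/F}} = F^\times$. I would then check well-definedness modulo $N_{K/F}(K^\times)$: replacing $d^K(\sigma_K)$ by $a \cdot d^K(\sigma_K)$ for $a \in K^\times$ changes $x$ by $\prod_{i=0}^{n-1}\sigma_K^i(a) = N_{K/F}(a)$, hence the image in $F^\times/N_{K/F}(K^\times)$ depends only on the class of $\Kt_{K/F}$ in $H^2(\Gamma_{K/F}, K^\times)$.

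Next I would identify this class with the canonical class $\alpha_{K/F}$ of local class field theory. The main invocation here is the explicit description of the fundamental class of an unramified extension: under the composition
\begin{equation*}
H^2(\Gamma_{K/F}, K^\times) \xrightarrow{\sim} F^\times/N_{K/F}(K^\times) \xrightarrow{\sim} \Gamma_{K/F}
\end{equation*}
(the second arrow being the local reciprocity map), $\alpha_{K/F}$ corresponds to the Frobenius $\sigma_K$, and any uniformizer $\pi_F$ maps to $\sigma_K$ under the reciprocity isomorphism for an unramified extension (see e.g.\ \cite{Neu1}). Therefore $x \equiv \pi_F \pmod{N_{K/F}(K^\times)}$.

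The main conceptual step is tying the abstract cohomological output $x$ to the standard generator $\pi_F$, and this is entirely contained in the unramified case of local class field theory, which is why the reduction in Section~\ref{reductionunram} to the unramified situation is so useful. No further computation is needed beyond invoking the two isomorphisms above; the routine part, which I would not expand in detail, is checking compatibility of the cyclic-group isomorphism $H^2 \simeq \widehat{H}^0$ with the chosen lift $d^K(\sigma_K)$, a standard cocycle manipulation using the normalized bar resolution for a cyclic group.
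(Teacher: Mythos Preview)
Your proposal is correct and follows essentially the same approach as the paper: both establish $x\in F^\times$ by commutation, verify well-definedness modulo $N_{K/F}(K^\times)$ by the same norm computation, and then invoke the unramified local class field theory to pin down the image as $\pi_F$. The only cosmetic difference is that the paper identifies $F^\times/N_{K/F}(K^\times)$ with $\pi_F^{\zz}/\pi_F^{n\zz}$ directly via the vanishing of $H^q(\Gamma_{K/F},U_K)$, whereas you route through the reciprocity isomorphism sending $\pi_F$ to $\sigma_K$; these are equivalent formulations of the same input.
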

\begin{proof}
To simplify notation, we let $\sigma$ denote $\sigma_K$ throughout this proof; we also set $d(\sigma) = d^K(\sigma)$.

We see that
\begin{equation}
d(\sigma) d(\sigma)^n = d(\sigma) x= d(\sigma)^n d(\sigma) = x d(\sigma),
\end{equation}
that is $x\in F^\times$.

Let $d^\prime(\sigma) = z d(\sigma)$ with $z\in K^\times$ be another section, then
\begin{multline}
x^\prime = d^\prime(\sigma)^n = (z d(\sigma))^n  = z \sigma(z) d(\sigma)d(\sigma) z d(\sigma)\ldots =\\
 z \sigma(z) \sigma^2(z)\ldots\sigma^{n-1}(z) d(\sigma)^n = x \prod_{0\leq i \leq n-1} \sigma^i(z) = x N_{K/F}(z)
\end{multline}
and we see that $x$ is well-defined modulo $N_{K/F}(K^\times)$.

It follows from the class field theory that the group $F^\times/N_{K/F}(K^\times)$ is isomorphic to the group $\pi_F^{\zz}/ \pi_F^{[K:F] \zz}$; namely, it follows from the fact that $H^q(\Gamma_{K/F}, U_K)= 1$ for every unramified extension $K/F$ and for every $q$, if $q=0$ then we obtain an isomorphism $U_F \simeq N_{K/F}(U_K)$, where $U_F$ and $U_K$ denote the groups of units in $F^\times$ and $K^\times$ resp.

On the other hand, $\Kt_{K/F}$ corresponds to the fundamental class of local class field theory, namely, to a generator of $H^2(\Gamma_{K/F}, K^\times)$; thus $x$ can be identified with $\pi_F$, the uniformizer of $F$.
\end{proof}

\subsection{Functor from $\Rep(\Kt_{K/F})$ to $\Isoc(\bar{\mathbb{F}}_p)$}
\label{FunctorToIsoc}
Let $\breve{F}$ denote the fraction field of the ring of Witt vectors over $\bar{\mathbb{F}}_p$, and let $\sigma$ denote the induced Frobenius automorphism of $\check{F}$. Then after the base change every representation $(V,\Phi)$ defines  an isocrystal over $\bar{\mathbb{F}}_p$, namely we have a functor
\begin{equation}
\label{IsocFuncKt}
\mathcal{E}^{\breve{F}}_K: \Rep(\Kt_{K/F}) \to \Isoc(\bar{\mathbb{F}}_p),\quad (V, \Phi) \mapsto (V \otimes \breve{F}, \Phi \otimes \sigma).
\end{equation}

We claim that $\mathcal{E}^{\breve{F}}_K$ is fully faithful; its faithfulness follows immediately and it remains to show that the functor is full.

General theory of isocrystals reduces the question to studying spaces of morphisms between isoclinic isocrystals of the same slope. By Dieudonn\'{e}-Manin classification, every isoclinic isocrystal over $\bar{\mathbb{F}}_p$ of slope $s/r$ decomposes as a direct sum of simple objects $E_{s/r}$, moreover, given a module defined by relation $\Phi^n - \pi^m$, it is of dimension $n$ as a vector space and it is isomorphic to the sum of $d=(m,n)$ copies of $E^{s_0/{r_0}}$, where $r_0 d = n$ and $s_0d = m$. Our goal is to show that similar fact holds true for representations of the Kottwitz gerbe for $K/F$.

Consider a representation $\rho: \Kt_{K/F} \to \mathcal{G}_V$, where $V$ is a finite-dimensional vector space over $K$. Let $V^m$ and $\Phi$ be as above, and let $v$ be a non-zero vector in $V^m$. Then $\Phi^n(v) = \pi^m v$, by $V_{m,n}$ we denote a submodule $V_m$  generated by $\Phi^i(v)$, where $i=0,\ldots,n-1$.

Note that we have the following decomposition
\begin{equation}
\label{decompospoly}
\Phi^n - \pi^m = (\Phi^{\frac{n}{d}} - \pi^{\frac{m}{d}})(\Phi^{\frac{n}{d} (d-1)} + \pi^{\frac{m}{d}}\Phi^{\frac{n}{d} (d-2)}+\ldots + \pi^{\frac{m}{d} (d-1)}) = (\Phi^{\frac{n}{d}} - \pi^{\frac{m}{d}}) \Psi ,\quad d =(m,n).
\end{equation}

\begin{propos}
\label{Decomposition}
Let $V_{m,n}$ be a module defined as above, then it decomposes as a direct sum of $d$ invariant subspaces of the operator $\Phi^{n/d}/\pi^{m/d}$, such that their images under $\mathcal{E}^{\breve{F}}_K$ are simple isocrystals of slope $m/n$.
\end{propos}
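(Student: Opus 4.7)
The plan is to combine the polynomial identity (\ref{decompospoly}) with the Dieudonné--Manin classification of isocrystals over $\overline{\mathbb{F}}_p$. First, I would upgrade the relation $\Phi^n(v) = \pi^m v$ to the stronger assertion that $\Phi^n$ acts as multiplication by $\pi^m$ on all of $V_{m,n}$: since $\pi \in F$ is fixed by $\sigma$, one computes
\[
\Phi^n(\Phi^i(v)) = \Phi^i(\Phi^n(v)) = \Phi^i(\pi^m v) = \sigma^i(\pi^m)\Phi^i(v) = \pi^m \Phi^i(v),
\]
and the vectors $\Phi^i(v)$ generate $V_{m,n}$ as a $K$-vector space. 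Consequently $T = \Phi^{n_0}/\pi^{m_0}$ satisfies $T^d = 1$, and the polynomial identity $X^n - \pi^m = \prod_{\zeta^d = 1}(X^{n_0} - \zeta \pi^{m_0})$ refines (\ref{decompospoly}) into pairwise coprime linear factors over any ring containing the $d$-th roots of unity; the factors lie in the commutative subalgebra generated by $\Phi^{n_0}$ and the fixed field $F_{n_0} = K^{\sigma^{n_0}}$, where $\Phi^{n_0}$ is central.

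Second, I would apply $\mathcal{E}_K^{\breve{F}}$ to obtain the isocrystal $\widetilde{V}_{m,n} = V_{m,n} \otimes_K \breve{F}$ with $\widetilde{\Phi} = \Phi \otimes \sigma_{\breve{F}}$. Since $\widetilde{\Phi}^n$ acts as multiplication by $\pi^m$ on the image of $V_{m,n} \otimes 1$, every Newton slope of $\widetilde{V}_{m,n}$ equals $m/n$, so $\widetilde{V}_{m,n}$ is isoclinic of slope $m/n$. The dimension count $n = d \cdot n_0 = d \cdot \dim_{\breve{F}} E_{m/n}$ together with Dieudonné--Manin then yields $\widetilde{V}_{m,n} \cong E_{m/n}^{\oplus d}$. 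Explicitly, the $d$ summands are the kernels $W_\zeta = \ker(\widetilde{\Phi}^{n_0} - \zeta \pi^{m_0})$ indexed by the $d$-th roots of unity $\zeta$ in $\breve{F}$: each $W_\zeta$ is $T$-invariant with $T|_{W_\zeta} = \zeta$, has $\breve{F}$-dimension $n_0$, and is cyclic with $\widetilde{\Phi}^{n_0}$ acting as the scalar $\zeta\pi^{m_0}$. Absorbing $\zeta$ via a Teichmüller representative (available in $\breve{F}$ when $\gcd(d,p) = 1$) identifies $W_\zeta$ with the standard presentation of the simple isocrystal $E_{m/n}$.

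The main obstacle is that $T$ is $\sigma^{n_0}$-semi-linear rather than linear, so the eigenspace decomposition does not descend to $V_{m,n}$ over $K$ in general (indeed $K$ need not contain the $d$-th roots of unity or admit the required norm-one elements); the phrase ``$d$ invariant subspaces'' in the statement of the proposition must therefore be interpreted at the level of the isocrystal $\mathcal{E}_K^{\breve{F}}(V_{m,n})$ rather than over $K$ itself. When $p \mid d$, the $d$-th roots of unity are not fully available in $\breve{F}$ and one should either pass to its algebraic closure for the explicit factorization into $W_\zeta$'s, or bypass the eigenvalue picture entirely and rely on Dieudonné--Manin, which already guarantees the desired decomposition into $d$ copies of $E_{m/n}$ and suffices for the fullness argument that motivates the proposition.
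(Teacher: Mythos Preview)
Your reading of the statement is where the gap lies. The proposition asserts a decomposition of $V_{m,n}$ itself, as an object of $\Rep(\Kt_{K/F})$ over $K$, into $d$ $\Phi$-stable $K$-subspaces on each of which $\Phi^{n/d}=\pi^{m/d}$; only \emph{after} this $K$-level splitting does one apply $\mathcal{E}^{\breve{F}}_K$. Your assertion that ``the phrase `$d$ invariant subspaces' \ldots must therefore be interpreted at the level of the isocrystal rather than over $K$ itself'' is exactly what the paper shows to be unnecessary. You correctly note that the $\zeta$-eigenspace decomposition of the semilinear operator $T=\Phi^{n/d}/\pi^{m/d}$ does not descend to $K$, but you then draw the wrong conclusion: the paper does \emph{not} seek eigenspaces for various $\zeta$. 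Instead it works entirely inside the fixed space $T=1$, which by semilinear descent is a $K'$-form of $V_{m,n}$ (here $K'\subset K$ is the unramified extension of $F$ of degree $n/d$).

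Concretely, the paper observes that $\Psi(av)$ is $T$-fixed for every $a\in K$, then chooses a $K'$-basis $a_1,\ldots,a_d$ of $K$ and shows that $\Psi(a_1v),\ldots,\Psi(a_dv)$ are $K$-linearly independent by computing the determinant $\det(\sigma_{K'}^{\,j}(a_i))$, which is nonzero since the $a_i$ form a basis. Each $V^i_{m,n}=\langle \Psi(a_iv),\Phi\Psi(a_iv),\ldots,\Phi^{n/d-1}\Psi(a_iv)\rangle$ is then a $\Phi$-cyclic $K$-subspace of dimension $n/d$ with $\Phi^{n/d}=\pi^{m/d}$, and a dimension count gives $V_{m,n}=\bigoplus_i V^i_{m,n}$. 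No roots of unity enter, so the argument is uniform in $p$ and $d$. This $K$-level decomposition is precisely what the paper uses next: the fullness map \eqref{fullness} compares $\End_{\Rep(\Kt_{K/F})}(V^i_{m,n})$ with $\End_{\Isoc}(E_{m/n})$, both explicitly realized as the same cyclic division algebra of degree $n/d$ over $F$. Your Dieudonn\'e--Manin route only gives the splitting after base change and therefore does not, as written, prove the proposition; to salvage the subsequent fullness argument without the $K$-splitting you would need an additional dimension count on $\End_{\Rep(\Kt_{K/F})}(V_{m,n})$ itself.
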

\begin{proof}
It is well known that if $d=1$, then $V_{m,n}$ is a simple object.

Suppose $d>1$. Then it follows from (\ref{decompospoly}) that $\Psi(a v)$ is invariant under the action of $\Phi^{n/d}/\pi^{m/d}$ for any $a\in K$. On the other hand, commuting $\Phi$ with coefficients, we get the following vector
\begin{equation}
\Psi(av) = \sigma_K^{\frac{n}{d} (d-1)}(a) \Phi^{\frac{n}{d} (d-1)} v + \pi^{\frac{m}{d}}\sigma_K^{\frac{n}{d} (d-2)}(a)\Phi^{\frac{n}{d} (d-2)}v+\ldots + \pi^{\frac{m}{d} (d-1)} a v.
\end{equation}

We set $r= n/d\in\zz$. Since $K$ is an unramified extension of $F$ of degree $n$ and $n = d r$, $K$ contains a subfield $K^\prime$ which is an unramified extension of $F$ of degree $r$, and $[K:K^\prime] = d$. Let $a_i$ with $i=1,\ldots,d$ be a basis of $K$ over $K^\prime$.

Consider the vectors $\Psi(a_i v)$, $i=1,\ldots,d$. It remains to prove that these vectors are linearly independent over $K$. This is equivalent to showing that the determinant
\begin{equation}
\begin{vmatrix}
a_1 & a_2 & \ldots & a_d \\
\sigma_{K^{\prime}}(a_1) &\sigma_{K^{\prime}}(a_2) & \ldots & \sigma_{K^{\prime}}(a_d)\\
\vdots & \vdots & \ddots & \vdots \\
 \sigma^{d-1}_{K^{\prime}(a_1)} & \sigma^{d-1}_{K^{\prime}}(a_d) & \ldots & \sigma^{d-1}_{K^{\prime}}(a_d)
\end{vmatrix}, \quad \text{where } \sigma_{K^\prime}  \text{ generates } \Gal(K^\prime/F),
\end{equation}
is non-zero, which is indeed the case since it is non-zero modulo maximal ideal (its power factors as a product of all linear combinations of $\overline{a}_i$ with coefficients in the residue field of $K^\prime$, see \cite{DeLu}, Example 2.4).

This shows that $V_{m,n}$ is isomorphic to a direct sum of subspaces $V^i_{m,n}$ defined as follows
\begin{equation}
V^{i}_{m,n}= \langle \Psi(a_i v), \Phi(\Psi(a_i v)), \ldots, \Phi^{n/d -1}(\Psi(a_i v)) \rangle, \quad i=1,\ldots,d
\end{equation}
and every $V^{i}_{m,n}$ is isomorphic (after base change) to a simple isocrystal $E_{s/r}$ with $s= m/d$, $r=n/d$.
\end{proof}

Since every representation of $\Kt_{K/F}$ is a direct sum of $V^{i}_{m/n}$, it is enough to show that natural injective morphism
\begin{equation}
\label{fullness}
\End_{\Rep(\Kt_{K/F})}(V^{i}_{m,n},\Phi) \to \End_{\Isoc(\bar{\mathbb{F}}_p)}(V^{i}_{m,n}\otimes \breve{F}, \Phi\otimes\sigma)
\end{equation}
is an isomorphism.
Note that both sides of (\ref{fullness}) define central division algebras over $F$. After choosing cyclic bases both algebras can be identified with a subalgebra of $r\times r$ matrices with coefficients in an unramified extension of $F$ of degree $r$, where $r=n/(m,n)$. This follows immediately from the proof of Proposition \ref{Decomposition} for the left-hand side, and it is done, for instance, in \cite{Saa}, Lemme 3.3.2.2 for the right-hand side. Thus (\ref{fullness}) is an isomorphism.

\subsection{Equivalence} Combining the reduction argument of section \ref{reductionunram} and the description of $\Rep(\Kt_{K/F})$ for $K/F$ finite and unramified (see section \ref{FunctorToIsoc}), we see that that the functors (\ref{IsocFuncKt}) define a fully faithful functor from the category of representations of the Kottwitz gerbe $\Kt_{F}$ to the category of isocrystals over $\bar{\mathbb{F}}_p$. Moreover, this functor is essentially surjective, since for every simple object $E_{s/r}$ in $\Isoc(\bar{\mathbb{F}}_p)$ there exists a representation of Kottwitz gerbe for $K/F$ with $K$ unramified of degree $r$ over $F$.

\section{Archimedean cases}
\label{SecArchCases}
\subsection{Representations over $\mathbb{C}$}
Consider the exact sequence of groups
\begin{equation}
1 \to \Gm(\CC) \to \Kt_{\CC} \to  \Gamma_{\CC/\CC}\to 1,
\end{equation}
since the Galois group of $\CC$ is trivial, we see that $\Kt_{\CC} = \Gm(\CC)$, thus its category of representations $\Rep(\Kt_{\CC})$ is equivalent to the category $\mathscr{C}$ of $\zz$-graded complex vector spaces  (see \cite[Example 2.30]{DM} or \cite[A.8.8]{CGP}  for the proof of general statement).

Note that the grading on $V= \bigoplus V^m \in Ob(\mathscr{C})$ is defined as follows:
\begin{equation}
\label{gradingdefinition}
V^m = \{v\in V \mid \rho(g) v = m(g) v \text{ for every } g\in \Gm\},
\end{equation}
where $m(g) \in \Hom(\Gm, \Gm) = \zz$ is the character $\lambda \mapsto \lambda^m$.

\subsection{The case of the Kottwitz gerbe over $\RR$}
In this case, we are given the exact sequence
\begin{equation}
\label{KottRealExt}
1 \to \Gm(\CC) \xrightarrow{i} \Kt_{\RR} \xrightarrow{p} \Gamma_{\CC/\RR} \to 1
\end{equation}
representing the non-trivial class in $H^2(\Gamma{\mathbb{C}/\mathbb{R}}, \mathbb{C}^\times)$.
\begin{theorem}
\label{KtRealRep}
The category of representations of real Kottwitz gerbe $\Rep(\Kt_\RR)$ is equivalent to the category $\mathscr{F}$ of finite dimensional $\zz$-graded complex vector with semi-linear automorphism $\alpha$, such that $\alpha^2$ acts on $V^m$ by $(-1)^m$.
\end{theorem}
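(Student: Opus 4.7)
The plan is to exhibit explicit quasi-inverse functors between the two categories by choosing a convenient set-theoretic lift of complex conjugation in $\Kt_\RR$ and tracking what the data of $\rho$ becomes.

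First I would pin down a normalized presentation of $\Kt_\RR$. The extension (\ref{KottRealExt}) is classified by the non-trivial class of $\Br(\CC/\RR)=\RR^\times/N_{\CC/\RR}(\CC^\times)=\{\pm 1\}$. Concretely this means that we can pick a lift $j\in \Kt_\RR$ of the non-trivial element $c\in\Gamma_{\CC/\RR}$ such that
\[
j^2=-1\in \Gm(\CC),\qquad jzj^{-1}=\bar z\quad\text{for all } z\in \Gm(\CC),
\]
any other choice $j'=\lambda j$ with $\lambda\in\CC^\times$ gives $(j')^2=\lambda\bar\lambda\cdot(-1)\in -\RR_{>0}$, and conversely $-1$ is the only preimage in $\RR^\times$ of the non-trivial Brauer class. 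Thus $\Kt_\RR$ is presented as the group generated by $\CC^\times$ and one extra element $j$ subject to these two relations (this is the Weil group $W_\RR$).

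Next I would define the functor $F:\Rep(\Kt_\RR)\to\mathscr F$. Given $(V,\rho)$ with $\rho:\Kt_\RR\to \mathcal{G}_V=\GL_V(\CC)\rtimes\Gamma_{\CC/\RR}$, the restriction of $\rho$ to $\Gm(\CC)$ is induced by an algebraic homomorphism $\nu:\Gm\to \GL_V$ over $\CC$, which produces the $\zz$-grading $V=\bigoplus_m V^m$ as in (\ref{gradingdefinition}). Setting $\alpha:=\rho(j)$, the fact that $\rho(j)$ projects to $c$ in $\Gamma_{\CC/\RR}$ means that $\alpha$ acts on $V$ as a $c$-semi-linear automorphism. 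From $jzj^{-1}=\bar z$ one gets $\alpha\circ\rho(z)\circ\alpha^{-1}=\rho(\bar z)$; applied on $V^m$ this forces $\alpha(V^m)\subseteq V^m$, so $\alpha$ preserves the grading. Finally, $\alpha^2=\rho(j^2)=\rho(-1)$, and since $-1\in\Gm(\CC)$ acts on $V^m$ through $\lambda\mapsto\lambda^m$, we obtain $\alpha^2|_{V^m}=(-1)^m$. A morphism $f:(V,\rho)\to(V',\rho')$ is $\CC$-linear and commutes with all $\rho(g)$; commutation with $\rho|_{\Gm(\CC)}$ says $f$ preserves the grading, and commutation with $\rho(j)$ says $f\alpha=\alpha' f$. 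So $F$ lands in $\mathscr F$ and is faithful.

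For the quasi-inverse $G:\mathscr F\to\Rep(\Kt_\RR)$, given $(V,\{V^m\},\alpha)$ I define $\tilde\rho:\Kt_\RR\to\mathcal{G}_V$ on generators by $\tilde\rho(z)=\bigoplus_m z^m\cdot\id_{V^m}$ for $z\in\Gm(\CC)$ and $\tilde\rho(j)=\alpha$. Compatibility with the relations is a direct check: semi-linearity of $\alpha$ gives $\alpha\circ(z^m\id)=\bar z^m\alpha$, which is the relation $\alpha\tilde\rho(z)\alpha^{-1}=\tilde\rho(\bar z)$; and the hypothesis $\alpha^2|_{V^m}=(-1)^m$ gives $\alpha^2=\tilde\rho(-1)=\tilde\rho(j^2)$. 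The algebraicity condition on the restriction to $\Gm(\CC)$ is automatic since $\tilde\rho|_{\Gm(\CC)}$ comes from the algebraic grading morphism. Morphisms in $\mathscr F$ (grading-preserving linear maps intertwining $\alpha$) manifestly transport to morphisms in $\Rep(\Kt_\RR)$.

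Finally I would check $F\circ G\simeq\id$ and $G\circ F\simeq\id$. The first is immediate from the definitions. For the second, given $(V,\rho)$ the natural isomorphism $\rho\simeq\tilde\rho_{F(V,\rho)}$ is forced because both homomorphisms agree on the generators $\Gm(\CC)$ and $j$ of $\Kt_\RR$. The tensor structure and internal hom from Section~\ref{RepsBitor} transport to the obvious ones on $\mathscr F$ (tensor product of graded vector spaces with $\alpha\otimes\alpha'$, the $(-1)^{m+n}$ condition being bilinear in $m,n$), so the equivalence is one of Tannakian categories. The only mildly delicate point, which I would verify by the Brauer-class calculation above, is the normalization $j^2=-1$; once this lift is fixed everything else is bookkeeping.
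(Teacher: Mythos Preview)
Your proof is correct and follows essentially the same approach as the paper: fix a lift $j$ of complex conjugation with $j^2=-1$, extract the grading from $\rho|_{\Gm}$ and the semi-linear automorphism $\alpha=\rho(j)$, then reverse the construction for the quasi-inverse. You are in fact more careful than the paper in spelling out the quasi-inverse explicitly and in verifying that $\alpha$ preserves each graded piece $V^m$.
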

\begin{proof}
Firstly, we describe $\Kt_\RR$ explicitly.

Let $\Gamma=\Gamma_{\CC/\RR}= \langle 1,\,\sigma\rangle$, where $\sigma$ is the complex conjugation. Since the kernel of extension (\ref{KottRealExt}) is $\Gm$, there is a well-defined action of $\Gamma$ on $\Gm$, namely for any set-theoretic section $\omega : \Gamma \to \Kt_{\RR}$, we have the action $i(g) \mapsto \omega(\sigma) i(g) \omega(\sigma)^{-1}= \sigma(i(g)) = i(\bar{g})$.

Since $p(\omega(\sigma)) = p(\omega(\sigma^2))$, there exists an element $x\in \CC^\times$ such that $\omega(\sigma)^2 = x$, but $x$ is known to be a $2$-cocycle of $\Gamma$ in $\Gm$, therefore the equation $\omega(\sigma)^2 \omega(\sigma) = \omega(\sigma) (\omega(\sigma))^2$ is satisfied. We deduce that $x = \bar{x}$, i.e. $x\in\RR$.

We have computed $Z^2(\Gamma, \CC^\times)$, in order to compute the coboundary we substitute $\omega(\sigma)$ with $\omega^\prime= z \omega(\sigma)$.

On the one hand $(\omega^\prime )^2 = x^\prime$, where $x^\prime \in \RR$. On the other hand, we have $(\omega^\prime )^2 = |z|^2 x$. Thus $x$ and $x^\prime$ differ by the positive real number, and we have $B^2(\Gamma, \CC^\times) = \RR_{>0}\subset \RR = Z^2(\Gamma,\CC^\times)$.
The corresponding second cohomology group can be identified with the group $\RR\slash \RR_{>0} =\{\pm1\}$, where $-1$ is the class of the nontrivial extension of $\Gamma_{\CC/\RR}$ by $\mathbb{C}^\times$.

Our goal is to describe the category of representations of $\Kt_{\RR}$ in  terms of linear algebra.

Given a representation $\rho: \Kt_{\RR} \to \mathcal{G}_V$ (see Section \ref{ExplDefRep}),  it defines a grading on $V$ by restricting $\rho$ to $\Gm(\CC)$ and noticing that the morphism on kernels is by definition induced by an algebraic morphism $\Gm \to \GL(V)$.

We let $\alpha$ denote the element of $\mathcal{G}_V$ defined by $\rho(\omega(\sigma))$.
Applying the definition \eqref{gradingdefinition} to $g=-1 = \omega(\sigma)^2$ we obtain that $\alpha^2: V^m \to V^m$ acts by $m(-1) = (-1)^m$.

By definition we have $p^\prime \circ \rho = p$, thus $\alpha$ defines a $\sigma$-linear automorphism of $V$.

Thus we defined a functor $\Rep(\Kt_\RR) \to \mathscr{F}$, that is actually an equivalence, since we can recover a representation of $\Gm$ from the knowledge of grading on $V$ and also recover the image of $w(\sigma)$ from a given semi-linear map $\alpha: V\MapsTo V$.
\end{proof}

\section{Fibre functors}
\label{FibreFunctorsExist}
In this section we show that $\Rep(\Kt_F)$ have fibre functors over any extension of $F$ of cohomological dimension $\leq 1$ (see \cite[Chap.III.2]{SerreCG}.)
More precisely, an extension $\widetilde{F}/F$ is of cohomological dimension $\leq1$, if and only if $\Br(L)= 0$ for every algebraic extension $L$ of $\widetilde{F}$. It is, in particular, equivalent to the assertion that, for every finite algebraic Galois extension $L/\widetilde{F}$, the $\Gal(L/\widetilde{F})$-module $L^\times$ is cohomologically trivial (for Tate cohomology).

Recall that If $F$ is a function field of a curve over $\mathbb{F}_q$, then by Tsen's theorem $F\otimes_{\mathbb{F}_q}\overline{\mathbb{F}}_q$ is of dimension $\leq 1$.
If $F=\QQ$, then any totally imaginary field such that for every ultrametric place the corresponding local degree is $\infty$ (as a supernatural number) is of dimension $\leq 1$ (see Proposition 9 and Corollary, Chap. II, \S3 in \cite{SerreCG}). In particular, $\QQ^{\cycl}$ satisfies this condition.
The main result of this section is stated as follows.

 \begin{theorem}
 \label{FibreFunctors}
 \begin{itemize}
\item $\Rep(\Kt_{\mathbb{Q}})$ has a fiber functor in finite-dimensional vector spaces over $\mathbb{Q}^{\cycl}$, where $\mathbb{Q}^{\cycl}$ is the extension of $\mathbb{Q}$ obtained by adjoining all roots of unity.
 \item If $F$ is the function field of an algebraic curve over $\mathbb{F}_q$, then $\Rep(\Kt_F)$ has a fiber functor in finite-dimensional vector spaces over $\breve{F} = F\otimes_{\mathbb{F}_q}\overline{\mathbb{F}}_q$.
 \end{itemize}
 \end{theorem}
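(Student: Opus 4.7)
The plan is to reduce the existence of a fiber functor to a cohomological vanishing. By the general Tannakian dictionary (cf.\ Section \ref{GStructure}), $\Rep(\Kt_F)$ admits a fiber functor with values in $\Vect_{\widetilde{F}}$ if and only if the gerbe $\Kt_F\times_F\widetilde{F}$ is neutral, equivalently, the class $\alpha_F$ dies under the restriction $H^2(F_{\fpqc},\mathbb{D}_F)\to H^2(\widetilde{F}_{\fpqc},\mathbb{D}_F)$. I will aim for the stronger statement that $H^2(\widetilde{F}_{\fpqc},\mathbb{D}_F)=0$ for any algebraic extension $\widetilde{F}/F$ of cohomological dimension $\leq 1$, and then specialize: $\widetilde{F}=\mathbb{Q}^{\cycl}$ has cohomological dimension $\leq 1$ by \cite[Chap.\,II,\S3]{SerreCG}, while $\widetilde{F}=F\otimes_{\mathbb{F}_q}\overline{\mathbb{F}}_q$ has cohomological dimension $\leq 1$ by Tsen's theorem, being the function field of a curve over an algebraically closed field.

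The first step is to pass to finite levels. Imitating Lemma \ref{InjKaletha} over $\widetilde{F}$ via the Grothendieck spectral sequence for $\RGamma\circ\varprojlim$, I would identify
\[
H^2(\widetilde{F}_{\fpqc},\mathbb{D}_F)=\varprojlim_{K,S}H^2(\widetilde{F},\mathbb{D}_{K,S}\times_F\widetilde{F}),
\]
provided the simultaneous vanishing of $H^1(\widetilde{F},\mathbb{D}_{K,S}\times_F\widetilde{F})$ at each level is known (so that $\varprojlim^{1}$ contributes nothing).

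For the second step, I would exploit the defining short exact sequence of $\Gal(K/F)$-modules
\[
0\longrightarrow X_{K,S}\longrightarrow\zz[S_K]\longrightarrow\zz\longrightarrow 0
\]
(sequence (B) from Section \ref{SemiLocal}), whose character dual is the exact sequence of $F$-tori
\[
1\longrightarrow\Gm\longrightarrow T_{S_K}\longrightarrow\mathbb{D}_{K,S}\longrightarrow 1,
\]
with $T_{S_K}=\prod_{v\in S}R_{K^{w_v}/F}(\Gm)$ a quasi-trivial torus. After base change to $\widetilde{F}$, Shapiro's lemma gives
\[
H^i(\widetilde{F},T_{S_K}\times_F\widetilde{F})=\prod_{v\in S}H^i(K^{w_v}\otimes_F\widetilde{F},\Gm),
\]
a finite product of $\Gm$-cohomology groups over algebraic extensions of $\widetilde{F}$; these vanish for $i=1$ by Hilbert 90 and for $i\geq 2$ by the cohomological-dimension hypothesis. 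Combined with the vanishing of $H^i(\widetilde{F},\Gm)$ for $i\geq 2$, the long exact cohomology sequence forces $H^1=H^2=0$ for $\mathbb{D}_{K,S}\times_F\widetilde{F}$, and together with Step 1 this yields $H^2(\widetilde{F}_{\fpqc},\mathbb{D}_F)=0$.

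The hardest part will be the passage to the inverse limit: I need to verify that the transition maps $p_{L/K}$ between the finite-level Kottwitz tori are compatible enough with the above resolutions that the simultaneous finite-level vanishing of $H^1$ and $H^2$ survives in the limit. This relies decisively on the injectivity of Lemma \ref{InjKaletha}, ultimately on the Kaletha-Kottwitz-Tate conditions (K1)--(K2) from Section \ref{TateKalethaKottwitzCond}, and explains why one must restrict throughout to pairs $(K,S)$ satisfying those conditions.
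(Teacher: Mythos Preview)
Your proposal is correct and follows essentially the same strategy as the paper: reduce existence of a fiber functor to the vanishing of $H^2(\widetilde{F}_{\fpqc},\mathbb{D}_F)$, control the passage to the inverse limit via the Grothendieck spectral sequence, and then kill the finite-level contributions using the exact sequence $1\to\Gm\to T_{S_K}\to\mathbb{D}_{K,S}\to 1$ together with Shapiro's lemma, Hilbert~90, and the hypothesis $\mathrm{cd}(\widetilde{F})\leq 1$. The only organizational difference is that the paper first compares $H^2(\widetilde{F},\mathbb{D}_F)$ with $H^2(\widetilde{F},\mathbb{T}_F)$ at the pro-level and then breaks $\mathbb{T}_F$ into finite products of Weil restrictions, whereas you descend to the finite-level tori $\mathbb{D}_{K,S}$ first and run the exact sequence there; the ingredients are identical.

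One small correction: your final paragraph overstates the difficulty. Your own long exact sequence in Step~2 already yields $H^1(\widetilde{F},\mathbb{D}_{K,S}\times_F\widetilde{F})=0$ directly, sandwiched between $H^1(\widetilde{F},T_{S_K})=0$ and $H^2(\widetilde{F},\Gm)=0$. Hence the $\varprojlim^{1}$ term vanishes for free, and no appeal to Lemma~\ref{InjKaletha} or to conditions (K1)--(K2) is needed over $\widetilde{F}$; those are used in the paper only for the vanishing of $H^1(F,\mathbb{D}_{K,S})$ over the base field $F$ itself, which is a separate issue.
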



\subsection{Rephrasing the existence of a fiber functor in cohomological terms}Gerbes over $F_{\fpqc}$ banded by $B$ (which we assume to be an abelian affine group scheme) are classified by $H^2(F_{\fpqc},B)$. Thus the existence of a fiber functor of $\Rep(\mathcal{G})$, where $\mathcal{G}$ is such a gerbe, over $\widetilde{F}\supseteq F$ is equivalent to the vanishing of the class of $\mathcal{G}$ in $H^2(\widetilde{F}_{\fpqc},B\times\Spec(\widetilde{F}))$. In particular, the vanishing of the latter cohomology group implies the existence of a fiber functor.

The problem we face working with the Kottwitz gerbes for global fields is that even the bands of $\Kt_{K/F}$ are pro-algebraic groups. Consequently, we must be careful with the cohomology groups mentioned above, since projective limits do not commute with $\fpqc$ cohomology.

In the following sections, we establish exact sequences that relate $\fpqc$ cohomology groups of a projective limit with the projective limit of $\fpqc$-cohomology groups of its algebraic quotients; we also compare cohomology of $\mathbb{D}_{F}$ with that of a simpler pro-torus $\mathbb{T}_{F}$, and deduce the vanishing of $H^2(\widetilde{F}_{\fpqc},\mathbb{D}_F\times \Spec{\widetilde{F}})$ from these two results.
\subsection{Cohomological generalities}
\label{CohomologicalGeneralities}
In Section 3 of \cite{BS} Bhatt and Scholze studied replete topoi, and established several properties of limits in such topoi. We only need some of those results in the special case of the topos of $\fpqc$ sheaves on the category of schemes, that is shown to be replete in Example 3.1.7. of \cite{BS}. We let $\mathcal{X}$ denote the mentioned topos.
\begin{propos}\label{repletelimit} If $\mathcal{X}$ is a replete topos and $F: \mathbf{N}^{\op} \to \Ab(\mathcal{X})$ is a diagram with $F_{n+1}\to F_{n}$ surjective, then $\lim F_n \simeq \Rlim F_n$.
\end{propos}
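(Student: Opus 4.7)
The plan is to compute $\Rlim F_n$ via the standard two-term complex and then use the defining property of a replete topos to show that the relevant $R^1\lim$ vanishes. Concretely, for any topos $\mathcal{X}$ with countable products (which exist because $\mathcal{X}$ is a topos) and any tower $F\colon \mathbf{N}^{\op}\to\Ab(\mathcal{X})$ with transition maps $f_n\colon F_n\to F_{n-1}$, one has a fiber sequence
\begin{equation*}
\Rlim F_n \longrightarrow \prod_{n} F_n \xrightarrow{\ d\ } \prod_{n} F_n,
\qquad d(x_n)_n = \bigl(x_n - f_{n+1}(x_{n+1})\bigr)_n.
\end{equation*}
Tautologically $H^0$ of this complex is $\lim F_n$, so the proposition is equivalent to the assertion that $d$ is an epimorphism in $\mathcal{X}$.

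First I would reduce the epimorphism statement to a lifting problem for a tower. Given a section $(y_n)_n$ of $\prod_n F_n$ over some object $U\in\mathcal{X}$, define inductively a tower of sheaves over $U$ by letting $G_0 = F_0|_U$ and
\begin{equation*}
G_N = \bigl\{\,(x_0,\dots,x_N)\in F_0\times\cdots\times F_N \ \bigm|\ x_n - f_{n+1}(x_{n+1}) = y_n \text{ for } 0\le n< N\,\bigr\},
\end{equation*}
with transition maps $G_{N+1}\to G_N$ being the projection forgetting the last coordinate. A section of $\lim G_N$ is precisely a local solution $(x_n)_n$ to $d(x_n)_n = (y_n)_n$, so producing such a solution after passing to a cover is what we need.

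Next I would verify that each transition $G_{N+1}\to G_N$ is an epimorphism: given $(x_0,\dots,x_N)\in G_N$, the required extension amounts to finding $x_{N+1}\in F_{N+1}$ with $f_{N+1}(x_{N+1}) = x_N - y_N$, which is possible locally by the hypothesis that $f_{N+1}$ is surjective. Now I invoke repleteness: for a tower of epimorphisms in a replete topos, the projection $\lim G_N \to G_0$ is itself an epimorphism. Since $G_0 = F_0|_U$ is nonempty locally (indeed has the section $x_0 = y_0 + f_1(\text{anything local})$, or simply any section of $F_0$ on a cover), we conclude that $\lim G_N$ has local sections, giving the desired local preimage of $(y_n)_n$ under $d$. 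Hence $d$ is an epimorphism, $H^1$ of the two-term complex vanishes, and $\Rlim F_n \simeq \lim F_n$.

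The main obstacle is purely organizational: one has to set up the auxiliary tower $G_N$ as honest sheaves and check that each transition map is an epimorphism in $\mathcal{X}$ (rather than just sectionwise surjective). Once that is done, the replete hypothesis does all the work, and no Mittag-Leffler or derived-functor calculation in the usual sense is needed.
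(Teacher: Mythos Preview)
The paper does not supply its own proof of this proposition; it simply quotes the result from Bhatt--Scholze \cite[Section~3]{BS}. Your argument is correct and is in fact essentially the proof given there (see \cite[Proposition~3.1.10]{BS}): one computes $\Rlim$ via the two-term complex $\prod_n F_n \xrightarrow{d} \prod_n F_n$ and shows $d$ is an epimorphism by building exactly the auxiliary tower of partial solutions you describe, then invoking the defining property of a replete topos to pass to the limit. So there is nothing to compare---you have reproduced the cited proof.

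One small cleanup: your final step, where you argue that $G_0 = F_0|_U$ ``is nonempty locally,'' is slightly overcomplicated. Since $F_0$ is an object of $\Ab(\mathcal{X})$, it has a global zero section, so $G_0$ has a section over $U$ itself; then the epimorphism $\lim_N G_N \to G_0$ furnished by repleteness lets you lift that zero section locally on $U$ to the desired preimage under $d$. Also, to be fully precise about why $G_{N+1}\to G_N$ is an epimorphism in $\mathcal{X}$ (not just sectionwise), note that $G_{N+1}$ is the fiber product $G_N \times_{F_N} F_{N+1}$ along the map $(x_0,\dots,x_N)\mapsto x_N - y_N$ and $f_{N+1}\colon F_{N+1}\to F_N$; since epimorphisms in a topos are stable under pullback, the claim follows immediately. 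You flagged this as the ``main obstacle,'' but it is in fact a one-line observation.
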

Also, derived limit commutes with derived functor of global sections, namely
\begin{equation}
\label{DerivedCommutativity}
\Rlim \RGamma(X, F_n) \simeq \RGamma(X, \Rlim F_n).
\end{equation}

Suppose that the conditions of Proposition \ref{repletelimit} are satisfied, then we obtain
\begin{equation}
 \RGamma(X, \varprojlim F_n)\simeq\R\varprojlim \RGamma(X, F_n).
\end{equation}

In order to compute the right-hand side one applies Grothendieck spectral sequence
\begin{equation}
\R^p\varprojlim\circ \R^q \Gamma (X, F_n) \Rightarrow \R^{p+q}(\varprojlim \circ \Gamma)(X,F_n),
\end{equation}
that after being combined with (\ref{DerivedCommutativity}) produces the following short exact sequences
\begin{equation}
\label{GrSpSequence}
0\to \R^1 \varprojlim \R^{i-1}\Gamma(X,F_n) \to \R^i(\Gamma(X,\varprojlim F_n)) \to \varprojlim \R^i(X,F_n)\to 0, \ i\geq 1.
\end{equation}

In what follows we will apply the results of this section in the case $X=X_{\fpqc}$, where $X$ is the spectrum of a field and $\{F_n\}$ is a pro-system of abelian sheaves on $X_{\fpqc}$, then the middle term in (\ref{GrSpSequence}) for $i=2$ is the desired $\fpqc$ cohomology group classifying gerbes banded by $\varprojlim_n F_n$.

We will be using freely several comparison isomorphisms of Grothendieck and Saavedra.
\begin{theorem}[Grothendieck, Saavedra Rivano]
\label{GrSaaTh}
Let $X$ be a scheme and $p: X_{\fl}\to X_{\et} $ be the canonical morphism of sites. Let $G$ be a smooth algebraic group on $X$. Then the canonical morphisms
\begin{equation}
H^i(X_{\et}, p_\ast(G)) \to H^i(X_{\fl}, G)
\end{equation}
are isomorphisms for $i=0,1$ (for all $i\geq0$ if $G$ is abelian, respectively).
If $X=\Spec{k}$ where $k$ is a field, then the canonical morphisms
\begin{equation}
H^i(X_{\fl}, G) \to H^i(X_{\fpqc}, G) \quad \text{for } i=1,2
\end{equation}
are isomorphims.
\end{theorem}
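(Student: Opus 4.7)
The theorem bundles two distinct comparison statements, which I would handle separately: (i) the \'etale-vs-fppf comparison for a smooth group $G$ over a general base $X$, for $i = 0, 1$ in general and all $i \geq 0$ when $G$ is abelian; and (ii) the fppf-vs-fpqc comparison over a field in degrees $1$ and $2$.

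For (i) my approach is to use the Leray spectral sequence for the morphism of sites $p : X_{\fl} \to X_{\et}$, for which it suffices to establish vanishing of the higher direct images $R^q p_* G$ for $q \geq 1$ in the abelian case, and for $q = 1$ for the non-abelian torsor statement. Since $R^q p_* G$ is the \'etale sheafification of $U \mapsto H^q_{\fl}(U, G)$, vanishing can be checked on strict Henselizations $R$ of local rings of $X$. The case $q = 1$ is the non-abelian torsor statement: any fppf $G$-torsor $P$ is representable by a scheme smooth over $\Spec R$, since smoothness descends through the fppf trivialization by $G_R$, and every smooth surjection onto the spectrum of a strictly Henselian local ring admits a section. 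This already yields the $i = 0, 1$ statement with no abelian hypothesis.

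For higher $i$ in the abelian case I would proceed by inserting the smooth site as an intermediate, proving separately that
\begin{equation*}
H^i_{\fl}(R, G) \simeq H^i_{\sm}(R, G), \qquad H^i_{\sm}(R, G) \simeq H^i_{\et}(R, G)
\end{equation*}
by the same direct-image vanishing technique. In each case the argument reduces to a statement about sections of smooth morphisms: since torsors under a smooth abelian $G$ are smooth schemes, one obtains refinements of fppf covers by smooth covers (and of smooth covers by \'etale covers) in cohomological contexts via the corresponding $H^1$-vanishing, which can then be iterated.

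For (ii) the $H^2$ half is essentially Theorem \ref{ComparisonNonAb}, which already handles the non-abelian case. For $H^1$ I would argue directly: an fpqc $G$-torsor $P$ over $\Spec k$ is trivialized by some faithfully flat $k$-algebra $A$; choose a finitely generated $k$-subalgebra $B \subset A$ sufficient for the trivialization (one can take $B$ generated by the finitely many coordinate functions defining the trivialization). Since $k$ is a field every $k$-algebra is flat, so $\Spec B \to \Spec k$ is fppf, trivializing $P$. Injectivity of $H^1_{\fl} \to H^1_{\fpqc}$ is automatic from fpqc descent, giving the desired isomorphism.

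The principal obstacle is the bootstrap from degree one to arbitrary degree in part (i) for abelian $G$. A naive \v{C}ech-level argument secretly uses the very refinement property one is proving; breaking this circularity requires the auxiliary smooth-site device or a careful inductive reduction in which each degree of cohomology appeals to the smoothness of $G$, not merely to the $H^1$-vanishing.
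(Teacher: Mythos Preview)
The paper does not prove this theorem; it is stated in Section~\ref{CohomologicalGeneralities} as a known comparison result attributed to Grothendieck and Saavedra Rivano (with implicit references to \cite{GR1} and \cite[III.~Cor.~3.1.6]{Saa}), and is used as a black box. So there is no paper proof to compare against, and your sketch for part~(i) is essentially the standard Grothendieck argument: vanishing of $R^q p_* G$ checked on strictly Henselian stalks via sections of smooth surjections, with the caveat you already flag about bootstrapping to higher degrees in the abelian case.

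There is, however, a genuine circularity in your treatment of part~(ii) for $H^2$. You propose to invoke Theorem~\ref{ComparisonNonAb}, but look at its proof in Section~\ref{ComparisonProof}: the very first sentence asserts that any $\fpqc$ gerbe banded by an algebraic group is neutralized by a finite extension, and justifies this as ``a consequence of the comparison isomorphism for $H^2(F_{\fpqc}, G)$ and $H^2(F_{\fl}, G)$ proved by Saavedra Rivano \cite[III.~Cor.~3.1.6]{Saa}.'' That comparison is precisely the $i=2$ case of the statement you are trying to prove. Theorem~\ref{ComparisonNonAb} consumes the $\fl=\fpqc$ comparison as input and adds the $\sm=\fl$ step on top; it does not supply the $\fl=\fpqc$ direction. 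You need an independent argument---the natural one is the same finite-presentation/limit technique you used for $H^1$, now applied to gerbe (or bitorsor cocycle) data over a faithfully flat $k$-algebra $A$: since $G$ is of finite type, the cocycle and its coherence conditions involve only finitely many elements and hence descend to a finitely generated (thus $\fppf$) subalgebra $B\subset A$. This is essentially Saavedra Rivano's own approach.
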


The following statement generalizes Shapiro's lemma and will be useful when we deal with cohomology of $\Res_{L/K} \Gm$ for a finite separable field extension $L/K$.
\begin{theorem}
\label{VarShapiro}
Let $X^\prime \to X$ be a finite \'{e}tale map of schemes, and let $G^\prime$ be a commutative finitely presented and relatively affine $X^\prime$-group scheme. Then the finitely presented and relatively affine $X$-group scheme $G=\Res_{X^\prime/X}(G^\prime)$ defines a sheaf for the \'{e}tale topology on $X$, and there is a natural isomorphism
\begin{equation}
H^i(X, G) \simeq H^i(X^\prime, G^\prime).
\end{equation}
\end{theorem}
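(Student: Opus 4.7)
\medskip

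\noindent\textbf{Proof proposal.} The plan is to exhibit $G = \Res_{X'/X}(G')$ as the pushforward sheaf $f_\ast G'$ along $f : X' \to X$, and then run the Leray spectral sequence, observing that the higher direct images vanish because $f$ is finite \'etale.

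First, I would verify the sheaf property and the identification $G = f_\ast G'$ on $X_{\et}$. Since $f$ is finite (hence affine) and $G'$ is a relatively affine $X'$-group scheme, the classical representability result for Weil restriction (see \cite[7.6]{BLR}) shows that $\Res_{X'/X}(G')$ is representable by a finitely presented relatively affine $X$-group scheme. Moreover, for every \'etale $U \to X$ one has the tautological equality
\begin{equation*}
G(U) = \Hom_{X}(U, \Res_{X'/X}(G')) = \Hom_{X'}(U\times_X X', G') = G'(U\times_X X') = (f_\ast G')(U),
\end{equation*}
so $G$ represents the \'etale sheaf $f_\ast G'$.

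Second, I would establish that $R^i f_\ast G' = 0$ for $i \geq 1$ in the \'etale topology. Because $f$ is finite, it suffices to compute stalks at geometric points: for any geometric point $\bar x \to X$, the fibre $X'\times_X \bar x$ is a finite disjoint union of copies of $\bar x$ (as $f$ is \'etale), and therefore
\begin{equation*}
(R^i f_\ast G')_{\bar x} \simeq H^i_{\et}(X'\times_X \bar x, G') = 0 \quad\text{for } i \geq 1,
\end{equation*}
since \'etale cohomology of a disjoint union of geometric points with values in any abelian sheaf vanishes in positive degrees. One can also phrase this via Proposition \ref{VanishingR1}: finite morphisms have trivial $R^1 f_\ast$ for smooth groups, and in the abelian setting an analogous argument handles all higher degrees.

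Third, the Leray spectral sequence
\begin{equation*}
E_2^{p,q} = H^p(X, R^q f_\ast G') \Rightarrow H^{p+q}(X', G')
\end{equation*}
degenerates at $E_2$ thanks to the vanishing above, yielding $H^i(X, f_\ast G') \simeq H^i(X', G')$ for all $i \geq 0$. Combining this with the identification $G = f_\ast G'$ from the first step gives the desired natural isomorphism. The only subtle point is the stalk computation for $R^q f_\ast$, but it is immediate once one uses that $f$ is simultaneously finite (so stalks of direct images compute sections over geometric fibres) and \'etale (so the geometric fibres are trivial), making this the mildest possible obstacle rather than a genuine difficulty.
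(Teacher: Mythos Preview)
The paper does not actually supply a proof of Theorem~\ref{VarShapiro}: it is stated as a known background result in the ``Cohomological generalities'' subsection, alongside the Grothendieck--Saavedra comparison isomorphisms, and is simply invoked later. So there is no paper-proof to compare against; your task reduces to whether your argument is correct on its own.

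Your approach is the standard one and is essentially correct: identify $\Res_{X'/X}(G')$ with $f_\ast G'$, show the higher direct images vanish, and conclude via Leray. One small imprecision: the stalk formula you wrote, $(R^i f_\ast G')_{\bar x} \simeq H^i_{\et}(X'\times_X \bar x, G')$, is not quite right as stated. For a finite morphism the correct identification is $(R^i f_\ast G')_{\bar x} \simeq H^i_{\et}\bigl(X'\times_X \Spec(\mathcal{O}^{sh}_{X,\bar x}),\, G'\bigr)$, i.e.\ one pulls back to the strict henselization, not to the geometric point itself. Since $f$ is finite \'etale, $X'\times_X \Spec(\mathcal{O}^{sh}_{X,\bar x})$ is a finite disjoint union of copies of $\Spec(\mathcal{O}^{sh}_{X,\bar x})$, and \'etale cohomology of a strictly henselian local ring vanishes in positive degrees, so the conclusion is unchanged. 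Alternatively, you can bypass the stalk computation entirely by citing the standard fact that $f_\ast$ is exact on abelian \'etale sheaves for any finite morphism (SGA~4, Exp.~VIII, Cor.~5.6), which immediately gives $R^i f_\ast = 0$ for $i\geq 1$. Your reference to Proposition~\ref{VanishingR1} is apt in spirit but not quite on the nose, since that proposition is stated for the fppf site and for $R^1$ only; the \'etale, all-degrees, abelian version is what you need here.
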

\subsection{Reduction to a simpler band}
In the case of Kottwitz gerbes for global fields, $\mathbb{D}_F$ is defined to be the projective limit of protori
\begin{equation}
\mathbb{D}_F = \varprojlim_{K/F} \mathbb{D}_{K/F}
\end{equation}
taken over the directed set of finite Galois extensions of $F$. Similarly, we let $\mathbb{T}_F$ denote the protorus $\varprojlim_{K/F} \mathbb{T}_{K/F}$, where $X^\ast(\mathbb{T}_{K/F}) = \mathbb{Z}[V_K]$ and the transition maps will be specified below (see also Section \ref{SemiLocal}).

The duality for groups of multiplicative type applied to exact sequence (B) introduced in Section \ref{SemiLocal} gives us a short exact sequence of pro-tori
\begin{equation}
\tag{C}
0 \to \varprojlim_{K}\Gm \xrightarrow{} \varprojlim_{K}\mathbb{T}_{K/F} \xrightarrow{} \varprojlim_{K}\mathbb{D}_{K/F}\to 0,
\end{equation}
note that $\varprojlim^1_{K}\Gm$ vanishes since the transition maps are surjective. Then we extend scalars to $\widetilde{F}$ and consider the long exact sequence of $\fpqc$-cohomology
\begin{equation}
\label{longexactforlim}
 \ldots \to H^i(\widetilde{F}, \varprojlim_{K}\Gm) \to H^i(\widetilde{F}, \varprojlim_{K}\mathbb{T}_{K/F}) \to H^i(\widetilde{F},\varprojlim_{K}\mathbb{D}_{K/F})\to \\ H^{i+1}(\widetilde{F},\varprojlim_{K}\Gm)\to\ldots
\end{equation}
where we omitted the base change for corresponding sheaves of abelian groups.

We apply short exact sequence (\ref{GrSpSequence}) to $H^i(\widetilde{F}, \varprojlim_{K}\Gm)$ with $i=1,2,3$. It follows immediately that $H^1(\widetilde{F}, \varprojlim_{K}\Gm)=\varprojlim^1 H^0(\widetilde{F},\Gm)$ since $\varprojlim H^1(\widetilde{F},\Gm)$ vanishes by Hilbert's Theorem 90; $H^2(\widetilde{F}, \varprojlim_{K}\Gm)=0$ since $\varprojlim_K H^2(\widetilde{F}, \Gm)$ vanishes by assumption on $\widetilde{F}$ and  we also have $\varprojlim^1 H^1(\widetilde{F}, \Gm)=0$ by Hilbert's 90; similarly, $H^3(\widetilde{F}, \varprojlim_{K}\Gm)$ vanishes by cohomological dimension reasons.

It follows from the vanishing results above that long exact sequence (\ref{longexactforlim}) induces an isomorphism
\begin{equation}
H^2(\widetilde{F}, \mathbb{T}_F\times\Spec{\widetilde{F}})\simeq H^2(\widetilde{F}, \mathbb{D}_F\times\Spec{\widetilde{F}}).
\end{equation}

\subsection{The pro-torus $\mathbb{T}_{F}$}
Recall that we denoted by $\mathbb{T}_{F}$ the inverse limit of the protori $\mathbb{T}_{K/F}$ defined by their character modules $\zz[V_K]$. The transition maps are given as the maps that are dual to the following homomorphisms of character modules
\begin{equation}
\label{TransMaps}
\zz[V_K] \to \zz[V_L]: v \in V_K \mapsto \sum_{w|v} [L_w:K_v] w, \quad F\subseteq K \subseteq L.
\end{equation}
Then $X^\ast(\mathbb{T}_{F}) = \varinjlim_{K} \zz[V_K]$, the limit is being taken over the set of finite Galois extensions of $F$ contained in the fixed separable closure $\overline{F}$.
We can rewrite $\zz[V_K]$ as a direct sum of finite dimensional $\Gal(K/F)$-modules $X_{v,K}$; namely
\begin{equation}
\label{LocalTorus}
X_{v,K} = \bigoplus_{w\in V_K, w|v} \zz w.
\end{equation}
Let us denote by $\mathbb{T}^v_{K/F}$ the groups of multiplicative type dual to $X_{v,K}$, thus
\begin{equation}
\mathbb{T}_{K/F} = \prod_{v\in V_F} \mathbb{T}^v_{K/F}
\end{equation}
and the pro-torus $\mathbb{T}_F$ can be written as a projective limit of finite products of algebraic groups of the form $\mathbb{T}^v_{K/F}$ indexed by a countable directed set $I$, then one applies the short exact sequence (\ref{GrSpSequence}) to this limit
\begin{equation}
0\to  \R^1\varprojlim_{i\in I} H^1(\widetilde{F}, \mathbb{T}^i \times \Spec(\widetilde{F})) \to H^2(\widetilde{F}, \mathbb{T}_F \times \Spec(\widetilde{F})) \to \varprojlim_{i\in I} H^2(\widetilde{F}, \mathbb{T}^i \times \Spec(\widetilde{F})) \to 0
\end{equation}
Using the fact that cohomology commutes with finite products, we can pass to cohomology of $\mathbb{T}^v_{K/F}\times\Spec{\widetilde{F}}$.
\subsection{Vanishing of $H^1$ and $H^2$ of algebraic quotients}
Let $w$ be a place of $K$ lying over $v\in V_F$, and let $K^w$ denote the decomposition field of $w\in V_K$, that is the field of invariants for the decomposition group $G_w$ at the place $w$, which is the subgroup of $G=\Gal(K/F)$ acting trivially on $w$.

Consider the group of characters of $\mathbb{T}^v_{K/F}$ given by $G$-module $X_{v,K}$ in (\ref{LocalTorus}). The set of places $w|v$ is in bijection with a system of representatives of the cosets in $G/G_w$, and there is an isomorphism of $G$-modules
\begin{equation}
X^\ast(\mathbb{T}^v_{K/F}) = X_{v,K} \simeq \bigoplus_{\sigma \in G/G_w} \zz \sigma(w),
\end{equation}
where the latter is endowed with the natural $G$-structure.

It is clear then that $\mathbb{T}^v_{K/F}$ is isomorphic to $\Res_{K^w/F} \Gm$, the Weil restriction for $\Gm$ over $K^w$ (see Lemma 12.61 in \cite{MilneAG}). Using the compatibility of Weil restriction with base change (see Section 7.6 in \cite{BLR}), we obtain
\begin{equation}
\label{WeilResBaseChange}
\Res_{K^w/F} (\Gm \times\Spec(K^w))\times \Spec(\widetilde{F}) \simeq \Res_{K^w\otimes_F\widetilde{F}/\widetilde{F}} (\Gm \times \Spec{K^w} \times \Spec(\widetilde{F}))
\end{equation}
with all the fiber products taken over $\Spec(F)$.
Note that $\Spec(K^w) \times_{\Spec(F)} \Spec(\widetilde{F})\simeq \coprod  \Spec(\widetilde{K}_j)$, where the fields $\widetilde{K}_j$ are extensions of $\widetilde{F}$ defined by the following decomposition
\begin{equation*}
K^w \otimes_F \widetilde{F} \simeq \prod \widetilde{K}_j.
\end{equation*}
It follows then from the universal property defining Weil restriction applied to the right-hand side of (\ref{WeilResBaseChange}) that the latter is isomorphic to
\begin{equation}
\label{DecompositionOfALocalFactor}
\prod_j \Res_{\widetilde{K}_j/\widetilde{F}}(\Gm\times_{\Spec(\widetilde{F})} \Spec(\widetilde{K}_j)).
\end{equation}
Consequently, in order to prove vanishing of $H^i(\widetilde{F}, \mathbb{T}^v_{K/F})$ for $i=1,2$ it is enough to show this for every factor in (\ref{DecompositionOfALocalFactor}).

By Theorem \ref{VarShapiro}, we have
\begin{equation}
H^i(\widetilde{F}_{\et}, \Res_{\widetilde{K}_j/\widetilde{F}}(\Gm\times_{\Spec(\widetilde{F})} \Spec(\widetilde{K}_j))) \simeq
H^i((\widetilde{K}_j)_{\et}, \Gm).
\end{equation}

We conclude by noticing that $H^1(\widetilde{K}_j, \Gm)=0$ by Hilbert's 90, and $H^2(\widetilde{K}_j, \Gm)=0$ since $\widetilde{K}_j/\widetilde{F}$ is a finite algebraic extension of a field of cohomological dimension $\leq1$.

\section{Local-global principle}
\label{LocalGlobalPrinSec}
In this section $F$ is a global field, $\bar{F}$ denotes its separable closure, and $\Gamma_F = \Gal(\bar{F}/F)$. We usually omit $F$ from notation, when the base field is clear. For any Galois extension $K/F$ we denote $\Gal(K/F)$ by $\Gamma_{K/F}$.

\subsection{Ad\`{e}lic cohomology}
\label{AdCoh}
For a finite set of primes $S$ in $F$, $\mathbb{A}^S$ is
the restricted product of $F_v$ for $v\notin S$, and for a finite extension $K/F$, $\mathbb{A}_K^S = \mathbb{A}^S \otimes_F K$. When $S$ is empty, we omit it from notation.

For a torus $T$ over $F$ define
\begin{equation*}
H^r(\mathbb{A}^S,T) = \varinjlim_{K} H^r(\Gamma_{K/F}, T(\mathbb{A}^S_K)),
\end{equation*}
where the limit is taken over the finite Galois extensions $L/F$ contained in a fixed separable closure of $F$.

For the proof of the next result we refer to \cite[Prop. 2.13]{MilneGerbes}.
\begin{propos}\label{AdelicCoh} Let $T$ be a torus over $F$. For all $r\geq1$, there is a natural isomorphism
\begin{equation}
H^r(\mathbb{A}^S, T) \simeq \bigoplus_{v\notin S} H^r(F_v, T).
\end{equation}
Moreover, a short exact sequence of tori gives rise to a long exact sequence of ad\`{e}lic cohomology.
\end{propos}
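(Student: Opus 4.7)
The plan is to unwind the definition of $H^r(\mathbb{A}^S,T)$ level by level, commute cohomology with a restricted product by a vanishing argument at unramified places, and then apply Shapiro's lemma fiberwise before passing to the colimit.

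First I would fix a finite Galois extension $K/F$ and, writing $S_K$ for the places of $K$ above $S$, decompose
\begin{equation*}
T(\mathbb{A}^S_K) \;=\; {\prod_{v\notin S}}^{\!\!\prime}\ \prod_{w\mid v} T(K_w),
\end{equation*}
where the outer restricted product is taken with respect to $\prod_{w\mid v}\mathcal{T}(\mathcal{O}_{K_w})$ for a smooth integral model $\mathcal{T}$ of $T$ defined over $\mathcal{O}_{F,S'}$ for some finite $S'\supset S$. The group $\Gamma_{K/F}$ preserves this decomposition, acting on each inner product $\prod_{w\mid v}T(K_w)$ by permuting the factors and acting on individual $T(K_w)$ via the decomposition subgroup.

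Next I would argue that, for $r\ge 1$,
\begin{equation*}
H^r\!\left(\Gamma_{K/F},\, T(\mathbb{A}^S_K)\right) \;\simeq\; \bigoplus_{v\notin S} H^r\!\left(\Gamma_{K/F},\, \prod_{w\mid v}T(K_w)\right).
\end{equation*}
The nontrivial point is that Galois cohomology commutes with the restricted product. This reduces, via the standard short exact sequence relating the restricted product to the full product modulo integral points, to the vanishing
\begin{equation*}
H^r\!\left(\Gamma_{K/F},\, \prod_{w\mid v}\mathcal{T}(\mathcal{O}_{K_w})\right) \;=\; 0,\qquad r\ge 1,
\end{equation*}
for all but finitely many $v$. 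By Shapiro's lemma this is $H^r(\Gamma_{K_{\bar v}/F_v},\mathcal{T}(\mathcal{O}_{K_{\bar v}}))$, which vanishes whenever $v$ is unramified in $K$ and $T$ has good reduction at $v$, since $\mathcal{T}(\mathcal{O}_{K_{\bar v}})$ is then cohomologically trivial as a $\Gamma_{K_{\bar v}/F_v}$-module (the pro-$p'$ part via Lang's theorem over the residue field, the unit-filtration being cohomologically trivial for unramified extensions). This is the main technical obstacle: one has to work with a good integral model and verify the vanishing at all but finitely many places uniformly in $K$.

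Once the decomposition is established, Shapiro's lemma applied to each $v$ gives
\begin{equation*}
H^r\!\left(\Gamma_{K/F},\, \prod_{w\mid v}T(K_w)\right) \;\simeq\; H^r\!\left(\Gamma_{K_{\bar v}/F_v},\, T(K_{\bar v})\right),
\end{equation*}
and passing to the filtered colimit over $K$, which commutes with direct sums and with group cohomology, yields
\begin{equation*}
H^r(\mathbb{A}^S,T) \;=\; \varinjlim_K H^r(\Gamma_{K/F},T(\mathbb{A}^S_K)) \;\simeq\; \bigoplus_{v\notin S} H^r(F_v,T),
\end{equation*}
which is the first claim. Finally, for the long exact sequence statement, given a short exact sequence $1\to T'\to T\to T''\to 1$ of tori over $F$, I would observe that at each finite level the sequence
\begin{equation*}
1\to T'(\mathbb{A}^S_K)\to T(\mathbb{A}^S_K)\to T''(\mathbb{A}^S_K)\to 1
\end{equation*}
is exact (since tori over local fields satisfy $H^1=0$ locally after an unramified extension, and the obstruction to surjectivity vanishes on restricted products by the same unramified-vanishing argument as above); the resulting long exact sequence in $\Gamma_{K/F}$-cohomology then passes to the colimit, and the desired long exact sequence of adèlic cohomology follows from the identification already obtained together with exactness of filtered colimits.
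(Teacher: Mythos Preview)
The paper does not give its own proof of this proposition; it simply refers to \cite[Prop.~2.13]{MilneGerbes}. Your sketch is the standard argument and is essentially what one finds in Milne (and earlier in Tate, Ono, Platonov--Rapinchuk): write $T(\mathbb{A}^S_K)$ as a restricted product, use that cohomology of a finite group commutes with products, kill the integral factors at almost all places via cohomological triviality of $\mathcal{T}(\mathcal{O}_{K_w})$ for unramified $w$ with good reduction, apply Shapiro's lemma, and pass to the colimit.

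One point in your long exact sequence paragraph deserves tightening. The sentence ``tori over local fields satisfy $H^1=0$ locally after an unramified extension'' is not correct as stated: $H^1(K_w,T')$ need not vanish for a non-split torus $T'$ (e.g.\ a norm-one torus), and unramified extensions are not what fixes this. What you actually need is that the colimit defining $H^r(\mathbb{A}^S,-)$ is over \emph{all} finite Galois $K/F$, so you may restrict to the cofinal family of $K$ containing a fixed splitting field of $T'$. For such $K$, $T'\times_F K_w$ is split, Hilbert~90 gives $H^1(K_w,T')=0$, and surjectivity on integral points at almost all places follows from smoothness of $\mathcal{T}\to\mathcal{T}''$ plus Hensel (equivalently, $H^1_{\mathrm{\acute et}}(\mathcal{O}_{K_w},\mathcal{T}')=0$ by Lang's theorem). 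Then $1\to T'(\mathbb{A}^S_K)\to T(\mathbb{A}^S_K)\to T''(\mathbb{A}^S_K)\to 1$ is genuinely exact for these $K$, and the long exact sequence passes to the colimit. Alternatively, once the direct-sum description is in hand for $r\ge 1$, you can simply assemble the long exact sequence from the local long exact sequences at each $v$, checking only that the connecting maps agree; this avoids the exactness-of-points issue entirely above degree~$0$.
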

\subsection{The principle}
\label{SecPrinciple}
Let $K/F$ be a finite Galois extension of global fields, and let $S\subset V_F$ be a finite subset of the set of places of $F$. We define $S_K$ as the set of all places of $K$ lying over $S$, and let $\dot{S}_K$ denote a set of lifts of elements of $S$ to places of $K$.

Recal that there is an exact sequence of Galois modules
\begin{equation}
0 \to \zz[S_K]_0 \xrightarrow{b^\prime} \zz[S_K] \xrightarrow{b} \zz\to 0,
\end{equation}
and an exact sequence of algebraic tori dual to it, namely,
\begin{equation}
\label{FiniteTori}
0 \to \Gm \xrightarrow{b} \mathbb{T}_{K,S} \xrightarrow{b^\prime}\mathbb{D}_{K,S} \to 0.
\end{equation}
Applying exact sequence (\ref{GrSpSequence}) to $\mathbb{D}_F$, we obtain
\begin{equation}
0\to R^1\varprojlim H^1(F,\mathbb{D}_{K,S}) \to H^2(F,\mathbb{D}_F) \to \varprojlim H^2(F,\mathbb{D}_{K,S})\to 0.
\end{equation}

Since $\mathbb{D}_{K,S}$ is a smooth affine algebraic group, we have an isomorphism  
\begin{equation*}
H^i(F,\mathbb{D}_{K,S})\simeq H^i(\Gamma, \mathbb{D}_{K,S}(\bar{F})).
\end{equation*}
It was shown in Section 3.8 of \cite{KalTai19}, that $R^i \varprojlim H^1(\Gamma, \mathbb{D}_{K,S}(\bar{F}))=0$ for $i=0,1$. Therefore the sequence of cohomology groups above is reduced to the isomorphism
\begin{equation}
H^2(F,\mathbb{D}_F) \MapsTo \varprojlim H^2(F,\mathbb{D}_{K,S}).
\end{equation}

Next we deal with the terms $H^2(F,\mathbb{D}_{K,S})$. First, note that (\ref{FiniteTori}) induces the long exact sequence of cohomology
\begin{equation}
\to H^1(F,\mathbb{T}_{K,S})\to H^1(F,\mathbb{D}_{K,S})\to H^2(F,\Gm) \to H^2(F,\mathbb{T}_{K,S}) \to H^2(F,\mathbb{D}_{K,S}) \to H^3(F,\Gm)\to
\end{equation}

We can simplify this sequence using the following lemma.

\begin{lemma}
\begin{itemize}
\item $H^1(F,\mathbb{T}_{K,S}) =0$.
\item $H^2(F,\mathbb{T}_{K,S}) = \bigoplus_{v \in S} \Br(K^{\dot{v}})$, where $\dot{v}\in S_K$ is an arbitrary lift of $v\in S$, and $K^{\dot{v}}$ is the decomposition field of $\dot{v}$.
\item $H^3(F,\Gm) = 0$.
\item The map $H^2(F,\Gm) \to H^2(F,\mathbb{T}_{K,S})$ in the sequence above coincides with the product of restrictions $\res_v: \Br(F)\to \Br(K^{\dot{v}})$.
\end{itemize}
\end{lemma}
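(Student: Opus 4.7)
The strategy is to exploit the Weil-restriction decomposition of $\mathbb{T}_{K,S}$ established in the previous section, apply Shapiro's lemma to reduce every computation to cohomology of $\Gm$ over the decomposition fields $K^{\dot v}$, and then combine Hilbert's Theorem 90 with global class field theory. Concretely, grouping the places in $S_K$ according to which $v \in S$ they lie over and picking lifts $\dot v \in \dot S_K$, one writes
\[
\mathbb{T}_{K,S} \;\simeq\; \prod_{v \in S} \mathbb{T}^{v}_{K/F} \;\simeq\; \prod_{v \in S} \Res_{K^{\dot v}/F}\Gm,
\]
so that by Theorem \ref{VarShapiro} one has the Shapiro-type identification
\[
H^i(F, \mathbb{T}_{K,S}) \;\simeq\; \bigoplus_{v \in S} H^i(K^{\dot v}, \Gm)
\]
for every $i \geq 0$.

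From this formula, parts (1) and (2) are immediate: $H^1(K^{\dot v},\Gm) = 0$ by Hilbert's Theorem 90, while by definition $H^2(K^{\dot v},\Gm) = \Br(K^{\dot v})$. For part (4), I would trace the morphism $H^2(F,\Gm) \to H^2(F,\mathbb{T}_{K,S})$ through the above decomposition: the inclusion $\Gm \to \mathbb{T}_{K,S}$ is dual to the degree map $b\colon \zz[S_K] \to \zz$, hence its projection onto each factor $\Res_{K^{\dot v}/F}\Gm$ is the canonical unit of the Weil-restriction adjunction. Under the Shapiro isomorphism, this unit induces exactly the restriction $\res_v\colon \Br(F) \to \Br(K^{\dot v})$; this can be checked at the level of $H^0$ (where it reduces to the tautological inclusion $F^\times \hookrightarrow (K^{\dot v})^\times$) and then propagated to $i=2$ by delta-functoriality via an injective resolution of $\bar F^\times$.

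The only genuinely non-formal input is part (3). In the function-field case, $H^3(F,\Gm) = 0$ is immediate from the fact that $\Gamma_F$ has cohomological dimension $\leq 2$. In the number-field case I would run the long exact sequence attached to the id\`{e}le-class sequence
\[
0 \to \bar F^\times \to \mathbb{I}_{\bar F} \to C_{\bar F} \to 0.
\]
Shapiro's lemma applied termwise yields $H^n(F,\mathbb{I}_{\bar F}) \simeq \bigoplus_v H^n(F_v,\Gm)$, and the local computation gives $H^3(F_v, \Gm) = 0$ at every place (for non-archimedean $v$ by cohomological dimension $2$, and for $v$ real by Tate periodicity, since $H^3(\RR,\Gm) \simeq H^1(\RR,\Gm) = 0$ by Hilbert 90). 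Hence $H^3(F,\mathbb{I}_{\bar F}) = 0$, so it suffices to show that $H^2(F,\mathbb{I}_{\bar F}) \to H^2(F, C_{\bar F})$ is surjective. By class field theory $H^2(F, C_{\bar F}) = \qq/\zz$ and this map is the sum of local invariants $\bigoplus_v \Br(F_v) \to \qq/\zz$, which is manifestly surjective, already from a single non-archimedean place. This global class-field-theoretic input is the principal obstacle; the remaining parts are then formal consequences of the Weil-restriction decomposition.
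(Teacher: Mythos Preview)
Your proposal is correct and follows essentially the same approach as the paper: the Weil-restriction decomposition $\mathbb{T}_{K,S}\simeq\prod_{v\in S}\Res_{K^{\dot v}/F}\Gm$ together with Shapiro's lemma (Theorem~\ref{VarShapiro}) and Hilbert~90 is exactly how the paper handles parts (1), (2) and (4), while for part (3) the paper simply cites Artin--Tate, whose argument is the id\`ele-class exact sequence computation you have written out explicitly. The only cosmetic difference is that where you unwind the adjunction unit and the surjectivity of the invariant map, the paper points to Proposition~1.6.5 in \cite{Neu1} and Theorem~14 in \cite{ArtinTate} respectively.
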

\begin{proof}
The first two statements follow from the \'{e}tale variant of Shapiro's lemma (see Theorem \ref{VarShapiro}) combined with Hilbert's Theorem 90.

The vanishing of $H^3(F,\Gm)$ follows from the comparison of \'{e}tale and Galois cohomology and Theorem 14 in \cite{ArtinTate}, in which it was shown that
\begin{equation*}
\varinjlim_{K} H^3(\Gal(K/F), K^\ast) = 0,
\end{equation*}
where the limit is being taken over the set of finite Galois extensions $F\subset K\subset \bar{F}$.

The last statement follows from Proposition 1.6.5 in \cite{Neu1}.
\end{proof}

It follows from the lemma above that
\begin{equation}
H^2(F,\mathbb{D}_{K,S})= \coker(\bigoplus_{v\in S} \res_v: \Br(F) \to \bigoplus_{v\in S} \Br(K^{\dot{v}})).
\end{equation}

On the other hand, applying Albert–Brauer–Hasse–Noether theorem, we obtain the following commutative diagram

\begin{equation}
\label{LocGlobSnake}
\begin{tikzcd}
 0\arrow{r}&   \Br(F) \arrow{r} \arrow{d}{\bigoplus_{v\in S}\res_v } & \bigoplus_{w\in V_F} \Br(F_w) \arrow{d} \arrow{r} & \QQ/ \zz\arrow{r} \arrow{d}{\bigoplus \deg(K^{\dot{v}}:F)} &0\\
 0\arrow{r}&   \bigoplus_{\dot{v}\in \dot{S}_K} \Br(K^{\dot{v}})   \arrow{r} &  \bigoplus_{\dot{v}\in \dot{S}_K}\bigoplus_{w\in V_{K^{\dot{v}}}} \Br(K^{\dot{v}}_w) \arrow{r} & \bigoplus_{v\in S} \QQ / \zz \arrow{r} &0
\end{tikzcd}
\end{equation}

We let $B_{K,S}$ denote $\ker( \QQ/\zz \to \bigoplus_{v\in S} \QQ/\zz )$. Note that $B_{K,S}$ is a finite subgroup of $\QQ/\zz$ of order $\gcd\{\deg(K^{\dot{v}}:F) \mid v\in S\}$.
\begin{remark} If $K$ is a cyclic extension of $F=\mathbb{Q}$ and $S$ contains an inert prime $v$ (that exists in this case), then $B_{K,S}$ vanishes, since $\deg(K^v:\QQ) =1$.
\end{remark}

\begin{remark}\label{AdelicCohom}
 The middle column in diagram (\ref{LocGlobSnake}) can be reinterpreted in terms of ad\`{e}lic cohomology. Namely $H^2(\mathbb{A}, \Gm) = \oplus_{v\in V_F} \Br(F_v)$ and analogously
 \begin{equation*}
 H^2(\mathbb{A}, \mathbb{T}_{K,S}) =  \bigoplus_{\dot{v}\in \dot{S}_K}\bigoplus_{w\in V_{K^{\dot{v}}}} \Br(K^{\dot{v}}_w).
 \end{equation*}
 Moreover, since $H^3(\mathbb{A},\Gm)=0$, we have an isomorphism
 \begin{equation}
 H^2(\mathbb{A},\mathbb{D}_{K,S}) \simeq \coker\big(H^2(\mathbb{A},\Gm) \to H^2(\mathbb{A},\mathbb{T}_{K,S})\big).
 \end{equation}
\end{remark}

Let us assume that the inverse system $B_{K,S}$ satisfies the Mittag-Leffler condition and $\varprojlim B_{K,S} = 0$. We will specify the transition maps below. Snake lemma applied to the diagram above gives us an obvious exact sequence, part of which is as follows
\begin{displaymath}
\xymatrix@C=0.5cm{
  \ldots \ar[r] & B_{K,S} \ar[rr]^{\delta_{K,S}\quad} && H^2(F,\mathbb{D}_{K,S}) \ar[rr]^{\eta_{K,S}} && H^2(\mathbb{A},\mathbb{D}_{K,S}) \ar[rr]^{} && \ldots  }
\end{displaymath}
that leads us to the short exact sequence
\begin{equation}
0 \to \coker{\delta_{K,S}} \to H^2(F,\mathbb{D}_{K,S}) \to \im{\eta_{K,S}}\to 0,
\end{equation}
to which we apply the inverse limit functor and obtain $\varprojlim H^2(F,\mathbb{D}_{K,S}) \simeq \varprojlim \im{\eta_{K,S}}$. On the other hand, $\varprojlim \im{\eta_{K,S}}\hookrightarrow \varprojlim H^2(\mathbb{A},\mathbb{D}_{K,S})$, from which we deduce a local-global principle for Tannakian categories banded by $\mathbb{D}_F$, that is stated as follows.
\begin{theorem}
\label{LocalGlobalTheorem}
 Let $F$ be a global field and let $\mathbb{D}_F$ be the Kottwitz pro-torus. Then the natural map
\begin{equation}
\label{LocalGlobal}
H^2(F,\mathbb{D}_F) \to \varprojlim H^2(\mathbb{A},\mathbb{D}_{K,S}).
\end{equation}
is injective.
\end{theorem}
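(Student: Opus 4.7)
The plan is to assemble the pieces already laid out in Section \ref{SecPrinciple}. First I would reduce the claim to finite levels: the Grothendieck spectral sequence (\ref{GrSpSequence}) applied to the pro-system $\{\mathbb{D}_{K,S}\}$, combined with the vanishing $R^i \varprojlim H^1(\Gamma, \mathbb{D}_{K,S}(\bar{F})) = 0$ for $i = 0, 1$ from \cite{KalTai19}, identifies $H^2(F, \mathbb{D}_F)$ with $\varprojlim_{K, S} H^2(F, \mathbb{D}_{K,S})$. It then suffices to produce a natural injection of the latter into $\varprojlim H^2(\mathbb{A}, \mathbb{D}_{K,S})$.

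At each finite level $(K, S)$, I would extract $H^2(F, \mathbb{D}_{K,S})$ from the long exact sequence in Galois cohomology associated with the short exact sequence of tori (\ref{FiniteTori}). Shapiro's lemma (Theorem \ref{VarShapiro}) together with Hilbert 90 handle the cohomology of $\mathbb{T}_{K,S}$, and the Artin--Tate vanishing of $H^3(F, \Gm)$ closes the sequence, giving
\begin{equation*}
H^2(F, \mathbb{D}_{K,S}) \simeq \coker\Bigl( \bigoplus_{v \in S} \res_v \colon \Br(F) \to \bigoplus_{v \in S} \Br(K^{\dot{v}}) \Bigr).
\end{equation*}
Combined with the Albert--Brauer--Hasse--Noether fundamental exact sequence, this yields the commutative diagram (\ref{LocGlobSnake}), whose middle column is identified in Remark \ref{AdelicCohom} with the ad\`{e}lic sequence $H^2(\mathbb{A},\Gm) \to H^2(\mathbb{A},\mathbb{T}_{K,S})$. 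The snake lemma then extracts the key exact fragment
\begin{equation*}
B_{K,S} \xrightarrow{\delta_{K,S}} H^2(F, \mathbb{D}_{K,S}) \xrightarrow{\eta_{K,S}} H^2(\mathbb{A}, \mathbb{D}_{K,S}),
\end{equation*}
so the obstruction to injectivity at finite level is contained in the image of $B_{K,S}$.

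The main obstacle, and the place where real work has to be done, is proving that with the transition maps induced by enlargement of $(K, S)$ the inverse system $\{B_{K,S}\}$ satisfies the Mittag--Leffler condition and has vanishing inverse limit. Since $B_{K,S} \subset \QQ/\zz$ is cyclic of order $\gcd_{v \in S} \deg(K^{\dot{v}}/F)$, the task is to show that, given any $(K_0, S_0)$, one can produce $(K, S)$ above $(K_0, S_0)$ in the directed system such that the induced map $B_{K,S} \to B_{K_0, S_0}$ is zero. I would approach this by enlarging $K_0$ inside a tower containing large cyclic sub-extensions of $F$ and invoking Chebotarev's density theorem to secure a prime $v$ of $F$ whose decomposition group inside the appropriate cyclic piece is the whole Galois group --- collapsing the corresponding local degree to $1$ and hence forcing the gcd to be trivial. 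The delicate bookkeeping is to match this geometric choice of $v$ against the precise form of the transition maps on $B_{K,S}$ inherited from the rightmost column of (\ref{LocGlobSnake}), and to ensure that the Kaletha--Kottwitz--Tate conditions continue to hold after enlargement.

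Granting these two vanishings, applying the left-exact functor $\varprojlim$ to the short exact sequence
\begin{equation*}
0 \to \coker \delta_{K,S} \to H^2(F, \mathbb{D}_{K,S}) \to \im \eta_{K,S} \to 0
\end{equation*}
yields $\varprojlim H^2(F, \mathbb{D}_{K,S}) \MapsTo \varprojlim \im \eta_{K,S}$, and the latter embeds in $\varprojlim H^2(\mathbb{A}, \mathbb{D}_{K,S})$. Composing with the identification $H^2(F, \mathbb{D}_F) \simeq \varprojlim H^2(F, \mathbb{D}_{K,S})$ from the first step produces the desired injective map.
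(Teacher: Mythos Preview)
Your reduction to the inverse system $\{B_{K,S}\}$ via the snake lemma on diagram (\ref{LocGlobSnake}), and the formal deduction of injectivity from the Mittag--Leffler property together with $\varprojlim B_{K,S}=0$, matches the paper's argument exactly. The only substantive gap is in your sketch of why $\{B_{K,S}\}$ enjoys these two properties. You propose to use Chebotarev to find a prime $v$ whose decomposition group in an auxiliary cyclic sub-extension is full, ``collapsing the corresponding local degree to $1$ and hence forcing the gcd to be trivial.'' But the gcd defining $|B_{K,S}|$ is taken over the degrees $\deg(K^{\dot v}/F)$ of decomposition fields in $K$ itself, not in the cyclic piece; for non-cyclic $\Gal(K/F)$ there is no unramified $v$ with $\deg(K^{\dot v}/F)=1$, since the decomposition group at an unramified place is cyclic and hence cannot equal the full Galois group. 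So adding such a prime to $S$ does not make the gcd trivial. (A corrected variant of your idea --- bounding $|B_{K_0K',\,S'}|$ by $[K_0:F]$ rather than by $1$, and then choosing the cyclic $K'$ with $[K':F]$ divisible by $[K_0:F]$ so that the transition map, multiplication by $[K_0K':K_0]$, annihilates it --- can be made to work, but that is not the mechanism you describe.)

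The paper's Lemma~\ref{MittagLeff} proceeds quite differently and avoids Chebotarev altogether. Given $(K,S)$ with $l=|B_{K,S}|$, it constructs an explicit finite Galois extension $E/K$ of degree $l$ in which every prime of $S_K$ is \emph{non-split}. This forces $|B_{E,S}|=|B_{K,S}|=l$, while $[E:K]=l$; since the transition map $B_{E,S}\to B_{K,S}$ factors through multiplication by $[E:K]$ on $\QQ/\zz$, it is therefore zero. Concretely, when $l$ is prime to $\Char F$ one takes (after first enlarging $K$ to contain enough roots of unity and to have trivial class group) $E=K(\sqrt[l]{y})$ with $y$ the product of the primes in $\dot S_K$; when $p=\Char F$ divides $l$ one handles the $p$-part via an Artin--Schreier--Witt extension $x^{q}-x=a$, where $a$ is the inverse of that product.
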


We will see in Lemma \ref{MittagLeff} below that the inverse system of abelian groups $B_{K,S}$ satisfies the desired properties. Let us firstspecify the transition maps. If  $S^\prime$ is a finite set of places of $F$ containing $S$, the map $B_{K,S^\prime} \to B_{K,S}$ is induced by the following diagram
\begin{displaymath}
\xymatrix@R+1pc@C+6pc{
  \QQ/\zz \ar[d]_{\id} \ar[r]^{\bigoplus_{v\in S^\prime} \deg(K^{\dot{v}}:F)} & \bigoplus_{v\in S^\prime} \QQ/\zz \ar[d]^{\pi} \\
  \QQ/\zz \ar[r]^{\bigoplus_{v\in S} \deg(K^{\dot{v}}:F)} & \bigoplus_{v\in S} \QQ/\zz   }
\end{displaymath}
where $\pi$ is an obvious projection. It is clear that $B_{K,S} = B_{K,S^\prime}$ if $S$ is large enough, namely, if $S$ contains places $w$ corresponding to all possible degrees of decomposition fields $K^w/F$. Note that it is a finite condition, since the latter correspond to decomposition subgroups in $\Gamma_{K/F}$. Therefore, it is harmless to assume that $S$ satisfies this condition.

Let $E$ be a finite Galois extension of $K$, in this situation we have the set $S_E$ and a map $\zz[S_K] \to \zz[S_E]$ defined as follows: $v\in S_K \mapsto \sum_{w\mid v} \deg(E_w:K_v) w$. Composing this map with the corresponding degree maps, we see that it induces the multiplication by $\deg(E:K)$ on $\zz$. Note that after passing to the tori dual to these Galois modules and computing their cohomology, this map corresponds to the multiplication by $\deg(E:K)$ on $\Br(F)$, that induces the map $\QQ/\zz \xrightarrow{\deg(E:K)} \QQ/\zz$, and we have the following commutative diagram
\[\begin{tikzcd}
    B_{E,S} \arrow[hook]{r} \arrow{d} & \QQ/\zz \arrow{d}{\cdot \deg(E:K)} \\
    B_{K,S} \arrow[hook]{r} & \QQ/\zz
\end{tikzcd}\]
that shows that the transition map $B_{E,S}\to B_{K,S}$ factors through the multiplication by $\deg(E:K)$.

\begin{lemma}
\label{MittagLeff}
Let $\{B_{K,S}\}$ be the inverse system defined above. Then $B_{K,S}$ satisfies the Mittag-Leffler condition and its inverse limit vanishes.
\end{lemma}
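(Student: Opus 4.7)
The plan is to identify $B_{K,S}$ concretely as a finite cyclic subgroup of $\QQ/\zz$ and then, for each pair $(K,S)$, to exhibit a larger pair $(E,S')$ for which the transition map $B_{E,S'}\to B_{K,S}$ is the zero map; both Mittag--Leffler and the vanishing of the inverse limit will follow at once.

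First I would observe that the commutative diagram defining $B_{K,S}$ identifies it with the $d_{K,S}$-torsion subgroup of $\QQ/\zz$, where $d_{K,S} = \gcd\{\deg(K^{\dot v}:F):v\in S\}$; in particular $B_{K,S}$ is a finite cyclic group, so any decreasing sequence of subgroup images in it is eventually constant, which will give Mittag--Leffler as soon as a single zero transition is produced. Next, by the explicit description recalled just before the statement of the lemma, the transition $B_{E,S'}\to B_{K,S}$ is the restriction to $B_{E,S'}$ of multiplication by $[E:K]$ on $\QQ/\zz$. Its image $[E:K]\cdot B_{E,S'}$ is the cyclic subgroup of order $d_{E,S'}/\gcd([E:K],d_{E,S'})$, hence trivial as soon as $d_{E,S'}\mid [E:K]$. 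Using that $d_{E,S'}$ divides $\deg(E^{\dot v}:F) = [E:F]/|D_v|$ for every $v\in S'$, where $D_v$ denotes the decomposition group at a lift $\dot v$ to $E$, this reduces the problem to producing some $v\in S'$ whose decomposition group $D_v\subseteq\Gal(E/F)$ has order divisible by $[K:F]$.

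The construction I have in mind is the following. I would pick a cyclic Galois extension $L/F$ of degree $[K:F]$ which is linearly disjoint from $K$ --- easily produced from a suitable cyclotomic extension when $F$ is a number field and from a constant field extension when $F$ is a global function field --- and set $E=KL$. Then $\Gal(E/F) \cong \Gal(K/F)\times\Gal(L/F)$ and $H=\{1\}\times\Gal(L/F)$ is a cyclic subgroup of order $[K:F]$. By the Chebotarev density theorem there exist infinitely many unramified primes $v$ of $F$ whose Frobenius at a suitable lift $\dot v$ to $E$ generates $H$; for any such $v$ the decomposition group $D_v$ equals $H$ and has order $[K:F]$. Taking $S'$ to be $S\cup\{v\}$, enlarged if necessary to verify the Kaletha--Kottwitz--Tate conditions for $(E,S')$ (these being stable under enlargement of the set of places), the pair $(E,S')$ satisfies $d_{E,S'}\mid[E:K]$, and hence the transition $B_{E,S'}\to B_{K,S}$ is zero. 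Mittag--Leffler follows since decreasing chains of subgroups of the finite group $B_{K,S}$ stabilize, and $\varprojlim B_{K,S} = 0$ follows because the stable image at every level is zero.

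The main obstacle is more psychological than technical: one might naively hope to kill the transition by taking $[E:K]$ to be a multiple of $d_{K,S}$, but this does not suffice because the image depends on the relation between $d_{E,S'}$ and $[E:K]$, not between $d_{K,S}$ and $[E:K]$. Forcing $d_{E,S'}$ itself to divide $[E:K]$ is precisely what requires the Chebotarev input; once one identifies this as the right condition, the construction of $L$ and the appeal to Chebotarev are entirely standard.
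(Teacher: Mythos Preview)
Your argument is correct and follows a genuinely different route from the paper's. The paper keeps $S$ fixed and builds $E/K$ so that every prime of $S$ is non-split in $E$ --- via a Kummer extension $K(\sqrt[l]{y})$ with $y$ supported on $\dot S_K$ when $l=|B_{K,S}|$ is prime to $\Char F$, and via an Artin--Schreier--Witt extension $x^{p^n}-x=a$ for the $p$-part --- thereby arranging $|B_{E,S}|=|B_{K,S}|$ while making $[E:K]$ divisible by this number. You instead allow $S$ to grow: after passing to $E=KL$ with $L/F$ cyclic of degree $[K:F]$ and linearly disjoint from $K$, you invoke Chebotarev to find an unramified place $v$ whose decomposition group in $\Gal(E/F)$ has order $[K:F]$, and adjoining $v$ to $S$ forces $d_{E,S'}\mid [E:K]$. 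Your approach is more uniform --- no case split on the characteristic, and Chebotarev is used as a black box --- whereas the paper's construction is more hands-on and avoids Chebotarev entirely, at the cost of explicit local computations with Kummer and Artin--Schreier towers. Both reduce to the same numerical criterion $d_{E,S'}\mid [E:K]$ that you correctly isolate.
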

\begin{proof}
From the discussion above it follows that it is sufficient to show that for any pair $(K,S_K)$ there exists a finite Galois extension $E/K/F$ such that $\deg(E:K)$ is divisible by $|B_{E,S}|$.
Suppose that $l=|B_{K,S}|$ is co-prime to the characteristic of $F$. We can always assume that $K$ contains enough roots of unity and can also replace $K$ with its Hilbert class field, then we can set $E=K(\sqrt[l]{y})$ where $y$ is the product of divisors in $\dot{S}_K$, then every prime from $S_K$ remains non-split in $E$ and $\deg(E:K) = |B_{E,S}| = |B_{K,S}|$.

In the case $|B_{K,S}|$ is divisible by $p^n$ where $p$ is the characteristic of $F$, we may take sufficiently large unramified extension of $K$ and consider an extension $E=K(y)$, where $y$ is a root of the additive polynomial $x^q - x = a$, where $q=p^n$, and $a$ is the inverted product of divisors in $\dot{S}_K$. Then $\deg(E:F) = p^n$ and $|B_{E,S}|=|B_{K,S}|$ since every local degree at $w\in S_K$ is equal to $p^n$ (see Theorem 4.1 in \cite{SP}). Now the argument of the previous part can be applied to deal with the co-prime to $p$ part of $B_{K,S}$.
\end{proof}

\section{$\varphi$-spaces}
\subsection{Drinfeld's classification of $\varphi$-spaces}
\label{IsoshtukasIntro}
Let $F$ be a function field with the field of constants $\mathbb{F}_q$.
We are going to introduce the category of $\varphi$-spaces (or isoshtukas) defined by Drinfeld in \cite{Dr}.
\begin{definition}
Let $B$ be an algebraic extension of $\mathbb{F}_q$. A $\varphi$-space over $B$ is a finite dimensional vector space $V$ over $F\otimes B$ with an $\id\otimes \Frob_B$-linear map $\varphi:V \MapsTo V$. A morphism between two $\varphi$-spaces $(V_1,\varphi_1)$ and $(V_2,\varphi)$ is an $F\otimes B$-linear map $\alpha: V_1\to V_2$ such that $\alpha\circ \varphi_1 = \varphi_2 \circ \alpha$.
\end{definition}

The $\varphi$-spaces over $B$ form an $F$-linear Tannakian $\otimes$-category $\varphi\text{-Vect}(B)$. In order to prove that this category is semi-simple, when $B=\overline{\F}$, and to describe the set of simple objects, Drinfeld introduces the notion of a $\varphi$-pair.
\begin{definition}
A $\varphi$-pair is a pair $(L,a)$, where $L$ is a commutative finite-dimensional $F$-algebra, and where $a\in L^\times\otimes \qq$ satisfies the following property: for any proper $F$-subalgebra $L^\prime$ of $L$, $a\notin (L^\prime)^\times\otimes \qq$.
\end{definition}

Notice that the latter condition in the definition of a $\varphi$-pair ensures that the group of automorphisms of a given pair is trivial, it also implies that $L$ is separable, and the set of isomorphisms of $\varphi$-pairs is identified with the set of orbits of $(F^{\text{sep}})^\times\otimes\qq$ under the action of $\Gal(F^{\text{sep}}/F)$. On the other hand, it follows from global class field theory that
\begin{equation}
(F^{\text{sep}})^\times\otimes\qq \simeq \varinjlim_{E/F}\Div^0(E)\otimes\qq.
\end{equation}

To every $\varphi$-space $(V,\varphi)$, Drinfeld associates a $\varphi$-pair $(L_{(V,\varphi)}, a_{(V,\varphi)})$, and notices that if $(V,\varphi)$ is a simple object in the category of $\varphi$-spaces, then $L_{(V,\varphi)}$ is a field. In particular. $a_{(V,\varphi)}$ is just an element in $L_{(V,\varphi)}^{\times}\otimes \mathbb{Q}$.

Let us briefly recall the construction of a $\varphi$-pair associated to a (simple) $\varphi$-space $(V,\varphi)$. Firstly, we find a pair $(V^\prime,\varphi^\prime)$, where $V^\prime$ is a finite-dimensional vector space defined over $F\otimes_{\F_q}\F_{q^n}$ for some natural $n$, and $\varphi$ is a $1\otimes\Frob_{\F_{q^n}}$-linear endomorphism of $V^\prime$ such that
\begin{equation}
(V,\varphi) \simeq (V^\prime, \varphi^\prime)\otimes_{\F_{q^n}}\overline{\F}_q.
\end{equation}

Then since $\varphi^n$ is a linear automorphism of $V^\prime$, it extends to a linear automorphism of $V$, namely $(\varphi^\prime)^n\otimes 1$, we denote it by $\Pi$. At the same time, $\Pi$ is an endomorphism of $(V,\varphi)$ as a $\varphi$-space, since it obviously commutes with $\varphi$. Thus $\Pi^N$ for $N\geq 1$ generates a commutative finite-dimensional algebra  $L_N=F[\Pi^N]\subseteq\End(V,\varphi)$.

Our next step is to consider an algebra
\begin{equation}
L = \bigcap_{N\geq1} L_N,
\end{equation}
which does not depend on the choice of $\varphi^\prime$.
\begin{remark}
\label{divisbilitystable}
Since all $L_N$ are finite-dimensional, there exists an index $N$ such that $L=L_N$, moreover, since for any $l,m\geq 1$ such that $l | m$ we have an inclusion $L_m \subseteq L_l$, we see that $L_N = L_{N^\prime}$ for any $N^\prime\geq 1$ divisible by $N$.
\end{remark}
To proceed, observe that $\Pi^N \in L^\times$ is not a zero divisor, and denote by $w_N$ its image in $L^\times\otimes\QQ$. Finally, we set \begin{equation}
a = a_{(V,\varphi)} = (w_N)^{\frac{1}{n^\prime N}},
\end{equation}
and note that $a$ does not depend on the choices of $\varphi^\prime$ and $N$. The pair $(L,a)$ is a $\varphi$-pair associated to $(V,\varphi)$.

The following theorem is proved in \cite{Dr}. Note that $\deg_x(a)$ for $a\in \Div^0(L)\otimes \QQ$ denotes the product of the degree of the residue field of the place $x$ and the multiplicity of $x$ in the rational divisor  $a$.
\begin{theorem}
\label{DrinfeldClassification}
\begin{enumerate}
\item
The category of $\varphi$-spaces over $\overline{\F}_q$ is abelian and semi-simple.
\item
The map $(V,\varphi) \mapsto (L_{(V,\varphi)}, a_{(V,\varphi)})$ induces a bijection between the set of isomorphism classes of irreducible $\varphi$-spaces and the set of isomorphism classes of $\varphi$-pairs $(L,a)$, where $L$ is a field.
\item Let $(V,\varphi)$ be an irreducible $\varphi$-space corresponding to a pair $(L,a)$, then
\begin{equation}
\dim_{F\otimes_{\F_q} \overline{\F}_q}(V) = [L:F]\cdot d(a),
\end{equation}
where $d(a)$ denotes the common denominator of $\deg_{\tilde{x}}(a)\in \qq$. Moreover, $\End(V,\varphi)$ is a central division algebra over $L$ of dimension $d(a)^2$ with local invariants $-\deg_{\tilde{x}}(a) \pmod \zz$.
\end{enumerate}
\end{theorem}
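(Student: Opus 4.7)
The plan is to adapt the Dieudonn\'e--Manin classification to this global function-field setting, with the $\varphi$-pair $(L,a)$ playing the role of the slope $s/r\in\qq$. First I would verify that the assignment $(V,\varphi)\mapsto (L_{(V,\varphi)}, a_{(V,\varphi)})$ is well-defined: the algebra $L_N=F[\Pi^N]\subseteq \End(V,\varphi)$ stabilizes for $N$ divisible enough (per Remark \ref{divisbilitystable}), and neither the stabilized $L$ nor the element $a\in L^\times\otimes \qq$ depends on the choice of descent $(V',\varphi')$ to $F\otimes\F_{q^{n'}}$, since any two such choices become isomorphic after a further enlargement of the constant field.

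For simplicity of $(V,\varphi)$, note that $L\subseteq \End(V,\varphi)$ is commutative, so if $L$ were not a field then a nontrivial idempotent $e\in L$ would split $(V,\varphi)$ as a $\varphi$-space, contradicting irreducibility. Conversely, given a $\varphi$-pair $(L,a)$ with $L$ a field, I would construct an irreducible $\varphi$-space as follows: choose $\alpha\in L^\times$ and positive integers $n',N$ so that $a^{n'N}=\alpha$ in $L^\times\otimes\qq$; on $V_0=(L\otimes_{\F_q}\F_{q^{n'}})^{n'N}$ define the $(\id\otimes\Frob)$-semi-linear map $\varphi_0$ that cyclically shifts the factors and, after $n'N$ steps, multiplies the last coordinate by $\alpha$; then set $(V,\varphi)=(V_0,\varphi_0)\otimes_{\F_{q^{n'}}}\overline{\F}_q$. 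The minimality condition defining a $\varphi$-pair (namely $a\notin(L')^\times\otimes\qq$ for any proper $F$-subalgebra $L'\subseteq L$) ensures that $L$ is precisely the commutant algebra recovered from $(V,\varphi)$, and in particular that the construction produces an irreducible object; different choices of $(\alpha,n',N)$ representing the same $a$ yield isomorphic $\varphi$-spaces, so the map $(L,a)\mapsto (V,\varphi)$ is a two-sided inverse to the assignment of the first paragraph.

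The dimension formula $\dim_{F\otimes\overline{\F}_q}(V)=[L:F]\cdot d(a)$ is then read off from this construction: the factor $[L:F]$ comes from tensoring with $L$, while $d(a)$ emerges as the common denominator of the numbers $\deg_{\tilde x}(a)$ and governs the minimal cycle length $n'N$. For the endomorphism algebra $D=\End(V,\varphi)$, a standard argument shows that $L$ lies in its centre and that $[D:L]=d(a)^2$ by a dimension count against $\End_{F\otimes\overline{\F}_q}(V)$; that $D$ is a division algebra is Schur's lemma applied to the irreducible $(V,\varphi)$. The computation of the local invariants of $D$ at each place $\tilde x$ of $L$ is the most delicate step: one localizes $(V,\varphi)$ at $\tilde x$ and compares the resulting object, built again by the cyclic-shift recipe, to a Dieudonn\'e--Manin simple isocrystal over the appropriate local field, then identifies the Brauer invariant with $-\deg_{\tilde x}(a)\pmod{\zz}$ by tracing through the normalization of $\mathrm{inv}_{\tilde x}$ on $\Br(L_{\tilde x})$.

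Semi-simplicity of the whole category follows once the classification of simples is in place: any $\varphi$-space descends to some $F\otimes\F_{q^n}$, where $\varphi^n$ generates a twisted polynomial Ore ring over which the descended module has finite length, and after base change to $\overline{\F}_q$ the composition series splits into a direct sum by extending scalars to the commutant of each simple subquotient. The main obstacle throughout will be the local-invariant computation in the third step, where sign and normalization conventions must be tracked carefully through the identification $\Br(L_{\tilde x})\simeq \qq/\zz$ and matched against the cocycle produced by the cyclic-shift construction.
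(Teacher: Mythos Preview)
The paper does not prove this theorem. It states it as Drinfeld's result and refers the reader to \cite{Dr} for the proof (``The following theorem is proved in \cite{Dr}''). So there is no argument in the paper to compare your proposal against; the paper only records the statement and then uses it.

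Your sketch is a reasonable outline of how Drinfeld's argument in \cite{Dr} actually proceeds: construct from a $\varphi$-pair $(L,a)$ a $\varphi$-space by a cyclic-shift model over $L\otimes_{\F_q}\F_{q^{n'}}$, check that the $\varphi$-pair recovered from it is the original one, and compute $\End$ and the local invariants by localizing and reducing to Dieudonn\'e--Manin. Two points worth tightening if you write this up for real: (i) the object you build as $(L\otimes\F_{q^{n'}})^{n'N}$ with the shift-then-multiply-by-$\alpha$ map is typically not irreducible for arbitrary $(n',N)$ representing $a$; one has to take the correct minimal model (governed by $d(a)$) or else pass to an irreducible summand and then argue uniqueness, and the minimality condition on $a$ enters exactly to guarantee that $L$, rather than a proper subfield, is the stabilized commutant; (ii) your semi-simplicity paragraph is too casual---the cleaner route (and the one Drinfeld takes) is to show directly that $\End(V,\varphi)$ is semisimple for every $(V,\varphi)$, by descending to a finite level and using that $F[\Pi]$ is a product of fields (the element $\Pi$ being a unit), which forces the module category to be semisimple. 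The local-invariant computation is indeed the delicate normalization-tracking step you flag, and the paper's later Theorem~\ref{phiSpacesLocal} (also cited from Drinfeld via \cite{LRS}) records exactly this localized decomposition.
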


We wish to reinterpret this theorem as a description of a Tannakian category $\varphi\text{-Vect}(\overline{\F}_q)$. As a preparatory step we should prove the following statement
\begin{propos}
Let $(V_1, \varphi_1)$ and $(V_2,\varphi_2)$ be two isotypic $\varphi$-spaces over $\overline{\F}_q$ of types $a_1, a_2 \in (F^{\text{sep}})\otimes \qq$ respectively, then their tensor product $(V_1\otimes V_2, \varphi_1\otimes\varphi_2)$ is an isotypic $\varphi$-space of type $a_1 a_2$.
\end{propos}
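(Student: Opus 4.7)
The plan is to reduce first to the case where both factors are irreducible, then to perform a common descent to $F \otimes_{\F_q} \F_{q^n}$, and finally to analyse the finite étale $F$-algebra $L_{V_1} \otimes_F L_{V_2}$ acting on $V_1 \otimes V_2$ in order to identify the intrinsic $\varphi$-pair of the tensor product.

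By Theorem \ref{DrinfeldClassification}(1), $\varphi\text{-Vect}(\overline{\F}_q)$ is semisimple, and the tensor product distributes over direct sums; writing $V_i \simeq W_i^{\oplus m_i}$ with $W_i$ the unique irreducible of type $a_i$, one has $V_1 \otimes V_2 \simeq (W_1 \otimes W_2)^{\oplus m_1 m_2}$, so it suffices to treat irreducible $W_1, W_2$. Next I would pick an integer $n$ for which both $(W_i, \varphi_i)$ descend to $(W_i', \varphi_i')$ over $F \otimes_{\F_q} \F_{q^n}$; the tensor then descends to $(W_1' \otimes_{F \otimes \F_{q^n}} W_2', \varphi_1' \otimes \varphi_2')$ at the same level, and a short computation gives $\Pi_{W_1 \otimes W_2} = (\varphi_1' \otimes \varphi_2')^n = \Pi_1 \otimes \Pi_2$, regarded as an element of $L_{W_1} \otimes_F L_{W_2} \subset \End(W_1 \otimes W_2, \varphi_1 \otimes \varphi_2)$, commuting with $\varphi_1 \otimes \varphi_2$.

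The core step is to identify the intrinsic $\varphi$-pair $(L_{W_1 \otimes W_2}, a_{W_1 \otimes W_2})$ with the pair attached to $a_1 a_2$. Since $L_{W_1} \otimes_F L_{W_2}$ is finite étale over $F$, it decomposes as a product of fields $\prod_j K_j$ whose factors are indexed by the $\Gal(F^{\sep}/F)$-orbits on $\Hom_F(L_{W_1}, F^{\sep}) \times \Hom_F(L_{W_2}, F^{\sep})$ under the diagonal action; the subalgebra $F[(\Pi_1 \otimes \Pi_2)^N]$ for large $N$, which stabilises to $L_{W_1 \otimes W_2}$ by Remark \ref{divisbilitystable}, projects into each $K_j$ as $F[\iota_1(\Pi_1^N) \iota_2(\Pi_2^N)]$ for an orbit representative $(\iota_1, \iota_2)$. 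By construction $a_i$ is the $(nN)$-th root of $\Pi_i^N$ in $L_{W_i}^\times \otimes \qq$, so the root of the projection is $\iota_1(a_1)\iota_2(a_2) \in (F^{\sep})^\times \otimes \qq$. Invoking the bijection of Theorem \ref{DrinfeldClassification}(2), one concludes that every irreducible constituent of $W_1 \otimes W_2$ has type represented by $\iota_1(a_1)\iota_2(a_2)$, which, after passage to $\otimes \qq$ (killing roots of unity in $L^\times$), coincides with the Galois orbit of $a_1 a_2$.

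The main obstacle will be verifying that the distinct factors $K_j$ produce products $\iota_1(a_1)\iota_2(a_2)$ that lie in a common $\Gal(F^{\sep}/F)$-orbit in $(F^{\sep})^\times \otimes \qq$: a priori different orbits of embedding pairs yield non-conjugate elements in $(F^{\sep})^\times$. The resolution should combine Drinfeld's minimality condition on $(L_{W_1 \otimes W_2}, a_{W_1 \otimes W_2})$ with the vanishing of torsion in $L^\times \otimes \qq$, forcing the generated subalgebra $F[(\Pi_1 \otimes \Pi_2)^N]$ to collapse onto a single Galois orbit and thereby identifying the type of $W_1 \otimes W_2$ intrinsically with the orbit of $a_1 a_2$. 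Once this coincidence of orbits is established, the claimed isotypicity follows directly from Drinfeld's bijection and the reduction in the first paragraph.
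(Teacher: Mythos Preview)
Your reduction to irreducibles, the common descent, and the identification $\Pi_{W_1\otimes W_2}=\Pi_1\otimes\Pi_2$ are all correct and coincide with the paper's argument. You have moreover put your finger on exactly the right difficulty: $L_1\otimes_F L_2\simeq\prod_j K_j$ is indexed by the diagonal $\Gamma_F$-orbits on pairs of embeddings, and the elements $\iota_1(a_1)\iota_2(a_2)$ attached to distinct orbits need not be $\Gamma_F$-conjugate. The paper's own proof slides past this point by writing ``$a$ can be identified with its image in $(L_1L_2)^\times\otimes\QQ$'', silently replacing $L_1\otimes_F L_2$ by a single compositum.

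The gap is in your proposed resolution: no combination of Drinfeld's minimality condition and torsion-freeness of $L^\times\otimes\QQ$ can force $F[(\Pi_1\otimes\Pi_2)^N]$ onto a single field factor, because the proposition as literally stated is false. Take $L_1=L_2=L$ a separable quadratic extension of $F$ with nontrivial automorphism $\tau$, and $a_1=a_2=a\in L^\times\otimes\QQ$ with $a\notin F^\times\otimes\QQ$. Over $F^{\sep}$ the irreducible $W$ of type $a$ decomposes into rank-one pieces of types $a$ and $\tau(a)$, so $W\otimes W$ has rank-one constituents of types $a^2$, $\tau(a)^2$, and $a\tau(a)=N_{L/F}(a)$; the $\Gamma_F$-orbits $\{a^2,\tau(a)^2\}$ and $\{N_{L/F}(a)\}$ are distinct (otherwise $a=\tau(a)$), so $W\otimes W$ is not isotypic. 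What the subsequent band computation actually needs, and what is true, is the statement \emph{after} base change to $F^{\sep}$: there every simple object has rank one and the tensor of simples of types $a,b$ is the simple of type $ab$. That is the assertion you should aim to prove; the orbit coincidence over $F$ that you are trying to establish simply does not hold in general.
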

\begin{proof}
Since the category of $\varphi$-spaces is semi-simple, we may assume that $(V_i,\varphi_i)$ are simple objects.

Let $(\tilde{V},\tilde{\varphi})$ denote the tensor product $(V_1\otimes V_2, \varphi_1\otimes\varphi_2)$.

Following the construction, we define $(V^\prime_i,\varphi^\prime_i)$, $i=1,2$, to be $\varphi$-spaces over $\F_{q^{n_i}}$ such that
\begin{equation*}
(V_i,\varphi_i) = (V^\prime_i,\varphi^\prime_i) \otimes_{\F_{q^{n_i}}} \overline{\F}_q, i=1,2.
\end{equation*}
Note that $\varphi_i^{n_i}$ is a linear bijective map $V_i \to V_i$. We can find an integer $n^\prime$ such that both spaces $(V^\prime_i,\varphi^\prime_i)$($i=1,2$) are defined over $\F_{q^{n^\prime}}$. We denote by $(L_i,a_i)$ the corresponding $\varphi$-pairs, where $L_i= F[\Pi_i^{N_i}]$ is a separable extension of $F$ for $i=1,2$, and $\Pi_i = (\varphi_i^{\prime\ n^\prime}\otimes \id)$, we have also
\begin{equation}
a_i= (w_{N_i})^{\frac{1}{n^\prime N_i}}\in L^\times_i\otimes \QQ.
\end{equation}

Notice that
\begin{equation}
(V_1\otimes V_2, \varphi_1 \otimes \varphi_2) \simeq (V_1^\prime \otimes V_2^\prime, \varphi_1^\prime\otimes\varphi_2^\prime)\otimes_{\F_{q^{n^\prime}}} \overline{\F}_q,
\end{equation}
and that $\Pi =(\varphi_1^\prime\otimes\varphi_2^\prime)^n\otimes 1$ is a linear bijective endomorphism of $(\widetilde{V},\widetilde{\varphi})$, let $L= L_N$ denote the intersection of finite-dimensional commutative algebras $F[\Pi^N]$ with $N\geq1$.

Then using Remark \ref{divisbilitystable}, we deduce that $N_1$, $N_2$, and $N$ can be chosen to be equal, and $\Pi^N = \Pi_1^N \otimes \Pi_2^N$ as an element in $\End(\widetilde{V},\widetilde{\varphi})\simeq \End(V_1,\varphi_1)\otimes_F\End(V_1,\varphi_1)$; furthermore, $a \in L^\times \otimes \QQ$ can be identified with its image in $(L_1 L_2)^\times \otimes \QQ$, which is the same as $a_1 a_2$. Thus $(\widetilde{V},\widetilde{\varphi})$ is an isotypic $\varphi$-space of type $a_1 a_2$.
\end{proof}

\subsection{The band of the category of $\varphi$-spaces} In this section, we show that the band of the category of $\varphi$-spaces can be identified with the global Kottwitz protorus $\mathbb{D}_F$.

 Notice that in order to compute the band of a semisimple Tannakian category $\mathcal{C}$ over $F$ it is enough to do so for the category $\mathcal{C}\otimes \widetilde{F}$ for any $\widetilde{F}/F$ such that $\mathcal{C}$ has a fibre functor in $\Vect_{\widetilde{F}}$, in which case $\mathcal{C}\otimes\widetilde{F}$ becomes a neutral Tannakian category over $\widetilde{F}$, and therefore is identified with the category of representations of its band $\Rep_{\widetilde{F}}(B)$, where we assume $B$ to be represented by an affine scheme over $\widetilde{F}$. Moreover, if any simple object of $\mathcal{C}\otimes\widetilde{F}$ is of rank $1$, then $B$ is identified with $D(\Sigma)$, the diagonalizable group dual to the abelian group of isomorphism classes of simple objects with the group structure given by tensor product (see Proposition 2.22 in \cite{Milne1994}).

In the case when $\mathcal{C}$ is the category of $\varphi$-spaces, we may take $\widetilde{F} = F^{\sep}$. Clearly, $\varphi$-spaces has a fibre functor over $F^{\sep}$, and therefore $\varphi$-spaces becomes a neutral Tannakian category after tensoring with $F^{\sep}$. We claim that every simple object in $\mathcal{C}\otimes F^{\sep}$ is of rank $1$, and the set of isomorphism classes of simple objects is given by the abelian group $(F^{\sep})^\times\otimes\mathbb{Q}$.

Let $(V,\varphi)$ be a simple object in $\mathcal{C}$, and let $(L,a)$ be its $\varphi$-pair. Since $\End(V,\varphi)$ is a central division algebra over $L$, we see that $\End(V\otimes_F F^{\sep},\varphi\otimes \id)$ is a central division algebra over
\begin{equation}
L\otimes_F F^{\sep} = \prod_{j: L\hookrightarrow K} F^{\sep}
\end{equation}
and therefore $(V\otimes_F F^{\sep},\varphi\otimes\id)$ splits into the sum of subobjects given by $(V_j,\varphi_j) = (V\otimes_{L,j} F^{\sep},\varphi\otimes(\id\circ j))$. Moreover, $\End(V_j,\varphi_j)$ is a central simple algebra over $F^{\sep}$, hence it is isomorphic to $M_{d(a)}(F^{\sep})$, where $d(a) = \deg_L(\End(V,\varphi))$ is as in Theorem \ref{DrinfeldClassification}. On the other hand
\begin{equation*}
\dim_L(V) = \dim_{F^{\sep}}(V_j) = d(a),
\end{equation*}
and therefore the centre of $\End(V_j,\varphi_j)$ provides us with idempotents $e_1,\ldots,e_{d(a)}$ splitting $(V_j,\varphi_j)$ into the sum of $d(a)$ simple isotypic objects $(e_{i}V_j,e_i\varphi_j)$,  for $i=1,\ldots, d(a)$, of rank $1$ and slope $a\in (F^{\sep})^{\times}\otimes\mathbb{Q}$.

\subsection{Drinfeld spaces and Dieudonn\'{e} modules}
Let $u$ be a place of $F$, and let $F_u$ denote the completion of $F$ at $u$.

We have an exact and $F$-linear functor
\begin{equation}
\Loc_u: (V,\varphi)\mapsto (V_u,\varphi_u) = (V\hat{\otimes}_F F_u, \varphi\hat{\otimes}_F F_u).
\end{equation}

The following theorem is due to Drinfeld (see also Proposition (B.4) in \cite{LRS}).
\begin{theorem}
\label{phiSpacesLocal} Fix a place $u\in V_F$.
Let $(V,\varphi)$ be an irreducible $\varphi$-space  and let $(L,a)$ be the corresponding $\varphi$-pair. For every place $w\in L$ dividing $u$ we let
\begin{equation}
(V_w, \varphi_w) = L_w \otimes_L (V,\varphi).
\end{equation}
The canonical splitting $F_u \otimes_F L \simeq \prod_{w|u} L_w$ induces a splitting
\begin{equation}
\label{ShapiroIsoc}
(V_u, \varphi_u) = \oplus_{w|u} (V_w, \varphi_w)
\end{equation}
of $(V_u,\varphi_u)$ as a Dieudonn\'{e} $F_u$-module. Then for each $w|u$, $(V_w,\varphi_w)$ is non-canonically isomorphic to
\begin{equation}
(N_{d_w, r_w}, \psi_{d_w,r_w})^{s_w}
\end{equation}
where the integers $d_w$, $r_w$, and $s_w$ are determined uniquely by the following relations
\begin{equation}
\begin{cases}
d_w, s_w \geq 1\\
(d_w, r_w) = 1\\
r_w/d_w = \deg_w(a)/ \deg(L_w:F_u)\\
d_w s_w = d(a)  \deg(L_w:F_u)
\end{cases}
\end{equation}
\end{theorem}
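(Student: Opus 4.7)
My plan is to reduce the theorem to the Dieudonn\'{e}-Manin classification of $F_u$-isocrystals, exploiting the extra $L$-action built into the $\varphi$-pair $(L,a)$ attached to $(V,\varphi)$. First I would establish the splitting \ref{ShapiroIsoc}: since $(L,a)$ is a $\varphi$-pair, $L$ is a separable field extension of $F$, so $F_u\otimes_F L \simeq \prod_{w\mid u}L_w$ as $F_u$-algebras; the inclusion $L\hookrightarrow\End(V,\varphi)$ into the centre of this central division algebra makes $V$ into an $L$-module on which $\varphi$ acts $L$-linearly, and applying $\Loc_u$ the idempotents of $\prod_{w\mid u}L_w$ cut $V_u$ into a canonical $\varphi_u$-stable direct sum $V_u = \bigoplus_{w\mid u}V_w$ with $V_w = L_w\otimes_L V$.

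Next I would show that each $(V_w,\varphi_w)$ is isoclinic as an $F_u$-isocrystal. By Theorem \ref{DrinfeldClassification}, $\End(V,\varphi)$ is a central division algebra over $L$ of dimension $d(a)^2$, so $\End(V_w,\varphi_w) = \End(V,\varphi)\otimes_L L_w$ is central simple over $L_w$ and in particular has no non-trivial central idempotents. If $(V_w,\varphi_w)$ had more than one slope, its Dieudonn\'{e}-Manin slope decomposition would be preserved by every endomorphism, so $\End(V_w,\varphi_w)$ would split as a non-trivial product indexed by slopes---a contradiction. Hence $(V_w,\varphi_w)$ is isoclinic, and Dieudonn\'{e}-Manin gives $(V_w,\varphi_w)\simeq (N_{d_w,r_w},\psi_{d_w,r_w})^{s_w}$ for a unique coprime pair $(d_w,r_w)$ and multiplicity $s_w\geq 1$.

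Finally I would fix the four integers by direct bookkeeping. From the construction of the $\varphi$-pair recalled in Section \ref{IsoshtukasIntro}, $\varphi^{n'N}$ acts on $V$ as multiplication by $w_N = a^{n'N}\in L^\times$, so $\varphi_w^{n'N}$ acts on $V_w$ by the scalar $a^{n'N}\in L_w^\times$. Computing the determinant of $\varphi_w^{n'N}$ as an $F_u$-linear endomorphism (via the norm $L_w\to F_u$) and translating $w$-adic valuations into $u$-adic ones using the residue-degree/ramification data, one recovers the $F_u$-slope of $(V_w,\varphi_w)$ as $\deg_w(a)/\deg(L_w:F_u)$, whence $r_w/d_w$ in lowest terms. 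For the multiplicity, $\dim_{F_u\otimes\overline{\mathbb{F}}_q} V_w = [L_w:F_u]\cdot\dim_L V = [L_w:F_u]\cdot d(a)$ by flat base change and Theorem \ref{DrinfeldClassification}, while Dieudonn\'{e}-Manin gives $\dim V_w = d_w s_w$; the remaining identity $d_w s_w = d(a)\deg(L_w:F_u)$ follows.

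The main obstacle I anticipate is the slope computation in the third step, where one has to reconcile three different normalizations: the $w$-adic valuation of the scalar $a^{n'N}\in L_w^\times$ (defined through the rational multiplicity of $w$ in the divisor $a$), the convention for $\deg_w(a)$ that couples this multiplicity with the residue-field degree at $w$, and the standard normalization of Frobenius slopes over $F_u$. Once this bookkeeping is carried out correctly and coherently, the remaining assertions, including coprimality of $d_w$ and $r_w$, are immediate.
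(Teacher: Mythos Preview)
The paper does not actually prove this theorem: it is stated with attribution to Drinfeld and a pointer to Proposition~(B.4) in \cite{LRS}, and no argument is given in the text. So there is no in-paper proof to compare against; your outline is being measured against the standard proof in the cited sources.

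Your approach is the standard one and is essentially what one finds in \cite{LRS}: use the central $L$-action to split $V_u$ along the idempotents of $F_u\otimes_F L$, argue isoclinicity from the fact that $\End(V,\varphi)\otimes_L L_w$ is central simple and hence has no nontrivial central idempotents, and then read off the slope and the dimension. The dimension count is exactly right.

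One small caution on the slope step. You write that ``$\varphi^{n'N}$ acts on $V$ as multiplication by $w_N=a^{n'N}$''. Strictly speaking it is $\Pi^N=((\varphi')^{n'}\otimes 1)^N$ that acts by $w_N$, whereas $\varphi^{n'N}=(\varphi')^{n'N}\otimes\Frob^{n'N}$ is still only semilinear over $\breve F_u$. This does not affect the outcome---after localizing at $u$ the $u$-adic valuation of the scalar $w_N$ is exactly what governs the Newton slope, since the Frobenius factor has valuation zero---but when you carry out the bookkeeping you should phrase the computation in terms of $\Pi^N$ (a genuine $\breve F_u$-linear endomorphism) rather than $\varphi^{n'N}$, and then divide by $n'N$ to extract the slope. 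With that adjustment the normalization exercise you flag goes through: $\ord_w(w_N)/(n'N\cdot e(w|u))$ gives the $F_u$-slope, which unwinds to $\deg_w(a)/[L_w:F_u]$ under the convention $\deg_w(a)=f(w|u)\cdot(\text{multiplicity of }w\text{ in }a)$ recalled in Section~\ref{isomCharacters}.
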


\section{Comparison over function fields}
In this section we prove the following theorem.
\begin{theorem}
\label{FunctionFields}
Let $F$ be a function field of a smooth algebraic curve over a finite field. Then the category of representations of the Kottwitz gerbe $\Rep(\Kt_F)$ is equivalent to the category of Drinfeld isoshutkas (see Section \ref{IsoshtukasIntro}).
\end{theorem}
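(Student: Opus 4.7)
The strategy is to realize both $\Rep(\Kt_F)$ and the category $\varphi\text{-Vect}(\overline{\mathbb{F}}_q)$ of Drinfeld isoshtukas as Tannakian $F$-categories corresponding to gerbes over $F_{\fpqc}$ banded by the Kottwitz pro-torus $\mathbb{D}_F$, and then to show that their classes in $H^2(F_{\fpqc},\mathbb{D}_F)$ coincide by means of the local-global principle established in Theorem \ref{LocalGlobalTheorem}.

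First I would verify that both categories are Tannakian over $F$, banded by $\mathbb{D}_F$, and admit fibre functors over $\breve{F}=F\otimes_{\mathbb{F}_q}\overline{\mathbb{F}}_q$. For $\Rep(\Kt_F)$ this follows by construction together with the second part of Theorem \ref{FibreFunctors}. For $\varphi\text{-Vect}(\overline{\mathbb{F}}_q)$ the fibre functor is tautological, and the identification of the band with $\mathbb{D}_F$ is essentially contained in the preceding section: after extension of scalars to $F^{\sep}$ the category becomes neutral and semisimple, Drinfeld's classification (Theorem \ref{DrinfeldClassification}) says its simple objects are of rank one, and the abelian group of isomorphism classes is $(F^{\sep})^\times\otimes\mathbb{Q}=\varinjlim_K\Div^0(K)\otimes\mathbb{Q}$, which is exactly the character group of $\mathbb{D}_F$.

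Having placed the two gerbe classes in $H^2(F_{\fpqc},\mathbb{D}_F)$, I would invoke Theorem \ref{LocalGlobalTheorem}: the natural map $H^2(F,\mathbb{D}_F)\to\varprojlim_{K,S} H^2(\mathbb{A},\mathbb{D}_{K,S})$ is injective, and by Proposition \ref{AdelicCoh} combined with Remark \ref{AdelicCohom} the right-hand side decomposes place by place into the local cohomology groups $H^2(F_u,\mathbb{D}_{F_u})$. At every non-archimedean place $u$ the image of $[\Kt_F]$ is the local Kottwitz class, and by Theorem \ref{TheoremNonArch} the corresponding Tannakian category is the category of $F_u$-isocrystals over $\overline{\mathbb{F}}_q$. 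On the $\varphi$-space side, the localization functor $\Loc_u$ combined with Theorem \ref{phiSpacesLocal} shows that a simple $\varphi$-space of type $(L,a)$ decomposes at $u$ into a sum of isoclinic Dieudonn\'e modules with slopes $\deg_w(a)/\deg(L_w:F_u)$ for $w\mid u$. Matching these slopes against the ones obtained from the Kottwitz side in Proposition \ref{Decomposition}, one sees that the two gerbe classes have the same image at every place and hence coincide globally by injectivity. This identifies the gerbes up to equivalence and thus produces the desired equivalence of Tannakian categories, whose essential image exhausts $\varphi\text{-Vect}(\overline{\mathbb{F}}_q)$ by the semisimplicity part of Theorem \ref{DrinfeldClassification}.

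The main obstacle will be the careful bookkeeping of localization morphisms at the level of bands. The map $\mathbb{D}_F\to\mathbb{D}_{F_u}$ implicit in Kottwitz' construction of $\alpha_F$ arises from the restriction of $\zz[V_K]_0\to\bigoplus_{w|u}\zz w$ to the kernels of the degree maps (Section \ref{SemiLocal}), while the one induced by $\Loc_u$ on the $\varphi$-space side is dictated by the canonical splitting $L\otimes_F F_u\simeq\prod_{w|u}L_w$ of Theorem \ref{phiSpacesLocal}. Once one checks that these two descriptions of the morphism on character lattices agree, the slope comparison becomes formal, and the injectivity provided by Theorem \ref{LocalGlobalTheorem} completes the argument.
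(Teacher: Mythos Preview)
Your proposal is correct and follows essentially the same route as the paper: identify both categories as $\mathbb{D}_F$-banded Tannakian categories, invoke the local-global principle of Theorem \ref{LocalGlobalTheorem}, and verify agreement place by place using Drinfeld's localization Theorem \ref{phiSpacesLocal} on one side and Kottwitz' localization on the other. The paper makes the local step slightly more explicit than you do, by using Proposition \ref{TannakaClassificationLocal} (local Tate duality) to translate the comparison of classes in $H^2(F_u,\mathbb{D}_F)$ into an equality of homomorphisms $X^\ast(\mathbb{D}_F)^{\Gamma_u}\to\Br(F_u)$, rather than appealing to Proposition \ref{Decomposition}; but this is exactly the ``character-lattice bookkeeping'' you flag in your final paragraph, so there is no substantive difference.
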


\subsection{Ad\`{e}lic Kottwitz class}
The definition of the Kottwitz class as an element of $H^2(F, \mathbb{D}_F)$ is quite inexplicit, but we are able to compute its local components in $\varprojlim H^2(\mathbb{A}, \mathbb{D}_{K,S})$.

It follows from the isomorphism (\ref{IsomHJK}) and Remark \ref{AdelicCohom} that the natural map
\begin{equation}
H^i(\Gamma_{K/F}, \Hom(\zz[S_K],\mathbb{A}_{K,S}^\times))\to H^i(\Gamma, \Hom(\zz[S_K], \mathbb{A}_K^\times)) \to H^i(\mathbb{A}, \mathbb{T}_{K,S})
\end{equation}
is injective. Therefore, the limit of semi-local classes $\alpha_2(K,S)\in H^i(\Gamma_{K/F}, \Hom(\zz[S_K],\mathbb{A}_{K,S}^\times))$, defines a unique element in $\varprojlim H^2(\mathbb{A},\mathbb{T}_{K,S})$. 

Furthermore, on every finite level we have an equality of Tate-Nakayama classes $b^\prime \alpha_2(K,S) = a^\prime \alpha_3(K,S)$ coming from natural maps
\begin{equation}
\label{classesfinlevel}
\begin{tikzcd}
                          \quad &                                H^2(\Gamma_{K/F}, \Hom(\zz[S_K], \mathbb{A}^\times_{K,S})) \arrow{d}{b^\prime} \\
H^2(\Gamma_{K/F}, \Hom(\zz[S_K]_0, \OO^\times_{K,S})) \arrow{r}{a^\prime}   &  H^2(\Gamma_{K/F}, \Hom(\zz[S_K]_0, \mathbb{A}^\times_{K,S}))
\end{tikzcd}
\end{equation}
\subsection{Inflations of classes defined for finite extensions} The following diagrams
\[
\begin{tikzcd}
H^2(\Gamma_{K/F}, \Hom(\zz[S_K]_0, \OO^\times_{K,S})) \arrow{r}{a^\prime} \arrow{d}   &  H^2(\Gamma_{K/F}, \Hom(\zz[S_K]_0, \mathbb{A}^\times_{K,S})) \arrow{d}\\
H^2(F, \mathbb{D}_{K,S}) \arrow{r}{}   &  H^2(\mathbb{A}, \mathbb{D}_{K,S})
\end{tikzcd}
\]
and
\[
\begin{tikzcd}
H^2(\Gamma_{K/F}, \Hom(\zz[S_K], \mathbb{A}^\times_{K,S})) \arrow{r}{b^\prime} \arrow{d}   &  H^2(\Gamma_{K/F}, \Hom(\zz[S_K]_0, \mathbb{A}^\times_{K,S})) \arrow{d}\\
H^2(\mathbb{A}, \mathbb{T}_{K,S}) \arrow{r}{b^\prime}   &  H^2(\mathbb{A}, \mathbb{D}_{K,S})
\end{tikzcd}
\]
commute. This basically follows from the same computation as Proposition \ref{AdelicCoh}.
\subsection{Local components of the Kottwitz class}
\label{LocalComponents}
We note that the maps $H^2(\mathbb{A}, \mathbb{T}_{K,S}) \to H^2(\mathbb{A}, \mathbb{D}_{K,S})$ are surjective. They induce a surjective map between the corresponding derived limits, since the kernel of every such projection is a quotient of $H^2(\mathbb{A},\Gm)$, that is a direct sum of $\QQ/\zz$ and $\frac{1}{2}\zz$, and has vanishing $R^1\varprojlim$. Therefore, the cokernel of
\begin{equation}
\varprojlim H^2(\mathbb{A}, \mathbb{T}_{K,S}) \to \varprojlim H^2(\mathbb{A}, \mathbb{D}_{K,S})
\end{equation}
is zero.

We have a diagram analogous to (\ref{classesfinlevel}), namely
\begin{equation}
\label{GlobalAdelicAdelic}
\begin{tikzcd}
                          \quad &                                \varprojlim H^2(\mathbb{A}, \mathbb{T}_{K,S}) \arrow[twoheadrightarrow]{d}{b^\prime} \\
H^2(F, \mathbb{D}_F) \arrow[hook]{r}{a^\prime}   &  \varprojlim H^2(\mathbb{A}, \mathbb{D}_{K,S})
\end{tikzcd}
\end{equation}
and two uniquely defined classes $\varprojlim \alpha_2(K,S)$ and $\varprojlim \alpha_3(K,S)$, such that their images coincide in $\varprojlim H^2(\mathbb{A}, \mathbb{D}_{K,S})$.

The images of $\alpha_2(K,S)$ in $H^2(\mathbb{A}, \mathbb{T}_{K,S})$ can be described fairly easily via isomorphism (\ref{IsomHJK}). More precisely, $\alpha_2(K,S)$ corresponds to the element $(\alpha_{K^{\dot{v}}/F})_{v\in S}$ where each  $\alpha_{K^{\dot{v}}/F}$ is the image of the local fundamental class in $H^2(\Gamma_{K/K^{\dot{v}}}, K_{\dot{v}}^\times)\simeq \frac{1}{n_v}\zz$ where $n_v = \deg(K:K^{\dot{v}}) = \deg(K_{\dot{v}}:F_v)$.

We can say more about the limit of $\alpha_2(K,S)$, since
\begin{equation}
H^2(\mathbb{A},\mathbb{T}_{K,S}) \simeq \bigoplus_{\dot{v}\in \dot{S}_K}\bigoplus_{w\in V_{K^{\dot{v}}}} \Br(K^{\dot{v}}_w),
\end{equation}
and we notice that $\Br(K^{\dot{v}}_w) = \Br(F_v)$ for any place $w$ lying over $v$. Moreover, the transition maps $\mathbb{T}_{E,S^\prime} \to \mathbb{T}_{K,S}$ are defined locally and for any place $w^\prime \in S_E$ dividing $w$ induce the map
\begin{equation}
\cdot \deg(E_{w^\prime}: K_w): \Br(E^{\dot{v}}_{w^\prime}) = \Br(F_v) \to \Br(K^{\dot{v}}_w) = \Br(F_v).
\end{equation}
 We see that the $v$-component of the inverse limit of local canonical classes with such transition maps coincides with the cohomology class  $\alpha_v\in H^2(F_v,\widetilde{\Gm})$ corresponding to the category of isocrystals. Here $\widetilde{\Gm}$ denotes the inverse limit of $\Gm$ with $X^\ast(\widetilde{\Gm}) =\QQ$. This implies that $\varprojlim \alpha_2(K,S)$ is given by the image of the product of the $\alpha_v$'s under the inclusion
\begin{equation}
\prod_{v\in V_F} H^2(F_v, \widetilde{\Gm}) \hookrightarrow \varprojlim H^2(\mathbb{A}, \mathbb{T}_{K,S}).
\end{equation}

\subsection{$\Div^0(K)$ and $\zz[V_K]_0$}
\label{isomCharacters}
 Note that the degree map defined on the abelian group $\Div(K)$ is defined as follows
\begin{equation}
\deg: \Div(K) \to \zz,\quad
\sum_{P\in V_K} n_P P  \mapsto \sum_{P\in  V_K} \deg(P) n_P,
\end{equation}
where $\deg(P)$ is the residue degree at the place $P$. Thus one has an isomorphism between $\Div(K)\otimes\QQ$ and $\zz[V_K]\otimes\QQ$ preserving kernels of the corresponding degree maps, which is given by
\begin{equation}
\Div(K)\otimes\QQ\to\zz[V_K]\otimes\QQ, \deg(P)^{-1} P \mapsto w,
\end{equation}
where $w$ and $P$ are the same places of $K$, and under such an identification $n_w = n_P \deg(P)$.

\subsection{Local cohomology classes via duality}
\label{ReductionStep}
 There is a sequence of natural maps on cohomology
\begin{equation}
H^2(F,\mathbb{D}_F) \MapsTo \varprojlim H^2(F,\mathbb{D}_{K,S}) \hookrightarrow \varprojlim \bigoplus_{u\in V_F} H^2(F_u, \mathbb{D}_{K,S}) \hookrightarrow
\prod_{u\in V_F} \varprojlim H^2(F_u, \mathbb{D}_{K,S}),
\end{equation}
where we omitted obvious base change maps. Notice that the inclusion in the middle is given by the local-global principle (see Theorem \ref{LocalGlobalTheorem}).

It follows from Lemma 14.4 in \cite{Kot} and the fact that $\mathbb{D}_{K,S}\times \Spec(F_u)$ is split over $K_w$ for $w\in V_K$ lying above $u$, that $\varprojlim^1 H^1(F_u, \mathbb{D}_{K,S}\times \Spec(F_u))=0$, and therefore
\begin{equation}
\label{LocalLimit}
H^2(F_u, \mathbb{D}_F\times \Spec(F_u)) \simeq \varprojlim_{K,S} H^2(F_u, \mathbb{D}_{K,S}\times \Spec(F_u)).
\end{equation}

Taking a product of the localization functors $\Loc_u$ over the set of places of $F$ defines a unique class in $\prod_{u\in V_F} H^2(F_u, \mathbb{D}_F)$.

 Note that Tate-Nakayama duality over the local field $F_u$ can be rephrased as a classification of Tannakian categories banded by tori. The following proposition generalizes a result of Saavedra Rivano (see VI.Proposition 3.5.3) to the case of Tannakian categories over local fields that are banded by tori that are not necessarily split over the base field.

\begin{propos}
 \label{TannakaClassificationLocal}
 Let $F_u$ be a local field, and let $\mathcal{C}$ be a Tannakian category over $F_u$ banded by an algebraic torus $T$. Then the class of $\mathcal{C}$ in $H^2_{\fppf}(F_u, T)$ is given by an element of $$\Hom(X^{\ast}(T)^{\Gamma_u}, \Br(F))$$
 defined by sending every simple object $E_{\chi}$, for $\chi \in X^\ast(T)^{\Gamma_u}$, to the class of $\End(E_{\chi})$ in the Brauer group of $F_u$.
\end{propos}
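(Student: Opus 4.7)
The proof proceeds by matching two descriptions of an isomorphism $H^2(F_u, T) \cong \Hom(X^*(T)^{\Gamma_u}, \Br(F_u))$: one coming from local Tate--Nakayama duality, and one from the Tannakian/gerbe-theoretic standpoint. Local Tate duality for tori asserts that the cup product pairing
\[
H^2(F_u, T) \times H^0(\Gamma_u, X^*(T)) \to H^2(F_u, \Gm) = \Br(F_u)
\]
is perfect, inducing the identification in the statement. Geometrically this pairing sends $([\mathcal{E}], \chi)$ to $\chi_*[\mathcal{E}]$, where $\chi_*$ is the pushout of gerbes along the morphism of bands $\chi : T \to \Gm$ attached to a $\Gamma_u$-invariant character. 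The proposition therefore reduces to verifying, for $\mathcal{E}$ with $\mathcal{C} = \Rep(\mathcal{E})$ and every Galois-invariant $\chi$, the equality $\chi_*[\mathcal{E}] = [\End(E_\chi)]$ in $\Br(F_u)$.

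The first step is to identify the Tannakian category $\Rep(\chi_*\mathcal{E})$ with the $\chi$-isotypic Tannakian subcategory $\mathcal{C}^\chi \subseteq \mathcal{C}$, namely the full Tannakian subcategory generated by $E_\chi$ under subquotients, tensor products, and duals. The canonical morphism of gerbes $\mathcal{E} \to \chi_*\mathcal{E}$ corresponds by Tannakian duality to an inclusion $\mathcal{C}^\chi \hookrightarrow \mathcal{C}$. After base change to a finite splitting field $K/F_u$ of $T$, the neutralized category $\mathcal{C}\otimes_{F_u} K$ decomposes as $\bigoplus_{\mu\in X^*(T)} (\mathcal{C}\otimes_{F_u} K)_\mu$ into isotypic components; since $\chi$ is $\Gamma_u$-invariant, the $\chi$-summand is $\Gamma_u$-stable and descends to a Tannakian category over $F_u$ that one checks coincides with $\mathcal{C}^\chi$. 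This uses Galois descent together with Giraud's functorial pushout of gerbes along a morphism of bands.

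The second step is to interpret $\Rep(\chi_*\mathcal{E})$ via the classical Morita--Brauer dictionary for $\Gm$-gerbes over a local field: any Tannakian category over $F_u$ banded by $\Gm$ is equivalent to $\bigoplus_{n\in\zz}\mathrm{Mod}\text{-}D^{\otimes n}$ for a uniquely determined central simple $F_u$-algebra $D$ whose class in $\Br(F_u)$ is precisely the class of the gerbe in $H^2(F_u, \Gm)$. Applied to $\mathcal{D} = \Rep(\chi_*\mathcal{E}) \simeq \mathcal{C}^\chi$, the simple generator in weight $1$ (the image of $E_\chi$, which has weight $\chi$ in $\mathcal{C}$ and weight $1$ after pushout along $\chi$) is precisely $E_\chi$ itself, and $D = \End(E_\chi)$. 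Combining the two steps gives $\chi_*[\mathcal{E}] = [D] = [\End(E_\chi)]$, as required.

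The main technical obstacle will be the first step: the precise matching of the gerbe-theoretic pushout along $\chi$ with the Tannakian extraction of an isotypic component when $T$ is not split over $F_u$. This requires combining \'{e}tale descent along the splitting field $K/F_u$ with Giraud's construction of $\chi_*\mathcal{E}$ and a verification that the two operations are compatible on representations. All ingredients are already in place, however, thanks to the local description of morphisms between gerbes in Section \ref{LocDescrMms} and Deligne's explicit description of $\Rep(\mathcal{E})$ recalled in Section \ref{RepsBitor}; the reduction to the split case over $K$ then brings us within reach of Saavedra Rivano's result (VI.Proposition 3.5.3 of \cite{Saa}) quoted in the proposition.
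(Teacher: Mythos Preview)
Your proposal is correct and follows essentially the same approach as the paper: both use local Tate duality to identify $H^2(F_u,T)$ with $\Hom(X^\ast(T)^{\Gamma_u},\Br(F_u))$, reinterpret the pairing as pushforward of the gerbe along the $F_u$-rational character $\chi:T\to\Gm$, identify $\Rep(\chi_\ast\mathcal{E})$ with the Tannakian subcategory of $\mathcal{C}$ generated by $E_\chi$, and then read off the Brauer class as $[\End(E_\chi)]$.

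The only difference is in your first step: you pass to a splitting field $K/F_u$, decompose $\mathcal{C}\otimes_{F_u}K$ into isotypic pieces, and descend the $\Gamma_u$-stable $\chi$-summand. The paper avoids this detour by observing directly that a $\Gamma_u$-invariant $\chi$ is already an $F_u$-epimorphism $T\to\Gm$, so the morphism of gerbes $\mathcal{E}_\alpha\to\mathcal{E}_{\chi^\ast(\alpha)}$ and the resulting fully faithful functor $\Rep(\mathcal{E}_{\chi^\ast(\alpha)})\hookrightarrow\Rep(\mathcal{E}_\alpha)$ exist over $F_u$ from the start. This makes the ``main technical obstacle'' you flag disappear: no descent is needed, and the identification with $\mathcal{C}^\chi$ is immediate from Tannakian duality applied to the surjection of bands. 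Your route still works, but the paper's is shorter.
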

\begin{proof} Recall that $H^2$ of tori over local fields can be described fairly easily. Namely, given an algebraic torus $T$ with the character group $X^\ast(T)$, the cup-product gives us bilinear maps
\begin{equation}
\theta_i: H^i(F_u, X^\ast(T)) \times H^{2-i}(F_u, T) \to H^2(F_u, \Gm)\quad(i=0,1,2)
\end{equation}
such that $\theta_0$ defines a duality between the compact group $H^0(F_u, X^\ast(T))^{\wedge}$ (the completion of the abelian group $H^0(F_u, X^\ast(T))$ for the topology given by subgroups of finite index) and the discrete group $H^2(F_u,T)$, in particular, $\theta_0$ induces an isomorphism $H^2(F_u, T)  \simeq \Hom_{\zz}(X^{\ast}(T)^{\Gamma_u},\Br(F_u))$ (see II.\S5 Theorem 6 in \cite{SerreCG}). Namely, any character $\chi \in H^0(F_u, X^\ast(T)) = X^\ast(T)^{\Gamma_u}$ defines an $F_u$-epimorphism from $T$ to $\Gm$, which in its turn induces the morphism on cohomology
\begin{equation}
\chi^{\ast}: H^2(F_u, T) \to H^2(F_u, \Gm),
\end{equation}
such that for any $\alpha\in H^2(F_u, T)$, we have
\begin{equation}
\theta_0(\chi, \alpha) = \chi^\ast(\alpha) \in H^2(F_u, \Gm).
\end{equation}

Let $\mathcal{E}_{\alpha}$ and $\mathcal{E}_{\chi^\ast(\alpha)}$ be the gerbes associated to $\alpha$ and $\chi^\ast(\alpha)$, respectively. It follows that we have a morphism of gerbes
\begin{equation}
\label{functorOnGerbes}
\mathcal{E}_{\alpha} \to \mathcal{E}_{\chi^\ast(\alpha)}
\end{equation}
that is banded by the epimorphism of bands given by $\chi$. Applying Tannakian duality to (\ref{functorOnGerbes}), we obtain a fully faithful functor
\begin{equation}
\Rep(\mathcal{E}_{\chi^{\ast}(\alpha)}) \to \Rep(\mathcal{E}_{\alpha})
\end{equation}
that can be viewed as an embedding of the Tannakian category spanned by the simple object $E_{\chi}$ of $\Rep(\mathcal{E}_{\alpha})$ corresponding to the character $\chi$. To finish the proof, we notice that the class of such subcategory is defined by an element in $\Hom(\mathbb{Z}, \Br(F_u))$ obtained naturally by computing the endomorphisms of $E_{\chi}$, and in our case is given by $\chi^{\ast}(\alpha)$ (see more on Brauer groups and gerbes in Ch.V, 4.2 in \cite{Giraud}).
\end{proof}

We apply Proposition \ref{TannakaClassificationLocal} to the isomorphism (\ref{LocalLimit}) to obtain
\begin{equation}
H^2(F_u, \mathbb{D}_F) \simeq \varprojlim \Hom(X^\ast(\mathbb{D}_{K,S})^{\Gamma_u},\Br(F_u)) =  \Hom(X^\ast(\mathbb{D}_F)^{\Gamma_u}, \Br(F_u)).
\end{equation}

Consequently, the equality of classes in $H^2(F_u, \mathbb{D}_F)$ would follow from the equality of homomorphisms $X^\ast(\mathbb{D}_F)\to \Br(F_u)$.
\subsection{Localization of the Kottwitz category}
\label{LocalMapsDesc}
This section is based on Section 7 in \cite{Kot}, where Kottwitz defined localization maps for finite layers of $B(F,G)$.

The localization of the Kottwitz category is defined via the sequence of maps of character modules
\begin{equation}
X^\ast(\mathbb{D}_F) \to X^\ast(\mathbb{T}_F) \to \QQ,
\end{equation}
that arises as a direct limit of
\begin{equation}
\mu_w^\prime: X^\ast(\mathbb{D}_{K,S}) \xrightarrow{b^\prime} X^\ast(\mathbb{T}_{K,S}) \xrightarrow{\mu_w} \zz = X^\ast(\Gm),
\end{equation}
where the first map is an inclusion and the second is given by $\mu_w(\sum n_v v) = n_w$, where $w$ is a chosen place in $S_K$ lying above $u$.
Recall that $H^2(\Gal(K_w/F_u), \Gm(K_w)) =  \frac{1}{[K_w:F_u]} \zz/ \zz \hookrightarrow \QQ/\zz=H^2(F_u, \Gm)$. Consequently, the canonical class $\alpha(K_w/F_u)$ corresponds to the homomorphism
\begin{equation*}
n_w \mapsto \frac{n_w}{[K_w:F_u]} \pmod{\zz}.
\end{equation*}

Thus the homomorphism $X^\ast(\mathbb{D}_{K,S})\to \Br(F_u)$ is given by the composition $\alpha(K_w/F_u)\circ \mu_w^\prime$. If $a= \sum n_v v \in X^\ast(\mathbb{D}_{K,S})$ then
\begin{equation}
\label{LocalKottwitzClass}
\alpha(K_w/F_u)\circ \mu_w^\prime(a) = \frac{n_w}{[K_w:F_u]}  \pmod{\zz}.
\end{equation}

\subsection{The local class corresponding to $\varphi$-spaces}
Let $\Div^0(K,S)\otimes \QQ$ denote the subgroup of degree $0$ rational divisors with supports in $S_K$, and let $\widetilde{\mathbb{D}_{K,S}}$ be its dual protorus. Assume that $S$ satisfies the condition (K1) (see section \ref{TateKalethaKottwitzCond}). Then local duality applies, since such a module of characters can be represented as a direct limit of $\Div^0(K,S)$ with transition maps being multiplication by natural numbers. Then the same argument as we used to prove the isomorphism (\ref{LocalLimit}) shows that $H^2(F_u, \widetilde{\mathbb{D}_{K,S}}) = \Hom(\Div^0(K,S)^{\Gamma_u}\otimes \QQ, \Br(F_u))$.

Consequently, Theorem \ref{phiSpacesLocal} provides us with an element in $H^2(F_u,\mathbb{D}_F)$, since the rational number $r_w/d_w$ from its statement is a slope of the corresponding isocrystal over $F_u$, the endomorphism algebra of which has the invariant $r_w/d_w \pmod{\zz}$ in $\Br(F_u)$. Strictly speaking, such maps were only defined for the subset of rational divisors of degree $0$ in $K$ that satisfy the primitivity condition imposed on $\varphi$-pairs, but they extend to the whole group of rational divisors via transition maps.
\subsection{Conclusion}
Recall that section \ref{ReductionStep} reduces the comparison of the category of Drinfeld $\varphi$-spaces over $\mathbb{F}_q$ and the category of representations of the Kottwitz gerbe $\Rep(\Kt_F)$ for a global function field $F$ to the comparison of their localizations for every place $u$ in $F$. The latter is reduced to establishing the equality of maps on characters of bands.

Taking into account that the formula (\ref{LocalKottwitzClass}) and the expression for $r_w/d_w$ in Theorem \ref{phiSpacesLocal} are the same modulo the isomorphism described in Section \ref{isomCharacters}, we conclude that the categories in question are equivalent.

\subsection{Isoshtukas with $G$-structure} Let $G$ be a linear algebraic group over $F$. One defines an isoshtuka with $G$-structure as an exact tensor functor from $\Rep(G)$ to the category of isoshtukas. Since the latter is equivalent to $\Rep(\Kt_F)$ we can apply the results of Section \ref{GStructure} to obtain the following.
\begin{theorem}
\label{BFG}
Let $F$ be a global function field, then $B(F,G)$ is isomorphic to the set of isomorphism classes of isoshtukas with $G$-structure.
\end{theorem}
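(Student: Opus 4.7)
The proof essentially assembles two results already established in the excerpt, so the plan is a short compatibility check rather than a substantive new argument.

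First, I would invoke Theorem \ref{FunctionFields}, which identifies $\Rep(\Kt_F)$ with the category of Drinfeld isoshtukas as Tannakian categories over $F$. Crucially, the equivalence constructed in that theorem is a tensor equivalence: on the function field side, the tensor structure on isoshtukas is the tensor product of vector spaces over $F \otimes_{\mathbb{F}_q} \mathbb{F}$ with the diagonal $\Frob$-linear map; on the gerbe side, the tensor structure on $\Rep(\Kt_F)$ was shown in Section \ref{RepsBitor} to be given by the obvious tensor product of locally free sheaves equipped with compatible bitorsor actions. The comparison of local Brauer class invariants established in the conclusion of Section \ref{LocalComponents} and the reduction via $\varphi$-pairs is compatible with tensor products, since the slope of $(V_1\otimes V_2,\varphi_1\otimes \varphi_2)$ is the sum of the slopes of its factors. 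Consequently, the equivalence is not only $F$-linear and exact but $\otimes$-compatible.

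Next, I would use this equivalence to translate the notion of an isoshtuka with $G$-structure. By the definition given at the end of Section 13 (just before the theorem statement), such an object is an exact tensor functor $\Rep(G) \to \varphi\text{-Vect}(\mathbb{F})$. Composing with the inverse of the equivalence of Theorem \ref{FunctionFields} yields an exact tensor functor $\Rep(G) \to \Rep(\Kt_F)$, and this assignment is a bijection on isomorphism classes of tensor functors because the equivalence of categories induces an equivalence on the functor categories $\Hom^{\otimes}_F(\Rep(G),-)$.

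Finally, I would apply Proposition \ref{RepsGStruct}, which states exactly that the set of isomorphism classes in $\Hom^{\otimes}_F(\Rep(G), \Rep(\Kt_F))$ is identified with the Kottwitz set $B(F,G)$. Chaining the two bijections gives the desired identification between $B(F,G)$ and the set of isomorphism classes of isoshtukas with $G$-structure.

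The only step requiring any real care is the verification that the equivalence of Theorem \ref{FunctionFields} respects the tensor structure; this is essentially the observation above about slopes adding under tensor product, combined with the fact that both tensor structures are induced from the underlying categories of vector spaces. Once that is in hand, the theorem is immediate. No genuine obstacle arises, since the heavy lifting is contained in Theorems \ref{FunctionFields} and \ref{RepsGStruct}; the present statement is a formal consequence.
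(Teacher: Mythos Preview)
Your proposal is correct and follows essentially the same approach as the paper: invoke Theorem~\ref{FunctionFields} to identify isoshtukas with $\Rep(\Kt_F)$ as Tannakian categories, then apply Proposition~\ref{RepsGStruct} to identify isomorphism classes of $\otimes$-functors $\Rep(G)\to\Rep(\Kt_F)$ with $B(F,G)$. The paper's own proof is in fact even terser, simply noting that the equivalence of Theorem~\ref{FunctionFields} allows one to apply the results of Section~\ref{GStructure}.
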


\section{More on fiber functors}
\label{MoreonFiber}
In this section, we prove the following result.
\begin{theorem}
\label{FineFiberFunctors}
For any finite Galois extension $K/\mathbb{Q}$ and any finite set of places of $\mathbb{Q}$ satisfying the conditions of Section \ref{TateKalethaKottwitzCond}, the class $\alpha_{K,S}$ of the Kottwitz gerbe is split by a cyclotomic extension of the form $\mathbb{Q}(\zeta_q)$, where $q$ is a prime number.
\end{theorem}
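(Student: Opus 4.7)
The plan is to translate the vanishing of $\alpha_{K,S}|_{\mathbb{Q}(\zeta_q)}$ into explicit divisibility conditions on $q$ and to produce such a prime via the Chebotarev density theorem. By the explicit formula \eqref{LocalKottwitzClass}, the local component of $\alpha_{K,S}$ at a place $v\in S$ is the class in $H^2(\mathbb{Q}_v, \mathbb{D}_{K,S})$ represented by the homomorphism sending $\chi = \sum_x n_x x \in X^\ast(\mathbb{D}_{K,S})^{\Gamma_v}$ to $n_w/[K_w:\mathbb{Q}_v] \pmod{\mathbb{Z}}$, where $w\mid v$ is a chosen place of $K$. After base change to $F' = \mathbb{Q}(\zeta_q)$ at a place $w'\mid v$, this formula acquires a multiplicative factor $[F'_{w'}:\mathbb{Q}_v]$ (via restriction on local Brauer groups), so the restricted local class vanishes identically iff
\begin{equation*}
n_v := [K_w:\mathbb{Q}_v] \mid [F'_{w'}:\mathbb{Q}_v].
\end{equation*}
For $F' = \mathbb{Q}(\zeta_q)$ with $q$ an odd prime not in $S$: at $v=\infty$ this is automatic because $n_\infty\in\{1,2\}$ and $[\mathbb{C}:\mathbb{R}] = 2$; at a finite place $v=\ell$ the completion $F'_{w'} = \mathbb{Q}_\ell(\zeta_q)$ is unramified over $\mathbb{Q}_\ell$ of degree $\ord_q(\ell)$, so the condition reduces to $n_\ell \mid \ord_q(\ell)$.

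To produce a prime $q$ satisfying all these divisibilities at once, I set $N = \lcm\{n_\ell : \ell\in S \text{ finite}\}$ and consider the Galois extension
\begin{equation*}
E = \mathbb{Q}\bigl(\zeta_N,\; \ell^{1/n_\ell} : \ell \in S \text{ finite}\bigr).
\end{equation*}
Distinct rational primes are multiplicatively independent in $\mathbb{Q}(\zeta_N)^\times/(\mathbb{Q}(\zeta_N)^\times)^N$, so the Kummer extensions $\mathbb{Q}(\zeta_N,\ell^{1/n_\ell})/\mathbb{Q}(\zeta_N)$ are linearly disjoint and $\Gal(E/\mathbb{Q}(\zeta_N)) \simeq \prod_\ell \mathbb{Z}/n_\ell$. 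I choose $\sigma\in\Gal(E/\mathbb{Q})$ acting trivially on $\mathbb{Q}(\zeta_N)$ and generating every cyclic factor. The Chebotarev density theorem then yields infinitely many primes $q$ with Frobenius in the conjugacy class of $\sigma$; for each such $q$ one has $q \equiv 1 \pmod{N}$ and $\ord_q(\ell) = n_\ell$ for every finite $\ell\in S$. Discarding the finitely many $q \in S\cup\{2\}$ still leaves infinitely many admissible primes.

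The main obstacle is the passage from local vanishing at every place of $F'$ to global vanishing of $\alpha_{K,S}|_{F'}$. I would handle this by first enlarging $S$ to $S'\supseteq S$ for which both $B_{K,S'} = 0$ and its analog over $F'$ vanish, which is possible by the argument of Lemma \ref{MittagLeff}; the snake-lemma description of Section \ref{SecPrinciple} then gives an injection $H^2(F',\mathbb{D}_{K,S'}) \hookrightarrow H^2(\mathbb{A}_{F'},\mathbb{D}_{K,S'})$, and since the surjection $\mathbb{D}_{K,S'} \twoheadrightarrow \mathbb{D}_{K,S}$ sends $\alpha_{K,S'}\mapsto\alpha_{K,S}$ by the Kottwitz compatibility, it suffices to split $\alpha_{K,S'}$ over $F'$. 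Running the Chebotarev argument for $S'$ in place of $S$ yields the desired prime. A secondary technical point is verifying that the formula for the local component extends uniformly to all characters in $X^\ast(\mathbb{D}_{K,S'})^{\Gamma_{w'}}$, including those not $\Gamma_v$-invariant; this I would confirm via the Shapiro identification applied to $\mathbb{T}_{K,S'}|_{F'_{w'}}$, under which the pushforward retains the shape $n_w\cdot[F'_{w'}:\mathbb{Q}_v]/n_v$, so that the same divisibility $n_v\mid[F'_{w'}:\mathbb{Q}_v]$ suffices.
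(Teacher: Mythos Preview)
Your overall strategy matches the paper's: reduce to local divisibility conditions on $[\mathbb{Q}(\zeta_q)_{w'}:\mathbb{Q}_\ell]=\ord_q(\ell)$ and then produce $q$ via Chebotarev in a Kummer--cyclotomic tower. But the Kummer step contains a genuine gap.

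You claim $\Gal(E/\mathbb{Q}(\zeta_N))\simeq\prod_\ell\mathbb{Z}/n_\ell\mathbb{Z}$, which requires each $\mathbb{Q}(\zeta_N,\ell^{1/n_\ell})/\mathbb{Q}(\zeta_N)$ to have degree exactly $n_\ell$. This can fail because of entanglement between radical and cyclotomic extensions. Concretely: take $K=\mathbb{Q}(\zeta_{16})$ and $S=\{2,\infty\}$; then $n_2=[K_w:\mathbb{Q}_2]=8$ and $N=8$, but $\sqrt{2}\in\mathbb{Q}(\zeta_8)$, so $[\mathbb{Q}(\zeta_8,2^{1/8}):\mathbb{Q}(\zeta_8)]\le4$. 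No element of this Galois group has order $8$, so Chebotarev cannot force $8\mid\ord_q(2)$. The phenomenon is not special to $\ell=2$: for any odd prime $\ell$ one has $\sqrt{\ell^*}\in\mathbb{Q}(\zeta_\ell)$ with $\ell^*=(-1)^{(\ell-1)/2}\ell$, so similar collapses occur whenever $\ell\mid N$. The paper confronts exactly this obstacle by invoking the Perucca--Sgobba bound (Theorem~\ref{PerSTheorem}): the defect $r^s/[\mathbb{Q}_{r,r}:\mathbb{Q}_r]$ divides a constant $c$ depending only on $S$, so one takes $r$ large enough that $r/c$ is divisible by all the required local degrees and only asks $\Frob_q$ to generate the guaranteed subgroup $(c\mathbb{Z}/r\mathbb{Z})^s$.

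There is also a smaller gap in your local--global step. You propose to make $B_{K,S'}=0$ by enlarging $S$ alone, citing Lemma~\ref{MittagLeff}; but that lemma kills the obstruction by enlarging the \emph{field} to some $E\supsetneq K$, not by enlarging $S$. Indeed $|B_{K,S'}|=\gcd_{v\in S'}[G:G_v]$ with $G=\Gal(K/\mathbb{Q})$, and if for instance $G\simeq(\mathbb{Z}/2)^4$ then every decomposition group $G_v$ is a proper subgroup (the maximal elementary abelian $2$-extension of $\mathbb{Q}_p$ has degree $4$ for odd $p$ and $8$ for $p=2$), so $B_{K,S'}$ never vanishes however large $S'$ is. The paper's reduction in Section~\ref{ReductionToEx} correctly passes to a pair $(E,S')$ with $E\supsetneq K$ before running the Chebotarev argument for that larger pair.
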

This is done in two steps: in Section \ref{ReductionToEx}, we reduce the question about the vanishing of restrictions of the class $\alpha_{K,S}$ to a question about the vanishing of the corresponding class in adelic cohomology, which in its turn provides us with a criterion of splitting formulated purely in terms of the local behaviour of the Galois extension to which $\alpha_{K,S}$ is restricted. The second step is performed in Section \ref{ExistenceTheorem2}, where we show that there exist infinitely many integer primes $q$, such that $\mathbb{Q}(\zeta_q)$ has the desired local behaviour.

\subsection{Reduction to an existence theorem}
\label{ReductionToEx}Let $K/F$ be a finite Galois extension of number fields and let $S$ be a finite set of places in $F$, satisfying the usual conditions (see Section \ref{TateKalethaKottwitzCond}). We consider the Tate-Nakayama class $\alpha_{K,S} \in H^2(F,\mathbb{D}_{K,S})$, where $\mathbb{D}_{K,S}$ is the algebraic torus over $F$ introduced in  Definition \ref{KottTorus}.

For any finite Galois extension $L/F$ we can consider the natural restriction map
\begin{equation}
H^2(F,\mathbb{D}_{K,S}) \xrightarrow{\res_L} H^2(L, \mathbb{D}_{K,S} \times_{\Spec(F)} \Spec(L))
\end{equation}
and ask whether  $\res_L(\alpha_{K,S}) = 0$. Since we do not have an explicit description of $\alpha_{K,S}$, but know the images of this class under localization maps, we may consider the following commutative diagram
\begin{equation}
\label{LocalizEx}
\begin{tikzcd}
H^2(F,\mathbb{D}_{K,S}) \arrow{r}{\res_L} \arrow{d}{\loc_F} &  H^2(L, \mathbb{D}_{K,S} \times_{\Spec(F)} \Spec(L)) \arrow{d}{\loc_L}\\
H^2(\mathbb{A}_F, \mathbb{D}_{K,S}) \arrow{r}{\Res_L} &  H^2(\mathbb{A}_L, \mathbb{D}_{K,S} \times_{\Spec(F)} \Spec(L))
\end{tikzcd}
\end{equation}
where $\loc_F$ and $\loc_L$ are the natural maps to adelic cohomology (see Section \ref{AdCoh}), and $\Res$ is the restriction map induced on adelic cohomology.

In order to show that $\res_L(\alpha_{K,S}) = 0$, it is enough to show that $\Res_L(\loc_F(\alpha_{K,S}))=0$ and that $\res_L(\alpha)$ lies in the kernel of $\loc_L$ if and only if it is $0$. We know that the localization map is not injective in general, however we showed in Section \ref{SecPrinciple} that the projective limit of the kernels vanishes.

Assume that $L \cap K = F$, in this case we are basically mimicking the argument from Section \ref{SecPrinciple}, namely we have an isomorphism
\begin{equation}
\mathbb{D}_{K,S} \times_{\Spec(F)} \Spec{L}= \big(\prod_{\dot{v}\in \dot{S}} \Res_{K^{\dot{v}}L/L} \mathbb{G}_{m,K^{\dot{v}}L}\big) / \mathbb{G}_{m,L},
\end{equation}
 moreover, it follows from the vanishing of $H^1(L,\mathbb{D}_{K,S})$ (see Lemma \ref{InjKaletha}) and from the vanishing of $H^3(L,\Gm)$ (see Corollary I.4.21 in \cite{ADT}) that $H^2(L, \mathbb{D}_{K,S}) = (\oplus_{\dot{v}\in \dot{S}} \Br(K^{\dot{v}}L))/\Br(L)$. Again, applying the snake lemma to the diagram similar to (\ref{LocGlobSnake}), we obtain that the kernel of the localization map $\loc_L$ is a finite subgroup of $\mathbb{Q}/\mathbb{Z}$ of order $\gcd_{\dot{v}\in\dot{S}} \deg(K^{\dot{v}}L/L)$. By disjointness of $L$ and $K$ over $F$, the latter is equal to $\gcd_{\dot{v}\in\dot{S}} \deg(K^{\dot{v}}/F)$.
 
 Note that by the local-global principle, there exists a finite Galois extension $E/K/F$ and a finite set of places $S^\prime$ in $E$ lying above $S$, such that if $\res_L(\alpha_{S^\prime, E})$ is in the kernel of the localization map $H^2(L,\mathbb{D}_{S^\prime,E}) \to H^2(\mathbb{A}_L,\mathbb{D}_{S^\prime,E})$, then  $\res_L (\alpha_{K,S}) = \pi_{S^\prime,E}(\res_L(\alpha_{S^\prime, E})) = 0$.

 Consequently, after replacing $(K,S)$ with $(E,S^\prime)$ in the diagram (\ref{LocalizEx}), we see that it is enough to find a finite Galois extension $L$ linearly disjoint from $E$ and $K$ that satisfies the following condition:
\begin{itemize}
\item $\Res_L(\alpha_{S^\prime,E}) = 0 \in H^2(\mathbb{A}_L,\mathbb{D}_{S^\prime,E})$.
\end{itemize}

By construction of $\alpha_{S^\prime, E}$, the class $\loc_F(\alpha_{S^\prime,E})$ is the image of the semi-local canonical class $\alpha_2(S^\prime,E)$ under the map $H^2(\mathbb{A}_F,\mathbb{T}_{S^\prime,E}) \to H^2(\mathbb{A}_F,\mathbb{D}_{S^\prime, E})$ (see Section \ref{LocalComponents}). Therefore, we are reduced to proving that $\Res_L(\alpha_2(S^\prime,E))=0$. Using the fact that $\alpha_2(S^\prime,E)$ is defined very explicitly (see Section \ref{SemiLocal}, in particular, the isomorphism (\ref{IsomHJK})), we obtain that $\Res_L(\alpha_{2}(S^\prime,E))$ vanishes if $\alpha(E_{\dot{v}}/F_v) \in \Br(E_{\dot{v}}/F_v)$ is killed by the map
\begin{equation}
H^2((E^{\dot{v}})_v,\Gm)=\Br(F_v) \xrightarrow{\res_{L_w}} \oplus_{\tilde{w}\in V_{E^{\dot{v}}L}, \tilde{w}|w} H^2((E^{\dot{v}}L)_{\tilde{w}}, \Gm) = \oplus_{\tilde{w}|w} \Br((E^{\dot{v}}L)_{\tilde{w}})
\end{equation}
where $w$ is a place in $L$, such that $w\mid v$. On the left-hand side we used that $E^{\dot{v}}$ is the decomposition field of a place lying over $v$. Consider an induced map $\Br(F_v) \to \Br((E^{\dot{v}}L)_{\tilde{w}})$, where $\tilde{w}$ is as above. Since it comes from the restriction map $\res_{L_w}$, it follows from local class field theory that this map can be identified with the multiplication by $\deg((E^{\dot{v}}L)_{\tilde{w}}/F_{v})$ on $\mathbb{Q}/\mathbb{Z}$.

In order to compute the degree of the extension of local fields $(E^{\dot{v}}L)_{\tilde{w}}/F_{v}$, we notice that $L\otimes_FE^{\dot{v}}\otimes_F  \otimes F_v = (\prod_{w|v, w\in V_{L}}L_w)^g$, where $g$ is the number of places of $E$ lying over $v$, therefore the desired degree is given by $\deg(L_w/F_v)$.

Combining all reduction steps, we see that any extension $L/F$ such that
\begin{enumerate}
\item $L\cap E = F$,
\item for any $w\in V_L$ lying above $v\in S$, the local degree $\deg(L_w/F_v)$ is divisible by $\deg(E_{v^\prime}/F_v$), where $v^\prime \in S_E$ is any lift of $v$,
\end{enumerate}
splits the canonical class $\alpha_{K,S}$.

In the following sections, we show that infinitely many extensions $\mathbb{Q}(\zeta_q)$, where $q$ is a prime, satisfy the second condition, therefore, the first condition poses no problem. We also notice that $\mathbb{Q}(\zeta_q)$ is a purely imaginary extension of $\mathbb{Q}$, thus the local degrees of $\mathbb{Q}(\zeta_q)/\mathbb{Q}$  at the infinite primes are all equal to $2$. This way we may restrict ourselves to finite primes of $S$.

\subsection{Auxiliary result: Kummer extensions of cyclotomic fields}

First, we recall a result from theory of Kummer extensions of cyclotomic number fields that will be crucial in Section \ref{ExistenceTheorem2}.
\begin{theorem}[Theorem 1 in \cite{PerS}]
\label{PerSTheorem}
Let $K$ be a number field and let $G$ be a finitely generated subgroup of $K^{\times}$. Consider a Kummer extension $K_{n,n} = K(\zeta_n,\sqrt[n]{G})$ of the $n$th cyclotomic extension $K_n=K(\zeta_n)$. If $G$ is of strictly positive rank $s$, then there is an integer $c\geq 1$ (depending only on $K$ and $G$) such that for all integers $n\geq 1$ we have
\begin{equation}
\frac{n^s}{\Gal(K_{n,n}:K_n)} \mid c
\end{equation}
where $\frac{n^s}{\Gal(K_{n,n}:K_n)}$ is a positive integer.
\end{theorem}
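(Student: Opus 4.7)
The plan is to reduce to prime-power level $n = \ell^k$ via a multiplicativity argument, then run an $\ell$-adic Kummer-theoretic analysis, and finally show that the set of primes at which nonmaximal behaviour occurs is finite. I would first write $n = \prod_i \ell_i^{k_i}$ and exploit the fact that the prime-power Kummer extensions $K(\mu_{\ell_i^{k_i}}, \sqrt[\ell_i^{k_i}]{G})$ have pairwise coprime degrees, so their compositum $K_{n,n}$ satisfies $[K_{n,n} : K_n] = \prod_i [K_{\ell_i^{k_i}, \ell_i^{k_i}} : K_{\ell_i^{k_i}}]$ up to a uniformly bounded defect arising from possible intersections of the cyclotomic and radical parts. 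This reduces the problem to producing, for every prime $\ell$, a constant $c_\ell \geq 1$ such that $\ell^{ks}/[K_{\ell^k,\ell^k}:K_{\ell^k}]$ is a positive integer dividing $c_\ell$ for every $k\geq 0$, with $c_\ell = 1$ for almost all $\ell$; then $c := \prod_\ell c_\ell$ works.

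For the $\ell$-adic analysis, fix $\ell$ and choose a free subgroup $G_0 \leq G$ of rank $s$ complementing the torsion, so that $[G:G_0] = t < \infty$. Let $K_\infty := K(\mu_{\ell^\infty})$ and $L_\infty := K_\infty(\sqrt[\ell^\infty]{G_0})$. The Kummer pairing
\[
\Gal(L_\infty/K_\infty) \times (G_0 \otimes \mathbb{Z}_\ell) \longrightarrow T_\ell(\mu_{\ell^\infty})
\]
is perfect, realising $\Gal(L_\infty/K_\infty)$ as a closed $\mathbb{Z}_\ell$-submodule of $\Hom_{\mathbb{Z}_\ell}(G_0 \otimes \mathbb{Z}_\ell,\, T_\ell(\mu_{\ell^\infty})) \cong \mathbb{Z}_\ell^{\,s}$. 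I would then invoke the Bashmakov--Ribet open-image theorem for tori to conclude that this submodule is open, hence of finite $\mathbb{Z}_\ell$-index. That index is precisely the stable value of $\ell^{ks}/[K_{\ell^k,\ell^k}:K_{\ell^k}]$ as $k\to\infty$, after absorbing bounded factors accounting for the discrepancy between $K_\infty$ and the layer $K_{\ell^k}$ (bounded by $[K(\mu_\ell):K]$) and between $G$ and $G_0$ (bounded by a power of $t$). This gives the required $c_\ell$.

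For the third step, I would show that for all but finitely many $\ell$ this index equals $1$. A failure at $\ell$ corresponds to a nontrivial multiplicative relation among the fixed generators of $G_0$ modulo $\ell$-th powers in $K(\mu_{\ell^\infty})$; a valuation argument at the primes of $K$ occurring in the factorisations of the generators, or equivalently a Chebotarev density argument, rules out such relations at all but finitely many $\ell$. One must additionally discard the finitely many primes above $t$, the primes of ramification in $K$, and the prime $2$, which requires separate attention because elements such as $\sqrt{-1}$ or $\sqrt{2}$ may already lie in $K$ and distort the pairing.

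The main obstacle is the open-image statement in the second step: it is already nontrivial for $s=1$ and $K = \mathbb{Q}$, and its extension to arbitrary number fields and arbitrary rank requires careful handling of the interaction between the Kummer cocycle and the $2$-torsion in $\mu$. Packaging all defects --- torsion in $G$, the passage from $K_\infty$ to the finite layers $K_{\ell^k}$, and the sporadic behaviour at small primes --- into a single uniform constant $c$ depending only on $K$ and $G$ is the most delicate book-keeping in the argument.
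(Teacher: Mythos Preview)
The paper does not prove this theorem at all: it is quoted verbatim from Perucca--Sgobba \cite{PerS} as an auxiliary black box (see the sentence ``First, we recall a result from theory of Kummer extensions of cyclotomic number fields\ldots'' immediately preceding the statement), and no argument is given or even sketched. So there is no ``paper's own proof'' to compare against.

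That said, your outline is a reasonable sketch of the classical route to such bounded-failure statements, in the spirit of Bashmakov and Ribet. A couple of remarks. First, the multiplicativity step is slightly more delicate than you indicate: the equality $[K_{n,n}:K_n]=\prod_i [K_{\ell_i^{k_i},\ell_i^{k_i}}:K_{\ell_i^{k_i}}]$ can fail by a bounded factor not only through cyclotomic--radical interaction but also because the various $K_{\ell_i^{k_i}}$ sit inside $K_n$ rather than being linearly disjoint base fields; you should phrase the reduction as a comparison of $[K_{n,n}:K_n]$ with $\prod_i [K_n(\sqrt[\ell_i^{k_i}]{G}):K_n]$ first. Second, invoking an ``open-image theorem for tori'' is heavier machinery than the result actually requires: for the split torus $\mathbb{G}_m^s$ the openness of the Kummer image follows directly from the finiteness of $K^\times/(K^\times)^\ell$-obstructions plus the fact that $G_0$ has rank $s$, and this elementary version is what Perucca--Sgobba make explicit. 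Your identification of the prime $2$ and the torsion bookkeeping as the genuinely fiddly points is accurate.
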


\subsection{Existence theorem}
\label{ExistenceTheorem2} The proof of the following theorem will occupy the rest of this section.
\begin{theorem}
Let $S=\{p_1,\ldots ,p_s\}$ be a fixed finite set of primes in $\mathbb{Q}$. Then there exists a constant $c\in\zz$ (that depends only on $S$) such that for any integer $r$ satisfying $c\mid r$, there exists infinitely many prime numbers $q \notin S$ with the following properties:
\begin{itemize}
\label{ExThSt}
\item[($\star$)] $q\equiv 1\pmod r$, $\gcd(\frac{q-1}{r}, r) = c$.
\item[($\star\star$)] let $L$ denote $\mathbb{Q}(\zeta_q)$, then for every $p_i\in S$, and for any prime ideal $P_i$ in $\mathcal{O}_L$ lying above $p_i$ the degree of the local extension $L_{P_i}/\mathbb{Q}_{p_i}$ is divisible by $\frac{r}{c}$.
\end{itemize}
\end{theorem}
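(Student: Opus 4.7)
The plan is to reformulate both $(\star)$ and $(\star\star)$ as Frobenius conditions in a suitable Galois extension of $\mathbb{Q}$ and then apply the Chebotarev density theorem. Theorem~\ref{PerSTheorem} supplies the uniform Kummer-defect bound that lets a single constant $c$ work for all $r$ divisible by $c$.

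I would begin by translating $(\star\star)$ into Kummer-theoretic form. Since $\mathbb{Q}(\zeta_q)/\mathbb{Q}$ is cyclic of degree $q-1$ and unramified outside $q$, the local degree at a prime of $L=\mathbb{Q}(\zeta_q)$ above $p_i$ equals $\ord_q(p_i)$. Writing $q-1 = rm$ with $\gcd(m,r)=c$ from $(\star)$, an elementary check gives $v_\ell(q-1)=v_\ell(c)+v_\ell(r)$ for every $\ell\mid r/c$; consequently the divisibility $(r/c)\mid\ord_q(p_i)$ is equivalent to $p_i$ not being an $\ell^{2v_\ell(c)+1}$-th power modulo $q$, for every such $\ell$ and every $i$. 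Because $(\star)$ forces $q\equiv 1\pmod{\ell^{2v_\ell(c)+1}}$, this is in turn equivalent to $q$ not splitting completely in the Kummer extension $\mathbb{Q}(\zeta_{\ell^{M_\ell}},\sqrt[\ell^{M_\ell}]{p_i})$, where $M_\ell:=2v_\ell(c)+1$.

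I would then package everything into a single Galois extension per prime: for each $\ell\mid r/c$ set
\begin{equation*}
N_\ell \ =\ \mathbb{Q}\bigl(\zeta_{\ell^{v_\ell(cr)+1}},\ \sqrt[\ell^{M_\ell}]{p_1},\ \ldots,\ \sqrt[\ell^{M_\ell}]{p_s}\bigr).
\end{equation*}
The cyclotomic layer is tuned to detect the exact $\ell$-adic valuation of $q-1$ demanded by $(\star)$, and the Kummer layer detects the non-power conditions of $(\star\star)$. Since the $N_\ell$ for distinct primes $\ell$ are linearly disjoint over $\mathbb{Q}$, one may independently prescribe the Frobenius class in each $\Gal(N_\ell/\mathbb{Q})$. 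The conditions reduce to finding, for each $\ell$, an element whose cyclotomic component $x\in(\zz/\ell^{v_\ell(cr)+1})^\times$ satisfies $x\equiv 1\pmod{\ell^{v_\ell(cr)}}$ and $x\not\equiv 1\pmod{\ell^{v_\ell(cr)+1}}$, and whose Kummer component $(a_1,\ldots,a_s)\in H_\ell:=\Gal\bigl(\mathbb{Q}(\zeta_{\ell^{M_\ell}},\sqrt[\ell^{M_\ell}]{p_1},\ldots,\sqrt[\ell^{M_\ell}]{p_s})/\mathbb{Q}(\zeta_{\ell^{M_\ell}})\bigr)\subseteq(\zz/\ell^{M_\ell})^s$ has $a_i\neq 0$ for every $i$.

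The constant $c$ is chosen as follows. Let $c_0$ be the Perucca--Sgobba constant of Theorem~\ref{PerSTheorem} for $G=\langle p_1,\ldots,p_s\rangle\subseteq\mathbb{Q}^\times$, so that $[(\zz/n)^s:\Gal(\mathbb{Q}(\zeta_n,\sqrt[n]{G})/\mathbb{Q}(\zeta_n))]\mid c_0$ for every $n$. I define $c$ as any multiple of $c_0$ satisfying $\ell^{v_\ell(c)+1}>s$ for every prime $\ell\leq s$; this is easily arranged by adjoining sufficient powers of the primes up to $s$ to $c_0$. The heart of the argument is then a counting step: the $\ell$-part of the Kummer defect is at most $\ell^{v_\ell(c)}$, giving $|H_\ell|\geq\ell^{sM_\ell-v_\ell(c)}$, while the bad locus on which some coordinate vanishes has size at most $s\cdot\ell^{(s-1)M_\ell}$ by a union bound. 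The complement is non-empty precisely when $\ell^{M_\ell-v_\ell(c)}>s$, i.e.\ $\ell^{v_\ell(c)+1}>s$, which holds by construction (for $\ell\leq s$ by the choice of $c$, and for $\ell>s$ automatically). The cyclotomic coset condition is non-empty whenever $\ell\geq 2$. An application of Chebotarev to the compositum $\prod_{\ell\mid r/c}N_\ell$ then yields infinitely many primes $q\notin S$ whose Frobenius realizes the prescribed classes; each such $q$ satisfies $(\star)$ and $(\star\star)$.

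The main obstacle is the counting inequality $\ell^{v_\ell(c)+1}>s$: one must arrange a single constant $c$ to dominate $s$ at every relevant small prime while simultaneously absorbing the Kummer defects supplied by Theorem~\ref{PerSTheorem}. The uniformity in $n$ of that theorem, combined with the flexibility to enlarge $c$ at small primes, is precisely what makes the construction go through for arbitrary $r$ with $c\mid r$.
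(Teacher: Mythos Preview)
Your overall strategy—reformulate $(\star)$ and $(\star\star)$ as Frobenius conditions in a Kummer--cyclotomic tower, invoke the uniform defect bound of Theorem~\ref{PerSTheorem}, and apply Chebotarev—is the same as the paper's. The execution, however, differs in two ways that both create problems.

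\textbf{The linear disjointness assertion is false.} You decompose prime by prime and claim that the fields $N_\ell$ for distinct $\ell\mid r/c$ are linearly disjoint over $\mathbb{Q}$. They need not be. Take $s=1$, $p_1=5$, and suppose $2,5\mid r/c$. Then $N_5\supseteq\mathbb{Q}(\zeta_5)\ni\sqrt{5}$, while $N_2\supseteq\mathbb{Q}(\sqrt[2^{M_2}]{5})\ni\sqrt{5}$, so $N_2\cap N_5\supseteq\mathbb{Q}(\sqrt{5})$. The resulting fiber-product constraint forces the $\sqrt{5}$-coordinate of the Kummer component in $N_2$ to be even (since $(\star)$ forces $q\equiv 1\pmod 5$, hence $5$ is a square mod $q$); your union-bound count ignores this and can prescribe incompatible conditions. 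The Perucca--Sgobba constant does detect such entanglements globally, but your argument applies it only inside each $N_\ell$ separately, so the cross-prime compatibility is never verified. The paper sidesteps this by working in the single field $\mathbb{Q}(\zeta_{r^2},\sqrt[r]{p_1},\dots,\sqrt[r]{p_s})$, analyzing its Galois group as a fiber product $\Gal(\mathbb{Q}_{r,r}/\mathbb{Q}_r)\times_{\Gal(E/\mathbb{Q}_r)}\Gal(\mathbb{Q}_{r^2}/\mathbb{Q}_r)$, and exhibiting the explicit element $((c,\dots,c),\sigma)$ with $\sigma$ a generator of $c\mathbb{Z}/r\mathbb{Z}$; both components lie over the identity in $\Gal(E/\mathbb{Q}_r)$, so compatibility is immediate.

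\textbf{A secondary omission.} Your compositum runs only over $\ell\mid r/c$, so condition $(\star)$ is not imposed at primes $\ell\mid r$ with $v_\ell(c)=v_\ell(r)$; there it demands $v_\ell(q-1)\ge 2v_\ell(r)$. This is easily repaired by adjoining $\zeta_{r^2}$, but as written the primes you produce need not satisfy $(\star)$. The paper's use of $\mathbb{Q}(\zeta_{r^2})$ encodes $\gcd((q-1)/r,r)=c$ directly as ``$\operatorname{Frob}_q$ generates $c\mathbb{Z}/r\mathbb{Z}$ in $\Gal(\mathbb{Q}(\zeta_{r^2})/\mathbb{Q}(\zeta_r))\simeq\mathbb{Z}/r\mathbb{Z}$'', handling all primes dividing $r$ at once.

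In short, your counting approach with a prime-by-prime decomposition is more fragile than the paper's explicit construction in a single field; the latter needs no linear-disjointness claim, no padding of $c$ beyond $c_S$, and no union bound.
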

To simplify notation, we denote $r_{\ast} = \frac{r}{c}$, where $c$ is an integral divisor of $r$. We will specify this constant below.

We rephrase the condition ($\star\star$) as follows.
Note that since $q\notin S$, every extension $L_{P_i}/\mathbb{Q}_{p_i}$ as above is unramified of degree $f_i$, which is computed as the order of $p_i$ in $(\sfrac{\mathbb{Z}}{q\mathbb{Z}})^{\times}$, more concretely, $f_i$ is the smallest natural number such that $p_i^{f_i} \equiv 1 \pmod q$ (see Theorem 2.13 in \cite{Wash}). Thus, the degree of  $L_{P_i}/\mathbb{Q}_{p_i}$ is divisible by $r_{\ast}$ if and only if $f_i$ is divisible by $r_{\ast}$.

Focusing on the prime $p=p_i$, we see that its order in $(\sfrac{\mathbb{Z}}{q\mathbb{Z}})^{\times}$ can be computed by analysing the extension $\mathbb{Q}(\zeta_r,p^{1/r})/\mathbb{Q}(\zeta_r)$, in which $q$ is unramified. Since $r\mid q-1$, the degree of the local extension over $q$ is equal to the degree of the extension $\mathbb{F}_q(p^{1/r})/\mathbb{F}_q$. Using Kummer theory (see Chapter IV, Theorem 3.3 in \cite{Neu2}), we obtain that $\Frob_q\in \Gal(\mathbb{F}_q(p^{1/r})/\mathbb{F}_q)$ has order equal to the order of $p^{\frac{(q-1)}{r}}$ in $\mathbb{F}_q^\times$. More precisely, in this case $\mathbb{F}_q^\times/(\mathbb{F}_q^{\times})^{{r}} \simeq \sfrac{\mathbb{Z}}{(r)}$ is isomorphic to the group of characters $\Hom(\Gal(\overline{\mathbb{F}}_q/\mathbb{F}_q), \mu_{r})$ via the map
\begin{equation*}
p \mapsto \chi_{p}(\Frob_q) = \frac{\Frob_q(p^{1/r})}{p^{1/r}}= p^{\frac{q-1}{r}}\in \mu_{r}.
\end{equation*}
Thus, the order of $p$ in $\mathbb{F}_q^\times/(\mathbb{F}_q^{\times})^{r}$ is equal to the order of $\chi_{p}(\Frob_q)$, and the latter is equal to the order of $\Frob_q \in \Gal(\mathbb{F}_q(p^{1/r})/\mathbb{F}_q)$. In particular, the order of $\Frob_q\in \Gal(\mathbb{F}_q(p^{1/r})/\mathbb{F}_q)$ divides the order of $p$ in $\mathbb{F}_q^{\times}$.

In order to combine the local conditions for different $p_i$'s, we consider the following tower of extensions

\begin{center}%
 \begin{tikzpicture}%
    \node (Q01) at (0,0) {$\mathbb{Q}$};
    \node (Q02) at (0,1) {$\mathbb{Q}_{r}:=\mathbb{Q}(\zeta_r)$};
    \node (Q1) at (0,2) {$E$};
    \node (Q2) at (3,3) {$\mathbb{Q}_{r^2}:=\mathbb{Q}(\zeta_{r^2})$};
    \node (Q3) at (0,4.5) {$\mathbb{Q}_{r^2,r}:=\mathbb{Q}(\zeta_{r^2},\sqrt[r]{p_1},\ldots,\sqrt[r]{p_s})$};
    \node (Q4) at (-3,3) {$\mathbb{Q}_{r,r}:=\mathbb{Q}(\zeta_{r},\sqrt[r]{p_1},\ldots,\sqrt[r]{p_s})$};

    \draw (Q01)--(Q02);
    \draw (Q02)--(Q1);
    \draw (Q1)--(Q2);
    \draw (Q1)--(Q4);
    \draw (Q3)--(Q4);
    \draw (Q2)--(Q3);
\end{tikzpicture}%
\end{center}%
where $E$ denotes the intersection of $\mathbb{Q}_{r,r}$ and $\mathbb{Q}_{r^2}$.

We claim that there are infinitely many rational primes $q$, such that $\Frob_q\in\Gal(\mathbb{Q}_{r^2,r}/\mathbb{Q})$ satisfies the following conditions:
\begin{enumerate}
\item $\Frob_q = 1 \in \Gal(\mathbb{Q}_r/\mathbb{Q})$.
\item $\Frob_q$ generates the subgroup $\sfrac{c\mathbb{Z}}{r\mathbb{Z}}\subseteq\Gal(\mathbb{Q}_{r^2}/\mathbb{Q}_r)$.
\item for any $1\leq i \leq s$ and for every projection $\Gal(\mathbb{Q}_{r,r}/\mathbb{Q}_r)\to \Gal(\mathbb{Q}_r(\sqrt[r]{p_i})/\mathbb{Q}_r)$ the image of $\Frob_q$ in the latter Galois group is at least of order $\frac{r}{c}$.
\end{enumerate}
Note that it follows from the discussion above that the third condition implies that the orders of $p_i$'s in $\mathbb{F}_q^\times$ are divisible by $\frac{r}{c}$.

First, we show that $H=\Gal(\mathbb{Q}_{r,r}/E)$ contains a subgroup $(\sfrac{c\mathbb{Z}}{r\mathbb{Z}})^s$.  Notice that by definition of $E$, the group $H$ is isomorphic to $\Gal(\mathbb{Q}_{r^2,r}/\mathbb{Q}_{r^2})$. Letting $n=r^2$ in Theorem \ref{PerSTheorem}, we obtain
\begin{equation}
\label{kummerlim}
\frac{r^{2s}}{\deg(\mathbb{Q}_{r^2,r^2}/\mathbb{Q}_{r^2})} = \frac{r^s}{\deg(\mathbb{Q}_{r^2,r}/\mathbb{Q}_{r^2})}\cdot \frac{r^s}{\deg(\mathbb{Q}_{r^2,r^2}/\mathbb{Q}_{r^2,r})} \mid c_S
\end{equation}
Since both factors in (\ref{kummerlim}) are integers due to the fact that the corresponding extensions are Kummer extensions, we see that $\frac{r^s}{\deg(\mathbb{Q}_{r^2,r}/\mathbb{Q}_{r^2})}$ is an integer dividing $c_S$. On the other hand, $H$ is a subgroup of $(\sfrac{\mathbb{Z}}{(r)})^s$, thus any generator of the preimage of any factor $\sfrac{\mathbb{Z}}{(r)}$ is at least of order $r/c_S$, hence $(\sfrac{c_S\mathbb{Z}}{r\mathbb{Z}})^s\subseteq H$. Finally, we set
\begin{equation*}
c = c_S.
\end{equation*}

Note that since $\mathbb{Q}_{r^2,r}$, $\mathbb{Q}_{r,r}$, and $\mathbb{Q}_{r^2}$ are abelian extensions over $\mathbb{Q}_r$, the Galois group of $\mathbb{Q}_{r^2,r}$ over $\mathbb{Q}_r$ is a fiber product of $\Gal(\mathbb{Q}_{r,r}/\mathbb{Q}_r)$ and of $\Gal(\mathbb{Q}_{r^2}/\mathbb{Q}_r)$  over $\Gal(E/\mathbb{Q}_r)$. Next, we consider the diagram
\begin{equation}
\begin{tikzcd}
1 \arrow{r} & H \arrow{r} \arrow[equal]{d}& \Gal(\mathbb{Q}_{r^2,r}/\mathbb{Q}_r) \arrow{r} \arrow{d}& \Gal(\mathbb{Q}_{r^2}/\mathbb{Q}_r)=\sfrac{\mathbb{Z}}{(r)} \arrow{r} \arrow{d}& 1 \\
1 \arrow{r} & H \arrow{r} & \Gal(\mathbb{Q}_{r,r}/\mathbb{Q}_r) \arrow{r} & \Gal(E/\mathbb{Q}_r) \arrow{r} & 1
\end{tikzcd}
\end{equation}

Let $\sigma$ denote a generator of $\sfrac{c \mathbb{Z}}{r\mathbb{Z}} \subseteq \Gal(\mathbb{Q}_{r^2}/E)$, and let $x = (c,\ldots,c)\in(\sfrac{c\mathbb{Z}}{r\mathbb{Z}})^s \subseteq H$, then both $\sigma$ and $x$ project to $1$ in $\Gal(E/\mathbb{Q}_r)$, since $\Gal(E/\mathbb{Q}_r)$ is a quotient of $\sfrac{\mathbb{Z}}{(r)}$ and its order divides $c$. It follows that $(x,\sigma) \in \Gal(\mathbb{Q}_{r^2,r}/\mathbb{Q}_r)$. Applying the Chebotarev density theorem to the conjugacy class of $(x,\sigma)$, we see that there are infinitely many primes $q\in \mathbb{Q}_r$ such that $\Frob_q$ lies in the chosen conjugacy class. Notice that by construction $\Frob_q = 1$ in $\Gal(\mathbb{Q}_r/\mathbb{Q})$.

In order to check the third condition, we need to show that for every $i=1,\ldots,s$, the Frobenius element $\Frob_{q,i}$ in $\Gal(\mathbb{Q}_{r}(\sqrt[r]{p_i})/\mathbb{Q}_r)$ has the correct order. Since $q$ is unramified in $\mathbb{Q}_{r^2,r}$, we see that  $\Frob_{q,i} = \pi_i(\Frob_q)$, where $\pi_i$ is defined as the sequence of natural projections
\begin{equation}
\pi_i: \Gal(\mathbb{Q}_{r^2,r}/\mathbb{Q}_r) \to\Gal(\mathbb{Q}_{r,r}/\mathbb{Q}_r) \to \Gal(\mathbb{Q}_{r}(\sqrt[r]{p_i})/\mathbb{Q}_r).
\end{equation}
By construction, $\pi_i(\Frob_q)$ belongs to the conjugacy class of the element
\begin{equation}
c \in \Gal(\mathbb{Q}_{r}(\sqrt[r]{p_i})/\mathbb{Q}_r) \subseteq \sfrac{\mathbb{Z}}{r \mathbb{Z}},
\end{equation}
therefore, the order of $\Frob_{q,i}$ is divisible by $\frac{r}{c}$, which completes the proof of the theorem.

\subsection{End of the proof of Theorem \ref{FineFiberFunctors}} We apply Theorem \ref{ExThSt} to the conditions obtained in the end of Section \ref{ReductionToEx}. Namely, we let $r$ be divisible by the least common multiple of local degrees $l = \lcm_{v\in S}(\deg(E_{v^\prime}/\mathbb{Q}_v))$, where $v\in S$ and $v^\prime\in S_E$ is any lift of $v$. It follows that there exists infinitely many primes $q\notin S$, such that local degrees of $\mathbb{Q}(\zeta_q)$ are divisible by an integer $r/c$, where $c$ does not depend on $r$. Therefore, we can achieve that $r/c$ is divisible by $l$ by simply enlarging $r$ multiplicatively.

\section{Motives over $\mathbb{F}$ and $\Rep(\Kt_\mathbb{Q})$}
\label{ScholzeTate}
Let $\mathbb{F}$ be the algebraic closure of the finite field $\mathbb{F}_p$.  In this section we show that $\Rep(\Kt_\mathbb{Q})$ has a full subcategory that under the assumption of Tate's conjecture over $\mathbb{F}$ can be identified with $\Mot(\mathbb{F})$ the category of motives over $\mathbb{F}$. The idea of describing motives in terms of gerbes and non-abelian cohomology goes back to Grothendieck and was pushed further in the case of $\Mot(\mathbb{F})$ in the work of Langlands and Rapoport (\cite{LR}), where an explicit description of the class of $\Mot(\mathbb{F})$ appeared. It was however corrected and simplified significantly in the following works of Milne (see \cite{Milne1994}, \cite{MilneGerbes}).

\subsection{Weil numbers and their germs}
\label{WeilNumbersGerms}
Let $q=p^n$. Recall that an algebraic number $\pi \in \mathbb{Q}^{\text{al}}$ is said to be a Weil $q$-number of weight $m$ if
\begin{itemize}
\item for every embedding $\rho: \mathbb{Q}[\pi] \hookrightarrow \mathbb{C}$, $\rho(\pi)\cdot\overline{\rho(\pi)} = q^m$.
\item for some $l$, $q^l \pi$ is an algebraic integer.
\end{itemize}
The set of all Weil $q$-numbers in $\mathbb{Q}^{\text{al}}$ is an abelian group denoted by $W(q)$. If $q^\prime = p^{n^\prime}$ and $n\mid n^\prime$, we have a group homomorphism $W(q) \to W(q^\prime)$: $\pi \mapsto \pi^{n^\prime/n}$. Define the group of germs of Weil numbers
\begin{equation}
W = \varinjlim W(q).
\end{equation}

There is an action of the absolute Galois group of $\mathbb{Q}$ on $W$, and $P$ is defined as a protorus over $\mathbb{Q}$ such that
\begin{equation}
X^{\ast}(P) = W.
\end{equation}
In order to represent $P$ as a projective limit of algebraic tori, we rewrite $W$ as a colimit of finitely generated Galois modules. Namely, for any CM-field $K/\mathbb{Q}$, we define $W^K$ as a subset of $\pi \in W$ having a representative in $K$ and such that for any place $v\mid p$ in $K$:
\begin{equation}
\label{germCondition}
\frac{\ord_v(\pi)}{\ord_{v}(p^n)} \cdot [K_v:\mathbb{Q}_p] \in \mathbb{Z}.
\end{equation}

Define $P^K$ as a group of multiplicative type defined by $\Gal(\mathbb{Q}^{\text{al}}/\mathbb{Q})$-module $W^K$.
Then $W = \varinjlim_{K} W^K$ and $P = \varprojlim_K P^{K}$, where the limit is being taken over the directed set of finite CM-extensions of $\mathbb{Q}$.

Let $K^{+}$ denote the maximal totally real subfield of $K$, and let $V_{p,K}$ and $V_{p,K^{+}}$ denote the set of $p$-adic primes in $K$ and $K^{+}$ respectively. Then there is an exact sequence of Galois modules
\begin{equation}
\label{MilneWeil}
0 \to W^K \xrightarrow{\alpha} \bigoplus_{v \in V_{p,K}} \mathbb{Z}\cdot v \bigoplus \mathbb{Z} \xrightarrow{\beta} \bigoplus_{w\in V_{p,K^{+}}}\mathbb{Z}\cdot w \to 0,
\end{equation}
where $\alpha$ is the map
\begin{equation}
\pi \mapsto (\sum_{v \in V_{p,K}} \frac{\ord_v(\pi)}{\ord_v{p^n}}\cdot [K_v:\mathbb{Q}_p], wt(\pi)),
\end{equation}
and $\beta$ is given by the formula
\begin{equation}
(\sum_{v \in V_{p,K}} n_v v, m) \mapsto \sum_{v \in V_{p,K}} n_v\mid_{V_{p,K^{+}}} - [K_v:\mathbb{Q}_p] m \sum {w\in V_{p,K^{+}}}
\end{equation}

Let $K^\prime$ be a CM-field containing $K$. Then $W^K\subset W^{K^\prime}$, and the transition map can be described naturally using the exact sequence (\ref{MilneWeil}). Namely, the transition map is induced by the map
\begin{equation}
\label{MilneWeilTrMaps}
v\mapsto \sum_{v^\prime\mid v}  [K^{\prime}_{v^\prime}:K_v]\cdot v^\prime,
\end{equation}
when $v\mid p$, and the identity map on $\mathbb{Z}$.

The short exact sequence of Galois modules (\ref{MilneWeil}) induces the short exact sequence of the corresponding tori and provides a very useful tool for analysis of the cohomology of $P^K$. The following results are due to Milne (see Section 3 in \cite{MilneGerbes}).
\begin{propos}
\label{MilneCohom}
 Let $K$ be a CM-field, and let $P^K$ be a torus attached to the Galois module $W^K$, then
\begin{itemize}
\item $\varprojlim^i_K H^1(\mathbb{Q}, P^K) = 0$, for $i=0,1$.
\item $H^2(\mathbb{Q},P) = \varprojlim_K H^2(\mathbb{Q}, P^K)$.
\item $H^2(\mathbb{Q},P^K) \to H^2(\mathbb{A}, P^K)$ is injective.
\item $H^2(\mathbb{Q},P) \to H^2(\mathbb{A},P)$ is injective.
\end{itemize}
\end{propos}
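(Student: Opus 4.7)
\medskip
\noindent\textbf{Proof proposal.} The strategy is to dualize the short exact sequence~(\ref{MilneWeil}) into an exact sequence of tori over $\mathbb{Q}$
\begin{equation*}
0 \to T_2^K \to T_1^K \to P^K \to 0,
\end{equation*}
and then transfer all cohomological questions about $P^K$ to the ``Brauer'' tori $T_1^K$ and $T_2^K$, whose cohomology is controlled by Shapiro. Concretely, the Galois sets $V_{p,K}$ and $V_{p,K^+}$ decompose into orbits each of which is induced from the corresponding decomposition subgroup, so $T_1^K$ (resp.\ $T_2^K$) is a finite product of Weil restrictions $\operatorname{Res}_{K^v/\mathbb{Q}}\Gm$ (resp.\ $\operatorname{Res}_{(K^+)^w/\mathbb{Q}}\Gm$) together with one extra factor $\Gm$ coming from the weight line in $A^K$. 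Theorem~\ref{VarShapiro} combined with Hilbert's Theorem~90 then gives $H^1(\mathbb{Q}, T_i^K) = 0$ for $i=1,2$, while $H^2(\mathbb{Q}, T_i^K)$ is a finite direct sum of Brauer groups of the relevant subfields.

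Next, the long exact sequence in cohomology embeds $H^1(\mathbb{Q}, P^K)$ into $H^2(\mathbb{Q}, T_2^K)$, which is a torsion group whose $n$-torsion is controlled by Brauer groups. The key point is to unwind the transition maps~(\ref{MilneWeilTrMaps}): the inclusion $W^K \hookrightarrow W^{K'}$ is, at each $p$-adic place $v$, multiplication by the local degree $[K'_{v'}:K_v]$. Dually, the transition on $H^2(\mathbb{Q}, T_2^K)$ factors through the same multiplication on each local Brauer component. Since the local degrees $[K'_{v'}:K_v]$ can be made arbitrarily divisible as $K'$ ranges over CM extensions of $K$, these maps eventually kill any fixed torsion class, so the pro-system $\{H^1(\mathbb{Q}, P^K)\}$ is pro-zero. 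Hence $\varprojlim_K H^1(\mathbb{Q}, P^K) = 0$ and $\varprojlim^1_K H^1(\mathbb{Q}, P^K) = 0$, giving the first statement. Plugging this into the Grothendieck spectral sequence~(\ref{GrSpSequence}) from Section~\ref{CohomologicalGeneralities} then yields the second statement
\begin{equation*}
H^2(\mathbb{Q}, P) = \varprojlim_K H^2(\mathbb{Q}, P^K).
\end{equation*}

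For the third statement, I would compare the long exact sequence of $0\to T_2^K\to T_1^K\to P^K\to 0$ with its adelic analogue term-by-term (cf.\ Proposition~\ref{AdelicCoh}), obtaining a commutative ladder whose relevant rows end in $H^2(\mathbb{Q}, T_i^K)\to H^2(\mathbb{A}, T_i^K)$. These maps are direct sums of restrictions $\operatorname{Br}(L)\to \bigoplus_w \operatorname{Br}(L_w)$ for various finite extensions $L/\mathbb{Q}$, which are injective by Albert--Brauer--Hasse--Noether. A short diagram chase, using the vanishing $H^1(\mathbb{A}, T_i^K)=0$ (again by Shapiro and local Hilbert~90), then gives the injectivity of $H^2(\mathbb{Q}, P^K) \to H^2(\mathbb{A}, P^K)$.

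Finally, the fourth statement follows by passing to the inverse limit in the third: the same vanishing argument applied to the adelic system shows $H^2(\mathbb{A}, P) = \varprojlim_K H^2(\mathbb{A}, P^K)$ (here Proposition~\ref{AdelicCoh} reduces the derived-limit computation to local statements where the analogous $R^1\varprojlim H^1$ vanishes for the same transition-map reasoning as before), and injectivity at each finite level combined with the exactness of $\varprojlim$ on injections yields $H^2(\mathbb{Q}, P) \hookrightarrow H^2(\mathbb{A}, P)$. The main technical obstacle I anticipate is the careful bookkeeping in the pro-zero argument for $\{H^1(\mathbb{Q}, P^K)\}$: the transition maps mix the weight-line contribution (which is multiplication by $1$) with the local-degree factors, and one needs to check that the part coming from $\mathbb{Z}$ in $A^K$ does not obstruct the eventual vanishing---a point that can be handled by first modding out the weight factor and treating the place-indexed part and the weight part of the sequence~(\ref{MilneWeil}) separately.
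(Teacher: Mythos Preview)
The paper does not supply its own proof of this proposition; it simply records the statement and attributes it to Milne \cite[Section~3]{MilneGerbes}. Your strategy---dualize the sequence~(\ref{MilneWeil}) to an exact sequence $0\to T_2^K\to T_1^K\to P^K\to 0$ of quasi-trivial tori surjecting onto $P^K$, use Shapiro plus Hilbert~90 to reduce the cohomology of the $T_i^K$ to Brauer groups of number fields, analyse the transition maps~(\ref{MilneWeilTrMaps}) to get the pro-zero statement, and then feed this into the spectral sequence~(\ref{GrSpSequence})---is precisely the route Milne follows, so in outline your proposal is correct and matches the cited source.

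One step is genuinely more delicate than your write-up suggests. For the third bullet, calling it ``a short diagram chase'' hides the real content. Injectivity of $H^2(\mathbb{Q},T_i^K)\hookrightarrow H^2(\mathbb{A},T_i^K)$ and the vanishing $H^3(\mathbb{Q},T_2^K)=0$ let you lift a class $x\in H^2(\mathbb{Q},P^K)$ with $\loc(x)=0$ to some $\tilde{x}\in H^2(\mathbb{Q},T_1^K)$ and then write $\loc(\tilde{x})$ as the image of a class $y\in H^2(\mathbb{A},T_2^K)$. But to conclude $x=0$ you need $y$ to lift to $H^2(\mathbb{Q},T_2^K)$, and ABHN only says the global classes are the \emph{kernel} of the sum-of-invariants map to $\mathbb{Q}/\mathbb{Z}$; it does not hand you a global lift of an arbitrary adelic $y$. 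Closing this requires the \emph{full} ABHN exact sequences for both $T_1^K$ and $T_2^K$ simultaneously and a check that the map $T_2^K\to T_1^K$ induces an injection on the resulting $\mathbb{Q}/\mathbb{Z}$-cokernels---this is exactly where the explicit shape of $\beta$ in~(\ref{MilneWeil}) (in particular the interaction between the place-indexed part and the weight line you flag at the end) is actually used, not merely bookkeeping. Once that injection is verified, the chase goes through; without it the argument as you have written it is incomplete. A minor side remark: when $K/\mathbb{Q}$ is not Galois the description of $T_i^K$ as products of $\Res_{K^{\dot v}/\mathbb{Q}}\Gm$ needs to be phrased via the Galois closure, though the conclusion (quasi-triviality, hence $H^1=0$ and $H^2$ a sum of Brauer groups) is unaffected.
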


\subsection{$W^K$ and the characters of the Kottwitz protorus}
One can also realize $W^K$ as a submodule of the character module of the Kottwitz pro-torus $\mathbb{D}_{K/\mathbb{Q}}$.
\begin{propos}
Let $K$ be a finite CM-extension of $\mathbb{Q}$. Then there is an injective homomorphism of $\Gal(K/\mathbb{Q})$ modules
\begin{equation}
\omega_K: W^K\hookrightarrow X^{\ast}(\mathbb{D}_{K/F}).
\end{equation}
\end{propos}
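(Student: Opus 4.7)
The natural construction is to record the $p$-adic valuation data of a Weil-number representative, then correct at the archimedean places so that the result has total degree zero. Concretely, for a germ represented by some $\pi\in W(q)\cap K^\times$ with $q=p^n$, I propose to set
\begin{equation*}
\omega_K(\pi) \;:=\; \sum_{v\in V_{p,K}} \frac{\ord_v(\pi)}{\ord_v(p^n)}\,[K_v:\mathbb{Q}_p]\cdot v \;-\; wt(\pi)\sum_{v\in V_{\infty,K}} v.
\end{equation*}
The coefficients at the $p$-adic places are integers precisely by the defining condition \eqref{germCondition} of $W^K$, and the archimedean coefficients are integers because weights are integers. Recall that $X^\ast(\mathbb{D}_{K/\mathbb{Q}})=\mathbb{Z}[V_K]_0$, so the target is correct provided this expression has degree zero.

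First I would verify that $\omega_K(\pi)\in\mathbb{Z}[V_K]_0$. Using $\pi\bar\pi = q^{wt(\pi)}$ at each archimedean embedding together with the product formula for $\pi\in K^\times$, one computes
\begin{equation*}
\sum_{v\in V_{p,K}}\frac{\ord_v(\pi)}{\ord_v(p^n)}\,[K_v:\mathbb{Q}_p] \;=\; \frac{[K:\mathbb{Q}]}{2}\,wt(\pi),
\end{equation*}
which exactly cancels the archimedean contribution, since the CM field $K$ has $[K:\mathbb{Q}]/2$ complex places. Next I would check independence of the chosen representative of the germ: replacing $\pi$ by $\pi^{n'/n}\in W(q')$ multiplies both $\ord_v(\pi)$ and $\ord_v(p^n)$ by $n'/n$ and preserves $wt(\pi)$, so every coefficient in $\omega_K(\pi)$ is unchanged. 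Thus $\omega_K$ descends to a well-defined map on $W^K$, and by construction it is additive.

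Galois equivariance follows from the standard identities $\ord_{\sigma v}(\sigma\pi)=\ord_v(\pi)$, the invariance $[K_{\sigma v}:\mathbb{Q}_p]=[K_v:\mathbb{Q}_p]$, and the fact that $\Gal(K/\mathbb{Q})$ fixes $wt(\pi)$ and leaves $\sum_{v\in V_{\infty,K}} v$ invariant as $K$ is totally complex. For injectivity, suppose $\omega_K(\pi)=0$. Then every $p$-adic valuation of $\pi$ vanishes, so $\pi$ is a global unit (it was already an $S$-unit outside $p$ by the Weil condition that some $q^l\pi$ is an algebraic integer); moreover $wt(\pi)=0$ forces $|\rho(\pi)|=1$ at every archimedean embedding. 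Kronecker's theorem then shows $\pi$ is a root of unity, and any root of unity becomes trivial after passage to a sufficiently divisible $q'$ in the colimit $W=\varinjlim W(q)$. Hence $\pi=0$ in $W^K$.

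The main technical point is the product-formula calculation that makes the $p$-adic and archimedean contributions cancel in the degree-zero condition; the integrality at $p$-adic places is built into the definition of $W^K$, well-definedness on germs is a direct computation, and injectivity is essentially Kronecker's theorem. Once the formula is in hand the rest is formal.
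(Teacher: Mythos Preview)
Your proposal is correct and follows essentially the same approach as the paper: you define $\omega_K$ by the identical formula, verify the degree-zero condition via the product formula, and your injectivity argument via Kronecker's theorem is precisely the explicit version of what the paper abbreviates as ``since $W^K$ is torsion-free.'' You supply more detail than the paper on well-definedness under change of representative and on Galois equivariance, but the underlying argument is the same.
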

\begin{proof}
Let $\pi\in W^K$ be a representative of a Weil $p^n$-Number of weight $m$. We may apply the product formula to $\pi$, namely
\begin{equation}
\prod_{v\in V_K} \norm{\pi}_v = 1.
\end{equation}
Moreover, for any non-archimedean $v\notin V_{p,K}$, we have $\norm{\pi}_v = 1$. Notice that for any $v \mid p$, we have $\norm{\pi}_v = p^{-f_v\ord_v(\pi)}$, where $f_v$ is the degree of the extension of the residue field of $K_v$ over $\mathbb{Q}_p$. The latter can be expressed as $[K_v:\mathbb{Q}_p]/e_v$, where $e_v$ is the ramification index of $v$ in $K$ that can be computed as $\ord_v(p)$. Since $K$ is a CM-field, every archimedean place $\infty^{\prime}\mid \infty$ is in fact complex, and by definition of Weil $p^n$-number of weight $m$, we have $\norm{\pi}_{\infty^\prime} = (p^n)^m$. Combining these observations, we deduce
\begin{equation}
\label{ProdFormulaWeil}
\sum_{v\in V_{p,K}} \frac{\ord_v{\pi}}{\ord_v(p)}\cdot [K_v:\mathbb{Q}_p] -\sum_{\infty^\prime\in V_{\infty,K}} nm = 0.
\end{equation}
Notice that by definition of $W^K$ and from the fact that $\pi \in W^K$, the equality (\ref{ProdFormulaWeil}) is integrally divisible by $n$ (see (\ref{germCondition})). Define
\begin{equation}
i_K: W^K \to \mathbb{Z}[V_K],\quad \pi\mapsto
\sum_{v\in V_{p,K}} \frac{\ord_v{\pi}}{\ord_v(p^n)}\cdot [K_v:\mathbb{Q}_p]\cdot v -\sum_{\infty^\prime\in V_{\infty,K}} m\cdot\infty^\prime = 0.
\end{equation}

Since $W^K$ is torsion-free, this map is injective. Moreover, it follows from (\ref{ProdFormulaWeil}) that $i_K$ induces an injective map $W^K \to X^{\ast}(\mathbb{D}_{K/\mathbb{Q}})$.
\end{proof}

In order to construct the morphism $\varinjlim_{K} W^K \to X^\ast(\mathbb{D}_\mathbb{Q})$, we need to check that the morphisms $\omega_K$ are compatible with the transition maps $W^K \to W^{K^\prime}$ and $X^{\ast}(\mathbb{D}_{K/\mathbb{Q}})\to X^{\ast}(\mathbb{D}_{K^\prime/\mathbb{Q}})$, where $K^\prime/\mathbb{Q}$ is a CM-extension containing $K$. Recall, that he transition maps for the character modules of Kottwitz protori are induced by the maps (\ref{TransMaps}), that is by the same formulae as (\ref{MilneWeilTrMaps}). We denote by $\omega$ the injective homomorphism of $\Gal(\mathbb{Q}^{\text{al}}/\mathbb{Q})$-modules $W \to X^\ast(\mathbb{D}_{\mathbb{Q}})$ induced by $\omega_K$ for varying CM-fields $K$.

\subsection{Image of the Kottwitz class under $\omega$} Let $\omega: W \hookrightarrow X^{\ast}(\mathbb{D}_{\mathbb{Q}})$ be the homomorphism defined above. By abuse of notation, we also denote by $\omega$ the epimorphism of protori $\mathbb{D}_{\mathbb{Q}}\to P$. Because $H^2(\mathbb{Q}, \mathbb{D}_{\mathbb{Q}}) = \varprojlim H^2(\mathbb{Q}, \mathbb{D}_{K,S})$ and $H^2(\mathbb{Q}, P) = \varprojlim H^2(\mathbb{Q}, P^K)$, there is a well-defined morphism
\begin{equation}
\omega^\ast: H^2(\mathbb{Q}, \mathbb{D}_{\mathbb{Q}}) \to H^2(\mathbb{Q}, P).
\end{equation}

We are interested in computing the class $\omega^\ast(\alpha_{\mathbb{Q}})$, where $\alpha_{\mathbb{Q}}$ denotes the class of the Kottwitz gerbe in $H^2(\mathbb{Q},\mathbb{D}_\mathbb{Q})$. By Proposition \ref{MilneCohom}, $H^2(\mathbb{Q}, P)\hookrightarrow H^2(\mathbb{A},\ P)$, therefore it suffices to describe the image of the given class locally.

\begin{propos}
\label{KtWeil}
The image of $\omega^\ast(\alpha_\mathbb{Q})$ in $H^2(\mathbb{A}, P)$ is given by the class $(0_{\mathbb{A}^{\{p,\infty\}}}, \alpha_{p}, \alpha_{\mathbb{R}})$. Where $\alpha_{p}$ and $\alpha_{\mathbb{R}}$ are the canonical local classes.
\end{propos}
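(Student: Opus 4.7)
The plan is to exploit the injectivity $H^2(\mathbb{Q},P)\hookrightarrow H^2(\mathbb{A},P)$ established in Proposition \ref{MilneCohom} together with the explicit description of local components of the Kottwitz class provided in Sections \ref{LocalComponents} and \ref{LocalMapsDesc}. Since both $H^2(\mathbb{Q},\mathbb{D}_\mathbb{Q})$ and $H^2(\mathbb{Q},P)$ are computed as projective limits over finite-level character modules, and $\omega$ is induced by the compatible system $\omega_K:W^K\hookrightarrow X^\ast(\mathbb{D}_{K/\mathbb{Q}})$, the first step will be to reduce to the verification, at each finite CM-level $K$ and at each place $u$ of $\mathbb{Q}$, of an equality between two homomorphisms $(W^K)^{\Gamma_u}\to\Br(\mathbb{Q}_u)$, by invoking the local variant of Proposition \ref{TannakaClassificationLocal}. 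Compatibility of the transition maps needs to be recorded: on the $\mathbb{D}$-side they are given by formula (\ref{TransMaps}), and on the $P$-side by (\ref{MilneWeilTrMaps}), which are the same formula $v\mapsto \sum_{v'\mid v}[K'_{v'}:K_v]v'$, so $\omega$ intertwines them on the nose.

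Next, fixing a CM-field $K$, a sufficiently large set $S$ satisfying the Tate--Kaletha--Kottwitz conditions, and a place $u$ of $\mathbb{Q}$ with a chosen lift $w\in V_K$, the local component of $\alpha_{K,S}$ at $u$ is determined by formula (\ref{LocalKottwitzClass}): the homomorphism $X^\ast(\mathbb{D}_{K,S})\to\Br(\mathbb{Q}_u)$ sends $a=\sum n_v v$ to $n_w/[K_w:\mathbb{Q}_u]\pmod{\mathbb{Z}}$. Composing with $\omega_K$ and evaluating on a representative $\pi\in W^K$ of a Weil $p^n$-germ of weight $m$, the image class is the function of $\pi$ read off from
\begin{equation*}
\omega_K(\pi)=\sum_{v\in V_{p,K}}\tfrac{\ord_v(\pi)}{\ord_v(p^n)}\,[K_v:\mathbb{Q}_p]\cdot v\ -\ \sum_{\infty'\in V_{\infty,K}} m\cdot \infty'.
\end{equation*}
The crucial observation is that the support of $\omega_K(\pi)$ is contained in $V_{p,K}\cup V_{\infty,K}$.

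The proof then splits into three cases. If $u$ is a finite place different from $p$, then $n_w=0$ for every $w\mid u$, hence the local homomorphism vanishes; this accounts for the trivial component $0_{\mathbb{A}^{\{p,\infty\}}}$. If $u=p$, then choosing any $w\mid p$ yields the homomorphism $\pi\mapsto \ord_w(\pi)/\ord_w(p^n)\pmod{\mathbb{Z}}$; by Theorem \ref{TheoremNonArch} and the Dieudonn\'e--Manin classification, this is exactly the slope homomorphism defining the canonical class $\alpha_p$ attached to the category of $p$-isocrystals. If $u=\infty$, then $[K_{\infty'}:\mathbb{R}]=2$ and the coefficient at $\infty'$ is $-m$, so the local homomorphism is $\pi\mapsto -m/2\pmod{\mathbb{Z}}$, which coincides with the real Kottwitz class $\alpha_\mathbb{R}$ via the identification $H^2(\Gamma_{\mathbb{C}/\mathbb{R}},\mathbb{C}^\times)=\mathbb{R}^\times/\mathbb{R}_{>0}\simeq\tfrac{1}{2}\mathbb{Z}/\mathbb{Z}$ derived in Theorem \ref{KtRealRep}.

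The main technical obstacle I expect is to justify rigorously the passage from finite levels to the limit: although $H^2(\mathbb{Q},P)=\varprojlim H^2(\mathbb{Q},P^K)$ by Proposition \ref{MilneCohom} and $H^2(\mathbb{Q},\mathbb{D}_\mathbb{Q})=\varprojlim H^2(\mathbb{Q},\mathbb{D}_{K,S})$, one must check that the adelic classes assembled level by level glue to the prescribed class $(0,\alpha_p,\alpha_\mathbb{R})$ in $H^2(\mathbb{A},P)$ rather than producing a parasitic contribution from $\varprojlim^1$ of $H^1$-terms. This will be handled by the vanishing statements in Proposition \ref{MilneCohom} and by the identification, at each place $u\in\{p,\infty\}$, of the transition system of local Brauer classes with the standard canonical classes of local class field theory, for which the uniformity of the formulas (\ref{LocalKottwitzClass}) along the tower of CM-fields is precisely what is needed.
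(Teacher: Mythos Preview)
Your proposal is correct and follows essentially the same route as the paper: reduce to local components via the injectivity in Proposition~\ref{MilneCohom}, identify $H^2(\mathbb{Q}_l,P)$ with $\Hom(W^{\Gamma_l},\Br(\mathbb{Q}_l))$ via local duality, and read off the three cases from the support of $\omega_K(\pi)$ using formula~(\ref{LocalKottwitzClass}). The only minor difference is that at $u=p$ the paper argues directly that the colimit over $K$ of the cokernels of $(W^K)^{\Gamma_v}\to\mathbb{Z}$ vanishes, so that $\Hom(\mathbb{Q},\mathbb{Q}/\mathbb{Z})\to\Hom(W^{\Gamma_p},\mathbb{Q}/\mathbb{Z})$ is injective and carries the projection $\mathbb{Q}\to\mathbb{Q}/\mathbb{Z}$ to the desired class, whereas you appeal to Theorem~\ref{TheoremNonArch} and the slope homomorphism; both arrive at the same identification.
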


\begin{proof}
Let $\Gamma_l$ denote the absolute Galois group of $\mathbb{Q}_l$. Recall that since $\lim^1H^1(\mathbb{Q}_l, P) = 0$, we may apply the arguments of Section \ref{ReductionStep}, to describe local cohomology groups of the Weil protorus, namely
\begin{equation}
H^2(\mathbb{Q}_l, P)\simeq \Hom_{\mathbb{Z}}(W^{\Gamma_l},\mathbb{Q}/\mathbb{Z}),
\end{equation}
where $l$ any non-archimedean prime.

Recall also that the map $\omega: W \hookrightarrow X^\ast(\mathbb{D}_{\mathbb{Q}})$ induces the map of $\Gamma_l$-modules
\begin{equation}
\label{WeilClassLocal}
\loc_l: W \xrightarrow{\omega} X^\ast(\mathbb{D}_{\mathbb{Q}}) \hookrightarrow X^\ast(\mathbb{T}_{\mathbb{Q}}) \xrightarrow{\mu_l} \mathbb{Q},
\end{equation}
where $\mu_l$ is defined in Section \ref{LocalMapsDesc}. Namely, $\mu_l$ sends a sum of valuations with rational coefficients to the coefficient corresponding to the fixed $l$-adic prime of $\mathbb{Q}^{\text{al}}$.

Therefore, for any $l\neq p$, we have that the image of $W$ in $\mathbb{Q}$ of the composition $\loc_l$ in (\ref{WeilClassLocal}) is $0$. Hence, the corresponding class in $H^2(\mathbb{Q}_l, P)$ is trivial.

If $l=p$, we see that the composition $W^{\Gamma_p}\to W \xrightarrow{\loc_p} \mathbb{Q}\xrightarrow{[K_v:\mathbb{Q}_p]^{-1}}\mathbb{Q}$ is surjective. It follows from the fact that for any CM-field $K$ and $\pi\in W^K$ we have $\loc_v(\pi) = \frac{\ord_v(\pi)}{\ord_v(p^n)}\cdot [K_v:\mathbb{Q}_p] \in \mathbb{Z}$, where $v$ is the $p$-adic place of $K$ induced by the fixed $p$-adic place of $\mathbb{Q}^{\text{al}}$. If $\pi$ is moreover $\Gamma_v$-invariant, where $\Gamma_v = \Gal(K_v/\mathbb{Q}_p)$ is the decomposition group of $v$, then we deduce from the product formula and equality (\ref{ProdFormulaWeil}) that $\ord_v(\pi)/\ord_v(p^n)$ takes value $\frac{m}{2}$, where $m$ is the weight of $\pi$.  We obtain that the colimit of the cokernels of the maps
\begin{equation}
\label{WeilComponents}
(W^{K})^{\Gamma_v} \xrightarrow{\loc_v} \mathbb{Z} \xrightarrow{[K_v:\mathbb{Q}_p]^{-1}} \frac{1}{2\cdot[K_v:\mathbb{Q}_p]}\mathbb{Z},\quad \pi \mapsto \frac{\ord_v(\pi)}{\ord_v(p^n)}
\end{equation}
vanishes, since their orders are bounded by $[K_v:\mathbb{Q}_p]$, and the transition maps for $K\subset K^\prime$, where $K^\prime$ is a finite CM-field over $\mathbb{Q}$, are given by multiplication by the local degrees $[K^{\prime}_{v^\prime}:K_v]$, hence kill the cokernels eventually.

Consequently, the map $\Hom(\mathbb{Q},\mathbb{Q}/\mathbb{Z}) \to \Hom(W^{\Gamma_p},\mathbb{Q}/\mathbb{Z})$ is injective and maps the class of the natural projection $\mathbb{Q} \to \mathbb{Q}/\mathbb{Z}$, that is the class $\alpha_p$, to the image of $\omega^{\ast}(\alpha_{\mathbb{Q}})$ in $H^2(\mathbb{Q}_p, P)$.

To complete the proof, we notice that when $l = \infty$ is the arhimedean place, then $\loc_{\infty}(\pi) = -m$. Hence the image of $\omega^{\ast}(\alpha_{\mathbb{Q}})$ in $H^2(\mathbb{R},P)$ coincides with the image of the projection $\mathbb{Z}\to\mathbb{Z}/2\mathbb{Z}$ under the map $\Hom(\mathbb{Z},\Br(\mathbb{R})) \to \Hom(W^{\Gamma_{\infty}},\Br(\mathbb{R}))$, where $\Gamma_{\infty} \simeq \Gal(\mathbb{C}/\mathbb{R})$.
\end{proof}

\subsection{Conjectural relation to $\Mot(\mathbb{F})$} Let $\zeta(V,s)$ denote the zeta function of a variety $V$ over $\mathbb{F}_q$. Let $A^r(V)$ denote the quotient of the space of algebraic cycles of codimension $r$ on $V$ by the subspace of cycles numerically equivalent to $0$. In this section, we assume the full Tate conjecture.
\begin{conj}[Tate conjecture]
\label{TateConjecture}
For all smooth projective varieties over $\mathbb{F}_q$ and $r\geq0$, the dimension of $A^r(V)$ is equal to the order of the pole of $\zeta(V,s)$ at $s = r$.
\end{conj}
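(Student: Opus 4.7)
The Tate conjecture stated here is one of the outstanding open problems in arithmetic geometry, and no complete proof is known in general. Any plan I could offer is therefore necessarily aspirational; I sketch the general outline one would have to follow and indicate where the obstructions lie.

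First I would reduce the statement to proving surjectivity of the $\ell$-adic cycle class map
\begin{equation*}
c_\ell^r: A^r(V) \otimes \mathbb{Q}_\ell \to H^{2r}(V_{\bar{\mathbb{F}}_q}, \mathbb{Q}_\ell(r))^{\Gal(\bar{\mathbb{F}}_q/\mathbb{F}_q)}
\end{equation*}
for some (equivalently, every) $\ell \neq p$, together with the semisimplicity of the geometric Frobenius action on $H^{2r}$. Given these, the pole order of $\zeta(V,s)$ at $s=r$ coincides with the dimension of the Galois-invariant subspace on the right, and the inequality $\dim A^r(V) \leq \ord_{s=r}\zeta(V,s)$ is a formal consequence of Deligne's proof of the Weil conjectures combined with the Galois-invariance of cycle classes. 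The genuine difficulty lies in the opposite inequality: producing enough algebraic cycles to exhaust the space of Tate classes.

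The main obstacle is the absence of a general construction principle for algebraic cycles. In every case where the Tate conjecture is known---abelian varieties (Tate, Zarhin), products of curves, ordinary K3 surfaces and most K3's in positive characteristic (Nygaard--Ogus, Charles, Madapusi Pera, Maulik)---the construction exploits either endomorphism algebras of abelian schemes, Shimura-variety uniformizations, or moduli-theoretic input specific to the geometry at hand. A uniform approach would presumably proceed by d\'{e}vissage: use de Jong's alterations to replace $V$ by a smooth projective variety dominated by a product of curves or an abelian variety, and then attempt to descend algebraic cycles along these morphisms. Such a strategy, however, presupposes the full package of the Standard Conjectures (semisimplicity of Frobenius, K\"{u}nneth decomposition, hard Lefschetz over finite fields), which is itself open and on the same order of difficulty as the statement under consideration.

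In the context of the present paper, this circularity is immaterial: the Tate conjecture is invoked only as a working hypothesis. Its role, via the Langlands--Rapoport description \cite{LR} as refined by Milne in \cite{Milne1994} and \cite{MilneGerbes}, is to identify $\Mot(\mathbb{F})_{\num}$ with the category of representations of a gerbe banded by the Weil protorus $P$, whose class in $H^2(\mathbb{Q},P)$ is pinned down by its adelic localizations. Proposition \ref{KtWeil} shows that the pushforward $\omega^\ast(\alpha_\mathbb{Q})$ has precisely the adelic components prescribed for this class; combined with the fiber functors constructed in Theorems \ref{FibreFunctors} and \ref{FineFiberFunctors}, this is what will yield the implication Tate $\Rightarrow$ Scholze stated as Corollary \ref{TateScholze}. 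The hard part, accordingly, is not the Tannakian bookkeeping but Tate's conjecture itself, which lies well beyond current techniques.
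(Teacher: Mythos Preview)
Your assessment is correct: the statement is labeled as a conjecture in the paper and is not proved there; it is assumed as a hypothesis in order to invoke the Langlands--Rapoport--Milne description of $\Mot(\mathbb{F})$ and thereby deduce Corollary~\ref{TateScholze}. Your summary of its role in the paper and of the known partial results is accurate, and there is nothing further to compare since the paper offers no proof of this statement.
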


Under this assumption the category of motives over $\mathbb{F}$ can be described fairly explicitly. Namely, it follows from theorem of Jannsen \cite{Jann}, that $\Mot(\mathbb{F})$ is a semi-simple Tannakian category over $\mathbb{Q}$. Moreover, the classes of simple objects correspond to the orbits of the action $\Gal(\mathbb{Q}^{\text{al}}/\mathbb{Q})$ on  $W$, the abelian group of germs of Weil numbers defined in Section \ref{WeilNumbersGerms}. Applying Tannakian duality, we see that $\Mot(\mathbb{F})$ can be recovered up to equivalence from the class it defines in $H^2(\mathbb{Q},P)$ ($\fpqc$-cohomology). Using the fact that $H^2(\mathbb{Q},P)\hookrightarrow H^2(\mathbb{A}, P)$, we deduce that it suffices to describe the images of the class of $\Mot(\mathbb{F})$ in cohomology groups of completions of $\mathbb{Q}$ at all places. One can apply Proposition \ref{TannakaClassificationLocal} to compute the local classes via isocrystals. Namely, it turns out that the localization functor (see Theorem 1.18 in \cite{Milne1994} for the definition)
\begin{equation*}
\Mot(\mathbb{F})\otimes \mathbb{Q}_p \to \Isoc(\mathbb{F})
\end{equation*}
corresponds to the morphism of gerbes induced by the map of characters $W \to \mathbb{Q}$ given by the formula
\begin{equation}
\label{MotivesIsocBand}
\pi \mapsto \frac{\ord_p(\pi)}{n}
\end{equation}
where $\ord_p$ is the extension of the $p$-adic valuation on $\mathbb{Q}$ corresponding to a chosen embedding $\mathbb{Q}^{\text{al}}\to \mathbb{Q}_p^{\text{al}}$. The latter can be deduced from the section ``The isocrystal of a motive'' in \cite{Milne1994}. Finally, one obtains the following description of $\Mot(\mathbb{F})$.

\begin{theorem}[Langlands-Rapoport, Milne (Th.3.29 in \cite{Milne1994})]
Assume the Tate conjecture. Then the category of motives over $\mathbb{F}$ is a semi-simple Tannakian category over $\mathbb{Q}$. Its band is given by the protorus $P$ corresponding to the $\Gal(\mathbb{Q}^{\text{al}}/\mathbb{Q})$-module $W$. The equivalence class of $\Mot(\mathbb{F})$ in $H^2(\mathbb{Q}, P)$ is identified with the class $(0_{\mathbb{A}^{\{p,\infty\}}}, \alpha_p, \alpha_{\mathbb{R}})\in H^2(\mathbb{A}, P)$ via the inclusion $H^2(\mathbb{Q}, P)\hookrightarrow H^2(\mathbb{A}, P)$.
\end{theorem}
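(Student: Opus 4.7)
My strategy is to recover the Tannakian structure of $\Mot(\mathbb{F})$ abstractly, identify its band, and then pin down its class in $H^2(\mathbb{Q},P)$ by the local-global injection of Proposition \ref{MilneCohom}, computing each local component via a standard realization functor. First, I would invoke Jannsen's theorem to get that numerical motives over $\mathbb{F}$ form a semi-simple abelian category; combined with the Tate conjecture, numerical and $\ell$-adic homological equivalence coincide, so one obtains a well-defined rank and a rigid tensor structure, and after the standard sign modification of the commutativity constraint (Deligne) this becomes a Tannakian category over $\mathbb{Q}$.

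Next I would identify the band. By Honda-Tate theory, simple motives over $\mathbb{F}$ are classified up to isogeny by Weil numbers via their Frobenius eigenvalues, and the $\Gal(\mathbb{Q}^{\text{al}}/\mathbb{Q})$-action on isomorphism classes is the natural action on $W$. Passing to $\mathbb{Q}^{\text{al}}$ as a fiber functor base and using that every simple object becomes a direct sum of rank-one summands (the endomorphism division algebras are central simple and split over $\mathbb{Q}^{\text{al}}$), I would apply the principle of Milne's Proposition 2.22 cited in the excerpt to conclude that the band of $\Mot(\mathbb{F})$ is the protorus $P$ with character module $W$.

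To pin down the class, by Proposition \ref{MilneCohom} it suffices to compute its image in $H^2(\mathbb{A},P)$. For each finite place $\ell\neq p$, $\ell$-adic \'etale cohomology provides a fiber functor $\Mot(\mathbb{F})\otimes\mathbb{Q}_\ell\to\Vect_{\mathbb{Q}_\ell}$, so the local class is trivial. At $p$, the crystalline realization gives a $\otimes$-functor $\Mot(\mathbb{F})\otimes\mathbb{Q}_p\to\Isoc(\mathbb{F})$; its effect on bands is the character $\pi\mapsto \ord_p(\pi)/n$ from \eqref{MotivesIsocBand}, and Proposition \ref{TannakaClassificationLocal} together with the very same computation used in the proof of Proposition \ref{KtWeil} identifies the local class with the canonical class $\alpha_p$. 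At $\infty$, the Betti/Hodge realization produces a $\otimes$-functor to $\Rep(\Kt_{\mathbb{R}})$ in the sense of Theorem \ref{KtRealRep}, compatible with the weight grading; the induced map $W\to\mathbb{Z}$ sends a Weil germ to (minus) its weight $m$, and by the same Hom-duality description of $H^2(\mathbb{R},P)$ this identifies the local class with $\alpha_{\mathbb{R}}$.

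\textbf{Main obstacle.} The delicate point is making the realization functors genuinely compatible with the gerbe-theoretic bands, rather than just with the underlying tori. At $p$ this requires that crystalline cohomology of a motive whose Frobenius eigenvalue is $\pi$ produces an isocrystal of slope $\ord_p(\pi)/n$, matching \eqref{MotivesIsocBand} on characters; this is essentially classical for abelian varieties but must be extended to all motives under the Tate conjecture. At $\infty$ the harder issue is verifying that the Hodge-theoretic data carries exactly the structure of a representation of $\Kt_{\mathbb{R}}$ as described in Theorem \ref{KtRealRep}, in particular that complex conjugation squares to the weight-parity sign $(-1)^m$ on the weight-$m$ piece; this is where the non-trivial local class at infinity is produced and where the matching with $\alpha_{\mathbb{R}}$ has to be checked carefully.
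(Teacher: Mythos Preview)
Your proposal follows essentially the same route as the paper's own discussion preceding the theorem statement (which in turn is a summary of Milne's argument): Jannsen for semi-simplicity, Tate plus Honda--Tate to identify the band as $P$, then the local-global injection of Proposition~\ref{MilneCohom} together with Proposition~\ref{TannakaClassificationLocal} and the realization functors to pin down the adelic class. The $\ell\neq p$ and $p$-adic components are handled exactly as the paper indicates.

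One point to flag: at the archimedean place you speak of a ``Betti/Hodge realization'' producing a functor to $\Rep(\Kt_{\mathbb{R}})$. For motives over $\mathbb{F}$ there is no Betti cohomology in the classical sense, since there is no embedding of the base into $\mathbb{C}$. The archimedean local class arises instead from the weight grading together with the polarization (or, in Milne's treatment, from the structure of the category after tensoring with $\mathbb{R}$ and the behaviour of Weil numbers under complex conjugation). Your identification of the induced character map $W\to\mathbb{Z}$, $\pi\mapsto -\operatorname{wt}(\pi)$, is correct, and the resulting class is indeed $\alpha_{\mathbb{R}}$; but the justification should be phrased in terms of the weight functor and polarization data rather than an appeal to Hodge theory. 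This is precisely the ``main obstacle'' you yourself single out, and it is the one place where your sketch would need to be rewritten to be literally correct.
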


The naturality of such a description is treated in Theorem 6.3 in \cite{MilneGerbes}, which combined with Proposition \ref{KtWeil} leads us to the following statement.
\begin{cor}
\label{TateScholze}
Let $\omega^\ast(\Rep(\Kt_\mathbb{Q}))$ be the subcategory of $\Rep(\Kt_\mathbb{Q})$, defined by the image of the global Kottwitz class $\alpha_{\mathbb{Q}}$ along the map $\omega: W \hookrightarrow X^\ast(\mathbb{D}_{\mathbb{Q}})$. There exists an equivalence $\omega^\ast(\Rep(\Kt_\mathbb{Q}))\to\Mot(\mathbb{F})$ compatible with the natural realization functors.
\end{cor}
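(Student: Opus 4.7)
The plan is to apply Tannakian duality in order to reduce the desired equivalence to an equality of cohomology classes in $H^2(\mathbb{Q}, P)$, combining the computation of Proposition \ref{KtWeil} with the Langlands--Rapoport--Milne description of $\Mot(\mathbb{F})$ recalled just above. First I would unwind the definition of the subcategory: the injection $\omega: W \hookrightarrow X^\ast(\mathbb{D}_\mathbb{Q})$ is dual to an epimorphism of pro-tori $\mathbb{D}_\mathbb{Q} \twoheadrightarrow P$, and the ``surjective functoriality'' for morphisms of bands (cf.\ \cite[IV.2.3.18]{Giraud}, already exploited in Section \ref{GStructure}) produces a $\mathbb{Q}$-gerbe banded by $P$ whose class in $H^2(\mathbb{Q}, P)$ is $\omega^\ast(\alpha_\mathbb{Q})$ and whose category of representations is canonically identified with the full subcategory $\omega^\ast(\Rep(\Kt_\mathbb{Q})) \subset \Rep(\Kt_\mathbb{Q})$.

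The core step is then to compare the two classes in $H^2(\mathbb{Q}, P)$ via localization. By Proposition \ref{KtWeil}, the image of $\omega^\ast(\alpha_\mathbb{Q})$ under the injection $H^2(\mathbb{Q}, P) \hookrightarrow H^2(\mathbb{A}, P)$ from Proposition \ref{MilneCohom} equals $(0_{\mathbb{A}^{\{p,\infty\}}}, \alpha_p, \alpha_\mathbb{R})$; assuming the Tate conjecture, the Langlands--Rapoport--Milne theorem attaches the same adelic class to $\Mot(\mathbb{F})$. Injectivity of the adelic localization forces the two classes to coincide in $H^2(\mathbb{Q}, P)$, and Tannakian duality then yields an abstract equivalence $\omega^\ast(\Rep(\Kt_\mathbb{Q})) \simeq \Mot(\mathbb{F})$ of Tannakian categories over $\mathbb{Q}$.

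The last task is to promote this to an equivalence compatible with the standard realization functors (crystalline at $p$, $\ell$-adic for $\ell \neq p$, Betti at $\infty$). Each realization corresponds on the Tannakian side to a morphism of bands from $P$ into the band of the target category, while the analogous realization functors on $\Rep(\Kt_\mathbb{Q})$ factor through $\omega$ by construction, built out of the local components $\mu_v^\prime$ of Section \ref{LocalMapsDesc}. I would verify place by place that the underlying morphisms of bands agree --- in particular, at $p$ one must match the slope map \eqref{MotivesIsocBand} with the composition of $\omega$ and the local Kottwitz component $\mu_p^\prime$, and analogously at $\infty$ --- and then invoke the naturality result \cite[Theorem 6.3]{MilneGerbes} to conclude that the equivalence commutes with all realizations simultaneously.

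The main conceptual input is of course the Tate conjecture itself, which is precisely what makes $\Mot(\mathbb{F})$ a semi-simple Tannakian category with band $P$; everything else is a bookkeeping exercise. On the technical side, the delicate point is the matching of realizations in the third paragraph, since the abstract equivalence of the second paragraph is only determined by the class in $H^2(\mathbb{Q}, P)$, whereas realizations depend on explicit choices of local primes and embeddings. The required local identifications, however, are essentially already carried out in the proof of Proposition \ref{KtWeil}, so once these are extracted in a slightly more functorial form the remaining verifications are formal.
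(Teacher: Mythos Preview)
Your proposal is correct and follows essentially the same route as the paper: the paper's argument is the single sentence preceding the corollary, which combines Proposition~\ref{KtWeil} with the Langlands--Rapoport--Milne theorem and then cites \cite[Theorem 6.3]{MilneGerbes} for the compatibility with realizations. Your write-up simply makes explicit the Tannakian and local-global bookkeeping that the paper leaves implicit.
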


\renewcommand{\refname}{References}


\end{document}